\numberwithin{equation}{section}
\newcommand{\field}[1]{\mathbb{#1}}
\newcommand{\C}{\field{C}}
\newcommand{\N}{\field{N}}
\newcommand{\Q}{\field{Q}}
\newcommand{\R}{\field{R}}
\newcommand{\Z}{\field{Z}}
\def\cC{\mathscr{C}}
\def\mA{\mathcal{A}}
\def\mE{\mathcal{E}}
\def\mF{\mathcal{F}}
\def\mL{\mathcal{L}}
\def\mM{\mathcal{M}}
\def\mN{\mathcal{N}}
\def\mU{\mathcal{U}}
\def\mV{\mathcal{V}}
\newcommand\mS{\mathcal{S}}
\def\kg{\mathfrak{g}}
\def\Re{{\rm Re}}
\def\Im{{\rm Im}}
\def\la{\langle}
\def\ra{\rangle}
\DeclareMathOperator{\End}{End}
\DeclareMathOperator{\Ker}{Ker}
\DeclareMathOperator{\rank}{rk}
\DeclareMathOperator{\Id}{Id}
\DeclareMathOperator{\tr}{Tr}
\DeclareMathOperator{\ind}{Ind}
\DeclareMathOperator{\td}{Td}
\DeclareMathOperator{\ch}{ch}
\newtheorem{thm}{Theorem}[section]
\newtheorem{lemma}[thm]{Lemma}
\newtheorem{prop}[thm]{Proposition}
\newtheorem{cor}[thm]{Corollary}
\newtheorem{defthm}[thm]{Definition and Theorem}
\theoremstyle{definition}
\newtheorem{rem}[thm]{Remark}
\theoremstyle{definition}
\newtheorem{defn}[thm]{Definition}
\newtheorem{eg}[thm]{Example}
\newcommand{\be}{\begin{eqnarray}}
\newcommand{\ee}{\end{eqnarray}}
\newcommand{\ov}{\overline}
\newcommand{\wi}{\widetilde}
\newcommand{\var}{\varepsilon}
\numberwithin{equation}{section}
\numberwithin{thm}{section}
\newcommand{\comment}[1]{}
\begin{document}
	
\title[Localization formula for $\eta$-Invariants]
{Differential $K$-theory and localization formula for $\eta$-invariants}
	
\author{Bo LIU}
\address{School of Mathematical Sciences,
	Shanghai  Key Laboratory of PMMP,
	East China Normal University, 
	Shanghai,  
	P.R. China}
\email{bliu@math.ecnu.edu.cn}

\author{Xiaonan MA}

\address{Universit\'e de Paris, CNRS, 
	Institut de Math\'ematiques de Jussieu-Paris Rive Gauche, 
	F-75013 Paris, France,
\and School of Mathematical Sciences, University of Science and 
Technology of China, Herfei, P.R. China}  
\email{xiaonan.ma@imj-prg.fr}	

\date{\today}
	
\begin{abstract}
In this paper  
we obtain a localization formula 
in differential $K$-theory for $S^1$-actions. 
We establish a localization formula for 
equivariant $\eta$-invariants by combining this result with
our extension of Goette's result 
on the comparison of two types of equivariant $\eta$-invariants.
An important step in our approach is to 
 construct a pre-$\lambda$-ring structure
 in differential $K$-theory. 
\end{abstract}
	
\maketitle


\tableofcontents
\setcounter{section}{-1}
	
\section{Introduction}\label{s01}  
	
The famous Atiyah-Singer index theorem \cite{AS63} 
states that for an elliptic differential operator on a compact manifold, 
the analytical index (related to the dimension of the space of solutions) 
is equal to the topological index,  computed 
in terms of characteristic classes.
We can view the index as a primitive spectral invariant of 
	an elliptic operator, whereas global spectral invariants such as 
	the $\eta$-invariant of Atiyah-Patodi-Singer 
	and the analytic torsion of Ray-Singer
	as the secondary spectral invariants of an elliptic operator.
	In \cite[Proposition 2.10]{ASegal68}, Atiyah and Segal  
	established a localization formula for the equivariant index using 
	 topological $K$-theory, which 
	computes the equivariant index via the contribution of 
	the fixed point set of the group action. 
	Thus it is natural to ask whether the localization property holds for these
	secondary spectral invariants. 
Note that they are neither computable
	from local data, nor topological invariants as the index.

	Note that the Ray-Singer holomorphic analytic torsion  \cite{RS73} 
	(and its families version, Bismut-K\"ohler torsion form \cite{BK92}) 
	is the analytic counterpart of
	the direct image in Arakelov geometry \cite{Soule92}. 
	Bismut-Lebeau's  embedding formula \cite{BL91} for 
	the analytic torsion and Bismut's family extension \cite{Bismut1997}
	are the essential analytic ingredients of	
	the arithmetic Riemann-Roch-Grothendieck theorem 
	\cite{GRosslerS08,GS92}. 

K\"ohler-Roessler established in their proof of the equivariant arithmetic
	Riemann-Roch theorem, a Lefschetz type fixed point formula 
	\cite[Theorem 4.4]{KRo01} in the equivariant arithmetic $K$-theory.
In the arithmetic context, their result 
gives a relation between the equivariant holomorphic torsion 
and the contribution of the fixed point set corresponding to the
$n$-th roots of unity.
	In \cite[Lemma 2.3]{KRo02}, they discussed in detail this problem
	and made a conjecture
	for projective complex manifolds 
	\cite[Conjecture, p82]{KRo02}.
	K\"ohler-Roessler \cite{KRo02} did not use the comparison formula of
	Bismut-Goette \cite{BG00}, but  used instead their 
	equivariant arithmetic Riemann-Roch theorem.
	For more applications of  the equivariant arithmetic
	Riemann-Roch theorem, cf. Maillot-Roessler \cite{MaiRo04} 
	and later works.
	
Atiyah-Patodi-Singer \cite{APS75} developed an index theory 
	for the Dirac operator on compact manifolds with boundary. 
	Their index formula involves a contribution
	of the boundary, called $\eta$-invariant.
	Formally, it is equal to the number
	of positive eigenvalues of the Dirac operator minus the number of
	its negative eigenvalues.	
Cheeger-Simons \cite{CheegerS85} 
gave a formula in $\R/\Q$  for 
the $\eta$-invariant by using their differential characters,
cf. also the work of Zhang \cite{Z05}.

The $\eta$-invariant (and its families version,
Bismut-Cheeger $\eta$-form \cite{BC89})
is  the analytic counterpart 
of the direct image in differential $K$-theory
	(see e.g., \cite{Be09,Bunke2009,FreedLott10}).
	In particular, the embedding formula of 
	Bismut-Zhang \cite[Theorem 2.2]{BZ93}
	for the $\eta$-invariants plays an important role 
	in the proof of Freed-Lott's index theorem 
	in differential $K$-theory \cite[Theorem 7.35]{FreedLott10}.
	Various extensions of 
	Bismut-Zhang's embedding formula 
	have been recently established
	 by B. Liu \cite{Liu2017} and later work.
    
  In this paper we will establish 
a localization formula	in differential $K$-theory. 
Our result is formally similar to \cite[Theorem 4.4]{KRo01}, 
	but we employ here totally different arguments.
For $S^1$-actions we get a pointwise identification between
	the equivariant $\eta$-invariant and the fixed point set
	contribution to the $\eta$-invariant, 
	modulo a rational function with integral coefficients. 
	The definition of the fixed point set
	contribution is actually an important part 
	of the localization formula.
By combining this identification with our recent extension \cite{LM2018}
	of Goette's comparison formula for two kinds 
	of equivariant $\eta$-invariants \cite{Goette00} 
	we finally conclude our main result: the difference of the 
	equivariant $\eta$-invariant and its fixed point set
contribution, as a function on the complement of a finite subset of $S^1$,
 is the restriction of a rational function on $S^1$ with integral coefficients.
	It seems that our result is the first 
	geometric application of differential $K$-theory.
    
Let us recall first the Atiyah-Segal localization formula for the
 equivariant index. 
Let $Y$ be an $S^1$-equivariant compact Spin$^c$ manifold.
It induces an $S^1$-equivariant complex line bundle $L$ such that
$\omega_2(TY)=c_1(L)\,\mathrm{mod}\,(2)$, where $\omega_2$ is 
the second Stiefel-Whitney class and $c_1$ is the first Chern class 
\cite[Appendix D]{LaMi89}. Let $E$ be an $S^1$-equivariant 
complex vector bundle over $Y$.  Let $D^Y\otimes E$ be the spin$^c$
Dirac operator on $\mS(TY,L)\otimes E$, where $\mS(TY,L)$
is the spinor associated with this spin$^c$ structure 
(cf. \eqref{local208}). 

For any complex vector bundle $F$ over a manifold $X$,
we use the notation
\begin{align}\label{eq:0.1}
\mathrm{Sym}_t(F)=1+\sum_{k>0}\mathrm{Sym}^k(F)t^k,
\quad
\lambda_t(F)=1+\sum_{k>0}\Lambda^k(F)t^k
\end{align}
for the symmetric and exterior powers of $K^0(X)[[t]]$ respectively and 
denote by $\mathrm{Sym}(F):=\mathrm{Sym}_1(F)$.
Here in (\ref{eq:0.1}), 
$1$ is understood as the trivial complex line bundle
over $X$ in $K^0(X)$, the $K$-group of $X$.

Let $Y^{S^1}$ 
be the fixed point set of the  circle action on $Y$, then 
each connected component 
$Y^{S^1}_{\alpha}$, $\alpha\in \mathfrak{B}$, 
of $Y^{S^{1}}$, is a compact manifold. 
Let $N_{\alpha}$ be the normal bundle of $Y_{\alpha}^{S^1}$ in $Y$ 
and we can choose a complex structure on $N_{\alpha}$ through 
the circle action.
For any $\alpha\in \mathfrak{B}$, $Y_{\alpha}^{S^1}$ also has 
an equivariant spin$^c$ structure with associated equivariant line bundle 
$L_{\alpha}=L|_{Y^{S^1}_{\alpha}}\otimes(\det N_{\alpha})^{-1}$ 
(see \eqref{local210} for instance). 

Let $K_{S^1}^0(Y_{\alpha}^{S^1})_{I(g)}$ be the localization of 
the equivariant $K$-group $K_{S^1}^0(Y_{\alpha}^{S^1})$ 
at the prime ideal
$I(g)$, which consists of all characters of $S^1$ vanishing at $g$.
 
Assume temporarily that  $Y$ is even dimensional.
Then the spinor is naturally $\Z_2$-graded: 
$\mS(TY,L)=\mS^+(TY,L)\oplus \mS^-(TY,L)$.
Let $D_{\pm}^Y\otimes E$ be the restrictions of $D^Y\otimes E$ 
to the space $\cC^{\infty}(Y,\mS^{\pm}(TY,$ $L)\otimes E)$
of smooth sections of $\mS^{\pm}(TY,$ $L)\otimes E$ on $Y$.
Then the kernels $\Ker (D_{\pm}^Y\otimes E)$ of 
$D_{\pm}^Y\otimes E$ are finite dimensional 
$S^1$-complex vector spaces. Let 
\begin{align}\label{i17} 
\ind_g(D^Y\otimes E)=\tr|_{\Ker (D^Y_+\otimes E)}[g]
-\tr|_{\Ker (D^Y_-\otimes E)}[g]
\end{align}
be the equivariant index of $D^Y\otimes E$ 
corresponding to $g\in S^1$.

For $g\in S^1$ fixed, let $\chi$ be a character of $S^1$ 
such that $\chi(g)\neq 0$.
For $w=(F^+-F^-)/\chi\in K_{S^1}^0(Y_{\alpha}^{S^1})_{I(g)}$ we
define the equivariant index of $D^{Y_{\alpha}^{S^1}}\otimes w$ by
\begin{align}\label{i18} 
\ind_g(D^{Y_{\alpha}^{S^1}}\otimes w):=
\chi(g)^{-1}\left(\ind_g(D^{Y_{\alpha}^{S^1}}\otimes F^+)
-\ind_g(D^{Y_{\alpha}^{S^1}}\otimes F^-)\right).
\end{align} 
This does not depend on the choices of 
$F^+,F^-\in K_{S^1}^0(Y_{\alpha}^{S^1})$ and $\chi$. 
Here $D^{Y_{\alpha}^{S^1}}\otimes F^{\pm}$ are spin$^c$ 
Dirac operators on $Y_{\alpha}^{S^1}$ defined in a similar way 
as $D_{\pm}^Y\otimes E$. 

\begin{thm}\label{as01}\cite[Lemma 2.7 and Proposition 2.10]{ASegal68}
If $Y^{S^1}$ is the fixed point set of $g\in S^1$, 
then there exists an inverse 
$\lambda_{-1}(N_{\alpha}^*)^{-1}$
in $K_{S^1}^0(Y_{\alpha}^{S^1})_{I(g)}$. Moreover,
\begin{align}\label{i01}
\ind_g(D^Y\otimes E)=\sum_{\alpha}\ind_g\left(D^{Y_{\alpha}^{S^1}}
\otimes \lambda_{-1}(N_{\alpha}^*)^{-1}\otimes 
E|_{Y_{\alpha}^{S^1}}\right).
\end{align}
\end{thm}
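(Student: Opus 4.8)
The plan is to deduce Theorem~\ref{as01} from two classical ingredients --- the Atiyah--Segal localization theorem in equivariant $K$-theory and the identification of the equivariant index of a spin$^c$ Dirac operator with the topological Gysin pushforward to a point --- as in \cite{ASegal68}. First I would treat the invertibility of $\lambda_{-1}(N_\alpha^*)$: over the fixed-point component $Y^{S^1}_\alpha$ the circle acts fibrewise on $N_\alpha$, so $N_\alpha$ splits $S^1$-equivariantly into weight subbundles $N_\alpha=\bigoplus_j N_j\otimes\C_{m_j}$, where $\C_{m_j}$ is the one-dimensional representation of weight $m_j$, each $N_j$ carries the trivial action, and all $m_j\neq 0$; since $Y^{S^1}$ is exactly the fixed-point set of $g$ one has moreover $g^{m_j}\neq 1$ for every $j$, so $1-t^{-m_j}\notin I(g)$ becomes a unit in $R(S^1)_{I(g)}$, where $R(S^1)=\Z[t,t^{-1}]$ is the representation ring of $S^1$. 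Using multiplicativity of $\lambda_{-1}(\cdot)$ and writing $\lambda_{-1}(N_\alpha^*)=\prod_j\lambda_{-1}(N_j^*\otimes\C_{-m_j})$, I would observe that modulo the nilpotent ideal $\widetilde K(Y^{S^1}_\alpha)\otimes R(S^1)_{I(g)}$ of $K^0_{S^1}(Y^{S^1}_\alpha)_{I(g)}=K(Y^{S^1}_\alpha)\otimes R(S^1)_{I(g)}$ the $j$-th factor reduces to $(1-t^{-m_j})^{\rank N_j}$, a unit; since an element that is a unit modulo a nilpotent ideal is a unit, $\lambda_{-1}(N_\alpha^*)$ is invertible.

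Next I would reduce the index identity to $K$-theory. Writing $\iota_\alpha\colon Y^{S^1}_\alpha\hookrightarrow Y$ for the inclusion, the chosen complex structure makes $N_\alpha$ spin$^c$, and the relation $L_\alpha=L|_{Y^{S^1}_\alpha}\otimes(\det N_\alpha)^{-1}$ is precisely what makes the spin$^c$ structure of $Y$ restrict, along $TY|_{Y^{S^1}_\alpha}=TY^{S^1}_\alpha\oplus N_\alpha$, to the chosen one on $Y^{S^1}_\alpha$; this provides an equivariant Gysin map $\iota_{\alpha!}\colon K^0_{S^1}(Y^{S^1}_\alpha)\to K^0_{S^1}(Y)$, the Thom isomorphism for $N_\alpha$ followed by extension by zero. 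I would then use the self-intersection formula $\iota_\alpha^*\iota_{\alpha!}(y)=\lambda_{-1}(N_\alpha^*)\,y$ and the vanishing $\iota_\alpha^*\iota_{\beta!}=0$ for $\alpha\neq\beta$ (disjoint supports), together with the Atiyah--Segal localization theorem asserting that $i^*=\bigoplus_\alpha\iota_\alpha^*\colon K^0_{S^1}(Y)_{I(g)}\to\bigoplus_\alpha K^0_{S^1}(Y^{S^1}_\alpha)_{I(g)}$ is an isomorphism, to conclude by a direct computation that for every $x\in K^0_{S^1}(Y)_{I(g)}$,
\begin{align}\label{eq:loc-pf}
x=\sum_{\alpha}\iota_{\alpha!}\bigl(\lambda_{-1}(N_\alpha^*)^{-1}\,\iota_\alpha^*x\bigr).
\end{align}

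Finally I would pass to indices. With $p_Y\colon Y\to\mathrm{pt}$ and $p_\alpha\colon Y^{S^1}_\alpha\to\mathrm{pt}$, so that $p_Y\circ\iota_\alpha=p_\alpha$, the plan is to apply the $R(S^1)$-linear extension of $p_{Y!}$ to the localized groups to \eqref{eq:loc-pf} with $x=[E]$, use $\iota_\alpha^*[E]=E|_{Y^{S^1}_\alpha}$ and $p_{Y!}\circ\iota_{\alpha!}=p_{\alpha!}$, evaluate at $g$, and compare with the definitions \eqref{i17} and \eqref{i18} --- the factor $\chi(g)^{-1}$ in \eqref{i18} being exactly the bookkeeping that inverts the denominator $\lambda_{-1}(N_\alpha^*)$ --- to obtain \eqref{i01}. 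The main obstacle, and where the real work lies, is the analytic input underlying this last step: that the equivariant spin$^c$ index is the topological pushforward, $\ind_g(D^Y\otimes\,\cdot\,)=[p_{Y!}(\cdot)](g)$ and similarly on each $Y^{S^1}_\alpha$, and that this pushforward is functorial under the Gysin maps, $p_{Y!}\circ\iota_{\alpha!}=p_{\alpha!}$. This is the $S^1$-equivariant Atiyah--Singer index theorem together with the multiplicativity of the analytic index under the Thom construction, proved in \cite{ASegal68} by excision and deformation arguments, and it is here that the matching of spin$^c$ data from the previous step is used.
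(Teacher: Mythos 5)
The paper does not prove Theorem \ref{as01}; it imports it directly from Atiyah--Segal, and your proposal is a correct reconstruction of exactly their argument (invertibility of $\lambda_{-1}(N_\alpha^*)$ by reducing modulo the nilpotent ideal $\widetilde K(Y^{S^1}_\alpha)\otimes R(S^1)_{I(g)}$ to the unit $\prod_j(1-t^{-m_j})^{\rank N_j}$, the localization isomorphism plus the self-intersection formula to get \eqref{eq:loc-pf}, and the identification of the equivariant analytic index with the functorial topological pushforward). No gaps; this matches the cited source's approach.
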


For simplicity, we fix a complex structure on $N_{\alpha}$ such that
the weights of the $S^1$-action on $N_{\alpha}$ are all positive. Then 
by \cite[(1.15) and (1.17)]{LMZ03}, we can 
reformulate Theorem \ref{as01} 
in the following way: 
\begin{align}\label{i19}
\ind_g(D^Y\otimes E)=\sum_{\alpha}\ind_g\left(D^{Y_{\alpha}^{S^1}}
\otimes \mathrm{Sym}(N_{\alpha}^*)\otimes 
E|_{Y_{\alpha}^{S^1}}\right) \quad 
\text{as distributions on $S^1$}.
\end{align} 
Notice that 
$\ind(D^Y\otimes E)=\Ker (D^Y_+\otimes E)-\Ker (D^Y_-\otimes E)$
is a finite dimensional virtual representation of $S^1$, thus is
an element of the representation ring $R(S^1)$ of $S^1$.
For each $\alpha$,
\begin{align}\label{i20} 
\ind\left(D^{Y_{\alpha}^{S^1}}\otimes \mathrm{Sym}(N_{\alpha}^*)
\otimes E|_{Y_{\alpha}^{S^1}}\right)\in R[S^1]
\end{align}
is a formal representation of $S^1$, i.e., each 
component of weight $k$ of \eqref{i20}, denoted by 
$$\ind\left(D^{Y_{\alpha}^{S^1}}\otimes \mathrm{Sym}(N_{\alpha}^*)
\otimes E|_{Y_{\alpha}^{S^1}}\right)_k$$
is a finite dimensional virtual vector space.
As a consequence of \eqref{i19}, we have for any $|k|\gg 1$, 
\begin{align}\label{i21} 
\sum_{\alpha}\ind\left(D^{Y_{\alpha}^{S^1}}\otimes 
\mathrm{Sym}(N_{\alpha}^*)\otimes E|_{Y_{\alpha}^{S^1}}\right)_k=0.
\end{align}

From now on  
 we assume that $Y$ is odd dimensional.
Let $g^{TY}$ be an $S^{1}$-invariant Riemannian metric on $TY$, 
and $\nabla^{TY}$ be the Levi-Civita connection on $(Y, g^{TY})$.
Let  $h^L$ and $h^E$ be $S^1$-invariant metrics. Let
$\nabla^L$ and $\nabla^E$ be $S^1$-invariant Hermitian connections
on $(L,h^{L})$ and $(E, h^{E})$. Put
\begin{align}\label{i02} 
\underline{TY}=(TY,g^{TY},\nabla^{TY}),\quad 
\underline{L}=(L,h^L,\nabla^L),\quad \underline{E}=(E,h^E,\nabla^E).
\end{align}
 We call them equivariant geometric triples. 
 
 For $g\in S^1$  
let $\bar{\eta}_g(\underline{TY}, \underline{L}, \underline{E})$
be the associated equivariant APS reduced $\eta$-invariant  
(cf. Definition \ref{local128}). 

In the 
rest of this paper 
we  always assume  that
$Y^{S^1}\neq \emptyset$ except in Section \ref{s0405}.
 
 Let $\underline{L_{\alpha}}$, $\underline{N_{\alpha}^*}$,
 $\underline{\mathrm{Sym}(N_{\alpha}^*)}$  and
 $\underline{\lambda_{-1}(N_{\alpha}^*)}$ be the  induced 
geometric triples on $Y^{S^{1}}_{\alpha}$. 
In view of \eqref{i19}, it is natural to ask whether we can define 
$\bar{\eta}_g\left(\underline{TY_{\alpha}^{S^1}}, 
\underline{L_{\alpha}}, \underline{\mathrm{Sym}(N_{\alpha}^*)}
\otimes\underline{E}|_{Y_{\alpha}^{S^1}}\right)$ as 
a distribution on $S^1$ 
for each $\alpha$ and how to compute the difference 
\begin{align}\label{i22}
\bar{\eta}_g(\underline{TY}, \underline{L}, \underline{E})
-\sum_{\alpha}\bar{\eta}_g\left(\underline{TY_{\alpha}^{S^1}}, 
\underline{L_{\alpha}}, \underline{\mathrm{Sym}(N_{\alpha}^*)}
\otimes\underline{E}|_{Y_{\alpha}^{S^1}}\right)
\end{align}
as 
a distribution on $S^1$ by using geometric data on $Y$. 

In this paper 
we give a realization of
$\underline{\lambda_{-1}(N_{\alpha}^*)}^{-1}$ in the localization of 
equivariant differential $K$-theory, such that
\begin{align}\label{i03}  
	\sum_{\alpha}\bar{\eta}_g	\left(\underline{TY_{\alpha}^{S^1}}, 
	\underline{L_{\alpha}}, \underline{\lambda_{-1}(N_{\alpha}^*)}^{-1}
	\otimes\underline{E}|_{Y_{\alpha}^{S^1}}\right)
\end{align}
is well-defined, and then we identify it to
$\bar{\eta}_g(\underline{TY}, \underline{L}, \underline{E})$
up to a rational function on $S^{1}$ with integral coefficients. 
The 
remaining challenging problem is to compute precisely
this rational function on $S^{1}$ in a geometric way.

For $g\in S^1$ 
let $\widehat{K}_{g}^0(Y)$ be 
the $g$-equivariant differential $K$-group in Definition \ref{defn:2.12}, 
which is the Grothendieck group of 
equivalence classes $[\underline{E}, \phi]$ 
(see \eqref{eq:2.083} 
for the equivalence relation) of cycles $(\underline{E}, \phi)$, 
where $\underline{E}$ is an equivariant geometric triple and $\phi\in 
\Omega^{\mathrm{odd}}(Y^g, \C)/d\Omega^{\mathrm{even}}(Y^g, \C)$, 
the space of odd degree complex valued differential forms on the 
fixed point set $Y^g$ of $g$, modulo exact forms.
Let $\widehat{K}_{g}^0(Y)_{I(g)}$ be its localization at 
the prime ideal $I(g)$. Then as explained in (\ref{eq:2.086}),
an element of $\widehat{K}_{g}^0(Y)_{I(g)}$ can be written as
$\left([\underline{E}, \phi]-[\underline{E'}, \phi']\right)
/\chi$, where 
$\chi$ is a character of $S^1$ such that $\chi(g)\neq 0$.

With respect to the $S^1$-action, we have the decomposition of 
complex vector bundles $N_{\alpha}=\bigoplus_{v>0}N_{\alpha,v}$ 
such that $g\in S^1$ acts on $N_{\alpha,v}$ by multiplication by $g^v$. 

 Using the pre-$\lambda$-ring structure of the differential $K$-theory 
 constructed in Theorem \ref{thm:2.06}, 
 we obtain the differential $K$-theory 
 version of the first part of Theorem \ref{as01}:
\begin{thm}[See Theorem \ref{thm:2.14}]\label{i06} 
There exists a 
finite subset $A\subset S^1$ (cf. Proposition \ref{local32}), 
such that for $g\in S^1\backslash A$,
$\left[\underline{\lambda_{-1}(N_{\alpha}^*)}, 
0\right]$ is invertible in 
$\widehat{K}_{g}^0(Y_{\alpha}^{S^1})_{I(g)}$ and there exists 
$\mN_0>0$, which does not depend on $g\in S^1\backslash A$, 
such that for any $\mN\in \N, \mN>\mN_0$, we have
\begin{align}\label{i07} 
\left[\underline{\lambda_{-1}(N_{\alpha}^*)},0\right]^{-1}=
\left[\underline{\lambda_{-1}(N_{\alpha}^*)^{-1}_{\mN}},0\right]\in 
\widehat{K}_{g}^0(Y_{\alpha}^{S^1})_{I(g)}.
\end{align}
Here 
$\underline{\lambda_{-1}(N_{\alpha}^*)^{-1}_{\mN}}$
is defined by 
truncation up to degree $\mN>\mN_0$  
in the formal expansion of $\underline{\lambda_{-1}(N_{\alpha}^*)}^{-1}$ 
given by the $\gamma$-filtration (see the precise definition in 
\eqref{eq:2.061}, \eqref{eq:2.063}, \eqref{eq:2.064} 
and \eqref{eq:2.092}).
\end{thm}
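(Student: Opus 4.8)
The plan is to deduce the statement from the algebraic invertibility in $K^0_{S^1}(Y^{S^1}_\alpha)_{I(g)}$ provided by Theorem \ref{as01}, lifted to differential K-theory via the pre-$\lambda$-ring structure of Theorem \ref{thm:2.06}. First I would recall that, by the Atiyah--Segal argument, $\lambda_{-1}(N_\alpha^*) = \sum_{j\ge 0}(-1)^j \Lambda^j(N^*_\alpha)$ becomes invertible in the localized ring once $g$ acts on no eigenbundle $N_{\alpha,v}$ trivially; since $N_\alpha = \bigoplus_{v>0}N_{\alpha,v}$ and each $N_{\alpha,v}$ carries weight $g^v$, this fails only when $g^v = 1$ for some $v$ among the finitely many weights occurring across all $\alpha\in\mathfrak B$, which singles out the finite exceptional set $A\subset S^1$ of Proposition \ref{local32}. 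Working in $\widehat K^0_g(Y^{S^1}_\alpha)_{I(g)}$, the class $[\underline{\lambda_{-1}(N^*_\alpha)},0]$ has the same image as in the topological localized group under the forgetful map, so one knows a priori that a two-sided inverse exists; the content is to exhibit it in the specific truncated geometric form \eqref{i07}.

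Next I would use the $\gamma$-filtration on $\widehat K^0_g(Y^{S^1}_\alpha)_{I(g)}$: the element $[\underline{\lambda_{-1}(N^*_\alpha)},0] - 1$ (where $1$ is the unit $[\underline{\C},0]$) lies in the augmentation ideal, hence is $\gamma$-nilpotent modulo the filtration, because over a compact base the $\gamma$-filtration is finite in each topological degree and, crucially, after localizing at $I(g)$ the torsion obstruction disappears. Concretely, writing $u = 1 - [\underline{\lambda_{-1}(N^*_\alpha)},0]$, one forms the geometric series $\sum_{k\ge 0}u^k$; one shows this series stabilizes modulo the $\gamma$-filtration $F^{\mathcal N}$ for $\mathcal N$ larger than the (uniform in $g\notin A$) bound $\mathcal N_0$ coming from $\dim Y^{S^1}_\alpha$ and the number of weights $v$. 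The truncation $\underline{\lambda_{-1}(N^*_\alpha)^{-1}_{\mathcal N}}$ is precisely the partial sum of this series cut at filtration level $\mathcal N$, assembled as a genuine geometric triple by the constructions \eqref{eq:2.061}--\eqref{eq:2.092}. That $\mathcal N_0$ can be chosen independently of $g\in S^1\setminus A$ follows because the filtration length depends only on the ranks of the $N_{\alpha,v}$ and of $Y^{S^1}_\alpha$, which are locally constant in $g$ away from $A$.

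The remaining point is to verify $[\underline{\lambda_{-1}(N^*_\alpha)},0]\cdot[\underline{\lambda_{-1}(N^*_\alpha)^{-1}_{\mathcal N}},0] = 1$ exactly in $\widehat K^0_g(Y^{S^1}_\alpha)_{I(g)}$, not merely modulo the filtration. Here I would argue that the product differs from $1$ by an element of $F^{\mathcal N}$ whose image under the topological forgetful map vanishes (by Theorem \ref{as01} applied in the topological localized group, where the truncation is already an honest inverse for $\mathcal N$ large), and whose ``curvature'' part vanishes for degree reasons once $\mathcal N > \dim Y^{S^1}_\alpha$; a differential K-class that is flat with vanishing topological image lies in the image of $H^{\mathrm{odd}}/H^{\mathrm{odd}}_{\C}$-type data, which on the compact fixed-point manifold is again controlled and, after localization at $I(g)$, is killed. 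So the product is exactly the unit. The main obstacle I anticipate is the last step: controlling the flat part of the error term and showing the uniformity of $\mathcal N_0$ simultaneously—i.e. ensuring the filtration-theoretic truncation interacts correctly with both the characteristic-form (Chern--Weil) component and the exact-form ambiguity of the cycle $(\underline E,\phi)$, rather than just with the underlying topological K-class. This is where the explicit formulas \eqref{eq:2.061}--\eqref{eq:2.092} for the pre-$\lambda$-operations on geometric triples must be invoked carefully, together with the finiteness of the $\gamma$-filtration on the localized differential K-group established in Theorem \ref{thm:2.06}.
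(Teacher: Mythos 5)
Your overall strategy (invert via $\gamma$-nilpotency after localization) is the right one, but two of its load-bearing steps do not work as stated. First, the element $u=1-[\underline{\lambda_{-1}(N_\alpha^*)},0]$ is \emph{not} in the augmentation ideal: $\rank \lambda_{-1}(N_\alpha^*)=\prod_v(1-1)^{\rank N_{\alpha,v}}=0$ when $N_\alpha\neq 0$, so $\rank u=1$ and the geometric series $\sum_k u^k$ does not stabilize in the $\gamma$-filtration. One must first extract the scalar unit $\prod_v(1-g^{-v})^{\rank N_{\alpha,v}}\in R(S^1)_{I(g)}$; this is exactly what the weight decomposition \eqref{eq:2.088} and the expansion \eqref{eq:2.090} accomplish, writing each factor as that unit times $1$ plus a series in the $\gamma^i(\underline{'N_{\alpha,v}^*}-\underline{\rank N_{\alpha,v}})$, which genuinely lie in $F^1$. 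This normalization is where the hypothesis $g\notin A$ (i.e.\ $g^v\neq 1$ for all occurring weights) enters, and it is not optional.

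Second, and more seriously, your treatment of the error term fails. Local nilpotency of the $\gamma$-filtration in $K^0$ does not transfer for free to $\widehat K^0$, and the flat part is \emph{not} killed by localization at $I(g)$: by the exact sequence \eqref{eq:3.042}, $\ker\tau$ is a quotient of $\Omega^{\mathrm{odd}}(Y_\alpha^{S^1},\C)/\Im\, d$, a $\C$-vector space on which $R(S^1)$ acts through evaluation at $g$, so every character outside $I(g)$ already acts invertibly and localization changes nothing. The paper's actual mechanism (Theorem \ref{thm:2.10}) is to pull the geometric triple back from a Grassmannian via Narasimhan--Ramanan with $p$ as in \eqref{eq:2.068}; there the offending $\gamma$-product equals $[0,\alpha]$ with $d\alpha=0$, because $\ch(\gamma^{i}(\underline{H}-\underline{\rank H}))\in\Omega^{\ge 2i}$ (see \eqref{eq:2.077}) kills the curvature term for degree reasons once $\sum i\cdot n_i$ exceeds half the dimension of the Grassmannian, and then $\alpha$ is exact because $H^{\mathrm{odd}}(\mathrm{Gr}(r,\C^p))=0$. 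This is also precisely what makes the bound $\mN_{r,m}$, hence $\mN_0$, depend only on ranks and dimensions and not on $g$. Without this (or an equivalent) argument, the identity \eqref{i07} is only established modulo the flat part, which is not enough.
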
 

\begin{rem} \label{t0.3}       By	\eqref{eq:2.092}, 
$\underline{\lambda_{-1}(N_{\alpha}^*)^{-1}_{\mN}}$
is a sum of virtual vector bundles on $Y_{\alpha}^{S^1}$ 
with coefficients in 
\begin{align}\label{eq:0.12} 
F(x)/\prod_{ v: 
N_{\alpha, v}\neq 0}(x^v-1)^{\rank N_{\alpha,v}+\mN}\in R(S^1)_{I(g)}
\end{align}
with $F(x)\in \Z[x]$, where $\Z[x]$ means the ring of
polynomials 
in $x$ 
with integral coefficients and $\rank N_{\bullet}$
is the rank of the complex vector bundle $N_{\bullet}$. 
\end{rem}

For $g\in S^1$
set
\begin{align}\label{i05} 
\Q_g:=\{P(g)/Q(g)\in\C: P,Q\in \Z[x], Q(g)\neq 0 \}\subset\C.
\end{align}

Let $\iota:Y^{S^1}\rightarrow Y$ be the 
canonical embedding. 
Let $\hat{\iota}^*:\widehat{K}^0_g(Y)_{I(g)}\rightarrow 
\widehat{K}^0_g(Y^{S^1})_{I(g)}$ be the induced homomorphism. 

\begin{thm}\label{i08} 
For $g\in S^1$, the direct image map 
$\widehat{f_Y}_!: \widehat{K}_g^0(Y)_{I(g)}\rightarrow 
\C/\Q_g$,
\begin{align}\label{i09}
[\underline{E}, 
\phi]/\chi
\mapsto\chi
(g)^{-1}\left(-\int_{Y^g}\td_g(\nabla^{TY},\nabla^L)\wedge\phi
+\bar{\eta}_g(\underline{TY}, \underline{L}, \underline{E})\right)
\end{align}
is well-defined. 

For any $g\in S^1\backslash A$, $\hat{\iota}^*$ is an isomorphism and
the following diagram commutes
		\begin{equation}\label{eq:0.14} 
		\begin{split}
\xymatrix{
	\widehat{K}^0_g(Y^{S^1})_{I(g)} 
	\ar[rd]_{\widehat{f_{Y^{S^1}}}_!}
	&&\widehat{K}^0_g(Y)_{I(g)}
	\ar[ll]_{
		\left[\underline{\lambda_{-1}(N^*)},0\right]^{-1}
		\cup\,\hat{\iota}^*} 
	\ar[ld]^{\widehat{f_Y}_!}\\
	&\C/\Q_g,&
}
\end{split}
\end{equation}
where the product $\cup$ is defined in \eqref{eq:2.084}.
In particular, taking into account \eqref{i09}, 
we have for any $\mN\in \N$ with $\mN>\mN_0$, 
where $\mN_0$ is as in Theorem \ref{i06},
\begin{align}\label{i13} 
\bar{\eta}_g\left(\underline{TY}, \underline{L},
\underline{E}\right)-\sum_{\alpha}\bar{\eta}_g
\left(\underline{TY_{\alpha}^{S^1}}, \underline{L_{\alpha}},
\underline{\lambda_{-1}(N_{\alpha}^*)^{-1}_{\mN}}\otimes 
\underline{E}|_{Y^{S^1}_{\alpha}}\right)\in \Q_g.
\end{align}
\end{thm}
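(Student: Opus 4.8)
The plan is to deduce Theorem~\ref{i08} by assembling the pieces already in place: the differential K-theory localization isomorphism, the inversion formula of Theorem~\ref{i06}, and the extension of Goette's comparison formula from \cite{LM2018}. First I would establish that the map $\widehat{f_Y}_*$ in \eqref{i09} is well-defined. The only thing to check is independence of the representative: if $(\underline{E},\phi)\sim(\underline{E}',\phi')$ then their images differ by terms accounted for by the equivalence relation \eqref{eq:2.083}, which in turn produce differences lying in $\Q_g$ (the curvature/transgression terms integrate against $\td_g$ to give values of $\Z$-rational functions of $g$, since the characters of $S^1$ occurring have integer exponents), and the quotient by the character $\chi$ only divides by $\chi(g)\in\Q_g^\times$; this uses the anomaly formula for the equivariant reduced $\eta$-invariant together with the fact that $\bar\eta_g$ of a genuine equivariant bundle, modulo $\Q_g$, depends only on the K-theory class—this last point is exactly the content (or an easy consequence) of the integrality properties established earlier, so I would cite Definition~\ref{local128} and the surrounding discussion rather than reprove it.

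Next I would prove commutativity of the diagram \eqref{eq:0.14}. The right-hand triangle is the localization statement: for $g\in S^1\setminus A$, Theorem~\ref{i06} gives that $[\underline{\lambda_{-1}(N_\alpha^*)},0]$ is invertible in $\widehat K^0_g(Y^{S^1}_\alpha)_{I(g)}$, and one shows $\hat\iota^*$ is an isomorphism with inverse $\big[\underline{\lambda_{-1}(N^*)},0\big]^{-1}\cup\hat\iota^*\circ(\hat\iota_*)$—or more precisely that the composite of $\hat\iota^*$ with multiplication by the inverse Euler class recovers the identity, which is the differential refinement of the classical Atiyah--Segal argument (Theorem~\ref{as01}). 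Granting the isomorphism, commutativity of the two triangles is equivalent to the single identity $\widehat{f_Y}_*(x) = \widehat{f_{Y^{S^1}}}_*\big(\hat\iota^*x\cup[\underline{\lambda_{-1}(N^*)},0]^{-1}\big)$ for all $x\in\widehat K^0_g(Y)_{I(g)}$. Because $\widehat K^0_g(Y)_{I(g)}$ is generated by classes of the form $(\underline E,0)/\chi$ with $\underline E$ a genuine equivariant geometric triple (the differential forms $\phi$ can be absorbed, again by the equivalence relation, after multiplying by a suitable character), it suffices to verify the identity on such generators. For those, $\widehat{f_Y}_*$ reads off $\bar\eta_g(\underline{TY},\underline L,\underline E)$ up to the $\td_g$-integral, while the right-hand side reads off $\sum_\alpha\bar\eta_g(\underline{TY^{S^1}_\alpha},\underline{L_\alpha},\underline{\lambda_{-1}(N_\alpha^*)^{-1}_\mN}\otimes\underline E|_{Y^{S^1}_\alpha})$ modulo $\Q_g$: this is precisely where the main input enters.

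The heart of the argument—and what I expect to be the principal obstacle—is the identity, modulo $\Q_g$, between $\bar\eta_g(\underline{TY},\underline L,\underline E)$ and the sum over $\alpha$ of the reduced $\eta$-invariants on the fixed-point components twisted by the truncated inverse $\underline{\lambda_{-1}(N_\alpha^*)^{-1}_\mN}$. Here I would invoke the extension of Goette's comparison theorem from \cite{LM2018}, which relates the APS equivariant reduced $\eta$-invariant $\bar\eta_g$ to Bismut--Goette's infinitesimally equivariant $\eta$-invariant, the latter being computable by a local index-theoretic (transgression of the equivariant $\widehat A$-form) argument on the fixed-point set. The strategy is: (i) replace $\bar\eta_g$ on both sides by the Bismut--Goette version, paying only a $\Q_g$-valued correction coming from the explicit local formula in \cite{LM2018,Goette00}; (ii) apply the adiabatic/fibered Atiyah--Segal degeneration to $Y$ and a neighborhood of $Y^{S^1}$, as in \cite{LMZ03}, so that the comparison of the two infinitesimally equivariant $\eta$-invariants reduces to a statement about transgressed equivariant characteristic forms, which holds on the nose; (iii) control the discrepancy between the genuine inverse $\underline{\lambda_{-1}(N_\alpha^*)^{-1}}$ and its $\gamma$-filtration truncation $\underline{\lambda_{-1}(N_\alpha^*)^{-1}_\mN}$: by Theorem~\ref{i06} and the description \eqref{eq:0.12}, for $\mN>\mN_0$ the two differ by a virtual bundle whose coefficients lie in $\Q_g$, and whose contribution to the $\eta$-invariant is therefore, by the integrality just used, a $\Q_g$-element. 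Assembling (i)--(iii) gives \eqref{i13}. The delicate point throughout is bookkeeping the denominators: one must check that every correction term genuinely involves only the values at $g$ of rational functions with integer coefficients—i.e., that no transcendental contribution (no genuine $\bar\eta$ of an infinite object) survives—which is guaranteed by the uniform bound $\mN_0$ on the truncation degree, independent of $g\in S^1\setminus A$, furnished by Theorem~\ref{i06}.
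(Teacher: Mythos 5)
Your outline of the first part (well-definedness via the variation formula, Theorem \ref{local85}) and of the localization isomorphism is essentially right, though two points are garbled: the transgression terms do not themselves land in $\Q_g$ — they exactly cancel the $-\int_{Y^g}\td_g\wedge\phi$ term in \eqref{i09}, and the $\Q_g$-contribution comes from the character $\chi_V(g)$ in \eqref{local73}; and the isomorphism of $\hat{\iota}^*$ is obtained from the exact sequence \eqref{eq:3.042}, the classical Atiyah--Segal localization and the five lemma, not from the Euler class (the relation $\hat{\iota}^*\circ\hat{\iota}_*=[\underline{\lambda_{-1}(N^*)},0]\,\cup$ comes afterwards). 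Also, classes $(0,\phi)$ cannot be ``absorbed'' into geometric generators — the exact sequence shows they form a genuine piece of $\widehat{K}^0_g(Y)$ — but since everything is additive this is repairable: on $(0,\phi)$ the commutativity reduces to the form identity \eqref{local76}.

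The genuine gap is in your treatment of the heart of the matter, the identity $\widehat{f_Y}_*=\widehat{f_{Y^{S^1}}}_*\circ[\underline{\lambda_{-1}(N^*)},0]^{-1}\cup\hat{\iota}^*$ on geometric cycles, i.e.\ \eqref{i13}. You propose to get it from the comparison theorem of \cite{LM2018}/\cite{Goette00} plus an ``adiabatic/fibered Atiyah--Segal degeneration'' reducing the infinitesimal $\eta$-invariant to transgressed characteristic forms on $Y^{S^1}$. This does not work. For $g\in S^1\setminus A$ one has $Y^g=Y^{S^1}$, hence $\vartheta_K=0$ on $Y^g$ and $\mM_{g,K}\equiv 0$ by \eqref{eq:1.52b}; Theorem \ref{local96} then only says $\bar{\eta}_{g,tK_0}=\bar{\eta}_{ge^{tK_0}}$ and carries no localization information whatsoever. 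The infinitesimal $\eta$-invariant is a secondary invariant and is not locally computable; the degeneration you invoke in step (ii) is exactly the formal APS-index argument \eqref{eq:0.28} which the paper explicitly flags as unproven and which the whole construction is designed to avoid. The correct mechanism is entirely different: one realizes the direct image $\hat{\iota}_*\underline{\mu}=\underline{\xi_+}-\underline{\xi_-}$ by the Atiyah--Hirzebruch construction \eqref{bl0035}--\eqref{local304}, applies the equivariant Bismut--Zhang embedding formula (Theorem \ref{local63}, from \cite{Liu2017}) to get
\[
\bar{\eta}_g(\underline{TY},\underline{L},\underline{\xi_+})-\bar{\eta}_g(\underline{TY},\underline{L},\underline{\xi_-})=\sum_{\alpha}\bar{\eta}_g(\underline{TY^{S^1}_{\alpha}},\underline{L_{\alpha}},\underline{\mu})+\chi_{V'}(g),
\]
with $\chi_{V'}(g)\in\Q_g$, and combines this with \eqref{local76} to obtain $\widehat{f_Y}_*\circ\hat{\iota}_*=\widehat{f_{Y^{S^1}}}_*$; composing with $(\hat{\iota}_*)^{-1}=[\underline{\lambda_{-1}(N^*)},0]^{-1}\cup\hat{\iota}^*$ and evaluating on $[\underline{E},0]$ via Theorem \ref{thm:2.14} yields \eqref{i13}. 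The comparison theorems \ref{local90} and \ref{local96} enter only later, in the passage from the pointwise statement \eqref{i13} to the uniform rationality of Theorem \ref{i14}.
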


The final main result of our paper is as follows:
\begin{thm}\label{i14} 
Let $A\subset S^1$ and $\mN_0\in \N$ be as in Theorem \ref{i06}. 
Then for any $\mN\in \N$, $\mN>\mN_0$, for any equivariant 
geometric triple $\underline{E}$ on $Y$,
the function on $g\in S^1\backslash A$,
	\begin{align}\label{i16} 
	\bar{\eta}_g\left(\underline{TY}, \underline{L},
	\underline{E}\right)-\sum_{\alpha}\bar{\eta}_g
	\left(\underline{TY_{\alpha}^{S^1}}, \underline{L_{\alpha}},
	\underline{\lambda_{-1}(N_{\alpha}^*)^{-1}_{\mN}}\otimes 
	\underline{E}|_{Y^{S^1}_{\alpha}}\right)
	\end{align}
	is the restriction on $S^1\backslash A$ of a rational 
	function on $S^1$ with integral coefficients 
that does not have poles
 on $S^1\backslash A$. 	
\end{thm}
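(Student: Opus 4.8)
The plan is to combine Theorem \ref{i08} with the recent comparison result \cite{LM2018} extending Goette's formula. The point of \eqref{i13} is that the difference \eqref{i16} already lies in $\Q_g$ for each fixed $g \in S^1 \backslash A$; what remains is to show that, as $g$ varies, these values are actually the restriction of a \emph{single} rational function with $\Z$-coefficients, with no poles on $S^1 \backslash A$. First I would make the $g$-dependence of both terms in \eqref{i16} completely explicit. For the fixed-point contribution this is essentially built in: by Theorem \ref{i06} and the remark after it, $\underline{\lambda_{-1}(N_\alpha^*)^{-1}_{\mathcal N}}$ is a finite sum of virtual bundles on $Y^{S^1}_\alpha$ with coefficients of the shape \eqref{eq:0.12}, i.e.\ $F(g)/\prod_v (g^v-1)^{\rank N_{\alpha,v}+\mathcal N}$ with $F \in \Z[x]$; since $\bar\eta_g$ is additive and $\C$-linear in the bundle entry, $\sum_\alpha \bar\eta_g(\underline{TY^{S^1}_\alpha}, \underline{L_\alpha}, \underline{\lambda_{-1}(N_\alpha^*)^{-1}_{\mathcal N}} \otimes \underline E|_{Y^{S^1}_\alpha})$ is an explicit $\Z[x]$-linear combination of rational functions $1/\prod_v(g^v-1)^{*}$ with coefficients that are themselves $\eta$-invariants of fixed geometric triples on the $Y^{S^1}_\alpha$ evaluated at $g$.

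Next I would analyze $\bar\eta_g(\underline{TY}, \underline L, \underline E)$ itself. Here one invokes \cite{LM2018}: the equivariant APS $\eta$-invariant $\bar\eta_g$ differs from Goette's infinitesimally-equivariant $\eta$-invariant by an explicitly computable local term (an integral over $Y^g$ of characteristic forms), and the infinitesimal version can be expanded near the fixed point set. The key structural input is that Goette's comparison expresses the $g$-dependence of $\bar\eta_g(\underline{TY}, \underline L, \underline E)$ through quantities living on $Y^{S^1}$ together with the weights of the normal bundle, precisely so that the combination \eqref{i16} has its transcendental ($\eta$-invariant) parts cancel, leaving only a rational function of $g$. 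This is the mechanism by which \eqref{i13} is upgraded from "$\in \Q_g$ pointwise" to "restriction of a global rational function": once the $\eta$-invariant contributions on both sides match after the Goette-type comparison, the residual is a finite $\Z[x]$-combination of the functions $g \mapsto 1/(g^v-1)$, hence a genuine rational function with $\Z$-coefficients.

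It then remains to check the no-poles assertion on $S^1 \backslash A$. For this I would argue as follows: the left side \eqref{i16}, as a function of $g$, is defined and finite at every $g \in S^1 \backslash A$ — indeed $\bar\eta_g(\underline{TY},\underline L,\underline E)$ is a bounded function of $g \in S^1$ (it is the reduced $\eta$-invariant of a family of elliptic operators, smooth in $g$ away from a discrete set, and for $g$ regular the fixed point set $Y^g = Y^{S^1}$), and by Theorem \ref{i06} every coefficient in the fixed-point sum is finite for $g \in S^1 \backslash A$ because the denominators $\prod_v(g^v-1)^{\rank N_{\alpha,v}+\mathcal N}$ are nonzero there (the zeros of these, being roots of unity, are among the points excluded by $A$, cf.\ Proposition \ref{local32}). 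A rational function that agrees on $S^1\backslash A$ with a function that is finite at every point of $S^1 \backslash A$ can have no pole in $S^1 \backslash A$; more carefully, writing the rational function as $P(x)/Q(x)$ in lowest terms, any pole on the circle would be a point of $S^1\backslash A$ where the value blows up, contradicting finiteness of \eqref{i16} there, so $Q$ has no zeros on $S^1 \backslash A$.

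The main obstacle, and the heart of the argument, is the matching of the $\eta$-invariant parts via \cite{LM2018}: one has to show that after applying the extended Goette comparison formula, all the genuinely spectral (non-local) contributions in $\bar\eta_g(\underline{TY},\underline L,\underline E)$ are exactly cancelled by the corresponding spectral contributions in $\sum_\alpha \bar\eta_g(\underline{TY^{S^1}_\alpha}, \underline{L_\alpha}, \underline{\lambda_{-1}(N_\alpha^*)^{-1}_{\mathcal N}} \otimes \underline E|_{Y^{S^1}_\alpha})$, leaving only characteristic-form integrals over $Y^{S^1}$ whose $g$-dependence is manifestly rational. This requires a careful bookkeeping of the formal expansion in the $\gamma$-filtration and the truncation at degree $\mathcal N$, and showing that the truncation error does not reintroduce transcendental terms — this is where the uniformity of $\mathcal N_0$ in Theorem \ref{i06} and the commutativity of \eqref{eq:0.14} are essential. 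The remaining steps (explicit rationality of the fixed-point sum, the no-poles argument) are then relatively routine consequences of Theorem \ref{i06} and standard regularity properties of the equivariant $\eta$-invariant.
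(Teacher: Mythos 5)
Your proposal correctly identifies the two inputs (Theorem \ref{i08} giving \eqref{i13}, and the comparison results of \cite{LM2018}), and your no-poles argument at the end is essentially right, but the crucial step — upgrading ``$Q_{\mN}(g)\in\Q_g$ for each fixed $g$'' to ``$Q_{\mN}$ is the restriction of a \emph{single} rational function with $\Z$-coefficients'' — is exactly the step you defer as ``the main obstacle,'' and the mechanism you sketch for it is not the one that works. You propose to apply the extended Goette comparison so that ``all the genuinely spectral contributions cancel, leaving only characteristic-form integrals whose $g$-dependence is manifestly rational.'' But Theorems \ref{local90} and \ref{local96} do not relate $\bar{\eta}_g$ on $Y$ to $\eta$-invariants on $Y^{S^1}$; they relate $\bar{\eta}_{ge^{tK_0}}$ to the infinitesimal invariant $\bar{\eta}_{g,tK_0}$ plus the local term $\mM_{g,tK_0}$ (which vanishes for $g\in S^1\setminus A$ by \eqref{eq:1.52b}). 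There is no cancellation of spectral terms to be had from this formula, and no such cancellation appears in the paper.

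What the comparison theorems actually buy, and what the paper uses them for (Lemma \ref{lem:3.07}), is \emph{real-analyticity} of $g\mapsto Q_{\mN}(g)$ on $S^1\setminus A$ (after clearing the explicit denominators $F_{\mN}(g)$) and meromorphy in $t$ near the finitely many points of $A$. The passage from pointwise rationality to a global rational function is then done by a completely different, arithmetic-flavoured argument (Proposition \ref{prop:3.08}): for each $g$ write $Q_{\mN}(g)=P_g(g)/R_g(g)$ in lowest terms with $P_g,R_g\in\Z[x]$; stratify $S^1\setminus A$ by degree bounds on $P_g,R_g$; since a neighbourhood $U$ of any $g_0$ is uncountable while $\Z^{M_0+N_0+2}$ is countable, uncountably many $g\in U$ share the \emph{same} pair of integer polynomials, and the identity theorem for real-analytic functions then forces $Q_{\mN}$ to equal that one rational function on all of $U$, hence on each connected component of $S^1\setminus A$. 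Finally, the meromorphy of $Q_{\mN}(ge^{tK})$ at $g\in A$ (again from Theorem \ref{local96} together with \eqref{eq:3.012}) is used to match the rational functions obtained on the two components adjacent to each point of $A$, yielding one rational function on all of $S^1$. Without some argument of this kind (or a genuine substitute for it), your proof is incomplete at its central step; the ``careful bookkeeping of the formal expansion'' you allude to would not, by itself, produce the required cancellation.
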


In the last part of this paper (see Section \ref{s0405}) 
we discuss the case when $Y^{S^1}=\emptyset$.

\begin{thm}[See Theorem \ref{thm:3.13}]\label{i10}
If $Y^{S^1}=\emptyset$ and $A=\{g\in S^1 : Y^g\neq\emptyset \}$,
then $\bar{\eta}_g(\underline{TY}, \underline{L},\underline{E})$ 
regarded as a function on $S^1\backslash A$, is the restriction 
of a rational function on $S^1$ with integral coefficients and
without poles in $S^1\backslash A$.
\end{thm}

Let us explain why we do not work directly with
all elements $g\in S^1\backslash A$ in Theorems \ref{i06} and \ref{i08}.
Note on one hand that the equivalence relation defining 
$\widehat{K}_g^0(Y)$ (cf.\ (\ref{eq:2.083}) in 
Definition \ref{defn:2.12})  really depends 
on $g\in S^1$ even for $g\in S^1\backslash A$. Thus we can neither 
define the differential $K$-group 
$\widehat{K}_{S^1}^0(Y)$, nor localize uniformly
on $g\in S^1\backslash A$. On the other hand, even in the classical
situation from Theorem \ref{as01}, we only localize at each 
element. To get Theorem \ref{i14}, 
a certain uniform version of Theorem \ref{i08} on 
$g\in S^1\backslash A$, we need to use
 Theorems \ref{local90} and  \ref{local96},
 i.e.,  our extension of Goette's result, which is roughly saying
that the equivariant $\eta$-invariant is a
meromorphic  function in $g\in S^1$ with possible 
poles in $A$ and whose singularity is locally computable on $Y^g$.

We give at the end of the introduction a
proof of Theorem \ref{i10} and a formal computation
for \eqref{i22} in a special case.

We suppose that there exists an oriented even dimensional 
$S^1$-equivariant Spin$^c$ Riemannian compact manifold $X$ 
with boundary $Y=\partial X$ and associated $S^1$-equivariant 
Hermitian line bundle $\underline{\mL}=(\mL, h^{\mL},\nabla^{\mL})$, 
and an $S^1$-equivariant Hermitian vector bundle $(\mE, h^{\mE})$ 
with $S^1$-invariant Hermitian connection 
$\nabla^{\mE}$ such that $(X,g^{TX})$, 
$\underline{\mE}$ and $\underline{\mL}$ are of product structure 
near the boundary and
\begin{align}\label{eq:0.20}
\mS(TX,\mL)=\mS^+(TX,\mL)\oplus 
\mS^-(TX,\mL),\quad \mS^+(TX,\mL)|_Y=\mS(TY,L).
\end{align}
Then the index $\ind_{\mathrm{APS}}(D^X\otimes \mE)$
of the Dirac operator $D^X\otimes \mE$ with respect to the APS 
boundary condition, is 
a finite dimensional virtual $S^1$-representation.
By \cite[Theorem 1.2]{Donnelly78}, 
we have for any $g\in S^1\backslash A_1$,
\begin{align}\label{eq:0.22} 
\ind_{\mathrm{APS}, g}(D^X\otimes \mE)=\int_{X^{S^1}}
\td_g(\underline{TX},\underline{\mL})\ch_g(\underline{\mE})
-\bar{\eta}_g(\underline{TY},\underline{L},\underline{E}),
\end{align}
where $A_1=\{h\in S^1: X^{S^1}\neq X^h \}$.

If $Y^{S^1}=\emptyset$, then $X^{S^1}$ is a 
manifold without boundary. 
Thus \eqref{eq:0.22} implies that 
$\bar{\eta}_g(\underline{TY},\underline{L},\underline{E})$ 
is the restriction to $S^1\backslash A_1$ of a rational function 
on $S^1$ with integral coefficients.

Assume now $Y^{S^1}\neq \emptyset$. We denote by 
$\ind_{\mathrm{APS}}(D^X\otimes \mE,k)$ the 
multiplicity of weight $k$ part of $S^1$-representation in
$\ind_{\mathrm{APS}}(D^X\otimes \mE)$. Then
\begin{align}\label{eq:0.24} 
\ind_{\mathrm{APS}, g}(D^X\otimes \mE)=\sum_k\ind_{\mathrm{APS}}
(D^X\otimes \mE,k)\cdot g^k,\quad \text{for any}\ g\in S^1.
\end{align}
Introduce the notation (cf. (\ref{local266}) and \eqref{eq:3.011})
\begin{multline}\label{eq:0.26} 
R(q)=q^{-\frac{1}{2}\sum_vv\rank N_v^X+\frac{1}{2}l}\bigotimes_{v>0}
\mathrm{Sym}_{q^{-v}}(\ov{N^X_v})\otimes 
\Big(\sum_v E_v q^v\Big)
\\
=\sum_k R_k q^k\in K(X^{S^1})[[q, q^{-1}]],
\end{multline}
where we denote by $\oplus_{v>0}N_v^X$ the normal bundle of 
$X^{S^1}$ in $X$ as in (\ref{local203}).
Now in view of \eqref{i19}, 
we apply the usual APS-index theorem
\cite{APS75} for 
the operator $D^{X^{S^1}}\otimes R_k$ taking into account that
$X^{S^1}$ is a manifold with boundary 
$Y^{S^1}=\partial X^{S^1}$, and 
we obtain
\begin{align}\label{eq:0.27} 
\ind_{\mathrm{APS}}(D^{X^{S^1}}\otimes R_k)=\int_{X^{S^1}}
\td(\underline{TX^{S^1}},\underline{\mL'})\ch(\underline{R_k})
-\sum_{\alpha}\bar{\eta}(\underline{TY_{\alpha}^{S^1}},
\underline{L_{\alpha}},\underline{R_k}),
\end{align}
where $\underline{\mL'}$ is the associated line bundle over 
$X^{S^1}$ defined as in \eqref{local210}.
By the general analytic localization technique in index theory 
by Bismut-Lebeau \cite{BL91}, we can expect that the arguments in 
\cite[Theorem 1.2]{DaiZ00}, \cite[\S 1.2]{LMZ03}  
extend to the APS-index case, i.e., we can expect that
\begin{align}\label{eq:0.28} 
\ind_{\mathrm{APS}}(D^X\otimes \mE,k)
=\ind_{\mathrm{APS}}(D^{X^{S^1}}\otimes R_k)+\mathrm{sf}(Y,k),
 \quad \text{for any}\ k\in \Z,
\end{align}
where $\mathrm{sf}(Y,k)\in \Z$ is the spectral flow of a family
of  deformed $D^Y\otimes E$ operators 
 via the vector field generated by the $S^1$-action.
As in the case of manifolds without boundary we write \textit{formally}
\begin{align}\label{eq:0.30}
\sum_{k\in \Z} \td(\underline{TX^{S^1}},\underline{\mL'})
\ch(\underline{R_k})\cdot g^k
=\td_g(\underline{TX},\underline{\mL})\ch_g(\underline{\mE}).
\end{align}
Thus, we have at least formally for any $g\in S^1\backslash A_1$,
\begin{align}\label{eq:0.32}
\bar{\eta}_g(\underline{TY},\underline{L},\underline{E})
=\sum_{k\in \Z}\left(\sum_{\alpha}\bar{\eta}
(\underline{TY_{\alpha}^{S^1}}
,\underline{L_{\alpha}},\underline{R_k})-\mathrm{sf}(Y,k)
\right)\cdot g^k.
\end{align}
This heuristic discussion hints to the 
possibility of computing \eqref{i22} geometrically.
  
However, the authors are not aware of any result stating that for 
$S^1$-equivariant geometric triples
$(\underline{TY},\underline{L},\underline{E})$ as above there exist 
$k>0$ and $S^1$-equivariant $X$, $\underline{\mL}$, $\underline{\mE}$ 
such that $\partial X$ 
consists of $k$ properly oriented copies of $Y$ and by restriction 
to $\partial X$ we get $\underline{TY},\underline{L},\underline{E}$. 
Another difficulty is how to make a proper sense of the 
right-hand side of \eqref{eq:0.32}. This is why our 
intrinsic formulation of Theorem \ref{i14} does not rely on the existence 
of such an $X$. Also, it shows the usefulness of the $\gamma$-filtration 
that we introduce in differential $K$-theory.

Finally, it is natural to ask whether there is a similar 
localization formula 
\eqref{i16} for the real analytic torsion \cite{Bismut1992,RS71}. 
However, unlike the case of the holomorphic torsion and 
the $\eta$-invariant, a suitable $K$-theory 
 where the real analytic torsion is the analytic 
ingredient of a Riemann-Roch type theorem 
(cf. \cite{BG04}) is still lacking.

The main result of this paper is announced in \cite{LM18}.

This paper is organized as follows. In Section \ref{s02},
we introduce the main object of our paper, the 
equivariant $\eta$-invariant, and we review some of its 
analytic properties, which we will use in this paper, 
such as the variation formula, the embedding formula and
the comparison of equivariant $\eta$-invariants.
In Section \ref{s03}, 
we prove that the differential $K$-ring is a pre-$\lambda$-ring 
and construct the inverse of 
$[\underline{\lambda_{-1}(N_{\alpha}^*)},0]$ in 
$\widehat{K}_{g}^0(Y_{\alpha}^{S^1})_{I(g)} $ explicitly. 
In Section \ref{s04}, 
we prove Theorems \ref{i08} and \ref{i14} 
and study the case 
$Y^{S^1}=\emptyset$.
We also compute  in detail the equivariant $\eta$-invariant in the 
case $Y=S^{1}$.\\

\textbf{Notation}: For any vector space $V$ and $B\in \End(V)$, 
we denote by $\tr[B]$ the trace of $B$ on $V$.
We denote by $\dim_\R$ or $\dim_\C$ the real or complex dimension 
of a vector space, and we skip the subscript if it is clear from the context.
For a complex vector bundle $E$, we will denote by 
$\rank E$ its rank as a complex vector bundle, and
$E^{\R}$ the underlying  real vector bundle.

For $\mathbb{K}=\R$ or $\C$,  
we denote by 
$\Omega^{\bullet}(X,\mathbb{K})$ 
the space of smooth $\mathbb{K}$-valued differential forms 
on a manifold $X$, and its subspaces of even/odd degree forms 
by $\Omega^{\rm even/\rm odd}(X,\mathbb{K})$.
Let $d$ be the exterior differential, then the image of $d$
is the space of exact forms, $\mathrm{Im}\, d$.  

Let $R(S^1)$ be the representation ring of the circle group $S^{1}$. 
For any finite dimensional virtual 
$S^1$-representation $V=M-M'\in R(S^1)$ and $h\in S^1$, its character 
\begin{align}\label{eq:0.27a}
\chi_V(h)=\tr|_{M}[h]-\tr|_{M'}[h]\in\Z[h,h^{-1}],
\end{align}
a polynomial in $h$ 
and $h^{-1}$ with integral coefficients. Conversely, for any 
$f\in\Z[h,h^{-1}]$, there exists a finite dimensional virtual 
$S^1$-representation 
$V_f\in R(S^1)$ such that $f=\chi_{V_f}$ on $S^1$. So in this paper, 
we will not 
distinguish the finite dimensional virtual $S^1$-representation and 
$f\in\Z[h,h^{-1}]$ as an element of $R(S^1)$.   

\noindent{\bf Acknowledgments}. 
We would like to thank Professors Jean-Michel Bismut and Weiping Zhang
for helpful discussions.
We are especially grateful to the referees for their 
 very helpful comments and suggestions. 
We are also indebted to George Marinescu for his critical comments.
B.\ L.\ is partially supported by NNSFC No.11931997, No.11971168 and
 Science and Technology Commission 
of Shanghai Municipality (STCSM), grant No.18dz2271000.
X.\ M.\ is partially supported by
NNSFC No.11528103,  No.11829102, ANR-14-CE25-0012-01,  and
funded through the Institutional Strategy of
the University of Cologne within the German Excellence Initiative.

\section{Equivariant $\eta$-invariants}\label{s02}

The heat kernel approach to the Atiyah-Singer index theorem, 
introduced by Mckean-Singer, Gilkey,
Atiyah-Bott-Patodi, \ldots, establishes 
the local index theorem for Dirac operators.
It finds immediately many applications, in particular
the discovery of the Atiyah-Patodi-Singer index theorem
for manifolds with boundary and of the $\eta$-invariant
as the boundary contribution in this index formula.  
Bismut, along with his various collaborators,
has made groundbreaking contributions in this direction,
in particular by developing various ideas and techniques
to study the global spectral invariants such as the 
$\eta$-invariant and the analytic torsion.

In this section 
we review some facts 
about the equivariant $\eta$-invariants. These results have largely been 
influenced both philosophically and technically 
by the analytic localization technique in index theory
developed by Bismut-Lebeau. The variation formula 
which computes the difference of the equivariant
$\eta$-invariants associated with different metrics
and connections, 
is a direct consequence of Donnelly's 
equivariant APS index theorem for manifolds with boundary.
It is used in Theorem \ref{defn:3.01} to show that 
the direct image in $g$-equivariant differential $K$-theory
is well-defined. The embedding formula guarantees that 
the direct image in $g$-equivariant differential $K$-theory is 
compatible with the embedding. Note that the embedding 
of the fixed point set into the total manifold appears
naturally in our problem (\ref{i22}).

To conclude Theorem \ref{i14} from Theorem \ref{i08},
we  need to understand the analyticity of the 
equivariant $\eta$-invariant 
as a function of $g\in S^1$. In the same
way as fixed-point formulas have two equivariant versions,
the Lefschetz fixed-point formula and Kirillov-like 
formulas of Berline-Vergne, 
also equivariant $\eta$-invariants have two versions. 
In Theorems \ref{local90}, \ref{local96}, we show that 
the difference of these two equivariant $\eta$-invariants
 is given by an explicit local formula,
involving natural Chern-Simons currents.
Moreover, the Kirillov-like equivariant $\eta$-invariant
is analytic near $0\in \mathrm{Lie}(S^1)$.

This section is organized as follows.
In Section \ref{s0201}, 
we study the equivariant decomposition of $TY$,
in particular, we define the finite subset
$A\subset S^1$ in Theorem \ref{i06}. In Section \ref{s0202},
we define the equivariant 
$\eta$-invariant. In Section \ref{s0203}, 
we first introduce  some characteristic classes
and Chern-Simons classes which appear in
various situations in the whole paper, then 
we recall the variation formula.
In Section \ref{s0204}, 
we review the geometric construction of the
direct image for an embedding in topological
$K$-theory, in particular the natural metrics
and connections on the direct image constructed by Bismut-Zhang. 
Finally, we explain the embedding formula. 
In Section \ref{s0205}, 
we compare the  equivariant $\eta$-invariant 
with the equivariant infinitesimal $\eta$-invariant.

\subsection{Circle action}\label{s0201}

Let $Y$ be a smooth compact manifold with a smooth circle action. 
For $g\in S^{1}$, set
\begin{align}\label{eq:1.1}\begin{split}
&Y^{g}= 	\{y\in Y: gy=y\},\\
&Y^{S^1}=\{y\in Y: hy=y\ \text{for any}\ h\in S^1 \}.
\end{split}\end{align}
Then $Y^{g}$ is the fixed point set of $g$-action on $Y$
and $Y^{S^{1}}$ is the fixed point set of the circle action 
on $Y$ with connected components 
$\{Y^{S^1}_{\alpha} \}_{\alpha\in \mathfrak{B}}$. 
Since $Y$ is compact, the index set $\mathfrak{B}$ is a finite set.
Certainly, for any $g\in S^{1}$, $Y^{S^1}\subseteq Y^{g}$.

If $g\in S^1$ is a generator of $S^1$, that is, $g=e^{2\pi i t}$ with 
$t\in\R$ irrational, then $Y^{S^1}$ is the fixed point set $Y^g$ of $g$. 
We have the decomposition of real vector bundles over 
$Y^{S^1}_{\alpha}$
\begin{align}\label{local30}
TY|_{Y^{S^1}_{\alpha}}=TY^{S^1}_{\alpha}\oplus
\bigoplus_{v\neq 0}N_{\alpha,v}^{\R},
\end{align}
where $N_{\alpha,v}^{\R}$ is the underlying real vector bundle 
of a complex vector bundle $N_{\alpha,v}$ over $Y^{S^1}_{\alpha}$ 
such that $g$ acts on $N_{\alpha,v}$ by multiplication by $g^v$. 
Let $N$ be the normal bundle of 
$Y^{S^1}$ in $Y$.
Then \eqref{local30} induces the canonical identification
$N_{\alpha}:=N|_{Y^{S^1}_{\alpha}}
 =\oplus_{v\neq 0}N_{\alpha,v}^{\R}$. 
We will regard $N_{\alpha}$ as a complex vector bundle. 
The complex conjugation provides a $\C$-anti-linear isomorphism
between the complex vector bundles 
$N_{\alpha,v}$ and $\ov{N}_{\alpha,-v}$.
Since we can choose either $N_{\alpha,v}$ or $\ov{N}_{\alpha,v}$ 
as the complex vector bundle for $N_{\alpha,v}^{\R}$, 
in what follows,
we may 
and we will assume that
\begin{align}\label{local201}
TY|_{Y^{S^1}_{\alpha}}=TY^{S^1}_{\alpha}\oplus
\bigoplus_{v>0}N_{\alpha,v}^{\R},\quad 
N_{\alpha}=\bigoplus_{v>0}N_{\alpha,v}.
\end{align}

Since the dimension of $TY$ is finite, there are only finitely many $v$ 
such that $\rank N_{\alpha,v}\neq 0$. 
Set
\begin{align}\label{local205}
q=\max\{v : \text{there exists}\ \alpha\in \mathfrak{B} \
\text{such that}\
\rank N_{\alpha,v}\neq 0\}.
\end{align}
Then we have the splittings 
\begin{align}\label{local203} 
TY|_{Y^{S^1}_{\alpha}}=TY^{S^1}_{\alpha}\oplus
\bigoplus_{v=1}^{q}N_{\alpha,v}^{\R},\quad N_{\alpha}
=\bigoplus_{v=1}^{q}N_{\alpha,v}.
\end{align}

\begin{prop}\label{local32} 
	The set 
\begin{align}\label{eq:1.6} 
A=\{g\in S^{1}: Y^{S^{1}}\neq Y^{g} \}
\end{align}	
is  finite. 
\end{prop}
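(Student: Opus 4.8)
The plan is to show that $A$ is finite by analyzing the weights of the circle action on the normal bundle $N$ of $Y^{S^1}$ in $Y$. First I would observe that, since $Y$ is compact, there are only finitely many weights $v\in\Z\setminus\{0\}$ appearing in the decomposition \eqref{local30}, namely the integers $v$ with $1\le v\le q$ (using the normalization \eqref{local203}) for which $\rank N_{\alpha,v}\neq 0$ for some $\alpha\in\mathfrak{B}$; call this finite set of weights $W$. The key point is then the identity
\begin{align}\label{eq:plan1}
Y^g=Y^{S^1}\quad\Longleftrightarrow\quad g^v\neq 1\ \text{ for all } v\in W.
\end{align}
Granting \eqref{eq:plan1}, the set $A$ in \eqref{eq:1.6} is exactly $\{g\in S^1: g^v=1\text{ for some }v\in W\}=\bigcup_{v\in W}\mu_v$, where $\mu_v\subset S^1$ is the (finite) group of $v$-th roots of unity; a finite union of finite sets is finite, so $A$ is finite.

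To prove \eqref{eq:plan1} I would argue as follows. For any $g\in S^1$ we always have $Y^{S^1}\subseteq Y^g$. Conversely, suppose $g$ fixes a point $y$; we want $y\in Y^{S^1}$. The stabilizer $S^1_y=\{h\in S^1: hy=y\}$ is a closed subgroup of $S^1$, hence is either all of $S^1$ (in which case $y\in Y^{S^1}$) or a finite cyclic group $\mu_m$ for some $m\ge 1$. In the latter case, $y$ lies in $Y^{\mu_m}\setminus Y^{S^1}$, so near $y$ the circle acts nontrivially and the normal directions carry the weights of the isotropy representation of $S^1$ on $T_yY/T_y(Y^{S^1})$ if $y\in Y^{S^1}$ — but since $y\notin Y^{S^1}$, I should instead work on the component of $Y^{\mu_m}$ through $y$. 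The cleanest route: pick a connected component $Z$ of $Y^{S^1}$ whose closure meets a neighborhood of the $S^1$-orbit through $y$; more robustly, just use that $g\in Y^g$ with $g\notin$ (full circle stabilizer) forces $g\in\mu_m\subseteq S^1$ with $m$ dividing some weight — here is the honest version: if $y\in Y^g\setminus Y^{S^1}$ then the orbit $S^1 y$ has positive dimension, so the tangent space to the orbit at $y$ is a nonzero $S^1$-subrepresentation of $T_yY$ on which $S^1$ acts through a nontrivial character, i.e. with some weight, and on that line the element $g$ acts trivially only if $g$ lies in the kernel of that character. Then I would relate these orbit weights to the weights $v\in W$ on the normal bundle $N$ by noting that every such isotropy weight at a point of $Y^g\setminus Y^{S^1}$ divides one of the normal-bundle weights appearing in \eqref{local203} at a nearby component of $Y^{S^1}$; equivalently, that $\{g\in S^1: Y^g\supsetneq Y^{S^1}\}$ is contained in $\bigcup_{v\in W}\mu_v$.

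An alternative and perhaps cleaner approach, which I would present as the main argument, avoids the stabilizer bookkeeping entirely: consider the vector field $\xi$ on $Y$ generating the $S^1$-action. Its zero set is exactly $Y^{S^1}$. For $g=e^{2\pi i t}\in S^1$ with $t$ irrational, $g$ is a topological generator of $S^1$ and $Y^g=Y^{S^1}$; for $g=e^{2\pi i p/m}$ of finite order $m$, $Y^g$ is the fixed point set of the cyclic group $\mu_m$, and $Y^{\mu_m}\neq Y^{S^1}$ precisely when some component of $Y^{\mu_m}$ is not pointwise fixed by all of $S^1$, which by the linearization of the action along $Y^{S^1}$ (the equivariant tubular neighbourhood theorem applied to \eqref{local203}) happens exactly when $g^v=1$ for some weight $v\in W$: indeed in the tube around $Y^{S^1}_\alpha$, a point in $N_{\alpha,v}$ is fixed by $g$ iff $g^v=1$. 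Since only finitely many $m$ can arise (each such $m$ must divide some $v\in W$), $A$ is finite. The main obstacle I anticipate is the global-to-local step: showing that every non-fixed point of $Y^g$ for $g$ of order $m$ forces $g^v=1$ for some normal weight $v$ — this requires knowing that a neighbourhood of $Y^{S^1}$ captures all the relevant isotropy, which is where the equivariant tubular neighbourhood theorem and the finiteness of $\mathfrak{B}$ and of $W$ do the real work; once that localization is in place the counting is immediate.
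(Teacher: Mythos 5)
There is a genuine gap, and it sits exactly where you flagged ``the main obstacle'': the localization step is false as stated. Your proposed equivalence $Y^g=Y^{S^1}\Leftrightarrow g^v\neq 1$ for all normal weights $v\in W$ (and hence the description $A=\bigcup_{v\in W}\mu_v$, $\mu_v$ the $v$-th roots of unity) does not hold. The equivariant tubular neighbourhood theorem applied to \eqref{local203} only controls isotropy \emph{near} $Y^{S^1}$; away from $Y^{S^1}$ there can be exceptional orbits whose finite stabilizers have orders unrelated to the weights on $N$. For instance, take $S^3\subset\C^2$ with the action $g\cdot(z_1,z_2)=(g^az_1,g^bz_2)$, $a,b>1$ coprime (no $S^1$-fixed points, but points with stabilizers $\mu_a$ and $\mu_b$), and form an equivariant connected sum along free orbits with $S^3$ carrying the action $g\cdot(z_1,z_2)=(gz_1,z_2)$ (whose fixed circle has normal weight $1$): the result is a connected compact $S^1$-manifold with $W=\{1\}$ but $A\supseteq(\mu_a\cup\mu_b)\setminus\{1\}$. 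Note also that the proposition must cover $Y^{S^1}=\emptyset$ (it is invoked in this form in Section \ref{s0405}), where your set $W$ is empty and the argument says nothing. Your first, ``stabilizer'' approach correctly observes that each stabilizer is either $S^1$ or a finite cyclic group, but finiteness of $A$ then requires showing that only finitely many isotropy orders occur on all of $Y$, and you never supply that; the attempted reduction to normal weights of $Y^{S^1}$ cannot supply it.

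The paper's proof repairs exactly this point by splitting $Y$ into two pieces. On a tubular neighbourhood $\mU_{\var}$ of $Y^{S^1}$ the linearization \eqref{eq:1.9b} gives $\mU_{\var}^{g_t}=Y^{S^1}$ unless $t\in\mF_q$ — this is the part of your argument that is correct. On the compact complement $Y_1=Y\setminus\mU_{\var/2}$ the action is locally free, and the slice theorem gives, around each orbit $S^1\cdot x$, an invariant neighbourhood $U_x$ with $U_x^g=\emptyset$ for every $g$ outside the finite stabilizer $S^1_x$; compactness of $Y_1$ then yields a finite set $A_0$ with $Y_1^g=\emptyset$ for $g\notin A_0$. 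The finitely many elements of $A_0$ are produced by this covering argument, not by divisibility against the normal weights. To fix your write-up, replace the claimed equivalence \eqref{eq:plan1} by the weaker (and correct) containment $A\subseteq\bigl(\bigcup_{v\in W}\mu_v\bigr)\cup A_0$ and prove the existence of $A_0$ by the slice-theorem-plus-compactness argument on $Y\setminus\mU_{\var/2}$.
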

\begin{proof}
Let $g^{TY}$ be an $S^1$-invariant metric on $Y$. Then there exists 
$\var>0$ such that the exponential map
\begin{align}\label{eq:1.5b}
(y,Z)\in \mU_{\var}=\{(y,Z)\in N_y: y\in Y^{S^1},  |Z|<\var \}\rightarrow 
\exp_y(Z)
\end{align}
is a diffeomorphism from $\mU_{\var}$ into the tubular neighborhood
$\mV_{\var}$ of $Y^{S^1}$ in $Y$. 
Then for any $g_t=e^{2\pi it}\in S^1$, $(y,Z)\in \mU_{\var}$ 
with $Z=(z_{\alpha,v})_{v=1}^q\in N_{\alpha, y}$, we have
\begin{align}\label{eq:1.9b} 
g_{t}(y,Z)=(y, (e^{2\pi i t v}z_{\alpha,v})_{v=1}^q).
\end{align}
Thus for $t\in (0,1)$, 
\begin{align}\label{eq:1.10b} 
\mU_{\var}^{g_t}=Y^{S^1},\quad \text{if}\ t\notin \mF_q
:=\left\{\frac{k}{p}: k,p\ \text{coprime}, 0\leq k<p\leq q\right\}.
\end{align}

Now $S^1$ acts locally freely on $Y_1:=Y\backslash\mU_{\var/2}$.
Thus for any $x\in Y_1$, the stabilizer $S_x^1$ of $x$ is 
a finite group $\{e^{2\pi i j/k}: 0\leq j<k \}\simeq \Z_k$ for certain 
$k\in \N^*$. 
Set $N_x=T_xY_1/T_x(S^1\cdot x)$, which is a linear representation
of $S_x^1\simeq \Z_k$.
By the slice theorem, there exists an $S^1$-equivariant 
diffeomorphism from an equivariant open neighborhood $U_x$
of the zero section in $S^1\times_{S_x^1}N_x$ to an 
open neighborhood of $S^1\cdot x$ in $Y_1$, which sends the zero 
section $S^1/S_x^1$ onto the orbit $S^1\cdot x$ by the map
$g\in S^1\rightarrow g\cdot x$. Now in this neighborhood $U_x$,
for any $g\in S^1\backslash S_x^1$, $U_x^g=\emptyset$. 
By using the compactness of $Y_1$  
there is a finite set 
$A_0\subset S^1$ such that for any $g\in S^1\backslash A_0$,
$Y_1^g=\emptyset$.
Combining with (\ref{eq:1.10b}), we know $A$ in (\ref{eq:1.6})
is finite.

The proof of Proposition \ref{local32} is completed.
\end{proof}

\subsection{Equivariant $\eta$-invariants}\label{s0202}	

In the 
remainder of this section,
let $Y$ be an odd dimensional compact oriented manifold
with circle action. Then 
the circle action automatically preserves the orientation of $Y$.
Let $g^{TY}$ be an $S^1$-invariant metric on $TY$.

Assume that $Y$ has an $S^1$-equivariant spin$^c$ structure,
i.e., the $S^{1}$-action on $Y$ lifts naturally to
 the associated 
Spin$^c$  principal bundle, in particular,
it induces an $S^1$-equivariant complex line bundle $L$
such that
$\omega_2(TY)=c_1(L)\,\mathrm{mod}\,(2)$, where $\omega_2$
is the second 
Stiefel-Whitney class and $c_1$ is the first Chern class \cite[Appendix 
D]{LaMi89}.  Let $\mS(TY,L)$ be the fundamental complex spinor 
bundle associated with this spin$^c$ structure.
It is an $S^1$-equivariant complex vector bundle in a canonical way, 
and formally
\begin{align}\label{local206} 
\mS(TY,L)=\mS_0(TY)\otimes L^{1/2},
\end{align}
where $\mS_0(TY)$ is the fundamental spinor bundle for the (possibly 
non-existent) spin structure on $TY$ and $L^{1/2}$ is the (possibly 
non-existent) square root of $L$. 

 Let $E$ be an $S^1$-equivariant complex vector bundle over $Y$. 
Then $S^1$ acts on $\cC^{\infty}(Y,\mS(TY,L)\otimes E)$ by
\begin{align}\label{eq:1.16b} 
(g.s)(x)=g(s(g^{-1}x)),\quad \text{for}\ g\in S^1.
\end{align} 
 
Let $h^L$ and $h^E$ be $S^1$-invariant 
Hermitian metrics on $L$ and $E$ respectively. 
Let $h^{\mS_Y}$ be the $S^1$-invariant Hermitian metric on 
$\mS(TY,L)$ induced by $g^{TY}$ and $h^L$. 

Let $\nabla^{TY}$ be the Levi-Civita connection on $(TY,g^{TY})$. Let 
$\nabla^L$ and $\nabla^E$ be $S^1$-invariant Hermitian 
connections on $(L,h^L)$ and $(E,h^E)$ respectively.
Let $\nabla^{\mS_Y}$ be the connection on $\mS(TY,L)$ 
induced by $\nabla^{TY}$ and $\nabla^L$ 
\cite[Appendix D]{LaMi89}. 
Let $\nabla^{\mS_Y\otimes E}$ be the connection on 
$\mS(TY,L)\otimes E$ induced by $\nabla^{\mS_Y}$ and $\nabla^E$,
\begin{align}\label{local207} 
\nabla^{\mS_Y\otimes E}=\nabla^{\mS_Y}\otimes 1+1\otimes\nabla^{E}.
\end{align}

Let $\{e_j \}$ be a locally orthonormal frame of $(TY,g^{TY})$. 
We denote 
by $c(\cdot)$ the Clifford action of $TY$ on $\mS(TY,L)$. 
Let $D^Y\otimes E$ 
be the spin$^c$ Dirac operator on $Y$ defined by  
\begin{align}\label{local208} 
D^Y\otimes E=\sum_jc(e_j)\nabla_{e_j}^{\mS_Y\otimes E}:
\cC^{\infty}(Y, \mS(TY,L)\otimes E)\to 
\cC^{\infty}(Y, \mS(TY,L)\otimes E).
\end{align}
Then $D^Y\otimes E$ is an $S^1$-equivariant first order self-adjoint 
elliptic differential operator on $Y$ and its kernel $\Ker
(D^Y\otimes E)$ is a finite dimensional $S^1$-complex vector space.

Let $\exp(-u(D^Y\otimes E)^{2})$, 
$u>0$, be the heat semi-group of $(D^Y\otimes E)^{2}$.

We denote by $\underline{TY}, \underline{L}, \underline{E}$ 
the equivariant geometric data 
\begin{align}\label{local209} 
\underline{TY}=(TY,g^{TY},\nabla^{TY}),\quad
\underline{L}=(L,h^L,\nabla^L),\quad \underline{E}=(E,h^E,\nabla^E).
\end{align}
We also call $\underline{TY}, \underline{L}, \underline{E}$ 
equivariant geometric triples over $Y$. 

\begin{defn}\label{local128}
For $g\in S^1$,  the equivariant  (reduced) $\eta$-invariant associated 
with $\underline{TY}, \underline{L}, \underline{E}$ is defined by 
\begin{multline}\label{local59} 
\bar{\eta}_g(\underline{TY}, \underline{L}, \underline{E})
=\int_0^{+\infty} \tr\big[g(D^Y\otimes E)\exp(-u(D^Y\otimes E)^{2})
\big]\frac{du}{2\sqrt{\pi u}}    \\
+\frac{1}{2}\tr|_{\Ker (D^Y\otimes E)}[g]\in \C.
\end{multline}
\end{defn}
The convergence of the integral at $u=0$ in \eqref{local59} is nontrivial 
(see e.g., \cite[Theorem 2.6]{BF86b}, \cite{Donnelly78}, 
\cite[Theorem 2.1]{Z90}).

\subsection{Variation formula}\label{s0203}

Since $g^{TY}$ is $S^1$-invariant, 
the fixed point set $Y^g$ is an odd dimensional totally geodesic 
submanifold of $Y$ for any $g\in S^1$. 
Let $N^{\R}$ be the normal bundle of $Y^g$ in $Y$, 
which we identify to the orthogonal complement of $TY^g$ in $TY$.	

Since the $S^1$-action preserves the spin$^c$ structure, 
we see that $Y^g$ is canonically oriented (cf. 
\cite[Proposition 6.14]{BeGeVe04}, \cite[Lemma 4.1]{LMZ002}). 

Assume first $g=e^{2\pi i t}\in S^1\backslash A$ 
(cf. \eqref{eq:1.6}), then $Y^g=Y^{S^1}$ 
and by (\ref{local201}), we have the 
decomposition of real vector bundles over $Y^g$,
\begin{align}\label{e01047}
TY|_{Y^g}=TY^g\oplus\bigoplus_{v>0}N_{v}^{\R},
\quad N^{\R}=\bigoplus_{v>0} N_v^{\R},
\end{align}
where $N_{v}^{\R}$ is the underlying real vector bundle
of the complex vector bundle $N_{v}$
such that $h\in S^1$ acts by multiplication by $h^v$.
We will fix the orientation on $Y^g=Y^{S^1}$ induced by
the canonical orientation on $N_v$ as complex vector bundles
and the orientation on $TY$.

Since $g^{TY}$ is $S^1$-invariant, the 
decomposition \eqref{e01047} is orthogonal and the restriction of 
$\nabla^{TY}$ on $Y^g$ is split under the decomposition. 
Let $g^{TY^g}$, $g^{N^{\R}}$ and $g^{N_v^{\R}}$ be the metrics 
induced by $g^{TY}$ on $TY^g$, $N^{\R}$ and $N_v^{\R}$.
Let $\nabla^{TY^g}$, $\nabla^{N^{\R}}$ and $\nabla^{N_v^{\R}}$
be the corresponding induced connections 
on $TY^g$, $N^{\R}$ and $N_v^{\R}$,
with curvatures $R^{TY^g}$, $R^{N^{\R}}$ and $R^{N_v^{\R}}$. 
Then under the decomposition \eqref{e01047},
\begin{align}\label{local300}
g^{TY}=g^{TY^g}\oplus g^{N^{\R}}, \quad g^{N^{\R}}
=\bigoplus_vg^{N_v^{\R}},\quad \nabla^{TY}|_{Y^g}
=\nabla^{TY^g}\oplus\bigoplus_v\nabla^{N_v^{\R}}.
\end{align}

Similar to \eqref{e01047}, we have the orthogonal decomposition of 
complex vector bundles with connections on $Y^g$
\begin{align}\label{local266} 
E|_{Y^g}=\bigoplus_{v}E_v, \quad \nabla^{E}|_{Y^g}
=\bigoplus_v\nabla^{E_v}.
\end{align}
Here $h\in S^1$ acts by multiplication by $h^v$ on $E_v$
and the connection $\nabla^{E_v}$ on $E_v$ is induced by $\nabla^E$.
Let $R^E$, $R^{E_v}$ be the curvatures of $\nabla^E$, $\nabla^{E_v}$.

\begin{defn}\label{local213}
For $g=e^{2\pi i t}\in S^1\backslash A$, 
set
\begin{align}\label{e01051}
\begin{split}
\widehat{\mathrm{A}}(TY^g,\nabla^{TY^g})
:&=\mathrm{det}^{1/2}\left(\frac{\frac{i}{4\pi}	R^{TY^g}}{\sinh
	\left(\frac{i}{4\pi}R^{TY^g}\right)}\right),
\\
\widehat{\mathrm{A}}_g(N^{\R},\nabla^{N^{\R}}):&=
\left(i^{\frac{1}{2}\dim 
	N^{\R}}\mathrm{det}^{1/2}|_{N^{\R}}\left(1-
g\cdot \exp\left(\frac{i}{2\pi}R^{N^{\R}}\right)\right)\right)^{-1}
\\
&=\prod_{v>0}\left(i^{\frac{1}{2}\dim 
	N_v^{\R}}\mathrm{det}^{1/2}|_{N_v^{\R}}\left(1-
g\cdot \exp\left(\frac{i}{2\pi}R^{N_v^{\R}}\right)\right)\right)^{-1},
\\
\widehat{\mathrm{A}}_g(TY,\nabla^{TY})
:&=\widehat{\mathrm{A}}(TY^g,\nabla^{TY^g})\cdot
\widehat{\mathrm{A}}_g(N^{\R},\nabla^{N^{\R}}) \in
\Omega^{\bullet}(Y^g, \C),
\\
\ch_g(\underline{E}):&=\tr\left[g\exp\left(\frac{i}{2\pi}R^{E} 
\right)\right]
\\
&=\sum_v\tr\left[\exp\left(\frac{i}{2\pi}R^{E_v} 
+2i\pi v t  \right)\right]\in
\Omega^{\bullet}(Y^g, \C).
\end{split}
\end{align}
The sign convention in 
$\widehat{\mathrm{A}}_g(N^{\R},\nabla^{N^{\R}})$
is that the degree $0$ part is given by
$\prod_{v>0}(2i\sin(\pi vt))^{-\frac{1}{2}\dim N_v^{\R}}$. 
\end{defn}
The forms in \eqref{e01051} are closed forms on $Y^{g}$ and their
cohomology class does not depend on the $S^{1}$-invariant metrics
$g^{TY}$, $h^{E}$ and connection $\nabla^{E}$.  We
denote by $\widehat{\mathrm{A}}(TY^g)$,
$\widehat{\mathrm{A}}_g(TY)$, $\ch_g({E})$
 their cohomology classes, two of which appear 
 in the equivariant index theorem \cite[Chapter 6]{BeGeVe04}.

Comparing with \eqref{e01051}, 
if $h\in S^1$ acts on $L|_{Y^{S^1}}$ by multiplication by $h^l$, we write
\begin{align}\label{e01138}
\ch_g(\underline{L^{1/2}}):=\exp\left(\frac{i}{4\pi}R^{L}|_{Y^g}
+i\pi l t\right)\in \Omega^{\bullet}(Y^g, \C).
\end{align}
We denote by
\begin{align}\label{bl0660} 
\td_g(\nabla^{TY}, 
\nabla^{L}):=\widehat{\mathrm{A}}_g(TY,\nabla^{TY}) 
\ch_g(\underline{L^{1/2}}).
\end{align}	
Note that the natural lift of $g=e^{2\pi it}$
on $\mS(TY,L)$ over $Y^{S^1}$ is given by
\begin{align}\label{eq:1.21a}
\prod_{v}\prod_j\left(\cos\left(\pi vt\right)
+\sin \left(\pi vt\right)c\left(e_{2j-1}^{v} \right) 
c\left(e_{2j}^{v} \right)\right)\cdot e^{i\pi lt},
\end{align}
where $\left\{e_{j}^{v} \right\}_j$ is an 
oriented orthonormal frame of $N_v$.
This explains the sign convention in (\ref{bl0660}).

If $g\in A$, we have the decomposition of real vector bundles over $Y^g$,
\begin{align}\label{eq:1.24b} 
TY|_{Y^g}=TY^g\oplus \bigoplus_{0<\theta\leq \pi}N(\theta),
\end{align}
where $N(\theta)$ is a real vector bundle over $Y^g$ which has
a complex structure such that $g$ acts 
by multiplication by $e^{i\theta}$
if $\theta\neq \pi$; or an even dimensional oriented real vector bundle 
on which $g$ acts by multiplication by $-1$ if $\theta=\pi$. 
We fix the orientation on $Y^g$ induced by the orientations
on $Y$ and on $N(\theta)$.
Then we can still define $\ch_g(\underline{E})$ as in (\ref{e01051}).
Let $g$ act on $L_{Y^g}$ by multiplication by $e^{i\theta'}$,
$0\leq \theta'<2\pi$.
Now the lift of the $g$-action on $\mS(TY,L)$
over $Y^g$ is given by 
\begin{align}\label{eq:1.22a}
\epsilon\prod_{0<\theta\leq \pi}\prod_j\left(\cos\left(
\frac{\theta}{2}\right)+\sin \left(
\frac{\theta}{2}\right)c\left(e_{2j-1}^{\theta} \right) 
c\left(e_{2j}^{\theta} \right)\right)\cdot e^{i\theta'/2}
\quad \text{and}\quad \epsilon=1\ \text{or}-1,
\end{align}
where $\left\{e_{j}^{\theta} \right\}_j$ is an 
oriented orthonormal frame of $N(\theta)$.
From (\ref{eq:1.22a}), the sign convention of 
$\td_g(\nabla^{TY}, 
\nabla^{L})$ in (\ref{bl0660}) is that its degree $0$
part is given by $\epsilon\prod_{0<\theta\leq\pi}
(2i\sin(\theta/2))^{-\frac{1}{2}\dim N(\theta)}e^{i\theta'/2}$.
This situation is only used in 
Sections \ref{s0203}, \ref{s0205} and \ref{s0401}.

We explain now the construction of Chern-Simons 
classes. Let 
\begin{align}\label{eq:1.25a}
\underline{TY_j}=(TY, g_j^{TY},\nabla_j^{TY}),\  
\underline{L_j}=(L, 
h_j^L,\nabla_j^L),\  \text{and}\ 
\underline{E_j}=(E, h_j^{E}, \nabla_j^{E})\quad \text{for}\ j=0,1
\end{align}
 be equivariant geometric triples over $Y$
as in (\ref{local209}).

Let $\pi:(y,s)\in Y\times \R\rightarrow y\in Y$ 
be the obvious projection. Then the 
$S^1$-action
lifts naturally on $Y\times \R$, by acting only on the factor $Y$.  Let 
$g^{\pi^*TY}$, $h^{\pi^*L}$ and $h^{\pi^*E}$ be 
$S^1$-invariant metrics on $\pi^*TY$, $\pi^*L$ and $\pi^*E$ over 
$Y\times\R$ such that for $j=0,1$,
\begin{align}\label{local301} 
g^{\pi^*TY}|_{Y\times\{j\}}=g_j^{TY},\quad h^{\pi^*L}|_{Y\times\{j\}}
=h_j^L,\quad h^{\pi^*E}|_{Y\times\{j\}}=h_j^E .
\end{align}
Let $\nabla^{\pi^*TY}$, $\nabla^{\pi^*L}$ and 
$\nabla^{\pi^*E}$ be $S^1$-invariant Hermitian 
connections on $(\pi^*TY, g^{\pi^*TY})$, $(\pi^*L, h^{\pi^*L})$ 
and $(\pi^*E, h^{\pi^*E})$ such that for $j=0,1$,
\begin{align}\label{local302} 
\nabla^{\pi^*TY}|_{Y\times\{j\}}=\nabla_j^{TY},\quad 
\nabla^{\pi^*L}|_{Y\times\{j\}}=\nabla_j^L,\quad 
\nabla^{\pi^*E}|_{Y\times\{j\}}=\nabla_j^E .
\end{align} 
Let $\underline{\pi^*E}=(\pi^*E, h^{\pi^*E}, \nabla^{\pi^*E})$ be 
the associated geometric triple on $Y\times \R$. 

If $\alpha=\alpha_0+ ds\wedge\alpha_1$ with 
$\alpha_0, \alpha_1\in \Lambda^{\bullet}(T^*Y)$, put
\begin{align}\label{eq:1.27}
\{\alpha\}^{ds}:=\alpha_1.
\end{align}

For $g\in S^1$, the equivariant Chern-Simons classes
$\widetilde{\ch}_g(\underline{E_0},\underline{E_1}), 
\widetilde{\td}_g(\nabla_0^{TY}, \nabla_0^L, \nabla_1^{TY},
\nabla_1^{L})\in 
\Omega^{\mathrm{odd}}(Y^g,\C)/ \Im\, d$ 
are defined by
\begin{align}\label{local40} 
\begin{split}
&\widetilde{\ch}_g(\underline{E_0},\underline{E_1})
=\int_0^1\{\ch_g(\underline{\pi^*E})\}^{ds}ds\in
\Omega^{\mathrm{odd}}(Y^g,\C)/ \Im \, d,
\\
&\widetilde{\td}_g(\nabla_0^{TY}, \nabla_0^L, \nabla_1^{TY},
\nabla_1^{L})
=\int_0^1\{\td_g(\nabla^{\pi^*TY},\nabla^{\pi^*L})\}^{ds}ds\in
\Omega^{\mathrm{odd}}(Y^g,\C)/ \Im \, d.
\end{split}
\end{align}
Moreover, we have
\begin{align}\label{local126}
\begin{split} 
&d\, \widetilde{\ch}_g(\underline{E_0}, 
\underline{E_1})=\ch_g(\underline{E_1})-\ch_g(\underline{E_0}),
\\
&d\, \widetilde{\td}_g(\nabla_0^{TY}, \nabla_0^L, \nabla_1^{TY},
\nabla_1^{L})=\td_g(\nabla_1^{TY},\nabla_1^L)
-\td_g(\nabla_0^{TY},\nabla_0^L).
\end{split}
\end{align}
Note that the Chern-Simons classes depend only on $\nabla_j^{TY}$, 
$\nabla_j^{L}$ and $\nabla_j^{E}$ for $j=0,1$
(see \cite[Theorem B.5.4]{MM07}).

Let $\bar{\eta}_g(\underline{TY_j}, \underline{L_j}, \underline{E_j})$ 
 for $j=0,1$
be the equivariant reduced $\eta$-invariants associated 
with ($\underline{TY_j}$, $\underline{L_j}$, $\underline{E_j}$).
The following variation formula is proved in 
\cite[Proposition 2.14]{Liu2016} (see also \cite[Theorem 2.6]{Liu2017}),
which extends the usual well-known non-equivariant 
variation formula for $\eta$-invariants (cf. \cite[p95]{APS76} 
or  \cite[Theorem 2.11]{BF86b}). 

Recall that for a finite dimensional virtual $S^1$-representation $V$,
we denote its character by $\chi_V$ (cf. (\ref{eq:0.27a})).

\begin{thm}\label{local85} 
There exists $V \in R(S^1)$ such that for any $g\in S^1$, 
\begin{multline}\label{local73} 
\bar{\eta}_g(\underline{TY_1}, \underline{L_1}, 
\underline{E_1})-\bar{\eta}_g(\underline{TY_0}, \underline{L_0}, 
\underline{E_0})=\int_{Y^{g}}
\widetilde{\td}_g(\nabla_0^{TY}, \nabla_0^L, \nabla_1^{TY},
\nabla_1^{L})\ch_g(\underline{E_1})\\
+\int_{Y^g}\td_g(\nabla_0^{TY},\nabla_0^L)
\wi{\ch}_g(\underline{E_0},\underline{E_1})
+\chi_{V}(g).
\end{multline}
\end{thm}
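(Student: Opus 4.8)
The plan is to reduce to a one-parameter variation and then run the heat-kernel transgression argument of Bismut--Freed type in its equivariant form; this is essentially \cite[Proposition 2.14]{Liu2016}, extending the non-equivariant variation formula of \cite[p.\,95]{APS76} and \cite[Theorem 2.11]{BF86b}. Using the geometric data on $Y\times\R$ fixed in \eqref{local301}--\eqref{local302}, I would restrict to $s\in[0,1]$ and form the smooth family $D_s$ of $S^1$-equivariant self-adjoint spin$^c$ Dirac operators $D^Y\otimes E$ associated with the restriction to $Y\times\{s\}$ of $(g^{\pi^*TY},\nabla^{\pi^*TY},h^{\pi^*L},\nabla^{\pi^*L},h^{\pi^*E},\nabla^{\pi^*E})$. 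Since all data are $S^1$-invariant, $g\in S^1$ commutes with $D_s$ and with $e^{-uD_s^2}$; for each fixed $g$ the $g$-traces in \eqref{local59} localize on $Y^g$ and the integral converges (the nontrivial point recorded after Definition \ref{local128}). It then suffices to show that $s\mapsto\bar\eta_g(\underline{TY_s},\underline{L_s},\underline{E_s})$ is piecewise smooth, with smooth variation given by the $ds$-component of the equivariant index density on $Y^g$, and with jumps lying in $\{\chi_W(g):W\in R(S^1)\}$.

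\textbf{Variation and the local term.} On any subinterval of $[0,1]$ on which $\dim\Ker D_s$ is constant, I differentiate the truncated expression $\bar\eta_{g,T}(D_s)=\tfrac{1}{2\sqrt\pi}\int_0^T u^{-1/2}\tr[gD_se^{-uD_s^2}]\,du+\tfrac12\tr|_{\Ker D_s}[g]$. Duhamel's formula together with $[g,D_s]=0$ and cyclicity of the trace gives the pointwise identity $\partial_s\big(u^{-1/2}\tr[gD_se^{-uD_s^2}]\big)=\partial_u\big(2u^{1/2}\tr[g\,\dot D_s\,e^{-uD_s^2}]\big)$, hence $\partial_s\bar\eta_{g,T}(D_s)=\tfrac{1}{2\sqrt\pi}\big[2u^{1/2}\tr[g\,\dot D_s\,e^{-uD_s^2}]\big]_{u=0}^{u=T}+\partial_s\big(\tfrac12\tr|_{\Ker D_s}[g]\big)$. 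Because $\Ker D_s$ is a continuously varying finite-dimensional $S^1$-representation of constant dimension its character is locally constant, so the last term vanishes; and because $P_{\Ker}\dot D_sP_{\Ker}=0$ when $\dim\Ker D_s$ is constant, the spectral gap forces the $u=T$ term to vanish as $T\to\infty$. Thus $\partial_s\bar\eta_g(D_s)=-\tfrac{1}{\sqrt\pi}\lim_{u\to0^+}u^{1/2}\tr[g\,\dot D_s\,e^{-uD_s^2}]$. By Donnelly's equivariant heat-kernel localization \cite{Donnelly78}, or by Bismut's analytic localization near $Y^g$ \cite{BL91}, $\tr[g\,\dot D_s\,e^{-uD_s^2}]$ has a small-time expansion concentrated on $Y^g$, and via Getzler-type rescaling of the $\mS(TY,L)\otimes E$-Dirac operator on a tubular neighbourhood of $Y^g$ (the normal directions producing the factor $\widehat{\mathrm{A}}_g(N^{\R},\nabla^{N^{\R}})$ of Definition \ref{local213}) the above limit is identified, up to sign, with the integral over $Y^g$ of the $ds$-component of $\td_g(\nabla^{\pi^*TY},\nabla^{\pi^*L})\,\ch_g(\underline{\pi^*E})$ at parameter $s$; this is the equivariant local index theorem of \cite[Chapter 6]{BeGeVe04} applied to the family, and it fixes the sign conventions of \eqref{e01051}--\eqref{bl0660}.

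\textbf{Integrating in $s$ and collecting jumps.} Summing over the finitely many subintervals, the jumps of $\bar\eta_g$ at the crossing values $s_0$ (where eigenvalues of $D_s$ pass through $0$) are each of the form $\pm\chi_W(g)$, with $W$ the genuine finite-dimensional $S^1$-representation carried by the corresponding $0$-eigenspace; their sum is therefore $\chi_V(g)$ for a virtual representation $V\in R(S^1)$ independent of $g$. This gives
\begin{multline*}
\bar\eta_g(\underline{TY_1},\underline{L_1},\underline{E_1})-\bar\eta_g(\underline{TY_0},\underline{L_0},\underline{E_0})\\
=\int_{Y^g}\int_0^1\big\{\td_g(\nabla^{\pi^*TY},\nabla^{\pi^*L})\,\ch_g(\underline{\pi^*E})\big\}^{ds}\,ds+\chi_V(g).
\end{multline*}
By the standard properties of Chern--Simons transgression forms — path-independence modulo exact forms and the Leibniz rule for the operation $\{\cdot\}^{ds}$, cf. \cite[Theorem B.5.4]{MM07} — one has, modulo $\Im\,d$ on $Y^g$,
\begin{multline*}
\int_0^1\big\{\td_g(\nabla^{\pi^*TY},\nabla^{\pi^*L})\,\ch_g(\underline{\pi^*E})\big\}^{ds}\,ds\\
=\widetilde{\td}_g(\nabla_0^{TY},\nabla_0^L,\nabla_1^{TY},\nabla_1^L)\,\ch_g(\underline{E_1})+\td_g(\nabla_0^{TY},\nabla_0^L)\,\widetilde{\ch}_g(\underline{E_0},\underline{E_1}),
\end{multline*}
and integrating over the closed manifold $Y^g$ kills the exact part, yielding \eqref{local73}.

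\textbf{Main obstacle.} The heart of the argument is the identification of the $u\to0$ limit with the fixed-point form on $Y^g$ together with the large-$u$ control: one must pin down the small-time asymptotics of $\tr[g\,\dot D_s\,e^{-uD_s^2}]$ uniformly in $s$ and match its relevant coefficient with the equivariant index density of the family restricted to $Y^g$ — this is the equivariant local index theorem for a family and requires Bismut-type rescaling near $Y^g$ — while simultaneously ensuring enough uniform decay as $u\to\infty$ (away from the finitely many crossing values of $s$) to justify differentiating under $\int_0^\infty$ and letting $T\to\infty$; the latter is precisely the equivariant sharpening of the nontrivial convergence behind Definition \ref{local128}. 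Carrying the sign conventions of Definition \ref{local213} and of \eqref{bl0660} correctly through the rescaling, and verifying that the identity is uniform over all $g\in S^1$ — including $g\in A$, where $Y^g$ is strictly larger than $Y^{S^1}$ and the extended sign convention recorded after \eqref{eq:1.24b} must be used — is the remaining bookkeeping.
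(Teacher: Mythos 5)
Your proposal is correct: the paper itself does not reprove Theorem \ref{local85} but cites \cite[Proposition 2.14]{Liu2016}, and your reconstruction — Duhamel transgression of the truncated $\eta$-integral on intervals of constant $\dim\Ker D_s$, identification of the $u\to 0$ boundary term with the $ds$-component of the equivariant index density on $Y^g$ via localization and rescaling, and absorption of the kernel crossings into the character of an equivariant spectral flow $V\in R(S^1)$ — is exactly the standard argument underlying that reference. The points you flag (uniform small-$u$ asymptotics, large-$u$ decay away from crossings, and the sign conventions at $g\in A$) are indeed where the work lies, and are handled there as you describe.
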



\subsection{Embedding formula for equivariant $\eta$-invariants}
\label{s0204}

Recall that $\{Y^{S^1}_{\alpha}\}_{\alpha\in \mathfrak{B}}$ is 
the set of the connected components of $Y^{S^1}$ and 
$N_{\alpha}$ is the normal bundle of $Y^{S^1}_{\alpha}$ in $Y$. 
We consider $N_{\alpha}$ as a complex vector bundle and denote by 
$N_{\alpha}^{\R}$ the underlying real vector bundle of $N_{\alpha}$.
Then 
$N_{\alpha}^{\R}\otimes_{\R}\C= N_{\alpha}\oplus\ov{N}_{\alpha}$.
Let $h^N$ be the Hermitian metric on $N_{\alpha}$
induced by $g^{N_{\alpha}}$.

Let $C(N_{\alpha}^{\R})$ be the Clifford algebra bundle of 
$(N_{\alpha}^{\R},g^{N^{\R}})$. 
Then $\Lambda(\ov{N}_{\alpha}^*)$ is a 
$C(N_{\alpha}^{\R})$-Clifford module. Namely, if $u\in N_{\alpha}$, let 
$u^*\in \ov{N}_{\alpha}^*$ be the metric dual of $u$.
The Clifford action on $\Lambda(\ov{N}_{\alpha}^*)$
is defined by
\begin{align}\label{local211} 
c(u)=\sqrt{2}u^*\wedge,\quad c(\ov{u})=-\sqrt{2}i_{\ov{u}}
\quad \text{ for any } u\in N_{\alpha}.
\end{align} 
Here $\wedge, i_{\cdot}$ are the 
exterior and interior products on forms.

Set
\begin{align}\label{local210} 
L_{\alpha}=L|_{Y^{S^1}_{\alpha}}\otimes(\det N_{\alpha})^{-1}.
\end{align}
Then $TY^{S^1}_{\alpha}$ has an equivariant spin$^c$ structure as 
$\omega_2(TY^{S^1}_{\alpha})=c_1(L_{\alpha})
\in H^2_{S^{1}}(Y_{\alpha}^{S^1},\Z)\,\mathrm{mod}\ (2)$ 
(cf. \cite[(1.47)]{LMZ03}). Let 
$\mS(TY^{S^1}_{\alpha},L_{\alpha})$ be the associated fundamental 
spinor bundle for $TY^{S^1}_{\alpha}$ such that
\begin{align}\label{local75} 
\mS(TY,L)|_{Y^{S^1}_{\alpha}}=\mS(TY_{\alpha}^{S^1},
L_{\alpha})\otimes \Lambda^{\bullet}(\overline{N}_{\alpha}^*).
\end{align}
As in \eqref{local206}, formally, we have
\begin{align}\label{eq:1.33}
\begin{split}
\mS(TY^{S^1}_{\alpha},L_{\alpha})&=\mS_0(TY^{S^1}_{\alpha})
\otimes L^{1/2}|_{Y^{S^1}_{\alpha}}\otimes (\det N_{\alpha})^{-1/2},
\\
\Lambda^{\bullet}(\ov{N}_{\alpha}^*)
&=\mS_0(N_{\alpha}^{\R})\otimes (\det N_{\alpha})^{1/2}.
\end{split}
\end{align}

Let $\nabla^N$ be the Hermitian connection on 
$(N_{\alpha},h^N)$ induced by $\nabla^{N^{\R}}$ 
in \eqref{local300}.
Note that the equivariant geometric triple 
$\underline{N_{\alpha}}=(N_{\alpha},h^N, \nabla^N)$ 
induces equivariant geometric 
triples $\underline{\Lambda^{\mathrm{even}}(N_{\alpha}^*)}$, 
$\underline{\Lambda^{\mathrm{odd}}(N_{\alpha}^*)}$ and 
$\underline{\det N_{\alpha}}$.
Denote by 
\begin{align}\label{eq:1.35}\begin{split}
\lambda_{-1}(N_{\alpha}^*)=\Lambda^{\mathrm{even}}(N_{\alpha}^*)
-\Lambda^{\mathrm{odd}}(N_{\alpha}^*), \\
\underline{\lambda_{-1}(N_{\alpha}^*)}
=\underline{\Lambda^{\mathrm{even}}(N_{\alpha}^*)}
-\underline{\Lambda^{\mathrm{odd}}(N_{\alpha}^*)}.
 \end{split} \end{align}
Let $\underline{L_{\alpha}}$ be the equivariant geometric triple 
induced from \eqref{local210}.

From \cite[(6.26)]{B95}, 
we have
\begin{align}\label{local324} 
\ch_g\left(\underline{\lambda_{-1}(N_{\alpha}^*)}\right)
=\widehat{\mathrm{A}}_g(N^{\R},\nabla^{N^{\R}})^{-1}
\cdot \ch_g\left(\underline{(\det N_{\alpha})^{-1/2}} \right).
\end{align}
From \eqref{e01051}-\eqref{bl0660}, \eqref{local210} 
and  \eqref{local324}, on $Y_{\alpha}^{S^1}$, we have
\begin{align}\label{local76} 
\td_g(\nabla^{TY},\nabla^L)
\ch_g\left(\underline{\lambda_{-1}(N_{\alpha}^*)}\right)
= \widehat{\mathrm{A}}(TY_{\alpha}^{S^1}, 
\nabla^{TY_{\alpha}^{S^1}})
 \ch_g\left(\underline{L_{\alpha}^{1/2}} \right)
=\td_g\left(\nabla^{TY_{\alpha}^{S^1}},\nabla^{L_{\alpha}}\right). 
\end{align}

We call $F$ a trivial $S^1$-equivariant vector bundle over $Y$
if there is a finite dimensional $S^1$-representation
$M$ such that $F=Y\times M$ with the $S^1$-action on 
$F$ by $g(y,u)=(gy,gu)$.

Let $(\mu, h^{\mu})$ be an $S^1$-equivariant Hermitian vector bundle 
over $Y^{S^1}$ with an $S^1$-invariant Hermitian connection 
$\nabla^{\mu}$. 
Let $\iota:Y^{S^1}\rightarrow Y$ be the obvious embedding. 
In the following, we 
describe the geometric construction of Atiyah-Hirzebruch's direct image 
$\iota_!\mu\in K^0(Y)$ of $\mu$ for the embedding 
in $K$-theory \cite{AH59}, \cite[\S 1b]{BZ93}. It will be clear from 
its construction that it is compatible with the group action.	

For any $\delta>0$  
set $\mU_{\alpha,\delta}:=\{Z\in N_{\alpha}^{\R}: 
|Z|<\delta\}$. Then there exists $\var_0>0$ such that
the exponential map $(y,Z)\in N_{\alpha}^{\R}\rightarrow \exp_y^Y(Z)$
is a diffeomorphism between $\mU_{\alpha,2\var_0}$ and 
an open $S^1$-equivariant tubular neighbourhood of 
$Y^{S^1}_{\alpha}$ in $Y$ for any $\alpha$. Without confusion we 
will also regard $\mU_{\alpha,2\var_0}$ as this neighbourhood of 
$Y^{S^1}_{\alpha}$ in $Y$ via this identification. We choose $\var_0>0$ 
small enough such 
that for any $\alpha\neq \beta\in \mathfrak{B}$, 
$\ov{\mU_{\alpha,2\var_0}}\cap \ov{\mU_{\beta,2\var_0}} 
=\emptyset$.

Let $\pi_{\alpha}:N_{\alpha}\rightarrow 
Y^{S^1}_{\alpha}$ denote the projection of the normal bundle 
$N_{\alpha}$ over $Y^{S^1}_{\alpha}$.
For $Z\in N_{\alpha}^{\R}$, let $\tilde{c}(Z)\in 
\End(\Lambda^{\bullet}(N_{\alpha}^*))$ be the transpose of the 
canonical Clifford action $c(Z)$ on 
$\Lambda^{\bullet}(\ov{N}_{\alpha}^*)$ in \eqref{local211}. 
In particular, for $u\in N_{\alpha}$, let $\bar{u}^*\in N_{\alpha}^*$
be the metric dual of $\bar{u}\in \ov{N}_{\alpha}$, then
\begin{align}\label{eq:1.39b} 
\tilde{c}(u)=\sqrt{2}\,i_{u},\quad \tilde{c}(\bar{u})=
-\sqrt{2}\,\bar{u}^*\wedge.
\end{align}
Let $\pi_{\alpha}^*(\Lambda^{\bullet}(N_{\alpha}^*))$ be the pull back 
bundle of $\Lambda^{\bullet}(N_{\alpha}^*)$ over $N_{\alpha}$. 
For any $Z\in N_{\alpha}^{\R}$ with $Z\neq 0$, let $\tilde{c}(Z): 
\pi_{\alpha}^*(\Lambda^{\mathrm{even/odd}}(N_{\alpha}^*))|_Z
\rightarrow
\pi_{\alpha}^*(\Lambda^{\mathrm{odd/even}}(N_{\alpha}^*))|_Z$ 
denote the corresponding pull back isomorphism at $Z$.

As $S^1$ acts trivially on $Y^{S^1}_{\alpha}$, we can just 
apply  \cite[Chapter I, Corollary 9.9]{LaMi89} for each 
weight part to see that (cf. also \cite[Proposition 2.4]{Segal68})
there exists an $S^1$-equivariant vector bundle 
$F_{\alpha}$ over $Y^{S^1}_{\alpha}$ such that 
$(\Lambda^{\mathrm{even}}(N_{\alpha}^*)\otimes \mu_{\alpha})\oplus 
F_{\alpha}$ with $\mu_{\alpha}= \mu|_{Y^{S^1}_{\alpha}}$,
is a trivializable $S^1$-equivariant complex vector bundle 
over $Y^{S^1}_{\alpha}$ with the $S^1$-equivariant
trivialization map 
$\varphi_{\alpha}:Y_{\alpha}^{S^1}\times M_{\alpha}\rightarrow 
\Lambda^{\mathrm{even}}(N_{\alpha}^*)\otimes
\mu_{\alpha}\oplus F_{\alpha}$.
Then
\begin{align}\label{bl0034}
\sqrt{-1}\tilde{c}(Z)\oplus 
\pi_{\alpha}^*\mathrm{Id}_{F_{\alpha}}:
\pi_{\alpha}^*(\Lambda^{\mathrm{even}}(N_{\alpha}^*)\otimes
\mu_{\alpha}\oplus F_{\alpha})|_Z\rightarrow 
\pi_{\alpha}^*(\Lambda^{\mathrm{odd}}(N_{\alpha}^*)\otimes 
\mu_{\alpha}\oplus F_{\alpha})|_Z
\end{align}
induces an $S^1$-equivariant isomorphism between
two $S^1$-equivariant vector bundles over 
$\overline{\mU_{\alpha,2\var_0}}\backslash Y^{S^1}_{\alpha}$.
By adding trivial  $S^1$-equivariant vector bundles for the 
part $F_{\alpha}$, we 
can also assume that 
$M_{\alpha}=M_{\beta}=M$ for any $\alpha\neq \beta\in \mathfrak{B}$.
Now the identification $\pi_{\alpha}^*(\Lambda^{\mathrm{even}}
(N_{\alpha}^*)\otimes \mu_{\alpha}\oplus F_{\alpha})$
(resp. $\pi_{\alpha}^*(\Lambda^{\mathrm{odd}}
(N_{\alpha}^*)\otimes \mu_{\alpha}\oplus F_{\alpha})$)
with $(Y\setminus\cup_{\alpha}
	\mU_{\alpha,\var_0})
\times M$ on $\mU_{\alpha,2\var_0}\backslash 
\mU_{\alpha,\var_0}$ via the map $\varphi_{\alpha}$
(resp. $(\sqrt{-1}\tilde{c}(Z)\oplus 
\pi_{\alpha}^*\mathrm{Id}_{F_{\alpha}})\circ\varphi_{\alpha}$)
defines an $S^1$-equivariant vector bundle $\xi_+$ (resp. $\xi_-$)
over $Y$. Moreover the identity map of the above trivializations
of $\xi_+$ and $\xi_-$ over $Y\backslash \cup_{\alpha}
\mU_{\alpha,\var_0}$ extends smoothly the map (\ref{bl0034})
to a map $v:\xi_+\rightarrow \xi_-$.
Thus for each $\alpha\in \mathfrak{B}$, there exists an 
$S^1$-equivariant vector bundle $F_{\alpha}$ over
$Y_{\alpha}^{S^1}$ such that
\begin{align}\label{eq:1.40a}
\begin{split}
\xi_{\pm}|_{\mU_{\alpha,2\var_0}}&=
\pi_{\alpha}^*(\Lambda^{\mathrm{even/odd}}(N_{\alpha}^*)\otimes
\mu_{\alpha}\oplus F_{\alpha})|_{\mU_{\alpha,2\var_0}},
\\
v|_{\mU_{\alpha,2\var_0}}&=\sqrt{-1}\tilde{c}(Z)\oplus 
\pi_{\alpha}^*\mathrm{Id}_{F_{\alpha}},
\end{split}
\end{align}
and the restriction of $v$ to $Y\backslash \cup_\alpha
\mU_{\alpha,2\var_0}$ is invertible.
Then the direct image of $\mu$ by $\iota$ is given by
\begin{align}\label{local303} 
\iota_!\mu=\xi_{+}-\xi_-\in K^0(Y).
\end{align}

By a partition of unity argument, we get 
a metric $h^{\xi}=h^{\xi_+}\oplus h^{\xi_-}$ 
over $Y$ such that
\begin{align}\label{bl0035}
h^{\xi_{\pm}}|_{\mU_{\alpha,\var_0}}
=\left.\pi_{\alpha}^*\left(h^{\Lambda^{\mathrm{even/odd}}
(N_{\alpha}^*)\otimes	\mu_{\alpha}}\oplus 
h^{F_{\alpha}}\right)\right|_{\mU_{\alpha,\var_0}},
\end{align}
where $h^{\Lambda^{\mathrm{even/odd}}(N_{\alpha}^*)\otimes 
	\mu_{\alpha}}$ is the $S^1$-invariant Hermitian metric on 
$\Lambda^{\mathrm{even/odd}}(N_{\alpha}^*)\otimes \mu_{\alpha}$  
induced by  $h^{N}$ and $h^{\mu}$.
Again by a partition of unity argument, we get 
an $S^1$-invariant $\Z_2$-graded Hermitian connection 
$\nabla^{\xi}=\nabla^{\xi_+}\oplus \nabla^{\xi_-}$ on 
$\xi=\xi_+\oplus \xi_-$ over $Y$ such that
\begin{align}\label{bl0037}
\nabla^{\xi_{\pm}}|_{\mU_{\alpha,\var_0}}
=\left.\pi_{\alpha}^*\left(\nabla^{\Lambda^{\mathrm{even/odd}}
(N_{\alpha}^*)\otimes \mu_{\alpha}}\oplus
\nabla^{F_{\alpha}}\right)\right|_{\mU_{\alpha,\var_0}},
\end{align}
where $\nabla^{\Lambda^{\mathrm{even/odd}}(N_{\alpha}^*)\otimes 
	\mu_{\alpha}}$ is the 
	Hermitian connection on 
$\Lambda^{\mathrm{even/odd}}(N_{\alpha}^*)\otimes \mu_{\alpha}$  
induced by  $\nabla^{N}$ and $\nabla^{\mu}$.
We denote now
\begin{align}\label{local304} 
\underline{\xi_{\pm}}=(\xi_{\pm}, h^{\xi_{\pm}}, 
\nabla^{\xi_{\pm}})\  \text{over}\ Y.
\end{align} 

An equivariant extension of Bismut-Zhang embedding formula
\cite[Theorem 2.2]{BZ93} (cf.\ also \cite[Theorem 4.1]{DaiZ00} 
or \cite[Theorem 2.1]{FengXuZ09})
for $\eta$-invariants was proved in \cite[Corollaries 3.8, 3.9]{Liu2017}.
The following result follows from \cite[Corollaries 3.8, 3.9]{Liu2017}
applied for $G=S^1$, $g\in S^1\backslash A$.
\begin{thm}\label{local63}
There exists $V' \in R(S^1)$, such that for any $g\in S^1\backslash A$, 
	\begin{align}\label{bl1001}
	\ov{\eta}_g(\underline{TY}, \underline{L}, \underline{\xi_+})
	-\ov{\eta}_g(\underline{TY}, \underline{L}, \underline{\xi_-})
	=\sum_{\alpha}\ov{\eta}_g(\underline{TY^{S^1}_{\alpha}}, 
	\underline{L_{\alpha}}, \underline{\mu})
	+\chi_{V'}(g).
	\end{align}
\end{thm}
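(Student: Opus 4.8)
The natural route is the analytic localization technique of Bismut--Lebeau and Bismut--Zhang, adapted to the equivariant reduced $\eta$-invariant. First I would repackage the left-hand side. By definition of $\ov\eta_g$, the $\Z_2$-graded operator $D_0 := (D^Y\otimes\xi_+)\oplus(D^Y\otimes\xi_-)$ acting on $\xi=\xi_+\oplus\xi_-$ has reduced equivariant $\eta$-invariant equal to $\ov\eta_g(\underline{TY},\underline L,\underline{\xi_+})-\ov\eta_g(\underline{TY},\underline L,\underline{\xi_-})$. The Atiyah--Hirzebruch data of \eqref{bl0034}--\eqref{bl0037} produce a self-adjoint, odd, $S^1$-invariant bundle endomorphism $V$ of $\xi$ which on each $\mU_{\alpha,\var_0}$ is $\sqrt{-1}\,\tilde c(Z)\oplus\pi_\alpha^*\mathrm{Id}_{F_\alpha}$ together with its adjoint, hence vanishes precisely on $Y^{S^1}$ and is invertible away from it. I would then study the deformed operator $D_T := D^Y\otimes\xi + TV$, $T\ge 0$: it remains $S^1$-equivariant, first order (the perturbation being of order $0$), elliptic and self-adjoint, and its reduced equivariant $\eta$-invariant $\ov\eta_g(D_T)$ is well defined by the same convergence argument as in Definition \ref{local128}.

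The two ends of the deformation give the two sides of the identity. At $T=0$, $\ov\eta_g(D_0)$ is the left-hand side. As $T\to+\infty$, the block $T^2V^2$ inside $D_T^2$ equals $T^2|Z|^2$ on the normal Clifford module and $T^2\,\mathrm{Id}$ on the auxiliary factor $F_\alpha$, so the relevant spectrum concentrates in an $O(T^{-1/2})$ tube around $Y^{S^1}$ and the $F_\alpha$-modes escape to infinity. Rescaling $Z\mapsto Z/\sqrt T$ as in Bismut--Lebeau, the model operator in the normal directions is a harmonic oscillator twisted by $\tilde c$, whose kernel --- computed weight-by-weight for the circle action exactly as in the non-equivariant case --- is a line which, through \eqref{eq:1.33} and \eqref{local75}, reconstitutes $\mS(TY^{S^1}_\alpha,L_\alpha)\otimes\mu_\alpha$. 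Hence the limit of the spectral side of $\ov\eta_g(D_T)$ is $\sum_\alpha\ov\eta_g(\underline{TY^{S^1}_\alpha},\underline{L_\alpha},\underline\mu)$, up to an anomaly that I track next.

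To see that the anomaly is a character, I would run a variation argument in the parameter $T$ of the same nature as Theorem \ref{local85}: $\frac{d}{dT}\ov\eta_g(D_T)$ splits into a locally computable index-density term on $Y^g=Y^{S^1}$ and an integer-weight ``spectral-flow'' term recording eigenvalues crossing $0$. Integrating over $T\in(0,+\infty)$, the accumulated local term is of the form $\int_{Y^{S^1}}\td_g(\nabla^{TY},\nabla^L)\big(\ch_g(\underline{\xi_+})-\ch_g(\underline{\xi_-})\big)$; but $\xi_+-\xi_-$ restricted to $Y^{S^1}_\alpha$ is $\lambda_{-1}(N_\alpha^*)\otimes\mu_\alpha$ (the $F_\alpha$ cancels), and by the Mathai--Quillen type identity \eqref{local324} together with \eqref{local76} one gets $\td_g(\nabla^{TY},\nabla^L)\ch_g(\underline{\lambda_{-1}(N_\alpha^*)})=\td_g(\nabla^{TY^{S^1}_\alpha},\nabla^{L_\alpha})$ on $Y^{S^1}_\alpha$, so the whole local contribution reduces to $\sum_\alpha\int_{Y^{S^1}_\alpha}\td_g(\nabla^{TY^{S^1}_\alpha},\nabla^{L_\alpha})\ch_g(\underline\mu)$, which is exactly the local anomaly already carried by the right-hand side and cancels against it. What survives is the accumulated spectral-flow contribution, an element of $\Z[h,h^{-1}]$; it assembles into a finite-dimensional virtual $S^1$-representation $V'$, and it is independent of $g\in S^1\backslash A$ because both sides are real-analytic there and the surviving term is locally constant, using the identification of $\Z[h,h^{-1}]$ with $R(S^1)$ recalled in the Notation.

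The main obstacle is the $T\to+\infty$ analysis. One needs uniform-in-$T$ off-diagonal and large-time estimates for the heat kernel of $D_T^2$, a Mehler-formula computation of the rescaled normal model operator and of its kernel with the correct $\widehat{\mathrm{A}}$-normalization and --- crucially here --- the correct $\det N$ twist so that $L_\alpha$ rather than $L$ appears, and a justification for interchanging the limit $T\to+\infty$ with the integral $\int_0^{+\infty}(\cdots)\frac{du}{2\sqrt{\pi u}}$ defining $\ov\eta_g$, controlling both the small-$u$ and the large-$u$ regimes so that no finite contribution is lost. This is precisely the Bismut--Lebeau machinery as carried out in \cite{BZ93, Liu2017}; granting it, the remaining ingredients are the purely cohomological identities \eqref{local324}, \eqref{local76} and the variation formula of Theorem \ref{local85}.
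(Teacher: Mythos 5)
Your proposal is essentially the paper's route: the paper does not reprove this statement but cites \cite[Corollaries 3.8, 3.9]{Liu2017}, the equivariant extension of \cite[Theorem 2.2]{BZ93}, whose proof is exactly the deformation $D^Y\otimes\xi+TV$ with Bismut--Lebeau concentration near $Y^{S^1}$, a harmonic-oscillator model on the normal bundle, and a spectral-flow term giving $V'$. The only structural point you gloss is that the accumulated locally computable term is the equivariant Bismut--Zhang current rather than a cancellation of Todd-form integrals; as Remark \ref{t1.6} notes, it vanishes here because $(Y^g)^g=Y^g$, so your conclusion stands.
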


Remark that in \cite[Theorem 3.7]{Liu2017} applied in the case 
when the base space is a point, $V'$ is 
an equivariant spectral flow of a family of deformed Dirac 
operators on $Y$ with a pseudodifferential operator perturbation
obtained from the corresponding perturbation of the Dirac operator 
on $Y^{g}$.   
Since for any $g\in S^1\backslash A$, $Y^g=Y^{S^1}$ does not change,
thus $V'$ does not depend on $g\in S^1\backslash A$.  

\begin{rem}\label{t1.6}
Note that in the general setting of \cite[Theorem 3.7]{Liu2017} for the 
embedding $i:Y\rightarrow X$, there is an additional term,
the equivariant Bismut-Zhang current. 
Note that the equivariant Bismut-Zhang current is defined for 
the normal bundle of $Y^g$ in $X^g$. In 
our case, since $(Y^g)^g=Y^g$, this term is zero. 
\end{rem}

\subsection{Comparison of equivariant $\eta$-invariants}\label{s0205}

In this subsection, we review the comparison formula for equivariant 
$\eta$-invariants in \cite{LM2018}, which is an extension of the result 
of \cite{Goette00} and the analogue of the comparison formulas for
the holomorphic torsions \cite{BG00} and for the 
de Rham torsions \cite{BG04}.

For $K\in \mathrm{Lie}(S^1)$, let 
$K^Y(x)=\left.\frac{\partial}{\partial t}\right|_{t=0}e^{tK}\cdot x$ 
be the induced vector field on $Y$, and $\mL_K$ be 
the corresponding Lie derivative given
by $\mL_Ks=\left.\frac{\partial}{\partial t}
\right|_{t=0}\left(e^{-tK}.\, s\right)$
for $s\in \cC^{\infty}(Y,E)$ (cf. (\ref{eq:1.16b})). 
The associated moment maps are defined 
by \cite[Definition 7.5]{BeGeVe04},
\begin{align}\label{ct310} 
\begin{split}
&m^{E}(K):=\nabla^{E}_{K^Y}-\mL_K\vert_E\in 
\cC^{\infty}(Y,\End(E)),
\\
&m^{TY}(K)
:=\nabla^{TY}_{K^Y}-\mL_K\vert_{TY}=\nabla^{TY}_{\cdot}K^Y \in 
\cC^{\infty}(Y,\End(TY)), 
\end{split}
\end{align}
where the last equation holds 
since the Levi-Civita connection $\nabla^{TY}$ is torsion free.

Let $R_K^{E}$ and $R_K^{TY}$ be the equivariant curvatures 
of $E$ and $TY$ defined in \cite[\S 7.1]{BeGeVe04}:
\begin{align}\label{local79} 
R_K^{E}=R^{E}-2i\pi m^{E}(K), 
\quad R_K^{TY}=R^{TY}-2i\pi m^{TY}(K).
\end{align}

Observe that $m^{TY}(K)|_{Y^g}$ commutes with the circle action 
for any $g\in S^1$. Then it preserves 
the decompositions \eqref{e01047} and (\ref{eq:1.24b}). 
Let $m^{TY^g}(K)$, $m^N(K)$ and $m^{N_v}(K)$ 
be the restrictions of $m^{TY}(K)|_{Y^g}$ to $TY^g$, $N^{\R}$
and $N_v^{\R}$. Similarly, 
$m^{E}(K)|_{Y^g}$ preserves the decomposition \eqref{local266}. 
We define the corresponding equivariant curvatures $R_K^{TY^g}$ and
$R^{N^{\R}}_K$ as in \eqref{local79}. 
The following definition is an analogue of Definition \ref{local213} and 
\eqref{e01138}. 

\begin{defn}\label{local214}
For $g=e^{2\pi it}\in S^1$, $K\in \mathrm{Lie}(S^1)$, 
$|K|$ small enough, set
\begin{align}\label{local215}
\begin{split}
\widehat{\mathrm{A}}_{g,K}(TY,\nabla^{TY})
:&=\mathrm{det}^{1/2}\left(\frac{\frac{i}{4\pi}
R_K^{TY^g}}{\sinh \left(\frac{i}{4\pi}R_K^{TY^g}\right)}\right)
	\\
&\times
\left(i^{\frac{1}{2}\dim
N^{\R}}\mathrm{det}^{1/2}\left(1-
g\cdot \exp\left(\frac{i}{2\pi}R_K^{N^{\R}}\right)\right)\right)^{-1}
\in \Omega^{\bullet}(Y^g, \C),
\\
\ch_{g,K}(\underline{E}):&=\tr\left[g\exp\left(\frac{i}{2\pi } 
R_K^{E}	\right)\right]\in \Omega^{\bullet}(Y^g, \C).
\end{split}
\end{align}
Let $R_K^L$ be the corresponding equivariant 
curvature of $L$.
For $g\in S^1\backslash A$, as in (\ref{e01138}), we define
\begin{align}\label{eq:1.49b} 
\ch_{g,K}(\underline{L^{1/2}})
:&=\exp\left(\frac{i}{4\pi}R_K^{L}|_{Y^g}+i\pi l t\right).
\end{align}
For $g\in A$, as we discussed after (\ref{eq:1.24b}),
 we replace $i\pi lt$ by $\frac{i}{2}\theta'$ in (\ref{eq:1.49b}).
As in \eqref{bl0660}, we denote by
\begin{align} \label{eq:1.48}
\td_{g,K}(\nabla^{TY}, 
\nabla^{L}):=\widehat{\mathrm{A}}_{g,K}(TY,\nabla^{TY}) 
\ch_{g,K}(\underline{L^{1/2}}).
\end{align}	
Certainly for $K=0$, $\widehat{\mathrm{A}}_{g,K}(\cdot)
=\widehat{\mathrm{A}}_{g}(\cdot)$
and $\ch_{g,K}(\cdot)=\ch_g(\cdot)$.
\end{defn}

For $K\in \mathrm{Lie}(S^1)$  
set
\begin{align}\label{local83} 
d_K=d-2i\pi i_{K^Y}.
\end{align}
Then by \cite[Theorem 7.7]{BeGeVe04}, 
$\widehat{\mathrm{A}}_{g,K}(TY,\nabla^{TY})$, 
$\ch_{g,K}(\underline{E})$ and $\ch_{g,K}(\underline{L^{1/2}})$
are $d_K$-closed.

For $K\in \mathrm{Lie}(S^1)$ 
let $\vartheta_K\in T^*Y$ be the $1$-form which is dual to $K^Y$
by the metric $g^{TY}$, i.e., 
\begin{align}\label{eq:1.50}
\vartheta_K(X)=\la K^Y, X\ra\quad \text{for}\ X\in TY.
\end{align}
For $g\in S^1$, $K\in \mathrm{Lie}(S^1)$, $|K|$ small enough, set 
(cf. \cite[Definition 1.7]{B11a})
\begin{align}\label{local217} 
\mM_{g,K}(\underline{TY}, \underline{L}, \underline{E})
=-\int_0^{\infty}\left\{\int_{Y^g}
\frac{\vartheta_K}{2i\pi}\exp\left(\frac{v\,d_K
\vartheta_K}{2i\pi} 
\right)\td_{g,K}(\nabla^{TY},\nabla^L)
\ch_{g,K}(\underline{E})\right\}dv.
\end{align}
Note that if $g\in S^{1}\setminus A$ 
we have $Y^{g}=Y^{S^{1}}$ from \eqref{eq:1.6}, thus 
\begin{align}\label{eq:1.52b} 
\vartheta_K=0 \text{ on }Y^{g} 
\text{  and } \mM_{g,K}(\underline{TY}, \underline{L}, \underline{E})=0 
\text{ for } g\in S^{1}\setminus A.
\end{align}

By the argument of \cite[Proposition 2.2]{Goette09b}, 
$\mM_{g,K}(\underline{TY}, \underline{L},\underline{E})$ is well-defined 
for $|K|$ small enough. Moreover, for $K_0\in\mathrm{Lie}(S^1)$, 
$t\in \R$ and $|t|$ small enough, 
$\mM_{g,tK_0}(\underline{TY}, \underline{L}, \underline{E})$
is smooth at $t\neq0$ for $g\in A$ and there exist 
$c_j(K_0)\in \C$  ($j\in \N^{*})$ such that as $t\rightarrow 0$, we have
\begin{align}\label{local218} 
\mM_{g,tK_0}(\underline{TY}, \underline{L}, \underline{E})
=\sum_{j=1}^{(\dim 
	Y^g+1)/2}c_j(K_0)t^{-j}+\mathcal{O}(t^0).
\end{align}

In the following definition of the equivariant 
infinitesimal $\eta$-invariant, the operator 
$\sqrt{t}D^Y\otimes E+\frac{c(K^Y)}{4\sqrt{t}}$
was introduced by Bismut \cite{Bi85}
in his heat kernel proof of the Kirillov formula for the 
equivariant index. As observed by Bismut \cite[\S 1d)]{Bi86}
(see also \cite[\S 10.7]{BeGeVe04}),  its square plus 
$\mathcal{L}_{K^Y}$ is the square of  the Bismut 
superconnection for a fibration with compact structure group,
by replacing $K^Y$ by the curvature of the fibration, 
thus $\bar{\eta}_{g,K}$ in (\ref{eq:1.51}) should be understood as 
certain universal $\eta$-forms of Bismut-Cheeger 
\cite[Definition 4.33]{BC89}.

\begin{defn}\cite[Definition 2.3]{LM2018}\label{local88} 
For $g\in S^1$, $K\in\mathrm{Lie}(S^1)$ and $|K|$ small enough,
the equivariant infinitesimal (reduced) $\eta$-invariant is defined by
\begin{multline}\label{eq:1.51}
\bar{\eta}_{g,K}(\underline{TY}, \underline{L}, \underline{E})
=\int_{0}^{+\infty}\frac{1}{2\sqrt{\pi t}}
\tr\left[g\left(D^Y\otimes E-\frac{c(K^Y)}{4t} 
\right)\right.
\\
\left.\cdot\exp\left(-t\left(D^Y\otimes 
E+\frac{c(K^Y)}{4t}\right)^2-\mathcal{L}_{K^Y}
\right) \right]dt+\frac{1}{2}\tr|_{\Ker
	(D^Y\otimes E)}[ge^K]\in \C .
\end{multline}
\end{defn}

From \eqref{local59}, \eqref{local217}  
and \eqref{eq:1.51}, we know that 
$\bar{\eta}_{g,0}(\cdot)= \bar{\eta}_{g}(\cdot)$,
$\mM_{g,0}(\cdot)=0$.

The following two theorems are special cases of 
\cite[Theorems 0.1, 0.2]{LM2018}, which extend 
Goette's result \cite[Theorem 0.5]{Goette00} as an equality
of formal Laurent series in $t$ at $t=0$ when $g=1$ and 
$K_0^Y$ does not vanish on $Y$.
Here the equivariant $\eta$-forms are just equivariant 
$\eta$-invariants and the compact Lie group is $S^1$.

\begin{thm}\label{local90} 
Fix $K_0\in\mathrm{Lie}(S^1)$, $g\in S^{1}$. 
There exists $\beta>0$ such that for $t\in \R$ and $|t|<\beta$, the 
equivariant infinitesimal $\eta$-invariant 
$\bar{\eta}_{g,tK_0}(\underline{TY}, \underline{L}, \underline{E})$
is well-defined and is an analytic function of $t$.
Furthermore, as a function of $t$ near $0$, 
$t^{(\dim 	Y^g+1)/2}\mM_{g,tK_0}(\underline{TY}, \underline{L}, 
\underline{E})$ is real analytic.
\end{thm}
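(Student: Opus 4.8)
The plan is to follow the local index theory scheme for equivariant $\eta$-forms of Bismut--Cheeger and Berline--Vergne, in the equivariant form developed in \cite{Goette00,LM2018}. In \eqref{eq:1.51} I rename the heat parameter by $u$ and write $K=sK_{0}$ with $s\in\R$, $|s|$ small, so the task becomes the analysis in $s$ near $0$. The starting observation, due to Bismut \cite[\S 1d)]{Bi86} and recalled just above \eqref{eq:1.51}, is that $D^{Y}\otimes E+\frac{c(sK_{0}^{Y})}{4u}$ is the generator value of a rescaled Bismut superconnection; by the Lichnerowicz-type formula \cite[\S 10.7]{BeGeVe04}, the operator $\big(D^{Y}\otimes E+\frac{c(sK_{0}^{Y})}{4u}\big)^{2}+\mL_{sK_{0}^{Y}}$ appearing in \eqref{eq:1.51} is a generalized Laplacian $\mathfrak{D}_{u,s}$ whose coefficients are real analytic in $s$ (for $|s|$ small) and smooth in $u>0$, the only term singular as $u\to0^{+}$ being of order $u^{-1}$, proportional to $c(K_{0}^{Y})$, hence supported away from $Y^{S^{1}}$. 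I split $\int_{0}^{+\infty}=\int_{0}^{1}+\int_{1}^{+\infty}$ in \eqref{eq:1.51}.

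On $[1,+\infty)$ the singular term is bounded and $\mathfrak{D}_{u,s}\to(D^{Y}\otimes E)^{2}+\mL_{sK_{0}^{Y}}$, which for $|s|\le\beta$ has the same kernel as $(D^{Y}\otimes E)^{2}$ together with a uniform spectral gap. Duhamel's principle and the standard resolvent estimates (as in \cite[Chapters 9--10]{BeGeVe04}) then give exponential decay, as $u\to+\infty$, of the integrand in \eqref{eq:1.51} and of all its $u$-derivatives, uniformly for $|s|\le\beta$; since $\mathfrak{D}_{u,s}$ is analytic in $s$ and these bounds are uniform, the contribution of $[1,+\infty)$ is real analytic in $s$. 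On $(0,1]$ I would use Getzler-type rescaling adapted to the $S^{1}$-action (\cite[\S 1d)]{Bi86}, \cite[\S 10.7]{BeGeVe04}): the supertrace localizes on an $\mathcal{O}(\sqrt{u})$-neighbourhood of $Y^{g}$, and the difficulty is that along $Y^{g}\setminus Y^{S^{1}}$ the factor $c(sK_{0}^{Y})/(4u)$ is genuinely of size $u^{-1}$, so the familiar cancellation that makes an $\eta$-integrand $\mathcal{O}(u^{-1/2})$ must be replaced by a finer analysis organizing the $u^{-1}$-contributions along $Y^{S^{1}}$. Carrying it out produces a small-$u$ expansion $\int_{Y^{g}}[\cdots]=\sum_{j}\beta_{j}(s)u^{-j}$, possibly with an additional $\log u$, the coefficients $\beta_{j}(s)$ real analytic in $s$; the structure forced by the equivariant index computation on $Y^{g}$ guarantees integrability of the remainder, so $\int_{0}^{1}$ converges. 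Real analyticity of $\int_{0}^{1}$ in $s$ follows from pointwise analyticity of the integrand and the uniform (in $s$) control of the small-$u$ asymptotics, via Vitali's theorem. This gives the first assertion, with $\beta$ the common radius of the various expansions.

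For the second assertion, if $g\in S^{1}\setminus A$ then $\mM_{g,sK_{0}}=0$ by \eqref{eq:1.52b}. For $g\in A$ I substitute $K=sK_{0}$ in \eqref{local217}: one has $\vartheta_{sK_{0}}=s\,\vartheta_{K_{0}}$ and $\frac{1}{2i\pi}d_{sK_{0}}\vartheta_{sK_{0}}=\frac{s}{2i\pi}d\vartheta_{K_{0}}-s^{2}|K_{0}^{Y}|^{2}$, while $\td_{g,sK_{0}}(\nabla^{TY},\nabla^{L})\ch_{g,sK_{0}}(\underline{E})$ is a convergent power series in $s$ near $0$ with form coefficients on $Y^{g}$, its radius controlled by the finitely many bounded eigenvalue functions of $m^{TY}(K_{0})$, $m^{E}(K_{0})$, $m^{L}(K_{0})$ on the compact $Y^{g}$. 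Thus the $v$-integrand is $e^{-vs^{2}|K_{0}^{Y}|^{2}}$ times a polynomial in $v$ of bounded degree with coefficients real analytic in $s$, and formally $\int_{0}^{+\infty}\vartheta_{sK_{0}}\exp\!\big(\tfrac{v}{2i\pi}d_{sK_{0}}\vartheta_{sK_{0}}\big)\,dv=\frac{-2i\pi\,\vartheta_{K_{0}}}{d\vartheta_{K_{0}}-2i\pi s\,|K_{0}^{Y}|^{2}}$; expanding the nilpotent part and integrating over $Y^{g}$ --- the convergence near $Y^{S^{1}}=\{K_{0}^{Y}=0\}$ and the precise Laurent structure being exactly the content of \cite[Proposition 2.2]{Goette09b} --- exhibits $\mM_{g,sK_{0}}$ in the form $s^{-(\dim Y^{g}+1)/2}\,h(s)$ with $h$ real analytic near $0$ (this also recovers \eqref{local218}). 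Hence $s^{(\dim Y^{g}+1)/2}\mM_{g,sK_{0}}=h(s)$ is real analytic.

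The step I expect to be the main obstacle is precisely the small-$u$ analysis of the first part in the presence of the genuinely singular term $c(K^{Y})/(4u)$: off $Y^{S^{1}}$ it is unbounded, the standard $\eta$-invariant heuristic fails, and one must follow in detail how the divergent contributions assemble along the (possibly larger) fixed set $Y^{g}$ and its sub-stratum $Y^{S^{1}}$ --- this is the delicate equivariant local index estimate of \cite{LM2018} extending \cite{Goette00}. By comparison, the passage from such uniform estimates to real analyticity in $s$ is soft, and the analysis of $\mM_{g,sK_{0}}$ reduces, after the scaling substitution, to the explicit computation of \cite{Goette09b}.
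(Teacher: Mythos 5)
The paper does not actually prove Theorem \ref{local90}: it is imported as a special case of \cite[Theorems 0.1, 0.2]{LM2018} (specialized to the case where the equivariant $\eta$-forms are $\eta$-invariants and the group is $S^1$), and the statement about $\mM_{g,tK_0}$ rests on the discussion surrounding \eqref{local217}--\eqref{local218}, which in turn invokes \cite[Proposition 2.2]{Goette09b}. Your treatment of the second assertion is essentially the same reduction the paper makes: the substitution $K=tK_0$, the identities $\vartheta_{tK_0}=t\,\vartheta_{K_0}$ and $\frac{1}{2i\pi}d_{tK_0}\vartheta_{tK_0}=\frac{t}{2i\pi}d\vartheta_{K_0}-t^2|K_0^Y|^2$, the explicit $v$-integration, and the appeal to \cite{Goette09b} for integrability over $Y^g$ near $Y^{S^1}=\{K_0^Y=0\}$ and for the Laurent structure \eqref{local218}; this part is sound, and you correctly dispose of the trivial case $g\in S^1\setminus A$ via \eqref{eq:1.52b}.

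For the first assertion, however, your argument is a strategy outline rather than a proof, and you say so yourself: the decisive step --- uniform small-$u$ control of the supertrace in \eqref{eq:1.51} in the presence of the genuinely singular term $c(tK_0^Y)/(4u)$, which is of true order $u^{-1}$ along $Y^g\setminus Y^{S^1}$, together with joint real analyticity in $t$ --- is exactly what you defer to \cite{LM2018}. The large-time half also needs more care than a routine Duhamel estimate: the operator in the exponential of \eqref{eq:1.51} is not self-adjoint, and the $u\to\infty$ convergence (to $\frac12\tr|_{\Ker(D^Y\otimes E)}[ge^{tK_0}]$) uniformly in $t$ is itself part of the content of \cite{Goette00,LM2018}. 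Since the paper offers nothing beyond the citation, your proposal does not conflict with it; but as a self-contained argument it has a gap precisely at the point you flag, and a correct write-up should simply quote \cite[Theorems 0.1, 0.2]{LM2018}, as the paper does.
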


\begin{thm}\label{local96} Fix $0\neq K_0\in\mathrm{Lie}(S^1)$. 
For any $g\in S^1$, there exists $\beta>0$ such that 
	for $|t|<\beta, t\neq 0$, we have
	\begin{align}\label{local97} 
	\bar{\eta}_{g,tK_0}(\underline{TY}, \underline{L}, \underline{E})
	=\bar{\eta}_{ge^{tK_0}}(\underline{TY}, 
	\underline{L}, \underline{E})
	+\mM_{g,tK_0}(\underline{TY}, \underline{L}, \underline{E}).
	\end{align}
\end{thm}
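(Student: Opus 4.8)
This statement is a special case of the comparison theorems of \cite{LM2018}; the plan is to reprove it by an equivariant transgression in the spirit of \cite{Goette00}, realizing $\bar{\eta}_{g,tK_0}$ and $\bar{\eta}_{ge^{tK_0}}$ as the two endpoints of a one-parameter family of rescaled heat-kernel integrals. Fix $0\neq K_0\in\mathrm{Lie}(S^1)$ and, for $|t|<\beta$ with $\beta>0$ small (to be shrunk finitely often), I would introduce $B_{u,\rho}:=\sqrt{u}\,(D^Y\otimes E)+\tfrac{\rho}{4\sqrt{u}}\,c(tK_0^Y)$ for $u>0$, $\rho\in[0,1]$. Since $B_{u,\rho}$ is $S^1$-invariant it commutes with $\mathcal{L}_{tK_0^Y}$, composition with $e^{-\mathcal{L}_{tK_0^Y}}$ realizes the action of $e^{tK_0}$, and $D^Y\otimes E-\tfrac{c(tK_0^Y)}{4u}=2\sqrt{u}\,\partial_u B_{u,1}$; substituting all this into \eqref{eq:1.51} (with the integration variable there renamed $u$) shows that, setting
\begin{align}\label{eq:pf-I}
\mI(\rho):=\frac{1}{\sqrt{\pi}}\int_0^{+\infty}\tr\Big[(ge^{tK_0})\,\partial_u B_{u,\rho}\,e^{-B_{u,\rho}^2}\Big]\,du+\tfrac12\tr|_{\Ker(D^Y\otimes E)}[ge^{tK_0}],
\end{align}
one has $\mI(1)=\bar{\eta}_{g,tK_0}(\underline{TY},\underline{L},\underline{E})$ and, comparing with \eqref{local59}, $\mI(0)=\bar{\eta}_{ge^{tK_0}}(\underline{TY},\underline{L},\underline{E})$. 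Convergence of the $u$-integral in \eqref{eq:pf-I} near $u=0$ is of the same nature as for $\bar\eta_{g,tK_0}$, hence controlled for $|t|<\beta$ by the estimates behind Theorem \ref{local90}, and near $u=\infty$ it is immediate from the spectral gap of $D^Y\otimes E$ and $B_{u,\rho}^2\ge0$. It then suffices to prove $\mI(1)-\mI(0)=\int_0^1\mI'(\rho)\,d\rho=\mM_{g,tK_0}(\underline{TY},\underline{L},\underline{E})$.

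Next I would differentiate \eqref{eq:pf-I} in $\rho$; the twist $ge^{tK_0}$ and the kernel term being $\rho$-independent, the standard transgression computation (Duhamel's formula and cyclicity of the trace) gives $\partial_\rho\tr[(ge^{tK_0})\partial_u B_{u,\rho}e^{-B_{u,\rho}^2}]=\partial_u\tr[(ge^{tK_0})\partial_\rho B_{u,\rho}e^{-B_{u,\rho}^2}]$ modulo terms of vanishing trace, so that (the $u=\infty$ boundary term vanishing because of the factor $u^{-1/2}$ in $\partial_\rho B_{u,\rho}=\tfrac{c(tK_0^Y)}{4\sqrt u}$ and the boundedness of $e^{-B_{u,\rho}^2}$)
\begin{align}\label{eq:pf-trans}
\mI'(\rho)=-\frac{1}{\sqrt{\pi}}\,\lim_{u\to0^+}\tr\Big[(ge^{tK_0})\,\frac{c(tK_0^Y)}{4\sqrt{u}}\,e^{-B_{u,\rho}^2}\Big].
\end{align}
I would then evaluate the small-time limit in \eqref{eq:pf-trans} by Bismut's rescaling and Mehler-formula technique from his heat-kernel proof of the Kirillov formula (\cite{Bi85,Bi86}, \cite[\S 10.7]{BeGeVe04}), applied to $B_{u,\rho}$ with its transgression term $\tfrac{\rho}{4\sqrt u}c(tK_0^Y)$. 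The point is that for $|t|<\beta$ the twist $ge^{tK_0}$ makes the trace concentrate near $Y^g$ as $u\to0$: off $Y^g$ the displacement of $ge^{tK_0}$ is bounded below, and the resulting Gaussian decay beats the growth $\exp(|tK_0^Y|^2/(16u))$ coming from the indefinite zeroth-order term $\tfrac{1}{16u}c(tK_0^Y)^2=-\tfrac{1}{16u}|tK_0^Y|^2$, whereas on $Y^g\setminus Y^{S^1}$ that growth is absorbed by harmonic-oscillator terms exactly as in Bismut's argument. After localizing and rescaling the normal coordinates, $c(tK_0^Y)$ produces the $1$-form $\vartheta_{tK_0}/(2i\pi)$ on $Y^g$, the square- and cross-terms of $B_{u,\rho}^2$ produce $\exp(d_{tK_0}\vartheta_{tK_0}/(2i\pi))$ together with a Gaussian in an auxiliary scale, and the part $\sqrt u\,(D^Y\otimes E)$ contributes $\td_{g,tK_0}(\nabla^{TY},\nabla^L)\,\ch_{g,tK_0}(\underline{E})$ through the equivariant Mehler formula; integrating over $Y^g$ and then over $\rho\in[0,1]$, a change of variables identifies $\int_0^1\mI'(\rho)\,d\rho$ with the right-hand side of \eqref{local217}, which is \eqref{local97}. (For $g\notin A$ this is transparent: $Y^g=Y^{S^1}$, so $\vartheta_{tK_0}=0$ there, $\mM_{g,tK_0}=0$, and the right-hand side of \eqref{eq:pf-trans} vanishes since $c(tK_0^Y)|_{Y^{S^1}}=0$.)

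The main obstacle is making the small-time step behind \eqref{eq:pf-trans} rigorous: this requires Gaussian-type kernel estimates for $e^{-B_{u,\rho}^2}$ near $Y^g$, uniform in $u\in(0,1]$, $\rho\in[0,1]$ and $|t|<\beta$, in the presence of the indefinite potential $-\tfrac{1}{16u}|tK_0^Y|^2$ --- precisely the analytic core shared with Bismut's Kirillov-formula proof and, at this level of generality, with \cite{LM2018} --- together with the bookkeeping of constants and signs needed for the limiting local density to reproduce the integrand of \eqref{local217} on the nose.
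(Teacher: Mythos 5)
Your endpoint identifications are correct: with the paper's sign convention $\mathcal{L}_K s=\partial_t|_{t=0}(e^{-tK}.s)$ one indeed has $e^{-\mathcal{L}_{tK_0^Y}}=e^{tK_0}\cdot$, and a direct computation shows that in fact $\mI(\rho)=\bar\eta_{g e^{(1-\rho)tK_0},\,\rho tK_0}$ for every $\rho$, so that $\mI(0)=\bar\eta_{ge^{tK_0}}$ and $\mI(1)=\bar\eta_{g,tK_0}$ as you claim. The transgression identity $\partial_\rho\tr=\partial_u\tr$ and the vanishing of the $u\to\infty$ boundary term are also sound.

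However, there is a genuine flaw at the evaluation step \eqref{eq:pf-trans}. You assert that, for each fixed $\rho$, the trace localizes near $Y^g$ as $u\to0$. That is the Bismut--Kirillov localization with twist $g$, which is what happens at $\rho=1$ (where the effective twist is $g$ and the Kirillov parameter is $tK_0$). But for $\rho<1$ the effective twist is $ge^{(1-\rho)tK_0}$, and for $g\in A$, $t\neq 0$ small and generic, this lies in $S^1\setminus A$, so its fixed-point set is $Y^{S^1}$, not $Y^g$. Consequently the pointwise limit in \eqref{eq:pf-trans} localizes on $Y^{S^1}$, where $tK_0^Y$ (hence the rescaled Clifford factor) vanishes and the resulting contribution integrates to zero. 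Equivalently: since $\mM_{g',K'}=0$ whenever $g'\notin A$ (cf.\ \eqref{eq:1.52b}) and $g'e^{\rho tK_0}=ge^{tK_0}$ is $\rho$-independent, the very statement being proved forces $\mI(\rho)\equiv\bar\eta_{ge^{tK_0}}$ for $\rho\in(0,1)$, hence $\mI'(\rho)\equiv 0$ there, and $\int_0^1\mI'(\rho)\,d\rho=0\ne\mM_{g,tK_0}$. The function $\mI$ is therefore \emph{discontinuous} at $\rho=1$ when $g\in A$, and the fundamental-theorem step $\mI(1)-\mI(0)=\int_0^1\mI'(\rho)\,d\rho$ is illegitimate: you have interchanged the $u\to0$ limit with the $\rho$-integral, which fails precisely because the convergence of the $u$-integral is not uniform near $\rho=1$.

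The correct bookkeeping keeps the order straight. Write $\phi(u,\rho)=\tr[(ge^{tK_0})\partial_u B_{u,\rho}e^{-B_{u,\rho}^2}]$ and $\psi(u,\rho)=\tr[(ge^{tK_0})\tfrac{c(tK_0^Y)}{4\sqrt u}e^{-B_{u,\rho}^2}]$. Then $\mI(1)-\mI(0)=\tfrac1{\sqrt\pi}\int_0^\infty\!\!\int_0^1\partial_u\psi\,d\rho\,du=-\tfrac1{\sqrt\pi}\lim_{u\to0}\int_0^1\psi(u,\rho)\,d\rho$, and only now substituting $v=\rho/\sqrt u$ gives
\begin{align*}
\mI(1)-\mI(0)=-\frac1{\sqrt\pi}\lim_{u\to0}\int_0^{1/\sqrt u}\tr\Big[(ge^{tK_0})\tfrac14 c(tK_0^Y)\,e^{-(\sqrt u D+\frac v4 c(tK_0^Y))^2}\Big]dv,
\end{align*}
so that the $\int_0^\infty dv$ appearing in \eqref{local217} emerges from the \emph{joint} limit ($u\to0$ and $\rho/\sqrt u=v$ fixed), not from the naive $\rho\in[0,1]$ interval after a pointwise $u\to0$ limit. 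This is the form the argument must take; the present paper only cites \cite{LM2018,Goette00} for Theorem~\ref{local96}, but the delocalization across $Y^g\setminus Y^{S^1}$ driving the nonzero $\mM$ is precisely the subtle point those references handle, and your proposal as written misses it.
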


Since 
$\bar{\eta}_{g,tK_0}(\underline{TY}, \underline{L}, \underline{E})$
is an analytic function of $t$, when 
$t\rightarrow 0$, the singularity of 
$\bar{\eta}_{ge^{tK_0}}(\underline{TY}, \underline{L}, \underline{E})$
is the same as that of 
$-\mM_{g,tK_0}(\underline{TY}, \underline{L}, \underline{E})$
in \eqref{local218}. Thus from Theorem \ref{local96}, we know 
$\bar{\eta}_{g}(\underline{TY}, \underline{L}, \underline{E})$ 
as a function of $g\in S^1$, is analytic on $S^1\backslash A$,
moreover, at $g\in A$, $\bar{\eta}_{ge^{tK_0}}(\underline{TY}, 
\underline{L}, \underline{E})
+\mM_{g,tK_0}(\underline{TY}, \underline{L}, \underline{E})$ 
on $0<t<|\beta|$ can be extended
as an analytic function on $|t|<|\beta|$.

\section{Differential $K$-theory}\label{s03}

$K$-theory and the $\lambda$-ring structure were first introduced by 
Grothendieck in 1957. The arithmetic $K$-theory in Arakelov
geometry was introduced by Gillet-Soul\'e in \cite{GS90c}.
It extends Grothendieck's $K$-theory by adding
Hermitian metrics on holomorphic vector bundles and 
differential forms of type $(p,p)$ modulo $\mathrm{Im}\,\partial
+\mathrm{Im}\,\bar{\partial}$.  
In the same way as the topological $K$-theory
of Atiyah and Hirzebruch is the $\cC^{\infty}$-version of Grothendieck's
$K$-theory, also the differential $K$-theory
introduced  by Freed-Hopkins \cite{FH00} 
and developed further by Hopkins-Singer,
Simons-Sullivan, Bunke-Schick, Freed-Lott, is a 
$\cC^{\infty}$-version of the arithmetic $K$-theory.
It extends the topological $K$-theory by adding Hermitian
metrics and connections on vector bundles and differential forms
modulo exact forms.

Note that the $\lambda$-ring structure on the arithmetic $K$-theory
was introduced by Gillet-Soul\'e \cite[Theorem 7.3.4]{GS90c}
and exploited in detail by Roessler \cite{Ro99,Ro01}, who
studied also the associated $\gamma$-filtration.

In this section, we start to exploit the $\lambda$-ring structure on the 
vector space of even degree real closed forms on $Y$ 
and on its direct sum with the space of odd degree
real forms modulo exact forms. Then we study
its compatibility with the 
Chern forms and Chern-Simons classes of geometric triples.
With this preparation, we can equip 
the differential $K$-theory with a pre-$\lambda$-ring structure. 
An important result is that the associated $\gamma$-filtration is locally 
nilpotent.

We consider the circle action on $Y$ now. Recall that $N$ is the 
normal bundle of $Y^{S^1}$, the fixed point set of
the circle action, in $Y$. When we apply the above
results to our $g$-equivariant differential $K$-theory
$\widehat{K}^0_g(Y^{S^1})$, it implies that
$\underline{\lambda_{-1}(N^*)}:=\sum_{i\geq 0}(-1)^i
\underline{\Lambda^i(N^*)}$,
the exterior algebra bundle of $N^*$ with corresponding
metric and connection, is invertible in 
$\widehat{K}^0_g(Y^{S^1})_{I(g)}$, the localization
of $\widehat{K}^0_g(Y^{S^1})$ at the prime ideal $I(g)$
of $R(S^1)$. This result allows us to define the 
counterpart of the $\eta$-invariant on the fixed point set.

This section is organized as follows.
In Section \ref{s0301}, we define 
the pre-$\lambda$-ring structure and study some examples. 
In Section \ref{s0302}, we construct the 
pre-$\lambda$-ring structure in differential $K$-theory. 
In Section \ref{s0303}, we 
study the locally nilpotent property of the $\gamma$-filtration
in differential $K$-theory. In Section 
\ref{s0304}, we define the $g$-equivariant differential $K$-theory and 
explicitly construct the inverse of 
$\underline{\lambda_{-1}(N^*)}$ 
at differential $K$-theory level.

\subsection{Pre-$\lambda$-ring structure}\label{s0301}

\begin{defn}\label{defn:2.01}\cite[(1.1)-(1.3)]{BerthelotTh}
For a commutative ring $R$ with identity,
a pre-$\lambda$-ring structure 
is defined by a countable set of maps $\lambda^n:R\rightarrow R$
with $n\in \N$ 	such that for all $x,y\in R$,
	
a) $\lambda^0(x)=1$;
	
b) $\lambda^1(x)=x$;
	
c) $\lambda^n(x+y)=\sum_{j=0}^n\lambda^j(x)\lambda^{n-j}(y)$.
	
If $R$ has a pre-$\lambda$-ring structure, 
we call it a pre-$\lambda$-ring.
\end{defn}

Remark that in \cite[\S 1]{ATall69} 
the pre-$\lambda$-ring here is called the $\lambda$-ring.

If $t$ is an indeterminate,  we define for $x\in R$,
\begin{align}\label{eq:2.001}
\lambda_t(x)=\sum_{n\geq 0}\lambda^n(x)t^n.
\end{align}
Then the relations a), c) show that $\lambda_t$ is 
a homomorphism from 
the additive group of $R$ into the multiplicative group $1+R[[t]]^+$, of 
formal power series in $t$ with constant term 1, i.e.,
\begin{align}\label{eq:2.002}
\lambda_t(x+y)=\lambda_t(x)\lambda_t(y)\quad \text{for any}\ x,y\in R.
\end{align}

Now we study some pre-$\lambda$-rings which we will use later.

Let $Y$ be a manifold. Let $Z^{\mathrm{even}}(Y,\R)$ be the
vector space of even degree real closed forms 
 on $Y$.
We define the Adams operation 
$\Psi^k:Z^{\mathrm{even}}(Y,\R)\rightarrow 
Z^{\mathrm{even}}(Y,\R)$ for $k\in \N$ by
\begin{align}\label{eq:2.003}
\Psi^k(x)=k^lx \quad\text{for}\quad x\in Z^{2l}(Y,\R).
\end{align}	 
For $x\in Z^{\mathrm{even}}(Y,\R)$, we define
\begin{align}\label{eq:2.004}
\lambda_t(x)=\sum_{n\geq 
	0}\lambda^n(x)t^n:=
	\exp\left(\sum_{k=1}^{\infty}\frac{(-1)^{k-1}\Psi^k(x)t^k}{k}\right).
\end{align}
From the Taylor expansion of the exponential function,
we have 
$\lambda^0(x)=1$ and $\lambda^1(x)=x$.
Since
\begin{multline}\label{eq:2.005}
\lambda_t(x+y)=\exp\left(\sum_{k=1}^{\infty}\frac{(-1)^{k-1}
\Psi^k(x+y)t^k}{k}\right)
\\
=\exp\left(\sum_{k=1}^{\infty}\frac{(-1)^{k-1}\Psi^k(x)t^k}{k}
+\sum_{k=1}^{\infty}\frac{(-1)^{k-1}\Psi^k(y)t^k}{k}\right)
=\lambda_t(x)\lambda_t(y),
\end{multline}
we have
\begin{align}\label{eq:2.006}
\lambda^n(x+y)=\sum_{j=0}^n\lambda^j(x)\lambda^{n-j}(y).	
\end{align}
Thus \eqref{eq:2.004} gives a pre-$\lambda$-ring structure on 
$Z^{\mathrm{even}}(Y,\R)$.

Consider the vector space (comparing with
 \cite[\S 7.3.1]{GS90c})
\begin{align}\label{eq:2.007}
\Gamma(Y):=Z^{\mathrm{even}}(Y,\R)\oplus 
\left(\Omega^{\mathrm{odd}}(Y,\R)/\Im \, d\right).
\end{align}
We give degree $l\geq 0$ to 
$Z^{2l}(Y,\R)\oplus \left(\Omega^{2l-1}(Y,\R)/\Im \, d\right)$
with $\Omega^{-1}(\cdot)=\{0\}$. 
We define a pairing on $\Gamma(Y)$ by the formula
\begin{align}\label{eq:2.008}
(\omega_1,\phi_1)*(\omega_2, \phi_2)
:=(\omega_1\wedge\omega_2, 
\omega_1\wedge\phi_2+\phi_1\wedge\omega_2
-d\phi_1\wedge\phi_2).
\end{align}
It is easy to verify that this pairing is 
commutative and associative. Since 
$*$ is clearly bilinear 
and $(1,0)$ is a unit, the pairing $*$ defines a graded associative, 
commutative and unitary $\R$-algebra structure on $\Gamma(Y)$
(comparing with \cite[Theorem 7.3.2]{GS90c}).
We define the Adams operation 
$\Psi^k:\Gamma(Y)\rightarrow \Gamma(Y)$ 
for $k\in \N$ (cf. \cite[\S 7.3.1]{GS90c}, \cite{Ro01}) by
\begin{align}\label{eq:2.009}
\Psi^k(\alpha,\beta)=(k^l\alpha,k^l\beta)\quad\text{for}\quad 
(\alpha,\beta)\in Z^{2l}(Y,\R)\oplus (\Omega^{2l-1}(Y,\R)/\Im \, d).
\end{align}
By using the pairing $*$ to replace the multiplicity in \eqref{eq:2.004},
similarly as $Z^{\mathrm{even}}(Y,\R)$,
we obtain a pre-$\lambda$-ring structure on $\Gamma(Y)$.

Let $p$ be the projection from $\Gamma(Y)$ to its component 
$Z^{\mathrm{even}}(Y,\R)$
and $\jmath$ be the following injection:
\begin{align}\label{eq:2.010} \begin{split}
p:\Gamma(Y)\rightarrow Z^{\mathrm{even}}(Y,\R),  
\quad (\omega,\varphi)\mapsto \omega,\\
\jmath: Z^{\mathrm{even}}(Y,\R)\rightarrow \Gamma(Y),
\quad \omega\mapsto (\omega,0).
\end{split}\end{align}
By \eqref{eq:2.008}, $p,\jmath$ are homomorphisms of 
pre-$\lambda$ rings, in particular, 
\begin{align}\label{eq:2.011} 
	\lambda^{k}(\omega,0)= (\lambda ^{k}\omega, 0).
\end{align}

Let $G$ be a Lie group and $\kg$ its Lie algebra. A polynomial 
$\varphi:\kg\rightarrow \C$
is called a $G$-invariant polynomial if 
\begin{align}\label{eq:2.012}
\varphi(\mathrm{Ad}(g^{-1})A)
=\varphi(A),\quad \text{ for any } g\in G,\ A\in \kg.
\end{align}
The set of all $G$-invariant polynomials is denoted by 
$\C[\kg]^G$. 

Let $U(r)$ be the unitary group with Lie algebra $\mathfrak{u}(r)$. 
For $A\in \mathfrak{u}(r)$, the characteristic polynomial of $-A$ is
\begin{align}\label{eq:2.013}
\det(\lambda I+A)=\lambda^r+c_1(A)\lambda^{r-1}+\cdots+c_r(A).
\end{align}
So $c_j\in \C[\mathfrak{u}(r)]^{U(r)}$ for $1\leq j\leq r$. 
It is well-known that $\C[\mathfrak{u}(r)]^{U(r)}$ is 
generated by $c_1,\cdots,c_r$
as a polynomial ring:
\begin{align}\label{eq:2.014}
\C[\mathfrak{u}(r)]^{U(r)}=\C[c_1,\cdots,c_r].
\end{align}
Let $T^r=\{(e^{it_1},\cdots, e^{it_r}): 
t_1,\cdots,t_r\in \R \}$ be a maximal torus of $U(r)$ with Lie 
algebra $\mathfrak{t}^r$. 
Then
\begin{align}\label{eq:2.015}
\C[\mathfrak{t}^r]^{T^r}=\C[u_1,\cdots,u_r],
\end{align}
where $u_j(x)=x_j$, for any 
$x=\sum_{j=1}^rx_j\left.\frac{\partial}{\partial t_j}\right|_{t=0}
\in T_e(T^r)=\mathfrak{t}^r$.
Let 
\begin{align}\label{eq:2.016} 
\theta:T^r\rightarrow U(r),\quad (e^{it_1},\cdots, e^{it_r})\mapsto 
\mathrm{diag}(e^{it_1},\cdots, e^{it_r})
\end{align}
be the 
diagonal injection, here $\mathrm{diag}(\cdots)$
is the diagonal matrix. It is well-known that 
\begin{align}\label{eq:2.017}
\theta^*: 
\C[\mathfrak{u}(r)]^{U(r)}\rightarrow \C[\mathfrak{t}^r]^{T^r}
\end{align}
is an injective homomorphism and 
\begin{align}\label{eq:2.018} 
\theta^*(c_j)=\sigma_j(u_1,\cdots,u_r), \quad 
\theta^*\left(\C[\mathfrak{u}(r)]^{U(r)} \right)
=\C[\sigma_1,\cdots,\sigma_r],
\end{align}
where $\sigma_j$ 
is the $j$-th elementary symmetric polynomial. 
We define the Adams operations for any $k\in \N$
\begin{align}\label{eq:2.020}\begin{split}
&\Psi^k:\C[\mathfrak{u}(r)]^{U(r)}\rightarrow 
\C[\mathfrak{u}(r)]^{U(r)},  \quad \Psi^k(c_j)=k^jc_j;\\
&\Psi^k:\C[\mathfrak{t}^r]^{T^r}\rightarrow 
\C[\mathfrak{t}^r]^{T^r}, \qquad\qquad \, \Psi^k(u_j)=k u_j.
\end{split}\end{align}
By constructing $\lambda^n$ as in \eqref{eq:2.004}, 
$\C[\mathfrak{u}(r)]^{U(r)}$ and $\C[\mathfrak{t}^r]^{T^r}$ 
are equipped now pre-$\lambda$-ring structures.

Let $E$ be a complex vector bundle over $Y$ of rank $r$. Let $h^E$ be 
a Hermitian metric 
on $E$. Let $\nabla^E$ be a Hermitian connection on $(E,h^E)$. We also 
denote by $\underline{E}=(E,h^E,\nabla^E)$ the {\it geometric triple} 
for this non-equivariant setting. 
For $\varphi\in \C[\mathfrak{u}(r)]^{U(r)}$, we define the 
characteristic form $\varphi(\underline{E})$ by
\begin{align}\label{eq:2.021}
\varphi(\underline{E})=\psi_Y\varphi\left(-R^E\right)\in 
\Omega^{\mathrm{even}}(Y,\C),
\end{align}
where $\psi_Y:\Omega^{\mathrm{even}}(Y,\C)\rightarrow 
\Omega^{\mathrm{even}}(Y,\C)$ is defined by 
\begin{align}\label{eq:2.022}
\psi_Y\omega=(2i\pi)^{-j}\omega\quad \text{for}\ \omega\in 
\Omega^{2j}(Y,\C).
\end{align}
Then by the Chern-Weil theory (cf. 
\cite[Appendix D]{MM07}), $\varphi(\underline{E})$
is closed. Moreover, $\varphi(\underline{E})\in
\Omega^{\mathrm{even}}(Y,\R)$ if $\varphi\in 
\R[\mathfrak{u}(r)]^{U(r)}$.

The triple $\underline{E}$ induces a homomorphism of rings
\begin{align}\label{eq:2.023} 
f_{\underline{E}}: \C[\mathfrak{u}(r)]^{U(r)}\rightarrow
Z^{\mathrm{even}}(Y,\C),
\quad \varphi\mapsto \varphi(\underline{E}).
\end{align}

Let $\underline{\pi^*E}=(\pi^*E, h^{\pi^*E}, \nabla^{\pi^*E})$ be the 
triple defined in Section \ref{s0202} without the group action. 
As in \eqref{local40}, the Chern-Simons class 
$\widetilde{\varphi}(\underline{E_0},\underline{E_1})\in 
\Omega^{\mathrm{odd}}(Y,\C)/\Im \, d$ is defined by 
(cf. \cite[Definition B.5.3]{MM07})
\begin{align}\label{eq:2.024}
\widetilde{\varphi}(\underline{E_0},\underline{E_1}):=
\int_0^1\{\varphi(\underline{\pi^*E})\}^{ds}ds\in
\Omega^{\mathrm{odd}}(Y,\C)/\Im \, d.
\end{align}
Then by \cite[Theorem B.5.4]{MM07},
\begin{align}\label{eq:2.025} 
d\widetilde{\varphi}(\underline{E_0},\underline{E_1})=
\varphi(\underline{E_1})-\varphi(\underline{E_0}),
\end{align}
and the Chern-Simons class depends only on $\nabla^{E_0}$ and 
$\nabla^{E_1}$.

\begin{lemma}\label{lem:2.02}
	Let $\underline{E_j}=(E, h_j^{E}, \nabla_j^{E})$ for $j=0,1,2$. Let 
	$\varphi,\varphi'\in \C[\mathfrak{u}(r)]^{U(r)}$. Then we have
\vskip 2 mm		
	(a) 
	$\widetilde{\varphi}(\underline{E_0},\underline{E_2})
	=\widetilde{\varphi}(\underline{E_0},\underline{E_1})
	+\widetilde{\varphi}(\underline{E_1},\underline{E_2})$;
\vskip 2 mm	
	(b) 
	$\widetilde{\varphi+\varphi'}(\underline{E_0},\underline{E_1})
	=\widetilde{\varphi}(\underline{E_0},\underline{E_1})
	+\widetilde{\varphi'}(\underline{E_0},\underline{E_1})$;
\vskip 2 mm		
	(c) 
	$\widetilde{\varphi\varphi'}(\underline{E_0},\underline{E_1})
=\widetilde{\varphi}(\underline{E_0},\underline{E_1})
\varphi'(\underline{E_1})
+\varphi(\underline{E_0})\widetilde{\varphi'}(\underline{E_0},
\underline{E_1})$;
	\vskip 2 mm	
	(d) $(\varphi(\underline{E_1}), 
	\widetilde{\varphi}(\underline{E_0},\underline{E_1}))*
	(\varphi'(\underline{E_1}),
	\widetilde{\varphi'}(\underline{E_0},\underline{E_1}))
	=(\varphi\varphi'(\underline{E_1}),
	\widetilde{\varphi\varphi'}(\underline{E_0},\underline{E_1}))$.
\end{lemma}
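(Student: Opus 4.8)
The plan is to verify the four identities (a)--(d) directly from the definition \eqref{eq:2.024} of the Chern--Simons class, reducing (d) to (b) and (c), and reducing everything to the transgression formula \eqref{eq:2.025} together with the standard functoriality of the Chern--Simons construction under composition of paths. The technical tool throughout is the operation $\{\cdot\}^{ds}$ picking out the $ds$-component of a form on $Y\times\R$ and then integrating over $[0,1]$, so every assertion will follow from an elementary manipulation of forms on $Y\times[0,1]$ (or on $Y\times[0,2]$ for part (a)).

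First I would prove (a). Choose a connection $\nabla^{\pi^*E}$ on $\pi^*E$ over $Y\times[0,2]$ interpolating $\nabla^{E_0}$, $\nabla^{E_1}$, $\nabla^{E_2}$ at $s=0,1,2$, constant near the integer values. Since $\widetilde\varphi$ does not depend on the chosen path (by \cite[Theorem B.5.4]{MM07}), I may compute $\widetilde\varphi(\underline{E_0},\underline{E_2})$ using the affine reparametrisation of $[0,2]$ onto $[0,1]$; then the integral $\int_0^2\{\varphi(\underline{\pi^*E})\}^{ds}\,ds$ splits as $\int_0^1+\int_1^2$, and each piece is, after translating the parameter, exactly $\widetilde\varphi(\underline{E_0},\underline{E_1})$ and $\widetilde\varphi(\underline{E_1},\underline{E_2})$ respectively (again invoking path-independence to replace the given interpolations by arbitrary ones). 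For (b), note that on $Y\times\R$ the characteristic form satisfies $(\varphi+\varphi')(\underline{\pi^*E})=\varphi(\underline{\pi^*E})+\varphi'(\underline{\pi^*E})$ pointwise since $f_{\underline{\pi^*E}}$ is linear in $\varphi$; extracting $\{\cdot\}^{ds}$ and integrating is linear, so (b) is immediate.

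For (c), the key point is that $(\varphi\varphi')(\underline{\pi^*E})=\varphi(\underline{\pi^*E})\wedge\varphi'(\underline{\pi^*E})$ (as $f_{\underline{\pi^*E}}$ is a ring homomorphism). Writing, on $Y\times\R$, $\varphi(\underline{\pi^*E})=a_0+ds\wedge a_1$ and $\varphi'(\underline{\pi^*E})=b_0+ds\wedge b_1$ with $a_i,b_i$ not involving $ds$, one has $\{\varphi\varphi'(\underline{\pi^*E})\}^{ds}=a_1\wedge b_0+a_0\wedge b_1$ (the $ds\wedge ds$ term vanishes). Now $a_0|_{Y\times\{s\}}$ is the characteristic form of the connection at parameter $s$; integrating $\int_0^1(a_1\wedge b_0)\,ds$ against $b_0|_{s=1}=\varphi'(\underline{E_1})$ requires moving $\varphi'(\underline{E_1})$ out of the integral, which is only legitimate up to exact forms --- and here is where one uses that $\widetilde\varphi$ lands in $\Omega^{\mathrm{odd}}/\Im\,d$. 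More precisely, $b_0(s)=\varphi'(\underline{E_1})-\int_s^1 \tfrac{d}{du}b_0(u)\,du$, and the correction term, after wedging with $a_1$ and integrating, is exact by \eqref{eq:2.025} applied to $\varphi'$ combined with the Leibniz rule $d(a_1\wedge(\cdots))=\cdots$. A symmetric argument handles $\int_0^1 a_0\wedge b_1\,ds$ with $a_0(s)=\varphi(\underline{E_0})+(\text{exact-producing remainder})$. Collecting the two pieces gives (c) modulo $\Im\,d$, which is exactly the equality in $\Omega^{\mathrm{odd}}(Y,\C)/\Im\,d$.

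Finally (d) is a formal consequence: by definition \eqref{eq:2.008} of the product $*$,
\begin{align*}
(\varphi(\underline{E_1}),\widetilde\varphi(\underline{E_0},\underline{E_1}))*
(\varphi'(\underline{E_1}),\widetilde{\varphi'}(\underline{E_0},\underline{E_1}))
=\big(\varphi\varphi'(\underline{E_1}),\ &\varphi(\underline{E_1})\widetilde{\varphi'}(\underline{E_0},\underline{E_1})
+\widetilde\varphi(\underline{E_0},\underline{E_1})\varphi'(\underline{E_1})\\
&-d\widetilde\varphi(\underline{E_0},\underline{E_1})\wedge\widetilde{\varphi'}(\underline{E_0},\underline{E_1})\big),
\end{align*}
and I would substitute $d\widetilde\varphi(\underline{E_0},\underline{E_1})=\varphi(\underline{E_1})-\varphi(\underline{E_0})$ from \eqref{eq:2.025}; the first component already matches $\varphi\varphi'(\underline{E_1})$, and the second component simplifies to $\widetilde\varphi(\underline{E_0},\underline{E_1})\varphi'(\underline{E_1})+\varphi(\underline{E_0})\widetilde{\varphi'}(\underline{E_0},\underline{E_1})$, which by part (c) equals $\widetilde{\varphi\varphi'}(\underline{E_0},\underline{E_1})$. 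Thus (d) follows from (c) and \eqref{eq:2.025} alone. The main obstacle is the bookkeeping in part (c): one must track exactly which remainder terms are exact, and this is where the quotient by $\Im\,d$ is essential; everything else is routine. I expect this is a known computation (it is the content of \cite[Theorem B.5.4]{MM07} and \cite[Theorem 7.3.2]{GS90c} in slightly different form), so the proof will mostly consist of assembling these standard facts.
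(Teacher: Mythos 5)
Your proposal is correct and follows essentially the same route as the paper: (a) and (b) are immediate from the definition \eqref{eq:2.024} (via path-independence and linearity), (c) is a direct computation with the $ds$-component where the correction terms are exact modulo $\Im\,d$, and (d) is deduced formally from \eqref{eq:2.008}, \eqref{eq:2.025} and (a)--(c) exactly as in \eqref{eq:2.026}. Your sketch of (c) is the only place requiring care, and the bookkeeping you outline (the remainders combine into $d_Y\int_0^1 A\wedge b_1\,ds$ plus a vanishing boundary term) does close up as claimed.
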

\begin{proof}
	By \eqref{eq:2.024}, (a) and (b) are obvious.
	Let $\pi^*\underline{E_1}$ be the pull-back
	of the triple $\underline{E_1}$ on $Y\times\R$.
	Then $\varphi'(\pi^*\underline{E_1}) 
	=\pi^*\varphi'(\underline{E_1})$. Thus by
	(\ref{eq:2.024}),
	\begin{multline}\label{eq:2.25a}
	\wi{\varphi\varphi'}(\underline{E_0},
	\underline{E_1})-\wi{\varphi}(\underline{E_0},
	\underline{E_1})\varphi'(\underline{E_1})
	=\int_0^1\Big\{\varphi(\underline{\pi^*E})
	\left(\varphi'(\underline{\pi^*E})-
	\varphi'(\pi^*\underline{E_1}) \right) 
	\Big\}^{ds}ds	\\ 
	=\int_0^1\left\{d^{Y\times \R}
	\left[\varphi(\underline{\pi^*E})
	\wi{\varphi'}(\pi^*\underline{E_1},
	\underline{\pi^*E}) \right] \right\}^{ds}ds
	\\
	=-d^Y\int_0^1\left\{\varphi(\underline{\pi^*E})
	\wi{\varphi'}(\pi^*\underline{E_1},
	\underline{\pi^*E}) \right\}^{ds}ds+
	\left.\left.\varphi(\underline{\pi^*E})
	\wi{\varphi'}(\pi^*\underline{E_1},
	\underline{\pi^*E})\right|_{Y\times\{t\}}
	\right|_{t=0}^1.
	\end{multline}
	From (\ref{eq:2.25a}), we get (c).
	
	We establish (d) now.
	From \eqref{eq:2.008}, \eqref{eq:2.025} and (c), we have
	\begin{multline}\label{eq:2.026}
	(\varphi(\underline{E_1}), 
	\widetilde{\varphi}(\underline{E_0},\underline{E_1}))*
	(\varphi'(\underline{E_1}),
	\widetilde{\varphi'}(\underline{E_0},\underline{E_1}))
	\\
	=\Big(\varphi(\underline{E_1})\varphi'(\underline{E_1}) , 
	\varphi(\underline{E_1})\widetilde{\varphi'}(\underline{E_0},
	\underline{E_1})+\widetilde{\varphi}(\underline{E_0},\underline{E_1})
	\varphi'(\underline{E_1})-(\varphi(\underline{E_1})
	-\varphi(\underline{E_0}))\widetilde{\varphi'}(\underline{E_0},
	\underline{E_1})\Big)
	\\
	=(\varphi\varphi'(\underline{E_1}), 
	\widetilde{\varphi\varphi'}(\underline{E_0},\underline{E_1})).
	\end{multline}
	
	The proof of Lemma \ref{lem:2.02} is completed.
\end{proof}

As in \eqref{eq:2.023}, the triple $\underline{\pi^*E}$ 
induces a map
\begin{align}\label{eq:2.027} 
\tilde{f}_{\underline{E}}: \C[\mathfrak{u}(r)]^{U(r)}
\rightarrow \Gamma(Y),
\quad \varphi\mapsto (\varphi(\underline{E_1}), 
\widetilde{\varphi}(\underline{E_0},\underline{E_1})).
\end{align}
It is a ring homomorphism by Lemma \ref{lem:2.02}.

\begin{lemma}\label{lem:2.03} 
	The ring homomorphisms $p$, $\theta^*$, $f_{\underline{E}}$ 
	and $\tilde{f}_{\underline{E}}$ in the following diagram are all 
	homomorphisms of pre-$\lambda$-rings, 
	\begin{center}
		\begin{tikzpicture}[>=angle 90]
		\matrix(a)[matrix of math nodes,
		row sep=3em, column sep=3em,
		text height=1.5ex, text depth=0.25ex]
		{\Gamma(Y) & \C[\mathfrak{u}(r)]^{U(r)} & 
		\C[\mathfrak{t}^r]^{T^r}\\
		Z^{\mathrm{even}}(Y,\R).	& & \\};
		\path[<-](a-1-1) edge 
		node[above]{$\tilde{f}_{\underline{E}}$} 
		(a-1-2);
		\path[->](a-1-2) edge 
		node[below]{$f_{\underline{E_1}}$} 
		(a-2-1);
		\path[->](a-1-1) edge node[left]{$p$} 
		(a-2-1);
		\path[->](a-1-2) edge node[above]{$\theta^*$} 
		(a-1-3);
		\end{tikzpicture}
	\end{center}
Moreover, we have 
\begin{align}\label{eq:2.028} 
f_{\underline{E_1}}=p\circ \tilde{f}_{\underline{E}}.
\end{align}
\end{lemma}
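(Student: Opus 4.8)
The plan is to exploit the fact that all four pre-$\lambda$-ring structures appearing in the diagram --- on $Z^{\mathrm{even}}(Y,\R)$, on $\Gamma(Y)$, on $\C[\mathfrak{t}^r]^{T^r}$ and on $\C[\mathfrak{u}(r)]^{U(r)}$ --- are produced by one and the same recipe \eqref{eq:2.004}, namely $\lambda_t(x)=\exp\bigl(\sum_{k\geq 1}(-1)^{k-1}\Psi^k(x)t^k/k\bigr)$, where the exponential and the products are formed in the ambient ring and $\Psi^k$ is the relevant Adams operation. Consequently each $\lambda^n(x)$ is a fixed universal polynomial with rational coefficients in $\Psi^1(x),\dots,\Psi^n(x)$. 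Since a unital ring homomorphism between $\Q$-algebras is automatically $\Q$-linear, I will first record the elementary principle that such a homomorphism $\phi\colon R\to S$ between two rings of this type is a homomorphism of pre-$\lambda$-rings as soon as it intertwines the Adams operations, $\phi\circ\Psi^k=\Psi^k\circ\phi$ for all $k\in\N$: indeed the coefficientwise extension $R[[t]]\to S[[t]]$ is then a $\Q[[t]]$-algebra homomorphism, hence commutes with $\exp$ of a power series without constant term, giving $\lambda^n_S(\phi(x))=\phi(\lambda^n_R(x))$. Thus the proof splits into two routine verifications: that $p$, $\theta^*$, $f_{\underline E}$ and $\tilde f_{\underline E}$ are unital ring homomorphisms, and that each intertwines the $\Psi^k$.

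For the first, I will simply quote what is already available: $p$ is a ring homomorphism by the multiplication law \eqref{eq:2.008} (as noted near \eqref{eq:2.011}), $\theta^*$ by classical invariant theory \eqref{eq:2.017}--\eqref{eq:2.018}, $f_{\underline E}$ by Chern--Weil theory (the statement after \eqref{eq:2.023}), and $\tilde f_{\underline E}$ by parts (b) and (d) of Lemma \ref{lem:2.02} (as noted after \eqref{eq:2.027}).

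For the second, the key point is that all four maps are \emph{degree preserving} for the gradings carried by these rings, while the Adams operation acts on the degree-$l$ homogeneous piece of each of them by the scalar $k^l$; hence the intertwining is automatic. Concretely: $p$ carries the degree-$l$ summand $Z^{2l}(Y,\R)\oplus(\Omega^{2l-1}(Y,\R)/\Im\, d)$ of $\Gamma(Y)$ into $Z^{2l}(Y,\R)$, so $p\circ\Psi^k=\Psi^k\circ p$ at once. For $\theta^*$, $f_{\underline E}$ and $\tilde f_{\underline E}$ it is enough, by multiplicativity, to test on the algebra generators $c_1,\dots,c_r$ of $\C[\mathfrak{u}(r)]^{U(r)}$, on which $\Psi^k c_j=k^j c_j$: one has $\theta^*(c_j)=\sigma_j(u_1,\dots,u_r)$ which is homogeneous of degree $j$, so $\Psi^k\theta^*(c_j)=k^j\theta^*(c_j)$; and the Chern--Weil form $c_j(\underline E)$ lies in $\Omega^{2j}(Y,\C)$ while the Chern--Simons form $\widetilde{c_j}(\underline{E_0},\underline{E_1})$ lies in $\Omega^{2j-1}(Y,\C)/\Im\, d$ (clear from \eqref{eq:2.024}), so $f_{\underline E}(c_j)$ and $\tilde f_{\underline E}(c_j)$ both sit in the degree-$j$ piece of $Z^{\mathrm{even}}$, resp.\ of $\Gamma(Y)$, and $\Psi^k$ multiplies them by $k^j$, matching $\Psi^k c_j=k^jc_j$. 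Finally \eqref{eq:2.028} will be immediate from the definitions: applying $p$ to $\tilde f_{\underline E}(\varphi)=\bigl(\varphi(\underline{E_1}),\widetilde\varphi(\underline{E_0},\underline{E_1})\bigr)$ returns $\varphi(\underline{E_1})=f_{\underline{E_1}}(\varphi)$.

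I do not expect a genuine obstacle here. The only ingredient that is not purely formal is the multiplicativity of $\tilde f_{\underline E}$, which is precisely Lemma \ref{lem:2.02}(d) and is already proved; the rest is the observation that the four $\lambda$-structures are ``Adams-generated'' together with the degree bookkeeping that an invariant polynomial of weighted degree $d$ in the $c_j$ produces a Chern--Weil form of form-degree $2d$ and a Chern--Simons form of form-degree $2d-1$, so that it lands in the degree-$d$ slot of $\Gamma(Y)$ where $\Psi^k$ acts by $k^d$.
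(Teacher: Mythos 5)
Your proposal is correct and follows essentially the same route as the paper: the paper's proof likewise observes that all four maps commute with the relevant Adams operations and then invokes the universal formula \eqref{eq:2.004} to conclude they are pre-$\lambda$-ring homomorphisms, with \eqref{eq:2.028} read off from \eqref{eq:2.023} and \eqref{eq:2.027}. Your write-up merely makes explicit the two ingredients the paper leaves implicit (the $\Q$-algebra/exponential argument and the degree bookkeeping on the generators $c_j$), so it is a fleshed-out version of the same argument.
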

\begin{proof}
From \eqref{eq:2.003}, \eqref{eq:2.009}, \eqref{eq:2.010},
\eqref{eq:2.017}, \eqref{eq:2.020}, 
\eqref{eq:2.023} and \eqref{eq:2.027}, 
we see that all homomorphisms here commute with the corresponding
Adams operations. So by \eqref{eq:2.004}, they are all homomorphisms 
of pre-$\lambda$-rings. 

The relation \eqref{eq:2.028} follows directly from \eqref{eq:2.023} 
and \eqref{eq:2.027}.

The proof of Lemma \ref{lem:2.03} is completed. 
\end{proof}

By \eqref{eq:2.018}, $(\theta^*)^{-1}(H)\in \C[\mathfrak{u}(r)]^{U(r)}$ 
is well-defined for any homogeneous symmetric polynomial $H$ 
in $u_1,\cdots,u_r$.
We define the Chern character to be the formal power series
\begin{align}\label{eq:2.029} 
\ch=\sum_{k=0}^{\infty}(\theta^*)^{-1}
\Big(\frac{1}{k!}\sum_{j=1}^ru_j^k\Big).
\end{align}
It is easy to see that 
\begin{align}\label{eq:2.030} 
f_{\underline{E}}(\ch)
:=\sum_{k=0}^{\infty}f_{\underline{E}}\circ(\theta^*)^{-1}
\Big(\frac{1}{k!}\sum_{j=1}^ru_j^k\Big)
\end{align}
is the same as the canonical Chern character $\ch(\underline{E})$
in Definition \ref{local213} for $g=1$.

Since the manifold is finite dimensional, 
the right-hand side of \eqref{eq:2.030} is a finite sum. 
In this paper, we only care about the characteristic forms. 
We could apply $\theta^*$ and $\lambda^i$ on $\ch$ formally
and obtain the rigorous equality of the characteristic forms after
taking the map $f_{\underline{E}}$.

From this point of view  
we write
\begin{align}\label{eq:2.031} 
\ch=(\theta^*)^{-1}\Big(\sum_{j=1}^r\exp(u_j)\Big).
\end{align}
From Lemma \ref{lem:2.03},
\eqref{eq:2.004}, \eqref{eq:2.020} and \eqref{eq:2.031}, we have
\begin{multline}\label{eq:2.032} 
\theta^*\circ\lambda_t(\ch)
=\lambda_t\Big(\sum_{j=1}^r\exp(u_j)\Big)
=\exp\left(\sum_{k=1}^{\infty}\frac{1}{k}(-1)^{k-1}t^k\Psi^k
\Big(\sum_{j=1}^r\exp(u_j)\Big)\right)
\\
=\exp\Big(\sum_{j=1}^r\sum_{k=1}^{\infty}
\frac{1}{k}(-1)^{k-1}t^k\exp(k 	u_j)\Big)
=\exp\Big(\sum_{j=1}^r\log(1+t\exp(u_j))\Big)\\
=\prod_{j=1}^r(1+t\exp(u_j)).
\end{multline}	

From \eqref{eq:2.032}, we get the following equality 
(comparing with \cite[Lemma 7.3.3]{GS90c}),
\begin{align}\label{eq:2.033} 
\lambda^k(\ch)(\underline{E})=\ch(\Lambda^k(\underline{E}))\in 
Z^{\mathrm{even}}(Y,\R).
\end{align}

\begin{lemma}\label{lem:2.04} 
The following identity holds,
\begin{align}\label{eq:2.034} 
\lambda^k\Big(\ch(\underline{E_1}),\wi{\ch}(\underline{E_0},
\underline{E_1})\Big)
=\Big(\ch(\Lambda^k (\underline{E_1})), \wi{\ch}(\Lambda^k 
(\underline{E_0}), \Lambda^k (\underline{E_1}))\Big)\in \Gamma(Y).
\end{align}
\end{lemma}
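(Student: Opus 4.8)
The plan is to deduce the identity from the pre-$\lambda$-ring homomorphism property of $\tilde{f}_{\underline{E}}$ established in Lemma \ref{lem:2.03}, together with the formal computation of $\lambda^k(\ch)$ in \eqref{eq:2.032}--\eqref{eq:2.033}. First I would observe that $\tilde{f}_{\underline{E}}$ is a homomorphism of pre-$\lambda$-rings, so it commutes with $\lambda^k$: for every $\varphi\in\C[\mathfrak{u}(r)]^{U(r)}$ we have $\lambda^k(\tilde{f}_{\underline{E}}(\varphi))=\tilde{f}_{\underline{E}}(\lambda^k(\varphi))$. Applying this with $\varphi=\ch$ (understood, as explained after \eqref{eq:2.030}, as a formal element whose image under $f_{\underline{E}}$ or $\tilde{f}_{\underline{E}}$ is a finite sum of characteristic forms) gives
\begin{align}\label{eq:prop.1}
\lambda^k\Big(\ch(\underline{E_1}),\wi{\ch}(\underline{E_0},\underline{E_1})\Big)
=\lambda^k\big(\tilde{f}_{\underline{E}}(\ch)\big)
=\tilde{f}_{\underline{E}}\big(\lambda^k(\ch)\big).
\end{align}
So the task reduces to identifying $\tilde{f}_{\underline{E}}(\lambda^k(\ch))$ with the right-hand side of \eqref{eq:2.034}.

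Next I would compute $\lambda^k(\ch)$ in the formal variables. From \eqref{eq:2.032}, $\theta^*\circ\lambda_t(\ch)=\prod_{j=1}^r(1+t\exp(u_j))$, so $\theta^*(\lambda^k(\ch))$ is the degree-$k$ elementary-symmetric expression in the $\exp(u_j)$, which is precisely $\theta^*$ applied to the class $\lambda^k(\ch)$ that represents $\ch\circ\Lambda^k$; equivalently, $\lambda^k(\ch)=(\theta^*)^{-1}\big(\sigma_k(e^{u_1},\dots,e^{u_r})\big)$, and this is the invariant polynomial $\varphi_k$ characterized by $f_{\underline{E}}(\varphi_k)=\ch(\Lambda^k(\underline{E}))$ for every geometric triple $\underline{E}$, by \eqref{eq:2.033}. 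Now I would apply $\tilde{f}_{\underline{E}}$ to $\varphi_k$: by definition \eqref{eq:2.027}, $\tilde{f}_{\underline{E}}(\varphi_k)=\big(\varphi_k(\underline{E_1}),\wi{\varphi_k}(\underline{E_0},\underline{E_1})\big)$. The first component is $\varphi_k(\underline{E_1})=f_{\underline{E_1}}(\varphi_k)=\ch(\Lambda^k(\underline{E_1}))$ by \eqref{eq:2.033}. For the second component I would check that the Chern--Simons class $\wi{\varphi_k}(\underline{E_0},\underline{E_1})$ equals $\wi{\ch}(\Lambda^k(\underline{E_0}),\Lambda^k(\underline{E_1}))$: both are computed by the same recipe \eqref{eq:2.024} applied on $Y\times\R$, namely $\int_0^1\{\varphi_k(\underline{\pi^*E})\}^{ds}ds$ versus $\int_0^1\{\ch(\underline{\pi^*\Lambda^k E})\}^{ds}ds$, and $\varphi_k(\underline{\pi^*E})=\ch(\Lambda^k(\underline{\pi^*E}))=\ch(\underline{\pi^*\Lambda^k E})$ as closed forms on $Y\times\R$ by \eqref{eq:2.033} (the connection on $\pi^*\Lambda^k E$ induced from $\nabla^{\pi^*E}$ is the pullback of the path connection used to define $\wi{\ch}(\Lambda^k(\underline{E_0}),\Lambda^k(\underline{E_1}))$, and Chern--Simons classes are path-independent by \cite[Theorem B.5.4]{MM07}). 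Combining these with \eqref{eq:prop.1} yields \eqref{eq:2.034}.

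The only delicate point is bookkeeping about the formal variable $t$: $\ch$ is an infinite series, so $\lambda^k(\ch)$ is a priori an infinite series of invariant polynomials, and one must argue that after applying $f_{\underline{E}}$ (equivalently $\tilde f_{\underline E}$) everything truncates to finitely many terms because $\dim Y<\infty$. This is exactly the convention fixed after \eqref{eq:2.030}, so I would simply invoke it: one computes $\theta^*$, $\lambda^k$ and the exponential formally as in \eqref{eq:2.031}--\eqref{eq:2.033}, and the resulting equality of characteristic forms (and their Chern--Simons transgressions) is rigorous after applying $\tilde f_{\underline E}$. I expect the verification that the transgression forms match — i.e. that $\wi{\varphi_k}(\underline{E_0},\underline{E_1})=\wi{\ch}(\Lambda^k(\underline{E_0}),\Lambda^k(\underline{E_1}))$ in $\Omega^{\mathrm{odd}}(Y,\C)/\Im\,d$ — to be the main (though routine) obstacle, handled by the path-independence of Chern--Simons classes and the naturality of $\Lambda^k$ under pullback to $Y\times\R$.
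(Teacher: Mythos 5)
Your proposal is correct and follows essentially the same route as the paper: both apply the pre-$\lambda$-ring homomorphism property of $\tilde{f}_{\underline{E}}$ from Lemma \ref{lem:2.03} to commute $\lambda^k$ past $\tilde{f}_{\underline{E}}$, then identify the even component via \eqref{eq:2.033} and the odd component by applying \eqref{eq:2.033} to $\underline{\pi^*E}$ inside the transgression integral \eqref{eq:2.024}. The paper's verification of the transgression equality $\widetilde{\lambda^k(\ch)}(\underline{E_0},\underline{E_1})=\widetilde{\ch}(\Lambda^k(\underline{E_0}),\Lambda^k(\underline{E_1}))$ is exactly the computation you describe in \eqref{eq:2.035}.
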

\begin{proof} 
From \eqref{eq:2.024} and \eqref{eq:2.033},
we have modulo exact forms,
\begin{multline}\label{eq:2.035} 
\widetilde{\lambda^k(\ch)}(\underline{E_0},\underline{E_1})
=\int_0^1\{\lambda^k(\ch)(\underline{\pi^*E})\}^{ds}ds
\\
=\int_0^1\{\ch(\Lambda^k(\underline{\pi^*E}))\}^{ds}ds
=\widetilde{\ch}(\Lambda^k(\underline{E_0}),
\Lambda^k(\underline{E_1})).
\end{multline}
So from Lemma \ref{lem:2.03}, \eqref{eq:2.027}, \eqref{eq:2.033} 
and \eqref{eq:2.035}, we get
\begin{multline}\label{eq:2.036}
\lambda^k\big(\ch(\underline{E_1}),\wi{\ch}(\underline{E_0},
\underline{E_1})\big)
=\lambda^k\big(\tilde{f}_{\underline{E}}(\ch) \big)
=\tilde{f}_{\underline{E}}(\lambda^k(\ch))
\\
=\Big(\lambda^k(\ch)(\underline{E_1}), \widetilde{\lambda^k(\ch)}
(\underline{E_0},\underline{E_1})\Big)
=\Big(\ch(\Lambda^k (\underline{E_1})), \wi{\ch}(\Lambda^k 
(\underline{E_0}), \Lambda^k (\underline{E_1}))\Big).
\end{multline}

The proof of Lemma \ref{lem:2.04}  is completed.
\end{proof}

\subsection{Pre-$\lambda$-ring structure in differential $K$-theory}
\label{s0302}
We introduce now a pre-$\lambda$-ring structure for differential $K$-ring.
It can be understood as the differential $K$-theory 
version of the pre-$\lambda$-ring structure for arithmetic $K$-theory 
in \cite[Theorem 7.3.4]{GS90c}.

Let $Y$ be a compact manifold.

\begin{defn}\label{defn:2.05}\cite[Definition 2.16]{FreedLott10}
	A cycle for differential $K$-theory of $Y$ is a 
	pair $(\underline{E}, \phi)$ where 
	$\underline{E}$ is a geometric triple and $\phi$ is an element in
	$\Omega^{\mathrm{odd}}(Y, \R)/\Im \, d$.
	Two cycles $(\underline{E_1}, \phi_1)$ and $(\underline{E_2}, 
	\phi_2)$ are equivalent if there exist a geometric triple 
	$\underline{E_3}$ and a vector bundle isomorphism
	\begin{align}\label{eq:2.037} 
	\Phi:E_1\oplus E_3\rightarrow E_2\oplus E_3
	\end{align}
	such that
	\begin{align}\label{eq:2.038}
	\widetilde{\ch}\left(\underline{E_1}\oplus\underline{E_3}, 
	\Phi^{*} 
	\left(\underline{E_2}\oplus\underline{E_3}\right)\right)
	=\phi_2-\phi_1.
	\end{align} 
	We define the sum in the obvious way by
	\begin{align}\label{eq:2.039}
	(\underline{E}, \phi)+(\underline{F}, 
	\psi)=(\underline{E}\oplus 
	\underline{F}, \phi+\psi).
	\end{align} 	
	The differential $K$-group 
	$\widehat{K}^0(Y)$ is defined as the Grothendieck group of 
	equivalence classes of cycles. 
	\end{defn}

We denote by $[\underline{E}, \phi]$
the equivalence class of a cycle $(\underline{E}, \phi)$. Then
\begin{align}\label{eq:2.38a}
\widehat{K}^0(Y)=\left\{[\underline{E}-\underline{E_1}, \phi-\phi_1]:
(\underline{E}, \phi), (\underline{E_1}, \phi_1) \text{ are
	cycles as above} \right\}
\end{align}
and $\widehat{K}^0(Y)$ is an abelian group.
	For $[\underline{E}, \phi], 
	[\underline{F}, \psi]\in \widehat{K}^0(Y)$,
set
\begin{align}\label{eq:2.040}
[\underline{E}, \phi]\cup[\underline{F}, \psi]
=\Big[\underline{E}\otimes \underline{F}, 
[(\ch(\underline{E}),\phi)*(\ch(\underline{F}),\psi)]_{\mathrm{odd}}
\Big],
\end{align} 
where $[\cdot]_{\mathrm{odd}}$ is the 
component of $\Gamma(Y)$ in 
$\Omega^{\mathrm{odd}}(Y,\R)/\Im \, d$.	
It is easy to check that this product \eqref{eq:2.040} is well-defined, 
commutative and associative (cf. also Lemma
\ref{lemma:2.15a}). 
Let $\underline{\C}$ be the trivial complex line bundle over $Y$
with the trivial metric and connection.
Then the 
element 
\begin{align}\label{eq:2.041} 
1:=[\underline{\C},0]
\end{align}
is a unit for the product $\cup$. 
Thus $(\widehat{K}^0(Y), +, \cup)$ is a commutative ring
with unit $1$.

From \eqref{local126}, \eqref{eq:2.037} and \eqref{eq:2.038}, we see that 
if $[\underline{E},0]=[\underline{F},0]\in \widehat{K}^0(Y)$, we have
\begin{align}\label{eq:2.042} 
\ch(\underline{E})=\ch(\underline{F})\in \Omega^{\bullet}(Y,\R).
\end{align}

\begin{thm}\label{thm:2.06}
	There exists a pre-$\lambda$-ring structure on $\widehat{K}^0(Y)$.
\end{thm}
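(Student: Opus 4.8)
The plan is to construct the operations $\lambda^k \colon \widehat K^0(Y) \to \widehat K^0(Y)$ on cycles by the obvious geometric formula and then verify the axioms (a), (b), (c) of Definition \ref{defn:2.01}. Concretely, for a cycle $(\underline E, \phi)$ I would set
\begin{align}\label{eq:pp.1}
\lambda^k(\underline E, \phi) = \Big(\underline{\Lambda^k(\underline E)},\ \big[\lambda^k\big(\ch(\underline E), \phi\big)\big]_{\mathrm{odd}}\Big),
\end{align}
where on the right $\lambda^k$ is the operation on $\Gamma(Y)$ defined via \eqref{eq:2.004} and \eqref{eq:2.009} (using the $*$-product), and $\underline{\Lambda^k(\underline E)}$ carries the metric and connection induced from $\underline E$. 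The choice is forced: the curvature part must be $\ch(\Lambda^k(\underline E))$ by \eqref{eq:2.033}, and Lemma \ref{lem:2.04} tells us that the $\Gamma(Y)$-operation $\lambda^k$ applied to $(\ch(\underline E_1), \widetilde\ch(\underline E_0, \underline E_1))$ produces exactly the transgression data $\widetilde\ch(\Lambda^k(\underline E_0), \Lambda^k(\underline E_1))$ that compatibility demands. Then I would extend $\lambda^k$ from cycles to the Grothendieck group; since $\widehat K^0(Y)$ is generated by classes of cycles modulo the relation \eqref{eq:2.038} and formal differences, axiom (c) (the addition formula) is precisely what allows this extension to be consistent, so (c) must be proved first at the level of cycles.

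First I would check \emph{well-definedness}: if $(\underline E_1, \phi_1) \sim (\underline E_2, \phi_2)$ via an isomorphism $\Phi\colon E_1 \oplus E_3 \to E_2 \oplus E_3$ with \eqref{eq:2.038}, I must show $\lambda^k(\underline E_1, \phi_1) \sim \lambda^k(\underline E_2, \phi_2)$. Here the addition formula (c) in $\Gamma(Y)$ reduces this to comparing $\lambda^j$-pieces of $\underline E_1 \oplus \underline E_3$ and $\underline E_2 \oplus \underline E_3$; applying $\Phi$ and using functoriality of $\Lambda^j$ under bundle isomorphisms, together with Lemma \ref{lem:2.04} to identify the transgression terms, gives the required equivalence witnessed by a suitable stabilizing bundle built from the various $\Lambda^i(E_3)$. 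Axioms (a) and (b) are immediate: $\Lambda^0(\underline E) = \underline{\C}$ gives $\lambda^0 = 1$ in the sense of \eqref{eq:2.041}, and $\Lambda^1(\underline E) = \underline E$ with $\lambda^1$ on $\Gamma(Y)$ being the identity gives $\lambda^1 = \mathrm{id}$. For (c), on the bundle level the classical splitting $\Lambda^k(E \oplus F) = \bigoplus_{j} \Lambda^j(E) \otimes \Lambda^{k-j}(F)$ handles the curvature/metric part (with the induced metric splitting orthogonally), while on the form part one invokes that $\tilde f_{\underline E}$ and the $\Gamma(Y)$-structure already satisfy (c) by \eqref{eq:2.005}–\eqref{eq:2.006}, and the $*$-product in \eqref{eq:2.040} is exactly engineered so that $\ch(\underline E \otimes \underline F)$-type cross terms match; Lemma \ref{lem:2.04} applied to the path connecting $\underline E_0 \oplus \underline F_0$ to $\underline E_1 \oplus \underline F_1$ then upgrades the equality of forms to equality of cycles.

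The main obstacle, I expect, is the well-definedness step, specifically handling the stabilization bundle $E_3$ and the isomorphism $\Phi$ simultaneously with the exterior power operations: $\Phi$ does not commute with taking $\Lambda^k$ in a naive way once we pass to $E_1 \oplus E_3$, so one must carefully expand $\Lambda^k(E_i \oplus E_3)$, track how the induced isomorphism $\Lambda^k(\Phi \oplus \mathrm{id})$ acts on the summands $\Lambda^j(E_i) \otimes \Lambda^{k-j}(E_3)$, and then use the additivity formula in $\Gamma(Y)$ together with Lemma \ref{lem:2.04} to see that the defect $\phi_2 - \phi_1 = \widetilde\ch(\underline E_1 \oplus \underline E_3, \Phi^*(\underline E_2 \oplus \underline E_3))$ propagates correctly to the defect for the $\lambda^k$-cycles. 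This is essentially a bookkeeping argument, but it is the place where all the pieces — the ring structure \eqref{eq:2.040}, the $\Gamma(Y)$ pre-$\lambda$-structure, and Lemma \ref{lem:2.04} — have to be combined, so I would present it in detail while relegating (a), (b) to a sentence each. Once these are in place, $\lambda_t$ extends to a group homomorphism $\widehat K^0(Y) \to 1 + \widehat K^0(Y)[[t]]^+$ by \eqref{eq:2.001}–\eqref{eq:2.002}, completing the proof.
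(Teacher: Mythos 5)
Your proposal matches the paper's proof essentially step for step: the same definition \eqref{eq:2.043} of $\lambda^k$ on cycles, the same immediate verification of axioms (a), (b) and of (c) via the splitting of $\Lambda^k(E\oplus F)$ together with the pre-$\lambda$-structure on $\Gamma(Y)$ and \eqref{eq:2.033}, and the same use of Lemma \ref{lem:2.04} combined with the induced isomorphism $\Lambda^k(\Phi)$ to treat well-definedness. The only point you gloss over is the final descent from $\lambda^k(\underline{E_j}\oplus\underline{E_3},\phi_j)$ to $\lambda^k(\underline{E_j},\phi_j)$: since $\lambda^k$ itself is not additive, the paper does this by induction on $k$ using the expansion \eqref{eq:2.053} and subtracting the lower-order terms in the Grothendieck group, rather than by exhibiting an explicit stabilizing bundle built from the $\Lambda^i(E_3)$.
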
 
\begin{proof}
	Let $(\underline{E},\phi)$ be a cycle.
	Observe that $(\ch(\underline{E}),\phi)\in \Gamma(Y)$. 
	Since $\Gamma(Y)$ is a pre-$\lambda$-ring,
	we define
	\begin{align}\label{eq:2.043} 
	\lambda^k(\underline{E},\phi):=(\Lambda^k(\underline{E}), 
	[\lambda^k(\ch(\underline{E}),\phi)]_{\mathrm{odd}}).
	\end{align}		
	It is clear that
	\begin{align}\label{eq:2.044}
	\lambda^0(\underline{E},\phi)=1, \quad
	\lambda^1(\underline{E},\phi)=(\underline{E},\phi).
	\end{align}	
	 By \eqref{eq:2.033}, \eqref{eq:2.039}, \eqref{eq:2.040}
	 and  \eqref{eq:2.043}, 
	 we have for any cycles $(\underline{E},\phi)$ and
	 $(\underline{F},\psi)$,
	\begin{multline}\label{eq:2.045}
	\lambda^k((\underline{E},\phi)+(\underline{F},\psi))
	=\lambda^k(\underline{E}\oplus\underline{F},\phi+\psi)
	\\
	=
\left(\Lambda^k(\underline{E}\oplus\underline{F}), 
[\lambda^k(\ch(\underline{E}\oplus\underline{F}),
\phi+\psi)]_{\mathrm{odd}}
\right)
\\
	=\left(\sum_{j=0}^k\Lambda^j(\underline{E})
	\otimes\Lambda^{k-j}(\underline{F}),
	\left[\sum_{i=0}^k\lambda^i(\ch(\underline{E}),\phi)*
	\lambda^{k-i}(\ch(\underline{F}),\psi)
	\right]_{\mathrm{odd}} \right)
	\\
	=\sum_{j=0}^k\Big(\Lambda^j\left(\underline{E}\right)
	\otimes\Lambda^{k-j}
	\left(\underline{F}\right),\Big[\Big(\ch\left(
	\Lambda^j\left(\underline{E}\right)\right),
	\left[\lambda^j\left(\ch\left(\underline{E}\right),
	\phi\right)\right]_{\mathrm{odd}}\Big)
	\\
	*
	\Big(\ch\left(\Lambda^{k-j}\left(\underline{F}
	\right)\right),
	\left[\lambda^{k-j}\left(\ch\left(\underline{F}\right)
	,\psi\right)\right]_{\mathrm{odd}}\Big)\Big]_{\mathrm{odd}}\Big)
	\\
	=\sum_{j=0}^k\Big(\Lambda^j(\underline{E}), 
	[\lambda^j(\ch(\underline{E}),\phi)]_{\mathrm{odd}}\Big)
	\cup\Big(\Lambda^{k-j}(\underline{F}),
	[\lambda^{k-j}(\ch(\underline{F}),\psi)]_{\mathrm{odd}}\Big)
	\\
	=\sum_{j=0}^k\lambda^j(\underline{E},\phi)\cup 
	\lambda^{k-j}(\underline{F},\psi),
	\end{multline}
	where the second equality 
	is implied by \eqref{eq:2.043}, the third equality	
	follows from Definition \ref{defn:2.01}c) and the pre-$\lambda$-ring 
	structure on $\Gamma(Y)$, the fourth 
	equality is a consequence of the fact that $p$ in 
	(\ref{eq:2.010}) is a homomorphism of pre-$\lambda$ rings,
	\begin{align}\label{eq:2.046} 
\lambda^j(\ch(\underline{E}),\phi)=\Big(\lambda^j\left(\ch
\left(\underline{E}\right)\right),
\left[\lambda^j\left(\ch\left(\underline{E}\right),
\phi\right)\right]_{\mathrm{odd}}\Big)
	\end{align}
and \eqref{eq:2.033}, the last two equalities 
follows from \eqref{eq:2.040} and \eqref{eq:2.043}.
So we only need to prove that $\lambda^k$ is well-defined on 
$\widehat{K}^0(Y)$.
	
	If $(\underline{E_1},\phi_1)\sim (\underline{E_2},\phi_2)$,
	there exist $\underline{E_3}$ and isomorphism
	$\Phi:E_1\oplus E_3\rightarrow E_2\oplus E_3$ such that 
	\begin{align}\label{eq:2.047} 
	\widetilde{\ch}\left( (\Phi^{-1})^{*} 
	(\underline{E_1}\oplus\underline{E_3}), 
	\underline{E_2}\oplus\underline{E_3}\right)
	=
	\widetilde{\ch}\left(\underline{E_1}\oplus\underline{E_3}, \Phi^{*} 
	\left(\underline{E_2}\oplus\underline{E_3}\right)\right)
	=\phi_2-\phi_1.
	\end{align}
	From \eqref{eq:2.034}, \eqref{eq:2.043}
	and (\ref{eq:2.047}), we have 
	\begin{multline}\label{eq:2.048}
	\lambda^k(\underline{E_2}\oplus\underline{E_3},
	\widetilde{\ch}\left( 
	\underline{E_1}\oplus\underline{E_3}, 
	\Phi^{*} (\underline{E_2}\oplus\underline{E_3}))\right)
	\\
	=\left( \Lambda^k\left(\underline{E_2}\oplus\underline{E_3}\right),
	\left[\lambda^k\left(\ch(\underline{E_2}\oplus\underline{E_3}),
	\widetilde{\ch}\left( (\Phi^{-1})^{*} 
	(\underline{E_1}\oplus\underline{E_3}), 
	\underline{E_2}\oplus\underline{E_3}\right)
	\right)\right]_{\mathrm{odd}}\right)
	\\
	=\Big( \Lambda^k\left(\underline{E_2}\oplus\underline{E_3}\right), 
	\widetilde{\ch}\left( (\Phi^{-1})^{*} 
	\Lambda^k(\underline{E_1}\oplus\underline{E_3}), 
\Lambda^k(	\underline{E_2}\oplus\underline{E_3})\right)\Big).
	\end{multline}
By Definition \ref{defn:2.05}, \eqref{eq:2.011} and \eqref{eq:2.043}, 
	we get
\begin{multline}\label{eq:2.049}
\lambda^k(\underline{E_1}\oplus\underline{E_3},0)
= \big( \Lambda^k\left(\underline{E_1}\oplus\underline{E_3}\right),
	\left[\lambda^k\left(\ch(\underline{E_1}\oplus\underline{E_3}),
	0\right)\right]_{\mathrm{odd}}\big)\\
	= \big( 
	\Lambda^k\left(\underline{E_1}\oplus\underline{E_3}\right), 0\big)
	\sim\Big( \Lambda^k\left(\underline{E_2}\oplus\underline{E_3}\right), 
	\widetilde{\ch}\left((\Phi^{-1})^{*}
	\Lambda^k(\underline{E_1}\oplus\underline{E_3}),
	\Lambda^k\left(\underline{E_2}
	\oplus\underline{E_3}\right)\right)\Big).
	\end{multline}
From \eqref{eq:2.048} and \eqref{eq:2.049}, we get 
	\begin{align}\label{eq:2.050} 
	\lambda^k(\underline{E_1}\oplus\underline{E_3},0)\sim 
	\lambda^k\left(\underline{E_2}\oplus\underline{E_3},
	\widetilde{\ch}\left( 
	\underline{E_1}\oplus\underline{E_3}, 
	\Phi^{*} (\underline{E_2}\oplus\underline{E_3})\right)\right). 
	\end{align}
	Since for $j=1,2$,
	\begin{align}\label{eq:2.051}
	\lambda_t(\underline{E_j}\oplus\underline{E_3},\phi_j)
	=\lambda_t((\underline{E_j}\oplus\underline{E_3},0)+(0,\phi_j))
	=\lambda_t(\underline{E_j}\oplus\underline{E_3},0)\cup
	\lambda_t(0,\phi_j),
	\end{align}	
	by \eqref{eq:2.047} and \eqref{eq:2.050}, we have 
	\begin{multline}\label{eq:2.052} 
\lambda^k(\underline{E_1}\oplus\underline{E_3},\phi_1)
=\sum_{i=0}^k\lambda^i(\underline{E_1}\oplus\underline{E_3},0)\cup
\lambda^{k-i}(0,\phi_1)
\\
\sim \sum_{i=0}^k\lambda^i(\underline{E_2}\oplus\underline{E_3},
\phi_2-\phi_1)\cup\lambda^{k-i}(0,\phi_1) 
= \lambda^k(\underline{E_2}\oplus\underline{E_3},\phi_2).
	\end{multline}
By (\ref{eq:2.045}), for any $k\geq 1$, $j=1,2$, we have 
	\begin{align}\label{eq:2.053} 
	\lambda^k(\underline{E_j}\oplus\underline{E_3},\phi_j)=
	\sum_{i=0}^k\lambda^i(\underline{E_j},\phi_j)\cup 
	\lambda^{k-i}(\underline{E_3},0).
	\end{align}
	Note that $\lambda^1(\underline{E_1},\phi_1)
	=(\underline{E_1},\phi_1)\sim 
	(\underline{E_2},\phi_2)=\lambda^1(\underline{E_2},\phi_2)$. 
	We assume that
	$\lambda^i(\underline{E_1},\phi_1)\sim \lambda^i
	(\underline{E_2},\phi_2)$
	holds for all $1\leq i\leq k-1$. Then 
	\begin{align}\label{eq:2.054}
	\sum_{i=0}^{k-1}\lambda^i(\underline{E_1},\phi_1)\cup 
	\lambda^{k-i}(\underline{E_3},0)\sim
	\sum_{i=0}^{k-1}\lambda^i(\underline{E_2},\phi_2)\cup 
	\lambda^{k-i}(\underline{E_3},0).
	\end{align}
From \eqref{eq:2.052}-\eqref{eq:2.054}, since $\lambda^0(x)=1$, 
	$\lambda^k(\underline{E_1},\phi_1)\sim 
	\lambda^k(\underline{E_2},\phi_2)$.
	So	 by induction, for any $k\geq 1$, we have
	$\lambda^k(\underline{E_1},\phi_1)\sim 
	\lambda^k(\underline{E_2},\phi_2)$.
	
	The proof of Theorem \ref{thm:2.06} is completed.
	\end{proof}

\begin{lemma}\label{lem:2.07} 
If $\sigma: X\rightarrow Y$ is a $\cC^{\infty}$ map of 
compact manifolds, then its pull-back maps
\begin{align}\label{eq:2.54b}
\begin{split}
&\sigma^*:\Gamma(Y)\rightarrow \Gamma(X), \qquad
\sigma^*(\omega, \phi)=(\sigma^*\omega, \sigma^*\phi);
\\
&\hat{\sigma}^*:\widehat{K}^0(Y)\rightarrow \widehat{K}^0(X),\quad
\hat{\sigma}^*[\underline{E},\phi]
=\left[\sigma^*\underline{E}, \sigma^*\phi\right],
\end{split}
\end{align}
are morphisms of pre-$\lambda$-rings.
\end{lemma}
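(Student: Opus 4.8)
The plan is to reduce everything to the naturality of pull-back on differential forms, since all the structure involved --- the differential $d$, the wedge product, the Adams operations $\Psi^k$, Chern--Weil forms and Chern--Simons transgressions --- is manifestly compatible with $\sigma^*$, and the operations $\lambda^k$ are assembled from exactly these ingredients.

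First I would treat $\sigma^*$ on $\Gamma$. The map $\sigma^*:\Omega^\bullet(Y,\R)\to\Omega^\bullet(X,\R)$ commutes with $d$, hence descends to $\Omega^{\rm odd}/\Im\, d$, preserves the grading, and is multiplicative for $\wedge$; from the formula \eqref{eq:2.008} for $*$ it follows immediately that $\sigma^*:\Gamma(Y)\to\Gamma(X)$ is a ring homomorphism. Since $\Psi^k$ acts on $Z^{2l}(Y,\R)$ and on $Z^{2l}\oplus(\Omega^{2l-1}/\Im\, d)$ by the scalar $k^l$ (see \eqref{eq:2.003}, \eqref{eq:2.009}) and $\sigma^*$ preserves degree, we get $\sigma^*\circ\Psi^k=\Psi^k\circ\sigma^*$. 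Because $\lambda_t$ on $Z^{\rm even}(Y,\R)$ and on $\Gamma(Y)$ is defined in \eqref{eq:2.004} purely in terms of the $\Psi^k$ and the ring multiplication, and $\sigma^*$ is a ring homomorphism commuting with the $\Psi^k$, we conclude $\sigma^*\circ\lambda^k=\lambda^k\circ\sigma^*$ on $Z^{\rm even}(Y,\R)$ and on $\Gamma(Y)$. This settles the statement for $\sigma^*$.

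For $\hat\sigma^*$ I would first check it is well-defined on $\widehat K^0(Y)$ and is a ring homomorphism. The Chern--Simons form $\widetilde{\ch}$ is natural under pull-back, since its defining integral \eqref{eq:2.024} involves only $\pi^*$, the extraction of the $ds$-component, and integration over $[0,1]$, all of which commute with $\sigma^*$; hence a bundle isomorphism $\Phi$ realizing an equivalence \eqref{eq:2.037}--\eqref{eq:2.038} pulls back to one realizing the corresponding equivalence over $X$. Multiplicativity follows from \eqref{eq:2.040} together with $\ch(\sigma^*\underline E)=\sigma^*\ch(\underline E)$ (the Chern character being built from the curvature, which pulls back) and the fact that $\sigma^*$ respects $*$. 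Finally, to obtain $\hat\sigma^*\circ\lambda^k=\lambda^k\circ\hat\sigma^*$ I would use the explicit formula \eqref{eq:2.043}: the needed ingredients are (i) the canonical isometric, connection-preserving isomorphism $\sigma^*\Lambda^k(\underline E)\cong\Lambda^k(\sigma^*\underline E)$, (ii) $\ch(\sigma^*\underline E)=\sigma^*\ch(\underline E)$, and (iii) $\sigma^*\circ\lambda^k=\lambda^k\circ\sigma^*$ on $\Gamma$, established above. Combining these with $\hat\sigma^*(\underline E,\phi)=(\sigma^*\underline E,\sigma^*\phi)$ gives the claim at the level of cycles, hence on $\widehat K^0(Y)$.

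The argument is entirely a matter of bookkeeping; the one point worth stating explicitly is the identification in (i) --- that forming exterior powers of Hermitian vector bundles, equipped with their induced metrics and connections, commutes with pull-back --- but this is standard and immediate from the local description of the induced data. I do not expect any genuine obstacle here: each ingredient is manifestly natural under $\sigma^*$, and $\lambda^k$ is built from these ingredients, so the lemma follows formally.
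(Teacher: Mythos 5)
Your argument is correct and matches the paper's own proof: both reduce the claim to the facts that $\sigma^*$ commutes with $*$ and $\Psi^k$ on $\Gamma$ (hence with $\lambda_t$ as defined in \eqref{eq:2.004}), and that exterior powers, Chern--Weil forms and Chern--Simons classes are natural under pull-back, after which the statement for $\hat\sigma^*$ follows from the explicit formula \eqref{eq:2.043}. Your write-up is somewhat more detailed (e.g.\ the well-definedness check via pulling back the isomorphism $\Phi$), but the route is the same.
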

\begin{proof}
From (\ref{eq:2.008}) and (\ref{eq:2.009}), $\sigma^*$ commutes with
the operators $*$ and $\Psi^k$, thus 
$\sigma^*:\Gamma(Y)\rightarrow \Gamma(X)$ is a morphism of 
pre-$\lambda$-rings. Now from (\ref{eq:2.023}) and
(\ref{eq:2.024}), we have  $\sigma^*f_{\underline{E}}
=f_{\sigma^*\underline{E}}\sigma^*$, $\widetilde{\varphi}\sigma^*
=\sigma^*\widetilde{\varphi}$.
From (\ref{eq:2.043}) and 
$\sigma^*\Lambda^k(\underline{E})
=\Lambda^k(\sigma^*\underline{E})$, 
we get that the second map of (\ref{eq:2.54b})
is well-defined and a morphism of 
pre-$\lambda$-rings. 
\end{proof}

\begin{rem}
In \cite[Chapter V]{BerthelotTh}, the $\lambda$-ring is well studied, 
and needs two additional conditions 
compared to Definition \ref{defn:2.01}.
It is well-known that the topological $K$-group $K^0(Y)$ is 
a  $\lambda$-ring (cf. \cite[Theorem 1.5]{ATall69}). In 
\cite{Ro01}, the author proves that the arithmetic 
$K$-group is also a  $\lambda$-ring. It is natural to ask whether 
the differential $K$-group $\widehat{K}^0(Y)$ is also a $\lambda$-ring,
and we will come back to this question later.
However, for our application here, the pre-$\lambda$-ring structure for
 differential $K$-theory is enough. 
\end{rem}

\subsection{$\gamma$-filtration}\label{s0303}

\begin{defn}\cite[(1.25)]{BerthelotTh}\label{defn:2.08}
Let $R$ be any pre-$\lambda$-ring with an augmentation homomorphism 
$\rank:R\rightarrow \Z$. The $\gamma$-operations are defined by
\begin{align}\label{eq:2.055} 
\gamma_t(x)=\sum_{j\geq 0}\gamma^j(x)t^j
:=\lambda_{\frac{t}{1-t}}(x).
\end{align}	
\end{defn}
By Definition \ref{defn:2.01}, we have 
\begin{align}\label{eq:2.056}
\gamma^0=1,\quad \gamma^1(x)=x,\quad \gamma_t(x+y)
=\gamma_t(x)\gamma_t(y),\quad \text{for any}\ x,y\in R.
\end{align} 

\begin{defn}
Set $F^nR:=R$ for $n\leq 0$ and $F^1R$ the kernel of 
$\rank:R\rightarrow \Z$. Let $F^nR$ be the additive subgroup 
generated by $\gamma^{r_1}(x_1)\cdots \gamma^{r_k}(x_k)$, where 
$x_1,\cdots,x_k\in F^1R$ and $\sum_{j=1}^kr_j\geq n$. The filtration
\begin{align}\label{eq:2.057}
F^1R\supseteq F^2R\supseteq F^3R\supseteq\cdots
\end{align}  
is called the $\gamma$-filtration of $R$. The $\gamma$-filtration is 
said to be \textbf{locally nilpotent} at $x\in F^1R$, if there 
exists $M(x)\in \N$, such that $\gamma^{r_1}(x)\cdots 
\gamma^{r_k}(x)=0$ for any $\sum_{j=1}^kr_j>M(x)$. 
\end{defn}

Let $Y$ be a compact connected manifold.

It is well-known that the classical $\gamma$-filtration of  $K^0(Y)$ is 
locally nilpotent for any $x\in F^1K^0(Y)$ \cite[Proposition 
3.1.5]{A67} with the augmentation homomorphism
$\mathrm{rk}:E\rightarrow \rank E$, 
the rank of the complex vector bundle $E$. Since 
$\widehat{K}^0(Y)$ is a pre-$\lambda$-ring, the augmentation 
homomorphism 
$\rank (\underline{E},\phi):=\rank E$ defines a 
$\gamma$-filtration of  $\widehat{K}^0(Y)$.  

We identify a geometric triple $\underline{E}$
to the cycle $(\underline{E},0)$.
By \eqref{eq:2.001}, \eqref{eq:2.011} and \eqref{eq:2.043}, we have
\begin{align}\label{eq:2.058} 
\lambda_t(\underline{E})=\sum_{j\geq 0}\Lambda^j(\underline{E})t^j.
\end{align}
 So 
$\gamma^i(\underline{E})$, defined as in \eqref{eq:2.055}, 
is a finite dimensional virtual Hermitian vector 
bundle with induced metric and connection. We also denote it by 
$\underline{\gamma^i(E)}$.

 Let $\underline{k}$ 
 be the $k$-dimensional trivial complex
vector bundle 
with trivial metric and connection.
Then $F^1\widehat{K}^0(Y)$ is generalized by cycles 
$(\underline{E}-\underline{\rank E},\phi)$ for 
$\phi\in \Omega^{\mathrm{odd}}(Y,\R)/\Im \, d$. By 
\eqref{eq:2.041}, (\ref{eq:2.055}) and \eqref{eq:2.058},
\begin{align}\label{eq:2.059} 
\lambda_t(\underline{\C})=1+t,\quad \gamma_t(\underline{\C})
=1+\frac{t}{1-t}\cdot 1=\frac{1}{1-t}.
\end{align}

From \eqref{eq:2.055}, \eqref{eq:2.056}, \eqref{eq:2.058}
and \eqref{eq:2.059}, letting $r=\rank E$, we have
\begin{multline}\label{eq:2.060} 
\gamma_t(\underline{E}-\underline{\rank 
E})=\gamma_t(\underline{E})\gamma_t(\underline{\C})^{-r}
=\lambda_{\frac{t}{1-t}}(\underline{E})(1-t)^{r}
=\sum_{i=0}^{r}\Lambda^i(\underline{E})t^i(1-t)^{r-i}
\\
=\sum_{i=0}^{r }\sum_{j=0}^{r-i}(-1)^j \left(\begin{array}{c}
r-i \\ 
j
\end{array} \right) \Lambda^i(\underline{E})t^{i+j}
=\sum_{k=0}^{r}\left(\sum_{i=0}^k (-1)^{k-i} \left( \begin{array}{c}
r-i \\ 
k-i
\end{array} \right) 
\Lambda^i(\underline{E}) \right)t^k.
\end{multline}
So
\begin{align}\label{eq:2.061}
\gamma^k(\underline{E}-\underline{\rank 	E})=
\begin{cases}
\sum_{i=0}^k (-1)^{k-i}\begin{pmatrix}
r-i \\ 
k-i
\end{pmatrix}   
\Lambda^i(\underline{E})  & \hbox{ if }0\leq k\leq r=\rank E; \\
0  & \hbox{ if } k>r.
\end{cases}
\end{align}	

Since $\lambda_t(x)=\gamma_{t/(1+t)}(x)$, from (\ref{eq:2.061}), we 
have
\begin{multline}\label{eq:2.062}
\lambda_{t}(\underline{E})=\lambda_t(\underline{E}-\underline{\rank
	E})\cdot \lambda_t(\underline{\rank 
	E })=\gamma_{\frac{t}{1+t}}(\underline{E }-\underline{\rank 
	E })\cdot(1+t)^{r}
\\
=(1+t)^{r}\Big(1+ \sum_{i=1}^{r}\gamma^i(\underline{E }
-\underline{\rank E })t^i(1+t)^{-i}\Big).
\end{multline}
Formally, we have
\begin{multline}\label{eq:2.063}
\lambda_{t}(\underline{E })^{-1}
=(1+t)^{-r}\Big(1+ \sum_{i=1}^{r}\gamma^i(\underline{E }
-\underline{\rank
	E })t^i(1+t)^{-i}\Big)^{-1}
\\
=(1+t)^{-r}\left(1+\sum_{j=1}^{\infty}(-1)^j\Big(
\sum_{i=1}^{r}\gamma^i(\underline{E }
-\underline{\rank	E })t^i(1+t)^{-i}\Big)^j \right)
\\
=(1+t)^{-r}\left(1+\sum_{k=1}^{\infty}t^k(1+t)^{-k}
\sum_{\substack{(n_1,\cdots,n_r)\in \N^r,\\ 
\sum_{i=1}^ri\cdot n_i=k}}(-1)^{\sum_{i=1}^r n_i}
\frac{(\sum_{i=1}^r n_i)!}{\prod_{i=1}^rn_i!} 
\prod_{i=1}^r\left(\gamma^{i}(\underline{E }
-\underline{\rank
E })\right)^{n_i}   \right).
 \end{multline}
 
To simplify the notations, 
we denote by
\begin{align}\label{eq:2.064} 
\lambda_{t}(\underline{E })^{-1}
=(1+t)^{-r}\left(1+\sum_{k=1}^{\infty}t^k(1+t)^{-k}
\big(P_{k,+}(\underline{E})-P_{k,-}(\underline{E})\big) \right),
\end{align} 
by 
using \eqref{eq:2.061} in \eqref{eq:2.063}.
Remark that $P_{k,\pm}(\underline{E})$ here are finite dimensional
Hermitian vector 
bundles with induced metrics and connections
obtained from $\underline{E}$.
We denote by  $P_{k,+}(\underline{E})=P_{k,-}(\underline{E})=0$
for $k<0$ and $P_{0,+}(\underline{E})=\underline{\C}$,
$P_{0,-}(\underline{E})=0$. From (\ref{eq:2.062}) and (\ref{eq:2.064}),
we know that for any $l\in \N^*$, 
\begin{align}\label{eq:2.66b} 
\sum_{i=0}^r\gamma^{i}(\underline{E }
-\underline{\rank E })\Big(P_{l-i,+}(\underline{E})
-P_{l-i,-}(\underline{E})\Big)=0,
\end{align}
as geometric triples.

The following theorem is the differential $K$-theory version of
the locally nilpotent property in topological $K$-theory
 \cite[Propositions 3.1.5, 3.1.10]{A67}. The corresponding 
 arithmetic $K$-theory version was proved by Roessler 
 \cite[Proposition 4.5]{Ro99}.

\begin{thm}\label{thm:2.10}
The $\gamma$-filtration of  $\widehat{K}^0(Y)$ is locally nilpotent 
at $[\underline{E}-\underline{\rank E},0]$. Explicitly, 
there exists $\mN_{r,m}>0$ (depending only on $r,m$) such that 
for any geometric triple $\underline{E}$ on $Y$  with $r=\rank E$,
$m=\dim Y$, and $(n_1,\cdots,n_r)\in \N^r$ such that 
	\begin{align}\label{eq:2.065} 
\sum_{i=1}^r 
i\cdot n_i>\mN_{r,m},
	\end{align}
we have	 
		\begin{align}\label{eq:2.066}
		\prod_{i=1}^r\left(\gamma^{i}([\underline{E }
		-\underline{\rank
			E },0])\right)^{n_i}=
	\left[\prod_{i=1}^r\left(\gamma^{i}(\underline{E }
	-\underline{\rank
		E })\right)^{n_i} ,0\right]
	=0\in\widehat{K}^0(Y).
	\end{align}	
\end{thm}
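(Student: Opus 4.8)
The plan is to reduce the vanishing asserted in \eqref{eq:2.066} to a nilpotency statement for the ideal $F^{1}\widehat{K}^{0}(Y)=\ker(\rank)$ whose order depends only on $m=\dim Y$, and to prove that nilpotency by combining the classical dimension bound for the filtration of $K^{0}(Y)$ with an elementary degree count for the flat part of $\widehat{K}^{0}(Y)$.

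First I record some bookkeeping, as in the proof of Theorem~\ref{thm:2.06}: from \eqref{eq:2.011} one has $\lambda^{k}([\underline{E},0])=[\underline{\Lambda^{k}E},0]$, so by the homomorphism property of $\lambda_{t}$ and by \eqref{eq:2.060}--\eqref{eq:2.061} one gets $\gamma^{i}([\underline{E}-\underline{\rank E},0])=[\underline{\gamma^{i}(E-\rank E)},0]$, which is $0$ for $i>r$; and \eqref{eq:2.040} together with \eqref{eq:2.008} gives $[\underline{A},0]\cup[\underline{B},0]=[\underline{A}\otimes\underline{B},0]$. This proves the first equality in \eqref{eq:2.066} and reduces the theorem to showing $\prod_{i=1}^{r}y_{i}^{\,n_{i}}=0$, where $y_{i}:=\gamma^{i}([\underline{E}-\underline{\rank E},0])$. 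By \eqref{eq:2.060} each $y_{i}$ with $1\le i\le r$ has $\rank$ equal to $0$, i.e.\ $y_{i}\in F^{1}\widehat{K}^{0}(Y)$, and $\prod_{i}y_{i}^{\,n_{i}}$ is a product of $\sum_{i}n_{i}$ elements of $F^{1}\widehat{K}^{0}(Y)$, with $\sum_{i}n_{i}\ge\big(\sum_{i}in_{i}\big)/r$. Hence it suffices to bound, in terms of $m$ alone, the least $\nu=\nu(m)$ with $\big(F^{1}\widehat{K}^{0}(Y)\big)^{\nu}=0$; then $\mN_{r,m}:=r\,\nu(m)$ works (the same estimate gives local nilpotency at $[\underline{E}-\underline{\rank E},0]$, since an arbitrary product $\gamma^{r_{1}}(\cdot)\cdots\gamma^{r_{k}}(\cdot)$ of total weight $>\mN_{r,m}$ either has a factor with $r_{j}>r$, hence vanishes by \eqref{eq:2.061}, or is a product of $k>\nu(m)$ elements of $F^{1}\widehat{K}^{0}(Y)$).

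To bound $\nu(m)$, let $\delta\colon\widehat{K}^{0}(Y)\to K^{0}(Y)$, $[\underline{E},\phi]\mapsto[E]$, be the forgetful ring homomorphism. It preserves $\rank$, so $\delta(F^{1}\widehat{K}^{0}(Y))\subseteq\widetilde{K}^{0}(Y)$; since $Y$ has the homotopy type of a finite complex of dimension $m$, it is classical that $(\widetilde{K}^{0}(Y))^{m+1}=0$ (\cite{A67}), whence $\big(F^{1}\widehat{K}^{0}(Y)\big)^{m+1}\subseteq\ker\delta$. From Definition~\ref{defn:2.05}, every element of $\ker\delta$ is of the form $[\underline{0},\phi]$ with $\phi\in\Omega^{\mathrm{odd}}(Y,\R)/\Im d$ (stabilize the two bundles by a common one and cancel), and \eqref{eq:2.008}, \eqref{eq:2.040} give $[\underline{0},\phi]\cup[\underline{0},\psi]=[\underline{0},-d\phi\wedge\psi]$; inductively, a product of $s$ elements of $\ker\delta$ equals $[\underline{0},\omega]$ with $\deg\omega\ge 2s-1$, and this is $0$ as soon as $2s-1>m$. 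Thus $(\ker\delta)^{\lfloor m/2\rfloor+2}=0$, and one may take $\nu(m)=(m+1)(\lfloor m/2\rfloor+2)$. Consequently, if $\sum_{i=1}^{r}in_{i}>\mN_{r,m}$ then $\sum_{i}n_{i}>\nu(m)$, so $\prod_{i}y_{i}^{\,n_{i}}\in\big(F^{1}\widehat{K}^{0}(Y)\big)^{\nu(m)}=0$, which together with the bookkeeping above is \eqref{eq:2.066}.

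The only non-formal ingredients are the two nilpotency estimates: the classical $(\widetilde{K}^{0}(Y))^{m+1}=0$, and --- the point needing the most care, and the one that really uses the structure of differential K-theory --- the uniform bound for the nilpotency of the flat part $\ker\delta$, obtained above from the explicit product formula for $[\underline{0},\cdot]$ and a dimension count; note that the identity \eqref{eq:2.66b} is not needed for this argument. I would also double-check the two auxiliary claims used in passing, namely that $\ker\delta$ is exactly $\{[\underline{0},\phi]\}$ and that $\gamma^{i}$ sends $F^{1}$ into $F^{1}$, both of which are immediate from Definitions~\ref{defn:2.01} and~\ref{defn:2.05}.
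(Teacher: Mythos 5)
Your proof is correct, but it takes a genuinely different route from the paper. The paper proves \eqref{eq:2.066} by classifying $(E,h^E,\nabla^E)$ via Narasimhan--Ramanan as a pull-back from $\mathrm{Gr}(r,\C^p)$ with $p=(m+1)(2m+1)r^3$, and then works on the Grassmannian: there the class $\bigl[\prod_i\gamma^i(\cdot)^{n_i},0\bigr]$ equals $[0,\alpha]$ with $-d\alpha=\ch\bigl(\prod_i\gamma^i(\cdot)^{n_i}\bigr)$, the Chern character vanishes by the degree estimate \eqref{eq:2.077} once $\sum_i i n_i$ exceeds $\tfrac12\dim_\R\mathrm{Gr}(r,\C^p)$, and $\alpha$ is then exact because the Grassmannian has no odd cohomology. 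You instead stay on $Y$ and bound the nilpotency order of the whole augmentation ideal: $\delta$-images of $(m+1)$-fold products die by the classical nilpotency of $\widetilde K^0(Y)$, so such products land in $\ker\delta=\{[0,\phi]\}$, and the explicit product formula $[0,\phi]\cup[0,\psi]=[0,-d\phi\wedge\psi]$ kills $s$-fold products of flat classes once $2s-1>\dim Y$ --- this replaces the paper's two uses of the Grassmannian (to force $d\alpha=0$ and to force $\alpha$ exact) by a single elementary degree count on $Y$. Your argument is shorter, avoids Narasimhan--Ramanan entirely, and in fact yields the stronger statement $\bigl(F^1\widehat K^0(Y)\bigr)^{\nu(m)}=0$ with $\nu(m)$ depending only on $m=\dim Y$; the price is the conversion $\sum_i n_i\ge\bigl(\sum_i i\,n_i\bigr)/r$, which is where the factor $r$ in $\mN_{r,m}=r\,\nu(m)$ enters, so (like the paper's bound) yours does not settle the question in Remark \ref{rem:2.11}~a) about a weight bound depending only on $Y$, since a single $\gamma^i$ of large weight contributes only one factor to your count. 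The two auxiliary facts you flag are indeed fine: $\ker\delta\subseteq\{[0,\phi]\}$ follows by stabilizing with a common $\underline{E_3}$ exactly as in \eqref{eq:2.038}, and $\gamma^i$ preserves $F^1$ because $\rank$ is a morphism of pre-$\lambda$-rings by \eqref{eq:2.043}; the only external input is the standard multiplicativity of the skeletal filtration giving $(\widetilde K^0(Y))^{m+1}=0$.
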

\begin{proof}  
	For any complex vector bundle $F$ over $Y$, 
we get from (\ref{eq:2.061}) the $\gamma^k$-operation on $K^0(Y)$
	by forgetting the metric and connection,
	\begin{align}\label{eq:2.065b}
	\gamma^k(F-\rank F)=
	\left\{
	\begin{array}{ll}
	\sum_{i=0}^k (-1)^{k-i}\begin{pmatrix}
	\rank F-i \\ 
	k-i
	\end{pmatrix} 
	\Lambda^i(F) & \hbox{if }  0\leq k\leq  \rank F; \\
	0 & \hbox{if } k>\rank F.
	\end{array}
	\right.
	\end{align}	
From the classical property of the $\gamma$-operation
in $K$-theory \cite[Propositions 3.1.5, 3.1.10]{A67}, we know that 
there exists $a_{Y}>0$ (depending only on $Y$) such that 
\begin{align}\label{eq:2.067}
\gamma^{i_{1}}(x_{1})\gamma^{i_{2}}(x_{2})
\cdots \gamma^{i_{k}}(x_{k})=0\in K^0(Y)\quad \text{ if }\, 
x_{j}\in F^{1}K^{0}(Y),  i_{j}\in \N, \, \, 
\sum_{j=1}^{k}i_{j}\geq a_{Y}.
\end{align}

Let $\underline{E }$ be a geometric triple on $Y$
with $r=\rank E$, $m=\dim Y$.
Let $\mathrm{U}(E)$ be the $\mathrm{U}(r)$-principal
bundle of unitary frames of the Hermitian vector bundle ($E,h^E$). 
Then $\mathrm{U}(E)\times_{\mathrm{U}(r)}\C^r\simeq E$ and 
$h^E$ coincides with the Hermitian metric induced by the 
canonical Hermitian inner product on $\C^r$. The Hermitian connection
$\nabla^E$ corresponds uniquely to a 
connection $\omega$ on $\mathrm{U}(E)$.
	
Let $\mathrm{Gr}(r,\C^p)$ be the 
Grassmannian, which is the space parameterizing all complex linear 
subspaces of $\C^p$ of given dimension $r$. 
Let $\mathrm{V}(p,r)$ be the canonical $\mathrm{U}(r)$-principal
bundle over $\mathrm{Gr}(r,\C^p)$ with 
the canonical connection $\omega_0$,
which is induced by the Maurer-Cartan form
on $\mathrm{U}(p)$ on the $\mathrm{U}(r)$-principal
bundle $\mathrm{V}(p,r)=\mathrm{U}(p)/(\mathrm{I}_r
\times \mathrm{U}(p-r))\rightarrow
 \mathrm{U}(p)/(\mathrm{U}(r)
\times \mathrm{U}(p-r))=\mathrm{Gr}(r,\C^p)$
via the canonical matrix decomposition of 
$\mathfrak{u}(p)$.
Let $H=\mathrm{V}(p,r)\times_{\mathrm{U}(r)}\C^r$. 
Let $h^H$ be the Hermitian metric on $H$
induced by the canonical inner product on $\C^r$.
Let $\nabla^H$ be the Hermitian connection
on $H$ induced by $\omega_0$.
Let $\underline{\rank H}$ be the $r$-dimensional trivial vector bundle 
over $\mathrm{Gr}(r,\C^p)$ with trivial metric and connection.

By a theorem of Narasimhan and Ramanan 
\cite[Theorem 1]{NarasimhanR61}, for 
\begin{align}\label{eq:2.068} 
p=(m+1)(2m+1)r^3,
\end{align}
there exists a map $f:Y\rightarrow \mathrm{Gr}(r,\C^p)$ 
such that $f^*\mathrm{V}(p,r)=\mathrm{U}(E)$ and 
$f^*\omega_0=\omega$. 
Thus $f^*(H,h^H,\nabla^H) =(E,h^E, \nabla^E)$.  
Let $\hat{f}^*:\widehat{K}^0(\mathrm{Gr}(r,\C^p))\rightarrow
\widehat{K}^0(Y)$ be the pull-back map. 
From \eqref{eq:2.043}, \eqref{eq:2.055}
and  \eqref{eq:2.061}, we have
\begin{multline}\label{eq:2.069}
\prod_{i=1}^r\left(\gamma^{i}([\underline{E }
-\underline{\rank E },0])\right)^{n_i}=
\left[\prod_{i=1}^r\left(\gamma^{i}(\underline{E }
-\underline{\rank E})\right)^{n_i},0\right]
	\\
=\hat{f}^*\left\{\left[\prod_{i=1}^r\left(\gamma^{i}(\underline{H }
-\underline{\rank H })\right)^{n_i},0\right]\right\}\in\widehat{K}^0(Y).
\end{multline}
By (\ref{eq:2.067}) and (\ref{eq:2.068}), there exists 
$a_{r,m}>0$ (depending only on $r,m$)	such that 
\begin{align}\label{eq:2.070}
\prod_{i=1}^r\left(\gamma^{i}(H-\rank H)\right)^{n_i}=0
\in K^0(\mathrm{Gr}(r,\C^p)) 
\quad \text{ if } \sum_{i=1}^r i\cdot n_i>a_{r,m}.
\end{align}
From Definition \ref{defn:2.05} and \eqref{eq:2.070},
if $\sum_{i=1}^r 	i\cdot n_i>a_{r,m}$, there exists 
$\alpha\in \Omega^{\mathrm{odd}}(\mathrm{Gr}(r,\C^p),\R)$
such that	
	\begin{align}\label{eq:2.071}
	\Big[\prod_{i=1}^r\left(\gamma^{i}(\underline{H }
	-\underline{\rank
		H })\right)^{n_i},0\Big]=[0,\alpha]\in 
	\widehat{K}^0(\mathrm{Gr}(r,\C^p))
	\end{align}
	and
	\begin{multline}\label{eq:2.072}
	-d\alpha=\ch\Big(\prod_{i=1}^r\left(\gamma^{i}(\underline{H }
	-\underline{\rank	H })\right)^{n_i}\Big)
	=\prod_{i=1}^r\Big(\ch\left(\gamma^{i}(\underline{H }
	-\underline{\rank	H })\right)\Big)^{n_i}
	\\
	\in \Omega^{\mathrm{even}}(\mathrm{Gr}(r,\C^p),\R).
	\end{multline}

From \eqref{eq:2.023}, \eqref{eq:2.033} and \eqref{eq:2.060}, we have
	\begin{multline}\label{eq:2.073}
	\ch(\gamma_t(\underline{H}-\underline{\rank H}))
		=\sum_{i=0}^r\ch(\Lambda^i(\underline{H}))t^i(1-t)^{r-i}
	\\
	=\sum_{i=0}^r\lambda^i(\ch)(\underline{H})t^i(1-t)^{r-i}
	=\sum_{i=0}^r
	f_{\underline{H}}\big(\lambda^i(\ch)\big)t^i(1-t)^{r-i},
	\end{multline}
	where $f_{\underline{H}}$ is the map \eqref{eq:2.023} 
	with respect to $\underline{H}$.
	From \eqref{eq:2.032}, we have
	\begin{multline}\label{eq:2.074}
	\sum_{i=0}^r\theta^*\lambda^i(\ch)t^i(1-t)^{r-i}
	=\sum_{i=0}^r\sigma_i(e^{u_1},\cdots,e^{u_r})t^i(1-t)^{r-i}
	\\
	=\prod_{j=1}^r\Big((1-t)+te^{u_j}\Big)
	=\prod_{j=1}^r\Big(1+t(e^{u_j}-1)\Big)
	=\sum_{i=0}^r\sigma_i(e^{u_1}-1,\cdots,e^{u_r}-1)t^i.
	\end{multline}
	Since $\theta^*$ is injective and 
	$\sigma_i(e^{u_1}-1,\cdots,e^{u_r}-1)$
	is symmetric with respect to $u_i$'s, from \eqref{eq:2.073}
	and \eqref{eq:2.074}, we have
	\begin{align}\label{eq:2.075} 
	\ch(\gamma^{i}(\underline{H}-\underline{\rank H}))=f_{\underline{H}}
	\circ (\theta^*)^{-1}\big(\sigma_i(e^{u_1}-1,\cdots,e^{u_r}-1) \big)
	\in Z^{\mathrm{even}}(\mathrm{Gr}(r,\C^p),\R).
	\end{align} 
	From Lemma \ref{lem:2.03},   
	we see that the Adams operation 
	$\Psi^k$ commutes with $f_{\underline{H}}
	\circ (\theta^*)^{-1}$. 
Since
	\begin{align}\label{eq:2.076}
	\Psi^k\big(\sigma_i(e^{u_1}-1,\cdots,e^{u_r}-1) \big)
	= \sigma_i(e^{ku_1}-1,\cdots,e^{ku_r}-1)
	\end{align}
	is a power series with respect to $k$ such that the coefficients of 
	$1,k,\cdots,k^{i-1}$ vanish, so is 
	$\Psi^k(\ch(\gamma^{i}(\underline{H}-\underline{\rank H})))$, too. 
	Since $\Psi^k \beta=k^l\beta$ for $\beta\in 
	Z^{2l}(\mathrm{Gr}(r,\C^p),\R)$,  we have
	\begin{align}\label{eq:2.077}
	\ch(\gamma^{i}(\underline{H}-\underline{\rank H}))\in 
	\Omega^{\geq 2i}(\mathrm{Gr}(r,\C^p),\R).
	\end{align}

	Since $\dim_{\R} 
	\mathrm{Gr}(r,\C^p)=2r(p-r)=2r^2((m+1)(2m+1)r^2-1)$, for any 
	$(n_1,\cdots,n_r)\in \N^r$ such that $\sum_{i=1}^r 
	i\cdot n_i>r^2((m+1)(2m+1)r^2-1)$, we have
	\begin{align}\label{eq:2.078} 
	\ch\left(\prod_{i=1}^r(\gamma^{i}(\underline{H}-\underline{\rank 
		H}))^{n_i}\right)=0
		\in \Omega^{\mathrm{even}}(\mathrm{Gr}(r,\C^p),\R).
	\end{align}
By \eqref{eq:2.072} and \eqref{eq:2.078}, we have
$d\alpha=0$ if $\sum_{i=1}^r i\cdot n_i>\mN_{r,m}$, with
\begin{align}\label{eq:2.080} 
\mN_{r,m}=\sup\{r^2((m+1)(2m+1)r^2-1),a_{r,m}\}.
\end{align}
Since the cohomology of the Grassmannian vanishes 
in odd degrees, 
the differential form $\alpha$ is exact. From \eqref{eq:2.071}, 
$\left[\prod_{i=1}^r(\gamma^{i}(\underline{H}-\underline{\rank 
H}))^{n_i},0\right]=0\in \widehat{K}^0(\mathrm{Gr}(r,\C^p))$. 
By \eqref{eq:2.069}, 	for any 
$(n_1,\cdots,n_r)\in \N^r$ such that $\sum_{i=1}^r 
i\cdot n_i>\mN_{r,m}$, we have
\begin{align}\label{eq:2.079}
\left[\prod_{i=1}^r(\gamma^{i}(\underline{E}-\underline{\rank 
	E}))^{n_i},0\right]
=0\in\widehat{K}^0(Y).
\end{align}
	
The proof of Theorem \ref{thm:2.10} is completed.		
\end{proof}

\begin{rem}\label{rem:2.11} 
a). 	Comparing with \eqref{eq:2.066} and \eqref{eq:2.067}, 
a natural question about the $\gamma$-filtration of $\widehat{K}^0(Y)$
is  whether we can replace $\mN_{r,m}$ in \eqref{eq:2.065} 
by a constant depending only on $Y$ such that 
\eqref{eq:2.066} still holds. More generally, whether 
the analogue of \eqref{eq:2.067} holds in $\widehat{K}^0(Y)$.

b). Let  $\{U_i \}$ be an open covering of $\mathrm{Gr}(r,\C^p)$ 
such that $U_i$'s are diffeomorphic to open balls,
and the covers can be divided into
 $\dim_{\R} \mathrm{Gr}(r,\C^p)+1$ $=2r(p-r)+1$ 
 classes $\mU_1, \cdots, \mU_{2r(p-r)+1}$ such that no two 
$U_i$'s of the same class intersect,
 from a result of Nash (cf. Ann. of Math. 63 (1956), p.61).
  Let $V_j=\bigcup_{U_i\in \mU_j}U_i$,
$j=1,\cdots, 2r(p-r)+1$. Then the union is a disjoint union
with the trivialization $h_j:H|_{V_j}\rightarrow V_j\times \C^r$.
Let $\{\varphi_j \}$ be a partition of unity of $\{V_j \}$.
Let $\pi_1:H\rightarrow \mathrm{Gr}(r,\C^p)$ and
$\pi_2:V_j\times \C^r\rightarrow \C^r$ be the 
canonical projections. Then the map 
$$v\in H\mapsto \Big(\pi_1(v), \varphi_1(\pi_1(v))\cdot
\pi_2(h_1(v)),\cdots, \varphi_{2r(p-r)+1}(\pi_1(v))\cdot
\pi_2(h_{2r(p-r)+1}(v))\Big)$$ 
induces a bundle map 
$H\rightarrow \mathrm{Gr}(r,\C^p)\times \C^{r(2r(p-r)+1)}$
such that it is injective at each fiber. Then we obtain 
a complex vector bundle $F$ over
$\mathrm{Gr}(r,\C^p)$ with rank $2r^2(p-r)$ as the complement of $H$
in $\mathrm{Gr}(r,\C^p)\times \C^{r(2r(p-r)+1)}$.
Thus we have
$$\gamma_t(\rank H-H)=\gamma_t((\rank H+F)-(H+F))
=\gamma_t(F-\rank F).$$
From (\ref{eq:2.065b}) and the above equation, $\gamma_t(\rank H-H)$
is a polynomial in $t$ with degree $\leq \rank F$.
By applying 
the remark before \cite[Proposition 3.1.5]{A67} to the polynomials
$\gamma_t(H-\rank H)$, $\gamma_t(\rank H-H)$,
we can take 
$$a_{r,m}=\rank H\cdot \rank F=2r^3(p-r)=2r^4((m+1)(2m+1)r^2-1).$$
Thus we can take $\mN_{r,m}=2r^4((m+1)(2m+1)r^2-1)$.
\end{rem}

From Theorem \ref{thm:2.10}, \eqref{eq:2.063} and \eqref{eq:2.064}, 
we have the following corollary.
\begin{cor}\label{cor:2.11}
If $k>\mN_{r,m}$ with $\mN_{r,m}$ as in \eqref{eq:2.080}, we have
\begin{align}\label{eq:2.081}
[P_{k,+}(\underline{E}),0]=[P_{k,-}(\underline{E}),0]\in\widehat{K}^0(Y).
\end{align}
\end{cor}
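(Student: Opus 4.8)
The plan is to identify $P_{k,+}(\underline{E})-P_{k,-}(\underline{E})$ explicitly as a $\Z$-linear combination of $\gamma$-monomials in $\underline{E}-\underline{\rank E}$ and then invoke Theorem \ref{thm:2.10}. First I would extract from \eqref{eq:2.063}--\eqref{eq:2.064} the coefficient of $t^k(1+t)^{-k}$. Since for fixed $k$ only finitely many tuples $(n_1,\cdots,n_r)\in\N^r$ satisfy $\sum_{i=1}^r i\cdot n_i=k$, comparing these coefficients is already an honest identity of finite-rank virtual Hermitian vector bundles with induced metrics and connections:
\begin{align}\label{eq:plan1}
P_{k,+}(\underline{E})-P_{k,-}(\underline{E})
=\sum_{\substack{(n_1,\cdots,n_r)\in\N^r\\ \sum_{i=1}^r i\cdot n_i=k}}
(-1)^{\sum_{i=1}^r n_i}\,\frac{(\sum_{i=1}^r n_i)!}{\prod_{i=1}^r n_i!}\,
\prod_{i=1}^r\bigl(\gamma^i(\underline{E}-\underline{\rank E})\bigr)^{n_i}.
\end{align}

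Next I would push \eqref{eq:plan1} into $\widehat{K}^0(Y)$. The assignment $\underline{F}\mapsto[\underline{F},0]$, extended $\Z$-linearly from geometric triples to virtual Hermitian bundles, is a ring homomorphism into $\widehat{K}^0(Y)$: additivity is \eqref{eq:2.039}, and multiplicativity follows from \eqref{eq:2.040} and \eqref{eq:2.008}, since $(\ch(\underline{F}),0)*(\ch(\underline{G}),0)=(\ch(\underline{F})\wedge\ch(\underline{G}),0)$ has vanishing odd component. Applying this homomorphism to \eqref{eq:plan1} expresses $[P_{k,+}(\underline{E}),0]-[P_{k,-}(\underline{E}),0]$ as the same linear combination of the classes $\bigl[\prod_{i=1}^r(\gamma^i(\underline{E}-\underline{\rank E}))^{n_i},0\bigr]$.

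Finally, for $k>\mN_{r,m}$ every tuple occurring in the sum has $\sum_{i=1}^r i\cdot n_i=k>\mN_{r,m}$, so Theorem \ref{thm:2.10} applied to this $\underline{E}$ (with $r=\rank E$, $m=\dim Y$) shows that each of these classes vanishes in $\widehat{K}^0(Y)$; hence the whole combination is $0$ and $[P_{k,+}(\underline{E}),0]=[P_{k,-}(\underline{E}),0]$, which is \eqref{eq:2.081}. I do not expect a genuine obstacle here: Theorem \ref{thm:2.10} does the real work, and the only point deserving a little care is the one just flagged, namely that \eqref{eq:2.063}--\eqref{eq:2.064} must be read coefficientwise in $t$ (equivalently, after clearing the finitely many relevant $(1+t)^{-k}$ denominators) so that it is a genuine identity of virtual Hermitian bundles and therefore survives application of the ring homomorphism to $\widehat{K}^0(Y)$.
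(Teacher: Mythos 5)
Your proof is correct and matches the paper's intended argument: the authors derive the corollary directly from Theorem \ref{thm:2.10} together with \eqref{eq:2.063} and \eqref{eq:2.064}, which is exactly your coefficient comparison, and your observation that $\underline{F}\mapsto[\underline{F},0]$ is a ring homomorphism (so that the $\gamma$-monomial products push through) is the implicit step that makes this work. No discrepancy to report.
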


\subsection{The $g$-equivariant differential $K$-theory.\ 
	Proof of Theorem \ref{i06}}\label{s0304}
Let $Y$ be a compact manifold with $S^1$-action.
In the following, we adapt the notation in Section \ref{s02}. 

The following definition is an extension of Definition \ref{defn:2.05}.
\begin{defn}\label{defn:2.12} 
	For $g\in S^1$, a cycle for the $g$-equivariant differential $K$-theory
	of $Y$ is a pair $(\underline{E}, \phi)$ where 
	$\underline{E}=(E, h^E, \nabla^E)$
	is an $S^1$-equivariant geometric triple on $Y$ and $\phi$
	is an element in $\Omega^{\mathrm{odd}}(Y^g, \C)/\Im \, d$.
	Two cycles $(\underline{E_1}, \phi_1)$ and $(\underline{E_2}, 
	\phi_2)$ are equivalent if there exist an $S^1$-equivariant 
	geometric triple $\underline{E_3}=(E_3, h^{E_3}, \nabla^{E_3})$ 
	and an $S^1$-equivariant vector bundle isomorphism over $Y$
	\begin{align}\label{eq:2.082}
	\Phi:E_1\oplus E_3\rightarrow E_2\oplus E_3
	\end{align}
	such that
	\begin{align}\label{eq:2.083}
	\widetilde{\ch}_g\left(\underline{E_1}\oplus\underline{E_3}, 
	\Phi^{*} 
	\left(\underline{E_2}\oplus\underline{E_3}\right)\right)
	=\phi_2-\phi_1.
	\end{align} 
	We define the sum in the same way as in \eqref{eq:2.039}.	
	We define the $g$-equivariant differential $K$-group 
	$\widehat{K}_g^0(Y)$ 
	to be the Grothendieck group of equivalence classes of cycles. 
\end{defn}
	
We denote by $[\underline{E}, \phi]\in \widehat{K}^0_g(Y)$
the equivalence class of a cycle $(\underline{E}, \phi)$.
For $[\underline{E}, \phi], [\underline{F}, \psi]\in \widehat{K}^0_g(Y)$,
set
	\begin{align}\label{eq:2.084}
	[\underline{E}, \phi]\cup[\underline{F}, \psi]
	=[\underline{E}\otimes \underline{F}, 
	\ch_g(\underline{E})\wedge\psi+\phi\wedge\ch_g(\underline{F})
	-d\phi\wedge\psi  ].
	\end{align} 
	From (\ref{eq:2.084}), 
we deduce that the element $1:=[\underline{\C},0]$ is a unit: 
	the circle action on the total space $Y\times\C$ is defined by 
	$g(y,c)=(gy,c)$ for any $(y,c)\in Y\times \C$,
	the trivial metric and connection are obviously $S^1$-invariant.
	
\begin{lemma}\label{lemma:2.15a}
The product (\ref{eq:2.084}) on $\widehat{K}^0_g(Y)$ is well-defined, 
 associative and commutative. 
\end{lemma}	
\begin{proof}
If two cycles $(\underline{E_1}, \phi_1)$ and $(\underline{E_2}, \phi_2)$
are equivalent, 
then by (\ref{local40}) and (\ref{eq:2.083}), we have
\begin{align*}
	\widetilde{\ch}_g\left(\left(\underline{E_1}\oplus\underline{E_3}
	\right)\otimes \underline{F}, 
\Phi^{*} 
\left(\underline{E_2}\oplus\underline{E_3}\right)\otimes \underline{F}
\right)
=(\phi_2-\phi_1)\wedge \ch_g\left(\underline{F}\right),
\end{align*}
and
\begin{align*}
d(\phi_2-\phi_1)=\ch_g\left(\underline{E_2}\oplus\underline{E_3}\right)
-\ch_g\left(\underline{E_1}\oplus\underline{E_3}\right)
=\ch_g\left(\underline{E_2}\right)
-\ch_g\left(\underline{E_1}\right).
\end{align*}
Thus $(\underline{E_1}\otimes \underline{F}, 
\ch_g(\underline{E_1})\wedge\psi+\phi_1\wedge\ch_g(\underline{F})
-d\phi_1\wedge\psi)$ and $(\underline{E_2}\otimes \underline{F}, 
\ch_g(\underline{E_2})\wedge\psi+\phi_2\wedge\ch_g(\underline{F})
-d\phi_2\wedge\psi)$ are equivalent. 
The product (\ref{eq:2.084}) on $\widehat{K}^0_g(Y)$ is well-defined.

The commutativity of (\ref{eq:2.084}) follows from the facts that
$\phi, \psi\in \Omega^{\mathrm{odd}}(Y^g, \C)/\Im \, d$
and $d\phi\wedge \psi=d\psi\wedge \phi+d(\phi\wedge \psi)$.

We verify now the associativity of the product.
From (\ref{eq:2.084}), we have
\begin{multline}\label{eq:2.087a}
[\underline{E_1}, \phi_1]\cup\Big([\underline{F_1}, \psi_1]
\cup [\underline{F}, \psi] \Big)
= \Big[ \underline{E_1}\otimes \underline{F_1}\otimes \underline{F},
\\
\big(\ch_g(\underline{E_1})-d\phi_1\big)\wedge 
\big(\ch_g(\underline{F_1})\wedge\psi
+\psi_1\wedge\ch_g(\underline{F})
-d\psi_1\wedge\psi\big)
+\phi_1\wedge \ch_g(\underline{F_1}\otimes \underline{F})\Big],
\end{multline}
and
\begin{multline}\label{eq:2.088a}
\Big([\underline{E_1}, \phi_1]\cup[\underline{F_1}, \psi_1]\Big)
\cup [\underline{F}, \psi] 
= \Big[ \underline{E_1}\otimes 
\underline{F_1}\otimes \underline{F},
\\ \ch_g(\underline{E_1}\otimes \underline{F_1})\wedge \psi 
-d\big(\ch_g(\underline{E_1})\wedge\psi_1
+\phi_1\wedge\ch_g(\underline{F_1})
-d\phi_1\wedge\psi_1 \big)\wedge\psi  \\
+\big(\ch_g(\underline{E_1})\wedge
\psi_1+\phi_1\wedge\ch_g(\underline{F_1})
-d\phi_1\wedge\psi_1 \big)\wedge \ch_g(\underline{F})\Big].
\end{multline}

Since $\ch_g(\cdot)$ is a closed even form and $\phi_1, \psi_1, \psi$
are odd forms, from (\ref{eq:2.087a}) and (\ref{eq:2.088a}), we have
\begin{align}
[\underline{E_1}, \phi_1]\cup\left([\underline{F_1}, \psi_1]
\cup [\underline{F}, \psi] \right)
=\left([\underline{E_1}, \phi_1]\cup[\underline{F_1}, \psi_1]\right)
\cup [\underline{F}, \psi].
\end{align}

The proof of Lemma \ref{lemma:2.15a} is completed.
\end{proof}
	
	Thus $(\widehat{K}^0_g(Y), +, \cup)$ is a commutative ring
	with unit $1$.

\begin{rem}\label{rem:2.13} 
Certainly, we can replace $S^1$ by any compact Lie group in 
Definition \ref{defn:2.12}.
\end{rem}

As in \eqref{eq:2.042}, if  $[\underline{E},0]=[\underline{F},0]\in 
\widehat{K}^0_g(Y)$, from (\ref{local126}) and (\ref{eq:2.083}), we have
\begin{align}\label{eq:2.085} 
\ch_g(\underline{E})=\ch_g(\underline{F})\in \Omega^{\bullet}(Y^{g},\C).
\end{align}

Note that $g\in S^1$ defines a prime ideal $I(g)$ in $R(S^1)$, 
the representation ring of $S^1$, namely all 
characters of $S^1$ which vanish at $g$. For any $R(S^1)$-module 
$\mathcal{M}$, 
we denote by $\mathcal{M}_{I(g)}$ the module obtained from 
$\mathcal{M}$ by localizing at this prime ideal.  An element of 
$R(S^1)_{I(g)}$ is a ``fraction" $u/v$ with $u,v\in R(S^1)$ and 
$\chi_v(g)\neq 0$, and two fractions $u/v$ and $u'/v'$ represent the 
same element of $R(S^1)_{I(g)}$ if there exists $w\in R(S^1)$ with 
$\chi_w(g)\neq 0$ and $wuv'=wu'v\in R(S^1)$.
Elements of $\mathcal{M}_{I(g)}$ are 
``fractions" $u/v$ ($u\in \mathcal{M}, v\in R(S^1), \chi_v(g)\neq 0$) 
with a 
similar equivalence relation. Thus $\mathcal{M}_{I(g)}$ is a module over 
the local ring $R(S^1)_{I(g)}$. Since we do not distinguish
the finite dimensional virtual representations and the characters of 
elements in $R(S^1)$, we usually write an element of 
$\mathcal{M}_{I(g)}$ by 
\begin{align}\label{eq:2.086} 
u/\chi\quad  \text{with}\ u\in \mM, \chi\in \Z[h,h^{-1}]\quad 
\text{for}\ h\in S^1,  \chi(g)\neq 0.
\end{align}

For a finite dimensional $S^1$-representation $M$, we consider the 
$S^1$-action on $Y\times M$ given by 
\begin{align}\label{eq:2.86b} 
g(y,u)=(gy,gu).
\end{align}
 Thus
 $Y\times M\rightarrow Y$ is an equivariant vector bundle over $Y$.  
 We denote this equivariant vector bundle by $E_M$. 
By construction,
the trivial metric $h^M$ and the trivial connection $d$ 
on $E_M$ are naturally $S^1$-invariant.  
Let $\underline{M}=(E_M, h^M, d)$. Note that $E\mapsto E_M\otimes E$ 
endows the $S^1$-equivariant $K$-group $K_{S^1}^0(Y)$ of $Y$
with the structure of an $R(S^1)$-module.
From \eqref{eq:2.083}, since 
$\ch_g(\underline{M})=\chi_{M}(g)$, constant on $Y^g$,
$(\underline{E}, \phi)\mapsto (\underline{M}\otimes \underline{E},  
\chi_{M}(g)\cdot\phi)$  makes $\widehat{K}_g^0(Y)$ an 
$R(S^1)$-module.

In the following,  we will denote by 
$\underline{'E}$ the corresponding geometric triple when forgetting 
the group action.

Recall that $Y^{S^1}=\{Y^{S^1}_{\alpha} \}_{\alpha\in \mathfrak{B}}$ 
is the fixed point 
set of the circle action and $N_{\alpha}$ is the normal bundle 
of $Y^{S^1}_{\alpha}$ in $Y$. We consider $N_{\alpha}$ 
as a complex vector bundle. 
By \cite[Proposition 2.2]{Segal68},
\begin{align}\label{eq:2.087} 
K_{S^1}^0(Y_{\alpha}^{S^1})\simeq R(S^1)\otimes
K^0(Y_{\alpha}^{S^1}) .
\end{align}

By (\ref{local203}), 
we write in the sense of \eqref{eq:2.087},
\begin{align}\label{eq:2.088}
\underline{\lambda_{-1}(N_{\alpha}^{*})}\simeq\bigotimes_{v=1}^{q}
\lambda_{-h^{-v}}\left(\underline{'N_{\alpha,v}^{*}}\right)
=\bigotimes_{v=1}^{q}\left(1+\sum_{k=1}^{\rank N_{\alpha,v}}
(-h^{-v})^k\cdot\Lambda^k\left(\underline{'N_{\alpha,v}^{*}}\right) 
\right).
\end{align}
Set 
 \begin{align}\label{eq:2.089} 
 r_{\alpha,v}=\rank N_{\alpha,v},\quad m_{\alpha}=\dim 
 Y_{\alpha}^{S^1}.
 \end{align}
By \eqref{eq:2.063} and \eqref{eq:2.064}, we have formally,
\begin{multline}\label{eq:2.090}
\lambda_{-h^{-v}}\left(\underline{'N_{\alpha,v}^* 
}\right)^{-1}
\\
=(1-h^{-v})^{-r_{\alpha,v}}
\left(1+\sum_{k=1}^{\infty}
(-h^{-v})^k(1-h^{-v})^{-k}
\left(P_{k,+}\left(\underline{'N_{\alpha,v}^*}\right)-P_{k,-}
\left(\underline{'N_{\alpha,v}^*}\right)\right)\right)
\\
=\frac{h^{vr_{\alpha,v}}}{(h^v-1)^{r_{\alpha,v}}}
\left(1+\sum_{k=1}^{\infty}
\frac{(-1)^k}{(h^v-1)^{k}}
\left(P_{k,+}\left(\underline{'N_{\alpha,v}^*}\right)-P_{k,-}
\left(\underline{'N_{\alpha,v}^*}\right)\right)\right).
\end{multline}

By Corollary \ref{cor:2.11}, we know that for any $k> 
\mN_{r_{\alpha,v}, m_{\alpha}}$, 
\begin{align}\label{eq:2.091}
\left[P_{k,+}\left(\underline{'N_{\alpha,v}^*}\right)-P_{k,-}
\left(\underline{'N_{\alpha,v}^*}\right),0\right]=0
\in \widehat{K}^0(Y_{\alpha}^{S^1}).
\end{align}

We define
\begin{align}\label{eq:2.092} 
\begin{split}
&\lambda_{-h^{-v}}\left(\underline{'N_{\alpha,v}^*}\right)^{-1}_{\mN}
:=\frac{h^{vr_{\alpha,v}}}{(h^v-1)^{r_{\alpha,v}}}
\left(1+\sum_{k=1}^{\mN}
\frac{(-1)^k}{(h^v-1)^{k}}
\left(P_{k,+}(\underline{'N_{\alpha,v}^*})-P_{k,-}
(\underline{'N_{\alpha,v}^*}) \right) 
\right),
\\
&\underline{\lambda_{-1}(N_{\alpha}^*)^{-1}_{\mN}}
:=\bigotimes_{v: \, r_{\alpha, v}\neq 0}
\lambda_{-h^{-v}}\left(\underline{'N_{\alpha,v}^*}\right)^{-1}_{\mN}.
\end{split}
\end{align}

It follows from \eqref{eq:1.6} that for $g\in S^1\backslash A$
we have $Y^g=Y^{S^1}$, thus
$g^v-1\neq 0$ if $\rank N_{\alpha,v}\neq 0$.

By \eqref{eq:2.080} and \eqref{eq:2.091}, we see that for any 
$\mN, \mN'>\sup_{\alpha,v}\mN_{r_{\alpha,v}, m_{\alpha}}$,
\begin{align}\label{eq:2.093} 
\left[\underline{\lambda_{-1}(N_{\alpha}^*)^{-1}_{\mN}},0\right]
=\left[\underline{\lambda_{-1}(N_{\alpha}^*)^{-1}_{\mN'}},0\right]\in
\widehat{K}_{g}^0(Y_{\alpha}^{S^1})_{I(g)}.
\end{align}
Then from (\ref{eq:2.66b}), \eqref{eq:2.088}-\eqref{eq:2.093}, for any
$\mN>\sup_{\alpha,v}\mN_{r_{\alpha,v}, m_{\alpha}}$, we have
\begin{align}\label{eq:2.094} 
\left[\underline{\lambda_{-1}(N_{\alpha}^*)},0\right]\cup
\left[\underline{\lambda_{-1}(N_{\alpha}^*)^{-1}_{\mN}},0\right]=1\in 
\widehat{K}_{g}^0(Y_{\alpha}^{S^1})_{I(g)}.
\end{align}
Summarizing, we obtain the following precise version of Theorem \ref{i06}.
A version for arithmetic $K$-group was obtained in \cite[Lemma 4.5]{KRo01}.

\begin{thm}\label{thm:2.14} 
For $g\in S^1\backslash A$,
$\left[\underline{\lambda_{-1}(N_{\alpha}^*)}, 0\right]$ is invertible in 
$\widehat{K}_{g}^0(Y_{\alpha}^{S^1})_{I(g)}$ and for any 
$\mN>\sup_{\alpha,v}\mN_{r_{\alpha,v}, m_{\alpha}}$
in \eqref{eq:2.080}, we have
\begin{align}\label{eq:2.095} 
\left[\underline{\lambda_{-1}(N_{\alpha}^*)},0\right]^{-1}=
\left[\underline{\lambda_{-1}(N_{\alpha}^*)^{-1}_{\mN}},0\right]\in 
\widehat{K}_{g}^0(Y_{\alpha}^{S^1})_{I(g)}.
\end{align}
\end{thm}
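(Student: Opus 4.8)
The plan is to establish Theorem~\ref{thm:2.14} in two stages, exactly mirroring the build-up already present in the excerpt. First I would prove the \emph{invertibility} of $\left[\underline{\lambda_{-1}(N_{\alpha}^*)},0\right]$ in $\widehat{K}_{g}^0(Y_{\alpha}^{S^1})_{I(g)}$, and then I would identify the inverse with the explicit truncated class $\left[\underline{\lambda_{-1}(N_{\alpha}^*)^{-1}_{\mN}},0\right]$ for $\mN>\sup_{\alpha,v}\mN_{r_{\alpha,v}, m_{\alpha}}$.

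For the core step, I would argue as follows. Using the decomposition $N_{\alpha}=\bigoplus_{v=1}^q N_{\alpha,v}$ from \eqref{local203}, the multiplicativity $\lambda_t(x+y)=\lambda_t(x)\lambda_t(y)$ (Definition~\ref{defn:2.01}c)) reduces the whole question to a single $v$ with $r_{\alpha,v}=\rank N_{\alpha,v}\neq 0$, via the factorization \eqref{eq:2.088}. For a fixed $v$, the key is the identity \eqref{eq:2.66b}, which is a formal consequence of \eqref{eq:2.062}--\eqref{eq:2.064}: it says that in $\widehat{K}^0(Y_{\alpha}^{S^1})$ the coefficients of the formal product $\gamma_t(\underline{E}-\underline{\rank E})\cdot\lambda_t(\underline{E})^{-1}$ vanish in all positive degrees, i.e. $\left[\sum_{i}\gamma^i(\underline{E}-\underline{\rank E})\cdot(P_{l-i,+}(\underline{E})-P_{l-i,-}(\underline{E})),0\right]=0$ for all $l\geq 1$. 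Plugging in $\underline{E}=\underline{'N_{\alpha,v}^*}$, the class $\left[\underline{\lambda_{-g^{-v}}(N_{\alpha,v}^*)^{-1}_{\mN}},0\right]$ of \eqref{eq:2.092} is precisely the truncation of the formal inverse; the point is that Corollary~\ref{cor:2.11} (itself a consequence of Theorem~\ref{thm:2.10}, the local nilpotence of the $\gamma$-filtration) forces $[P_{k,+}(\underline{'N_{\alpha,v}^*}),0]=[P_{k,-}(\underline{'N_{\alpha,v}^*}),0]$ in $\widehat{K}^0(Y_{\alpha}^{S^1})$ as soon as $k>\mN_{r_{\alpha,v},m_{\alpha}}$, so that all the discarded tail terms contribute zero after passing to $\widehat{K}^0$. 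Hence the truncated product really is a one-sided, and by commutativity two-sided, inverse: $\left[\underline{\lambda_{-1}(N_{\alpha}^*)},0\right]\cup\left[\underline{\lambda_{-1}(N_{\alpha}^*)^{-1}_{\mN}},0\right]=1$, which is \eqref{eq:2.094}. The $R(S^1)$-localization at $I(g)$ enters only to make sense of the factors $g^{vr_{\alpha,v}}/(g^v-1)^{r_{\alpha,v}}$ and $(g^v-1)^{-k}$: since $g\in S^1\backslash A$ we have $Y^g=Y^{S^1}$ and $g^v-1\neq 0$ whenever $r_{\alpha,v}\neq 0$, so the character $v\mapsto (g^v-1)^{r_{\alpha,v}+k}$ is a unit in $R(S^1)_{I(g)}$ and the fractions in \eqref{eq:2.092} are legitimate elements of $\widehat{K}_{g}^0(Y_{\alpha}^{S^1})_{I(g)}$.

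Independence of the truncation level $\mN$ (statement \eqref{eq:2.093}) follows from the same Corollary~\ref{cor:2.11}: the difference between the $\mN$-truncation and the $\mN'$-truncation is a combination of the classes $[P_{k,+}-P_{k,-},0]$ with $k>\sup_{\alpha,v}\mN_{r_{\alpha,v},m_{\alpha}}$, each of which is $0$, so the two truncated classes agree already in $\widehat{K}^0(Y_{\alpha}^{S^1})$, hence a fortiori after localization. Finally, since the inverse of an invertible element in a commutative ring is unique, \eqref{eq:2.094} together with well-definedness forces \eqref{eq:2.095}.

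The main obstacle, in my view, is not the algebraic bookkeeping but making rigorous the passage from the \emph{formal} power-series identities \eqref{eq:2.062}--\eqref{eq:2.064}, \eqref{eq:2.66b} in the indeterminate $t$ to genuine identities of differential K-theory classes after the substitution $t\mapsto -g^{-v}$ and truncation. One must check that the finitely many surviving terms, after substituting the value of $t$ and clearing denominators in $R(S^1)_{I(g)}$, reassemble exactly into $\left[\underline{\lambda_{-1}(N_{\alpha}^*)},0\right]\cup\left[\underline{\lambda_{-1}(N_{\alpha}^*)^{-1}_{\mN}},0\right]$ minus a multiple of the vanishing classes of Corollary~\ref{cor:2.11}; this uses crucially that $\Lambda^i(\underline{'N_{\alpha,v}^*})=0$ for $i>r_{\alpha,v}$, so the ``infinite'' product defining $\lambda_{-g^{-v}}(\underline{'N_{\alpha,v}^*})$ is in fact a finite sum, and that the $\gamma$-filtration argument in Theorem~\ref{thm:2.10} is applied on each fixed component $Y_{\alpha}^{S^1}$ (a compact manifold) with the uniform bound $\mN_{r,m}$ of \eqref{eq:2.080} depending only on rank and dimension. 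Once this compatibility is in hand, the theorem is immediate.
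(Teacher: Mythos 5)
Your proposal is correct and follows essentially the same route as the paper: factorize via \eqref{eq:2.088}, use \eqref{eq:2.66b} together with Corollary~\ref{cor:2.11} to show the truncated series in \eqref{eq:2.092} is an inverse modulo vanishing tail terms, use Corollary~\ref{cor:2.11} again for $\mN$-independence, and observe that $g\in S^1\setminus A$ makes the denominators units in $R(S^1)_{I(g)}$. (Your description of \eqref{eq:2.66b} as ``the coefficients of $\gamma_t(\underline{E}-\underline{\rank E})\cdot\lambda_t(\underline{E})^{-1}$'' is slightly off — the two series in \eqref{eq:2.062}--\eqref{eq:2.064} share the common prefactor $(1+t)^{\pm r}$ and are each power series in $s=t/(1+t)$, so the relevant vanishing is for the coefficients of $s^l$ in $\lambda_t\cdot\lambda_t^{-1}=1$ — but this is a cosmetic imprecision that doesn't affect the argument.)
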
 
Remark that the lower bound 
$\sup_{\alpha,v}\mN_{r_{\alpha,v}, m_{\alpha}}$
does not depend on $g\in S^1\backslash A$.

From \eqref{eq:2.042} and \eqref{eq:2.091}, for any 
$k> \mN_{r_{\alpha,v},m_{\alpha}}$,
\begin{align}\label{eq:2.096} 
\ch\left(P_{k,+}\left(\underline{'N_{\alpha,v}^*}\right)\right)
=\ch\left(P_{k,-}\left(\underline{'N_{\alpha,v}^*}\right)\right)
\in \Omega^{\bullet}(Y_{\alpha}^{S^1},\R).
\end{align}

From \eqref{eq:2.092}, we have
\begin{multline}\label{eq:2.097} 
\ch_g\left(\underline{\lambda_{-1}(N_{\alpha}^*)^{-1}_{\mN}}\right)
\\
=\prod_{v: \, r_{\alpha, v}\neq 0}
\frac{g^{vr_{\alpha,v}}}{(g^v-1)^{r_{\alpha,v}}}
\left(1+\sum_{k=1}^{\mN} \frac{(-1)^k}{(g^v-1)^{k}}
\left(\ch\left(P_{k,+}\left(\underline{'N_{\alpha,v}^*}\right)\right)
-\ch\left(P_{k,-}\left(\underline{'N_{\alpha,v}^*}\right)\right) 
\right) \right).
\end{multline}
The following corollary follows directly from 
\eqref{eq:2.085}, \eqref{eq:2.094}, \eqref{eq:2.096} 
and \eqref{eq:2.097}.
\begin{cor}\label{cor:2.15} 
For any $\mN>\sup_{\alpha,v}\mN_{r_{\alpha,v}, m_{\alpha}}$,
$g\in S^1\backslash A$,
\begin{align}\label{eq:2.098} 
\ch_g\left(\underline{\lambda_{-1}(N_{\alpha}^*)}\right)\cdot
\ch_g\left(\underline{\lambda_{-1}(N_{\alpha}^*)^{-1}_{\mN}}\right)
=1\in \Omega^{\bullet}(Y_{\alpha}^{S^1},\C).
\end{align}
\end{cor}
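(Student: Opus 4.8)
The plan is to derive the form-level identity \eqref{eq:2.098} from the $K$-theoretic identity \eqref{eq:2.094} by pushing it through the characteristic-form (``curvature'') homomorphism of $g$-equivariant differential $K$-theory. Concretely, I would first check that $[\underline{E},\phi]\mapsto \ch_g(\underline{E})-d\phi$ defines a ring homomorphism $\omega_g\colon \widehat{K}^0_g(Y_{\alpha}^{S^1})\to\Omega^{\bullet}(Y_{\alpha}^{S^1},\C)$ into closed even forms, with $\omega_g([\underline{E},0])=\ch_g(\underline{E})$ and $\omega_g(1)=1$. Well-definedness on equivalence classes is precisely \eqref{eq:2.085} combined with the Chern--Simons transgression \eqref{local126}: if $(\underline{E_1},\phi_1)\sim(\underline{E_2},\phi_2)$ via $\Phi$ and $\underline{E_3}$ then $d(\phi_2-\phi_1)=d\,\widetilde{\ch}_g(\underline{E_1}\oplus\underline{E_3},\Phi^{*}(\underline{E_2}\oplus\underline{E_3}))=\ch_g(\underline{E_2})-\ch_g(\underline{E_1})$, since pulling the geometry of $E_2\oplus E_3$ back by the isomorphism $\Phi$ leaves its Chern character form unchanged. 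Multiplicativity follows from \eqref{eq:2.084} together with $\ch_g(\underline{E}\otimes\underline{F})=\ch_g(\underline{E})\wedge\ch_g(\underline{F})$ and the Leibniz rule for $d$. Since $\ch_g(\underline{M}\otimes\underline{E})=\chi_M(g)\,\ch_g(\underline{E})$, the map $\omega_g$ is $R(S^1)$-linear, hence extends to the localization by $\omega_g(x/\chi):=\omega_g(x)/\chi(g)$, which makes sense because $\chi(g)\neq0$ for $\chi\notin I(g)$.

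Granting this, applying $\omega_g$ to \eqref{eq:2.094} yields directly
\begin{align*}
\ch_g\!\left(\underline{\lambda_{-1}(N_{\alpha}^{*})}\right)\cdot\ch_g\!\left(\underline{\lambda_{-1}(N_{\alpha}^{*})^{-1}_{\mN}}\right)
&=\omega_g\!\left(\left[\underline{\lambda_{-1}(N_{\alpha}^{*})},0\right]\right)\cdot\omega_g\!\left(\left[\underline{\lambda_{-1}(N_{\alpha}^{*})^{-1}_{\mN}},0\right]\right)\\
&=\omega_g\!\left(\left[\underline{\lambda_{-1}(N_{\alpha}^{*})},0\right]\cup\left[\underline{\lambda_{-1}(N_{\alpha}^{*})^{-1}_{\mN}},0\right]\right)=\omega_g(1)=1,
\end{align*}
which is exactly \eqref{eq:2.098}; the hypothesis $g\in S^{1}\backslash A$ enters only to ensure $\prod_{v}(g^{v}-1)^{r_{\alpha,v}+\mN}\neq0$, so that the scalars inverted in \eqref{eq:2.092} indeed lie outside $I(g)$. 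As an alternative, more hands-on route one may expand both factors: $\ch_g(\underline{\lambda_{-1}(N_{\alpha}^{*})})$ equals $\prod_{v}\ch(\lambda_{-g^{-v}}(\underline{'N_{\alpha,v}^{*}}))$ by \eqref{eq:2.088}, while $\ch_g(\underline{\lambda_{-1}(N_{\alpha}^{*})^{-1}_{\mN}})$ is given by \eqref{eq:2.097}; since $\mN>\mN_{r_{\alpha,v},m_{\alpha}}$, \eqref{eq:2.096} shows the terms dropped by truncating at $\mN$ carry zero Chern character, so $\ch_g(\underline{\lambda_{-1}(N_{\alpha}^{*})^{-1}_{\mN}})$ coincides with the Chern character of the genuine inverse of $\underline{\lambda_{-1}(N_{\alpha}^{*})}$ in $K^{0}(Y_{\alpha}^{S^{1}})_{I(g)}$, and multiplicativity of the Chern character finishes the proof.

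The one step requiring genuine care is the construction of $\omega_g$, namely verifying that subtracting $d\phi$ really renders $\ch_g$ insensitive to the equivalence relation \eqref{eq:2.083} and that the resulting map is $R(S^1)$-linear, which is what licenses inverting scalars after localizing at $I(g)$. Everything downstream -- multiplicativity of the Chern character, the Leibniz rule for the pairing $*$ in \eqref{eq:2.084}, and the nonvanishing $\prod_{v}(g^{v}-1)^{r_{\alpha,v}+\mN}\neq0$ -- is then formal.
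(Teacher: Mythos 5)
Your argument is correct and is essentially the paper's intended one: the paper derives \eqref{eq:2.098} by pushing the differential K-theory identity \eqref{eq:2.094} through the Chern character form, using \eqref{eq:2.085} for well-definedness on equivalence classes and \eqref{eq:2.096} to control the truncation, which is exactly what your homomorphism $\omega_g$ packages. The extra care you take in verifying multiplicativity and $R(S^1)$-linearity of $\omega_g$ is a fleshed-out version of the same route, not a different one.
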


\section{Localization formula for equivariant 
	$\eta$-invariants}\label{s04}

In this section, 
we establish Theorems \ref{i08} and \ref{i14} 
by combining the analytic results on the 
$\eta$-invariant in Section \ref{s02}
and the algebraic framework of $g$-equivariant
differential $K$-theory in Section \ref{s0304}.
We fix $g\in S^1\backslash A$ now. It is relatively
easy to verify, by using the exact sequence on 
$g$-equivariant differential $K$-theory and
equivariant $K$-theory, that the localization
at $g\in S^1\backslash A$ of the restriction 
of $g$-equivariant differential $K$-theory from
the total manifold $Y$ to the fixed point set 
$Y^{S^1}$ is an isomorphism. We can describe 
the inverse of this map by using the direct image 
for the embedding $Y^{S^1}\hookrightarrow Y$ 
constructed in Section \ref{s0204} and the
inverse of $\lambda_{-1}(\underline{N^*})$
constructed in Section \ref{s0304}. This result, combined
 with the embedding formula of $\eta$-invariants
Theorem \ref{local63}, implies 
that the difference of the equivariant $\eta$-invariant
and its contribution on the fixed point set is
the value at this element 
of a rational function with 
integral coefficients. Note that the coefficients of these
rational functions depend a priori on the element 
$g\in S^1\backslash A$, but thanks to Theorems
\ref{local90}, \ref{local96}, we can finally conclude that
these rational functions are the same 
for any $g\in S^1\backslash A$. This ends the proof of our main result,
Theorem \ref{i14}.

This section is organized as follows.
In Section \ref{s0400}, 
we establish the localization formula
in $g$-equivariant differential $K$-theory.
 In Section \ref{s0401}, 
we define a direct image map in $g$-equivariant differential $K$-theory.
In Section \ref{s0402}, 
we state  Theorem \ref{thm:3.03},
which is a precise formulation of our main result, Theorem \ref{i14}.
In Section \ref{s0403}, 
we establish first Theorem \ref{i08} 
by applying the embedding 
formula of $\eta$-invariants, Theorem \ref{local63}, the localization
and direct image map in $g$-equivariant differential $K$-theory.
By using Theorems \ref{i08}, \ref{local90} and \ref{local96},
we get  finally Theorem \ref{thm:3.03}.
In Section \ref{s0405}, 
we study the case when $Y^{S^1}=\emptyset$ 
and compute explicitly the equivariant reduced $\eta$-invariant
when the manifold $Y$ is the circle.


\subsection{Localization in $g$-equivariant differential 
$K$-theory}\label{s0400}

Let $Y$ be a compact manifold with $S^1$-action.
We explain first the $S^1$-equivariant $K^1$-theory on $Y$, 
and the equivariant odd Chern character for an element in 
the $S^1$-equivariant $K^1$-group.

Let $K_{S^1}^{1}(Y)$ be the $S^1$-equivariant $K^{1}$-group 
of $Y$.
By \cite[Definitions 2.7 and 2.8]{Segal68}, we have the exact sequence
\begin{align}\label{eq:3.032} 
0\rightarrow K_{S^1}^{1}(Y)\overset{\varsigma}{\rightarrow} 
K_{S^1}^0(Y\times \widehat{S^1})\overset{i^*}  
{\rightarrow}K_{S^1}^0(Y)\rightarrow 0,
\end{align}
where $\widehat{S^1}$ is a copy of $S^1$ 
with trivial $S^1$-action and 
there exists $b\in\widehat{S^1} $ such that the map $i$ is given by
$i:Y\ni y\rightarrow (y,b)\in Y\times \widehat{S^1}$.
Note that
$Y\times \widehat{S^1}=Y\times \R/\Z$. We will take $b=\frac{1}{2}$
thus $i(Y)=Y\times\{\frac{1}{2}\}$. 

By \eqref{eq:3.032}, an element $y$ of $K_{S^1}^{1}(Y)$ 
can be represented as an element
$x= \varsigma(y)\in K_{S^1}^0(Y\times \widehat{S^1})$ 
such that $i^*(x)=0\in K_{S^1}^0(Y)$. 
We write $x=W-U$, where $W$ and $U$ are equivariant complex
vector bundles over $Y\times \widehat{S^1}$. 
By \cite[Proposition 2.4]{Segal68},
we may and we will assume that $U$ is a trivial vector bundle
associated with a finite dimensional $S^1$-representation
as in (\ref{eq:2.86b}). Since
$i^*(x)=W|_{Y\times\{1/2 \}}-U|_{Y\times\{1/2 \}}
=0\in K_{S^1}^0(Y)$, by adding on $U$ a trivial vector bundle
associated with a finite dimensional $S^1$-representation,
we may and we will assume that 
$W|_{Y\times\{1/2 \}}$  is a trivial vector bundle over 
$Y\times\{1/2 \}$ associated with a finite dimensional 
$S^1$-representation $M$ as in (\ref{eq:2.86b}) and
\begin{align}\label{eq:3.34b} 
U=\left(Y\times \widehat{S^1}\right)\times M.
\end{align} 
Since $[0,1]$ is contractible, there exists 
an $S^1$-equivariant morphism $F\in \cC^{\infty}(Y,\mathrm{Aut}(M))$ 
such that
\begin{align}\label{eq:3.034}
W=\big(Y\times[0,1]\big)\times  M/\sim_{F},
\end{align}
where $\sim_{F}$ is the gluing map: $(y,1,m)\sim_{F}(y,0,F(y)m)$ 
for $y\in Y$, $m\in M$. 
Then it induces an equivariant vector bundle isomorphism 
$F:E_M\rightarrow E_M$, where the $S^1$-equivariant
vector bundle $E_M$ on $Y$ is defined as in (\ref{eq:2.86b}) by
\begin{align}\label{eq:3.035} 
E_M:=Y\times M.
\end{align}

When we restrict the above construction on $Y^{S^1}$, 
as $S^1$ acts trivially on $Y^{S^1}$, we have
\begin{align}\label{eq:3.35b} 
\begin{split}
U|_{Y^{S^1}\times \widehat{S^1}}
=\Big(Y^{S^1}&\times \widehat{S^1}\Big)\times M,
\quad W|_{Y^{S^1}\times \widehat{S^1}}=\left(Y^{S^1}\times 
[0,1]\right)\times M/\sim_F,
\\ &E_M|_{Y^{S^1}}=Y^{S^1}\times M,
\end{split}
\end{align}
where $S^1$ acts only on the factor $M$.

Let $\nabla$ be an $S^1$-invariant connection on $E_M$.
Then $F^*\nabla_{\cdot}\cdot=F^{-1}\nabla_{\cdot}(F \cdot)$
is also an $S^1$-invariant connection on $E_M$.
From \eqref{eq:3.034},
\begin{align}\label{eq:3.037}
\nabla^W=dt\wedge\frac{\partial}{\partial t}+(1-t)\nabla+t 
F^*\nabla=dt\wedge\frac{\partial}{\partial t}+\nabla+t F^{-1}\nabla F
\end{align}
is a well-defined $S^1$-invariant connection on $W$ over 
$Y\times \widehat{S^1}$.

Recall that the equivariant Chern character form $\ch_g(\underline{E})$ 
and the equivariant Chern-Simons class 
$\wi{\ch}_g(\underline{E_0},\underline{E_1})$ 
defined in \eqref{e01051} and \eqref{local40}
depend only on the connections, not on the
metrics. We often denote 
the equivariant Chern character form by $\ch_g(E,\nabla^E)$
and the equivariant Chern-Simons class by 
$\wi{\ch}_g(E,\nabla^{E_0},\nabla^{E_1})$.

Let $\nabla^U$ be the trivial connection on $U$. 
It is naturally $S^1$-invariant.
For $g\in S^1\backslash A$, the odd equivariant 
Chern character for $y\in K_{S^1}^1(Y)$ as above, is defined by
\begin{multline}\label{eq:3.038}
\ch_g(y):=\left[\int_{\widehat{S^1}}
\left(\ch_g(W,\nabla^W)
-\ch_g(U,\nabla^U)\right)\right]
\\
=\left[\int_{\widehat{S^1}}\ch_g(W,\nabla^W)\right]
\in H^{\mathrm{odd}}(Y^{S^1},\C)\subset
\Omega^{\mathrm{odd}}(Y^{S^1},\C)/\Im \, d,
\end{multline} 
where the fiberwise integral $\int_{\widehat{S^1}}$ is normalized 
such that
$\int_{\widehat{S^1}}\tilde{\pi}^*\alpha\wedge\beta
=\alpha\wedge\int_{\widehat{S^1}}\beta$
for the obvious projection $\tilde{\pi}:Y^{S^1}\times \widehat{S^1}
\rightarrow Y^{S^1}$ and $\alpha\in \Omega^{\bullet}(Y^{S^1})$,
$\beta\in \Omega^{\bullet}(Y^{S^1}\times \widehat{S^1})$.
As $[\ch_g(W,\nabla^W)]
\in H^{\bullet}(Y^{S^1}\times\widehat{S^1},\C)$
does not depend on the choice of $\nabla$, thus $\ch_g(y)$ 
also does not depend on $\nabla$.  
From \eqref{e01051}, \eqref{eq:1.27}, \eqref{eq:3.037} 
and \eqref{eq:3.038}, we have
\begin{multline}\label{eq:3.039} 
\ch_g(y)
=-\left[\int_{[0,1]}\left\{\ch_g\left([0,1]\times 
E_M, dt\wedge\frac{\partial}{\partial 
	t}+(1-t)\nabla+tF^{*}\nabla\right)\right\}^{dt}dt\right]
\\
=-\widetilde{\ch}_g(E_M, \nabla, F^{*}\nabla)\in 
\Omega^{\mathrm{odd}}(Y^{S^1},\C)/\Im \, d.
\end{multline}

If we choose $\nabla$ as the trivial connection $d$ 
on $E_M$, by \eqref{eq:3.037}, the curvature $R^W$ 
of $\nabla^W$ is given by
\begin{align}\label{eq:3.040} 
R^W=\left(\nabla^W \right)^2
=dt\wedge (F^{-1}d F)-t(1-t)( F^{-1}d F)^2.
\end{align}
From \eqref{e01051}, \eqref{eq:1.27},
\eqref{eq:3.039} and \eqref{eq:3.040}, we calculate that 
\begin{align}\label{eq:3.041} 
\ch_g(y)=\sum_{n\geq 0}\frac{1}{(2i\pi 
	)^{n+1}}\frac{n!}{(2n+1)!}
\left[\tr\left[g( F^{-1}d F)^{2n+1} \right]\right].
\end{align}
This is just the equivariant version of the odd Chern character in 
\cite{Getzler93} and \cite[(1.50)]{Z01}.

From 
\eqref{eq:3.032}, $K^1_{S^1}(Y)$ is an $R(S^1)$-module. Moreover, 
$\phi\mapsto \chi_{M}(g)\cdot \phi$  makes 
$\Omega^{\mathrm{odd}}(Y^{S^1}, \C)/\Im\, d$ an $R(S^1)$-module. 

The following Proposition is the $g$-equivariant extension of the 
corresponding results in \cite[Proposition 2.20]{Bunke2009}, 
\cite[Proposition 2.24]{BunkeSchick13} and \cite[(2.21)]{FreedLott10}, 
which 
are analogues of Gillet-Soul\'e's result 
\cite[Theorem 6.2]{GS90c} in arithmetic $K$-theory.

\begin{prop}\label{prop:3.09}
	If $g\in 
	S^1\backslash A$, we have
	the exact sequence of $R(S^1)$-modules,
	\begin{align}\label{eq:3.042}
	K_{S^1}^{1}(Y)\overset{{\ch_{g}}}{\longrightarrow} 
	\Omega^{\mathrm{odd}}(Y^{S^1}, \C)/\Im\, d
	\overset{{a}}{\longrightarrow} \widehat{K}_g^0(Y) 
	\overset{{\tau}}{\longrightarrow} K_{S^1}^0(Y)
	\longrightarrow 0,
	\end{align}
	where 
	\begin{align}\label{eq:3.043}
	a(\phi)=[0, \phi],\quad \tau([\underline{E}, \phi])=[E].
	\end{align}
\end{prop}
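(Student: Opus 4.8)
The plan is to verify exactness at each of the four terms, following the non-equivariant arguments of \cite[Proposition 2.20]{Bunke2009}, \cite[(2.21)]{FreedLott10} and \cite[Proposition 2.24]{BunkeSchick13}. Throughout we use that for $g\in S^1\backslash A$ one has $Y^g=Y^{S^1}$ by \eqref{eq:1.6}, so that the Chern--Simons classes $\widetilde{\ch}_g$ appearing in Definition \ref{defn:2.12} and the odd Chern character $\ch_g$ of \eqref{eq:3.038}--\eqref{eq:3.039} all take values in $\Omega^{\mathrm{odd}}(Y^{S^1},\C)/\Im\, d$. The $R(S^1)$-linearity of $\ch_g,a,\tau$ is immediate: tensoring $W,U$ (resp. a cycle, resp. a bundle) by the equivariant trivial bundle $\underline{M}$ of \eqref{eq:2.86b} multiplies the relevant characteristic forms by the constant $\chi_M(g)$ on $Y^{S^1}$ and commutes with $\varsigma,i^*,\tau$. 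I first record two elementary facts. (i) By \eqref{eq:2.039}, $a$ is additive and $[\underline{F},\alpha]-[\underline{F},\beta]=[0,\alpha-\beta]=a(\alpha-\beta)$ for any cycle $\underline{F}$. (ii) $\tau\circ a=0$ is clear, and $\operatorname{im}\ch_g\subseteq\ker a$: given $y\in K^1_{S^1}(Y)$, use the presentation \eqref{eq:3.034}--\eqref{eq:3.035} by an equivariant automorphism $F$ of a trivial equivariant bundle $E_M$; applying the equivalence relation \eqref{eq:2.082}--\eqref{eq:2.083} with $E_3=0$ and isomorphism $F$ gives $[\underline{E_M},0]=[\underline{E_M},\widetilde{\ch}_g(\underline{E_M},F^*\underline{E_M})]$, whence by (i) $a(\widetilde{\ch}_g(\underline{E_M},F^*\underline{E_M}))=0$; by \eqref{eq:3.039} this form equals $-\ch_g(y)$, so $a(\ch_g(y))=0$.

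\textbf{Exactness at $K^0_{S^1}(Y)$ and at $\widehat{K}^0_g(Y)$.} Surjectivity of $\tau$ is clear, since an equivariant bundle with an invariant metric and connection gives a cycle mapping onto it and we extend by additivity. Now let $x=[\underline{E_1},\phi_1]-[\underline{E_2},\phi_2]$ with $\tau(x)=[E_1]-[E_2]=0$ in $K^0_{S^1}(Y)$. By definition of the equivariant K-group there is an equivariant bundle $G$ and an equivariant isomorphism $\Phi\colon E_1\oplus G\xrightarrow{\sim}E_2\oplus G$; fix an invariant metric and connection on $G$. Then $x=[\underline{E_1}\oplus\underline{G},\phi_1]-[\underline{E_2}\oplus\underline{G},\phi_2]$, and by \eqref{eq:2.082}--\eqref{eq:2.083} (with $E_3=0$ and isomorphism $\Phi$) we have $[\underline{E_2}\oplus\underline{G},\phi_2]=[\underline{E_1}\oplus\underline{G},\phi_2-\eta]$ where $\eta:=\widetilde{\ch}_g(\underline{E_1}\oplus\underline{G},\Phi^*(\underline{E_2}\oplus\underline{G}))$. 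Hence by (i), $x=a(\phi_1-\phi_2+\eta)\in\operatorname{im}a$. Together with (ii) this gives $\ker\tau=\operatorname{im}a$.

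\textbf{Exactness at $\Omega^{\mathrm{odd}}(Y^{S^1},\C)/\Im\, d$.} The inclusion $\operatorname{im}\ch_g\subseteq\ker a$ is (ii). Conversely, suppose $a(\phi)=[0,\phi]=0$ in $\widehat{K}^0_g(Y)$. By definition of the Grothendieck group there is a cycle $(\underline{C},\gamma)$ with $(\underline{C},\phi+\gamma)\sim(\underline{C},\gamma)$, so by \eqref{eq:2.082}--\eqref{eq:2.083} there exist an equivariant triple $\underline{E_3}$ and an equivariant automorphism $\Phi$ of $D:=C\oplus E_3$ with $\widetilde{\ch}_g(\underline{D},\Phi^*\underline{D})=-\phi$. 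Choose $D'$ with $D\oplus D'$ equivariantly trivial, say $=E_M$, and put $F:=\Phi\oplus\mathrm{Id}_{D'}$, an equivariant automorphism of $E_M$; it defines a class $y\in K^1_{S^1}(Y)$ via \eqref{eq:3.034}--\eqref{eq:3.035}. Taking on $E_M$ the direct-sum connection $\nabla^{D}\oplus\nabla^{D'}$ with $\nabla^{D'}$ any invariant connection, and using additivity of $\widetilde{\ch}_g$ under direct sums (the equivariant analogue of Lemma \ref{lem:2.02}) together with the vanishing of the Chern--Simons class of a connection against itself, we get $\widetilde{\ch}_g(E_M^{S^1},\nabla,F^*\nabla)=\widetilde{\ch}_g(\underline{D},\Phi^*\underline{D})=-\phi$; hence $\ch_g(y)=\phi$ by \eqref{eq:3.039}. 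Thus $\ker a=\operatorname{im}\ch_g$, completing the proof.

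\textbf{Main obstacle.} The routine parts amount to unwinding Definition \ref{defn:2.12} via the transgression formula \eqref{local126}. The two points needing care are the appeals to Segal's equivariant results -- that $[E_1]=[E_2]$ in $K^0_{S^1}(Y)$ yields a stable equivariant isomorphism, and that every equivariant bundle is a summand of an equivariant trivial one (cf. \cite[Proposition 2.4]{Segal68}) -- and, in the last step, the bookkeeping identifying the Chern--Simons form attached to $F=\Phi\oplus\mathrm{Id}_{D'}$ on $E_M$ with the one attached to $\Phi$ on $D$, which relies on the direct-sum additivity of $\widetilde{\ch}_g$ and its independence of the chosen path of connections and metrics recorded after \eqref{local40}.
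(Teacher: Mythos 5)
Your proof is correct and follows essentially the same route as the paper's: the same four exactness checks, the same identification $\ch_g(y)=-\widetilde{\ch}_g(E_M^{S^1},\nabla,F^*\nabla)$ from \eqref{eq:3.039}, and the same use of \cite[Proposition 2.4]{Segal68} to extend an automorphism realizing a class in $\ker a$ to an automorphism of an equivariant trivial bundle. You merely spell out in more detail the steps the paper leaves as "easy to see" (exactness at $K^0_{S^1}(Y)$ and the unwinding of $[0,\phi]=0$ in the Grothendieck group).
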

\begin{proof}
It is obvious from Definition \ref{defn:2.12} that $\tau$ is 
surjective and $\tau\circ a=0$.
	
	If $x\in \Ker \tau$,  it is easy to 
	see from Definition \ref{defn:2.12} that $x\in \Im (a)$.
	
	Now we prove $a\circ \ch_g=0$. 
	For $y\in K_{S^1}^{1}(Y)$, we  can construct 
	equivariant vector bundle
	$E_M$ over $Y$ as in \eqref{eq:3.035}. 
	Let $h^{M}$ be the metric on $E_M$ induced by 
	an $S^1$-invariant metric on $M$ via \eqref{eq:3.035}
	and $\nabla$ be an $S^1$-invariant Hermitian 
	connection on $(E_M,h^{M})$.
	By (\ref{eq:3.35b}) and \eqref{eq:3.039}, we have
	\begin{multline}\label{eq:3.044}
	a(\ch_g(y))=\left[0,-\widetilde{\ch}_g\left(E_M, 
	\nabla|_{Y^{S^1}}, F^{*}\nabla|_{Y^{S^1}}\right)\right]
	=[0,-\widetilde{\ch}_g(E_M, \nabla, F^{*}\nabla)]
	\\
	=[(E_M, h^{M}, \nabla), 0]- [(E_M, h^{M},
	\nabla), \widetilde{\ch}_g((E_M, h^{M}, \nabla), 
	(E_M, F^*h^{M}, F^{*}\nabla))].
	\end{multline}
	By Definition \ref{defn:2.12}, 
	$((E_M, h^{M}, \nabla), 0)$ and $((E_M, h^{M}, 
	\nabla), \widetilde{\ch}_g((E_M, h^{M}, \nabla), 
	(E_M, F^*h^{M}, F^{*}\nabla)))$ are equivalent 
	under the equivariant vector bundle 
	isomorphism $F$ over $Y$. That is, 
	$a(\ch_g(y))=0\in \widehat{K}_g^0(Y)$.
	
	At last, we prove $\Ker a\subseteq\Im \ch_g$.
	For $\phi'\in \Ker a$,  i.e., $[0,\phi']=0\in \widehat{K}_g^0(Y)$.
By Definition \ref{defn:2.12}, there exists 
an equivariant geometric triple 
$\underline{E'}=(E', h^{E'}, \nabla^{E'})$  
and an equivariant vector bundle isomorphism over $Y$:
\begin{align}\label{eq:3.045} 
\Phi':E'\rightarrow E'\quad \text{such that}
\quad \phi'=-\widetilde{\ch}_g\left(\underline{E'}, 
\Phi'^{*} \underline{E'}\right).
\end{align}
By \cite[Proposition 2.4]{Segal68}, there exists an $S^1$-vector bundle
$E$ on $Y$ such that $E\oplus E'=E_M$ 
where $M$ is a finite dimensional $S^1$-representation. Set 
\begin{align}\label{eq:3.046} 
\Phi: E\oplus E'\rightarrow E\oplus E',\quad (u,v)\mapsto (u, \Phi'(v)).
\end{align}
Let $h^E$ be an $S^1$-invariant Hermitian metric on $E$ 
and $\nabla^E$ be an $S^1$-invariant Hermitian connection
on $(E,h^E)$. Then 
$\widetilde{\ch}_g\left(\underline{E}\oplus\underline{E'}, \Phi^{*} 
	\left(\underline{E}\oplus\underline{E'}\right)\right)
	=\widetilde{\ch}_g\left(\underline{E'}, 
	\Phi'^{*} \underline{E'}\right)$.
	As in \eqref{eq:3.037}, 
	\begin{align}\label{eq:3.047} 
	\nabla^W=dt\wedge\frac{\partial}{\partial 
		t}+(1-t)(\nabla^{E}\oplus \nabla^{E'})+t 
	\Phi^*(\nabla^{E}\oplus \nabla^{E'})
	\end{align}
	is an $S^1$-invariant connection on 
	$W= Y\times[0,1]\times M/\sim_{\Phi}$. 
	Therefore, by \eqref{local40}, \eqref{eq:3.037} and \eqref{eq:3.039}, 
	modulo exact forms, we 	have
	\begin{multline}\label{eq:3.048}
	-\widetilde{\ch}_g\left(\underline{E}\oplus\underline{E'}, \Phi^{*} 
	\left(\underline{E}\oplus\underline{E'}\right)\right)	
	=-\widetilde{\ch}_g\left(E\oplus E', (\nabla^{E}\oplus 
	\nabla^{E'})|_{Y^{S^1}},
	\Phi^{*}(\nabla^{E}\oplus \nabla^{E'})|_{Y^{S^1}}\right)
	\\
	=\int_{\widehat{S^1}}\ch_g(W, \nabla^W).
	\end{multline}
From (\ref{eq:3.34b}), \eqref{eq:3.038}, \eqref{eq:3.045}, 
\eqref{eq:3.046} and \eqref{eq:3.048},
$\Phi$ defines an element $y\in K_{S^1}^1(Y)$
by $\varsigma(y)=W-U$ and $\phi'=\ch_g(y)$.
	
	It is obvious that the $R(S^1)$-action commutes with $\ch_g$, $a$ 
	and $\tau$. 
	
	The proof of Proposition \ref{prop:3.09} is completed.
\end{proof}

Let $\iota:Y^{S^1}\rightarrow Y$ be the canonical embedding. Let 
\begin{align}\label{eq:3.17b} 
\hat{\iota}^*:\widehat{K}^0_g(Y)_{I(g)}\rightarrow 
\widehat{K}^0_g(Y^{S^1})_{I(g)}, \quad
[\underline{E},\phi]/\chi\to [\underline{E}|_{Y^{S^1}}, \phi]/\chi,
\end{align}
be the induced homomorphism by restriction.

The following localization theorem, which is the differential $K$-theory 
version of the Atiyah-Segal localization theorem 
in topological $K$-theory \cite[Theorem 1.1]{ASegal68},
is inspired by \cite[Theorem 3.27]{BunkeSchick13}
and \cite[Theorem 5.5]{Ta12}.
\begin{thm}[Localization Theorem]\label{thm:3.10} 
	For $g\in S^1\backslash A$, the restriction map
	$\hat{\iota}^*:\widehat{K}_g^0(Y)_{I(g)}\rightarrow 
	\widehat{K}_g^0(Y^{S^1})_{I(g)}$ in (\ref{eq:3.17b})
	is an $R(S^1)_{I(g)}$-module isomorphism.
\end{thm}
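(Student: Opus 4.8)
The plan is to follow the standard strategy for proving localization theorems: reduce the isomorphism statement to an injectivity/surjectivity result via a five-lemma argument applied to the exact sequence of Proposition \ref{prop:3.09}, and then import the corresponding classical facts from topological $K$-theory (Theorem \ref{as01}, i.e.\ Atiyah-Segal) together with the de Rham-theoretic fact that $\Omega^{\mathrm{odd}}(Y^g,\C)/\Im\,d = \Omega^{\mathrm{odd}}(Y^{S^1},\C)/\Im\,d$ is literally unchanged by $\hat\iota^*$ when $g\in S^1\setminus A$ (since $Y^g=Y^{S^1}$ in that case). Concretely, first I would write down the commutative diagram with two rows: the top row is the localization of the sequence $K^1_{S^1}(Y)\to \Omega^{\mathrm{odd}}(Y^{S^1},\C)/\Im\,d \to \widehat K^0_g(Y)\to K^0_{S^1}(Y)\to 0$ from \eqref{eq:3.042}, the bottom row is the same sequence for $Y^{S^1}$ in place of $Y$, and the vertical maps are $\hat\iota^*$ (or $\iota^*$ on the $K$-theory and form terms). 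Localization at $I(g)$ is exact, so both rows stay exact after tensoring with $R(S^1)_{I(g)}$.

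Next I would check that each vertical map in that diagram, except the one over $\widehat K^0_g(Y)$ itself, is an isomorphism. For the two topological $K$-theory terms $K^0_{S^1}(Y)_{I(g)}$ and $K^1_{S^1}(Y)_{I(g)}$, this is exactly the classical Atiyah-Segal localization theorem \cite[Theorem 1.1]{ASegal68}: restriction to the fixed-point set $Y^{S^1}=Y^g$ becomes an isomorphism after localizing at $I(g)$, and the $K^1$ case follows from the $K^0$ case applied to $Y\times\widehat{S^1}$ via the defining exact sequence \eqref{eq:3.032} together with the fact that $(Y\times\widehat{S^1})^{S^1}=Y^{S^1}\times\widehat{S^1}$. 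For the differential-form term, the map $\iota^*:\Omega^{\mathrm{odd}}(Y^{S^1},\C)/\Im\,d\to\Omega^{\mathrm{odd}}(Y^{S^1},\C)/\Im\,d$ is the identity (since the source fixed-point set and the target fixed-point set coincide), hence an isomorphism; localizing preserves this. Moreover I must verify that all the squares in the diagram commute, which is routine from the naturality of $\ch_g$, $a$, $\tau$ and the restriction maps — the only point needing a word is that $\ch_g$ on $K^1$ as defined in \eqref{eq:3.038} is already valued in forms on the fixed-point set, so $\iota^*$ acts as the identity there too.

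Once the diagram is set up with four of the five vertical arrows known to be isomorphisms (the fifth being the zero map to $0$ on the far right, trivially an iso in the appropriate extended sense), the five lemma forces $\hat\iota^*:\widehat K^0_g(Y)_{I(g)}\to\widehat K^0_g(Y^{S^1})_{I(g)}$ to be an isomorphism as well. Since every map in sight is $R(S^1)_{I(g)}$-linear (the $R(S^1)$-actions were checked to commute with $a$, $\tau$, $\ch_g$ in Proposition \ref{prop:3.09}, and localization is functorial), the resulting isomorphism is one of $R(S^1)_{I(g)}$-modules.

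The main obstacle I anticipate is not the five-lemma bookkeeping but rather making the comparison diagram genuinely commute on the nose at the level of differential $K$-theory: the definition of $a$ via $[0,\phi]$ and of $\ch_g$ on $K^1_{S^1}$ both involve choices (connections on $E_M$, the homotopy in \eqref{eq:3.034}), and one must be careful that $\iota^*\circ a = a\circ\iota^*$ and $\iota^*\circ\ch_g = \ch_g\circ\iota^*$ hold exactly, not just up to exact forms — though in fact working modulo $\Im\,d$ is built into the definition of the groups, so this should go through. A secondary subtlety is confirming that localization at $I(g)$ commutes with the exact sequence \eqref{eq:3.042}, which is automatic since $R(S^1)_{I(g)}$ is a flat $R(S^1)$-module; and that the Atiyah-Segal isomorphism applies to $K^1_{S^1}$, which requires only the observation that the suspension construction \eqref{eq:3.032} is compatible with restriction to fixed points.
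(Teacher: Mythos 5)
Your proposal is correct and follows essentially the same route as the paper: localize the exact sequence of Proposition \ref{prop:3.09} for $Y$ and for $Y^{S^1}$, observe that the differential-form term is literally unchanged since $Y^g=Y^{S^1}$, deduce the $K^1_{S^1}$ isomorphism from Atiyah--Segal applied to $Y\times\widehat{S^1}$ via \eqref{eq:3.032}, and conclude by the five lemma. No substantive differences from the paper's argument.
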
 
\begin{proof}
	Since localization preserves exact sequences 
	\cite[Proposition 3.3]{AM69}, from Proposition \ref{prop:3.09}, 
	we get an exact sequence of $R(S^1)_{I(g)}$-modules
	\begin{align}\label{eq:3.050}
	K_{S^1}^{1}(Y)_{I(g)}
	\overset{{\ch_{g}}}{\longrightarrow}
	\left(\Omega^{\mathrm{odd}}(Y^{S^1}, \C)/\Im\, d\right)_{I(g)}
	\overset{{a}}{\longrightarrow} 
	\widehat{K}_g^0(Y)_{I(g)} 
	\overset{{\tau}}{\longrightarrow} 
	K_{S^1}^0(Y)_{I(g)}
	\longrightarrow 0.
	\end{align}
	Replacing $Y$ by $Y^{S^1}$, since $(Y^{S^1})^{S^1}=Y^{S^1}$, 
	we get an exact 
	sequence of $R(S^1)_{I(g)}$-modules
	\begin{align}\label{eq:3.051}
	K_{S^1}^{1}(Y^{S^1})_{I(g)}
	\overset{{\ch_{g}}}{\longrightarrow}
	\left(\Omega^{\mathrm{odd}}(Y^{S^1}, \C)/\Im\, d\right)_{I(g)}
	\overset{{a}}{\longrightarrow} 
	\widehat{K}_g^0(Y^{S^1})_{I(g)} 
	\overset{{\tau}}{\longrightarrow} 
	K_{S^1}^0(Y^{S^1})_{I(g)}
	\longrightarrow 0.
	\end{align}
	Furthermore, we have the commutative diagram
		\begin{equation}\label{eq:3.052} 
		\begin{split}
	\xymatrix{
		K_{S^1}^{1}(Y)_{I(g)} \ar[d]^{\iota^*} \ar[r]^-{\ch_g}
		&\left(\Omega^{\mathrm{odd}}(Y^{S^1}, \C)/\Im\, d\right)_{I(g)}
		\ar[d]^{\Id} \ar[r]^-{a}
		&\widehat{K}_g^0(Y)_{I(g)}\ar[d]^{\hat{\iota}^*} 
		\ar[r]^-{\tau} &K_{S^1}^0(Y)_{I(g)}\ar[d]^{\iota^*} \ar[r] &0\\
		K_{S^1}^{1}(Y^{S^1})_{I(g)}  \ar[r]^-{\ch_g}
		&\left(\Omega^{\mathrm{odd}}(Y^{S^1}, \C)/\Im\, d\right)_{I(g)}
		\ar[r]^-{a} 
		&\widehat{K}_g^0(Y^{S^1})_{I(g)}\ar[r]^-{\tau} 
		&K_{S^1}^0(Y^{S^1})_{I(g)}\ar[r] &0.
	}
\end{split}
	\end{equation}
	Here $\iota^*:K_{S^1}^*(Y)_{I(g)}
	\rightarrow K_{S^1}^*(Y^{S^1})_{I(g)}$
	is the $R(S^1)_{I(g)}$-module map induced by $\iota$.
	From (\ref{eq:3.032}) for $Y$ and $Y^{S^1}$, 
	we have the commutative diagram
	\begin{align}\label{eq:3.52b} 
	\begin{split}
	\xymatrix{
	0\ar[r]	&K_{S^1}^{1}(Y)_{I(g)} \ar[d]^{\iota^*} \ar[r]^-{\varsigma} 
	&K_{S^1}^0(Y\times \widehat{S^1})_{I(g)}\ar[d]^{\iota^*} 
	\ar[r]^-{i^*} &K_{S^1}^0(Y)_{I(g)}\ar[d]^{\iota^*} \ar[r] &0\\
	0\ar[r]	&K_{S^1}^{1}(Y^{S^1})_{I(g)}  \ar[r]^-{\varsigma} 
	&K_{S^1}^0(Y^{S^1}\times \widehat{S^1})_{I(g)}\ar[r]^-{i^*} 
	&K_{S^1}^0(Y^{S^1})_{I(g)}\ar[r] &0.
}
\end{split}
	\end{align}
	Using localization in topological $K$-theory 
	\cite[Theorem 1.1]{ASegal68}, $\iota^*$ is 
	an isomorphism on $K_{S^1}^0(\cdot)_{I(g)}$.
By the five lemma on (\ref{eq:3.52b}), 
$\iota^*$ is an isomorphism on $K_{S^1}^{1}(\cdot)_{I(g)}$.
	Then by the five lemma on (\ref{eq:3.052}), $\hat{\iota}^*$ in 
	(\ref{eq:3.052}) is an 
	isomorphism.
	
	The proof of Theorem \ref{thm:3.10} is completed.
\end{proof}

As the restriction map $\hat{\iota}^*$ in (\ref{eq:3.17b})
is an isomorphism, it is a natural question to 
find explicitly its inverse. We solve this problem
by combining the construction of the geometric
direct image for embeddings in Section \ref{s0204} and 
the invertibility of the element $[\lambda_{-1}
(\underline{N^*}), 0]$ in $\widehat{K}^0_g(Y^{S^1})_{I(g)}$ 
obtained in Theorem \ref{thm:2.14}.

In the following definition 
we adopt  
the notation in Section \ref{s0204}.
\begin{defn}\label{defn:3.11}
	For $g\in S^1\backslash A$, the direct image map 
	\begin{align}\label{eq:3.053}
	\hat{\iota}_!:\widehat{K}^0_g(Y^{S^1})_{I(g)}\rightarrow 
	\widehat{K}^0_g(Y)_{I(g)}
	\end{align}
	is defined by 
	\begin{align}\label{eq:3.054}
	\hat{\iota}_!\left(\left[\underline{\mu},\phi\right]/\chi
	\right)=\left[\underline{\xi_+}, 
	\ch_g\left(\underline{\Lambda^{\mathrm{even}}(N^*)}\right)\wedge 
	\phi\right]/\chi-\left[\underline{\xi_-}, 
	\ch_g\left(\underline{\Lambda^{\mathrm{odd}}(N^*)}
	\right)\wedge \phi\right]/\chi.
	\end{align}
\end{defn}

\begin{thm}\label{thm:3.12} 
The direct image map $\hat{\iota}_!$ is a well-defined isomorphism and 
	\begin{align}\label{eq:3.055} 
	\hat{\iota}^*\circ 
	\hat{\iota}_!=\left[\underline{\lambda_{-1}(N^*)},0\right]\cup:
	\widehat{K}_g^0(Y^{S^1})_{I(g)}
	\overset{{\sim}}{\longrightarrow}
	\widehat{K}_g^0(Y^{S^1})_{I(g)}.
	\end{align}
Thus the inverse map of $\hat{\iota}^*:
 \widehat{K}^0_g(Y)_{I(g)}\rightarrow 
\widehat{K}^0_g(Y^{S^1})_{I(g)}$ in (\ref{eq:3.17b}) is given by
$\hat{\iota}_!\circ \left[\underline{\lambda_{-1}(N^*)},0\right]^{-1}\cup$.
\end{thm}
\begin{proof}
	From the construction of $\underline{\xi_{\pm}}$ 
	in \eqref{eq:1.40a}-\eqref{bl0037}, we have
	\begin{multline}\label{eq:3.056} 
	\hat{\iota}^*\left\{\left[\underline{\xi_+}, 
	\ch_g\left(\underline{\Lambda^{\mathrm{even}}(N^*)}\right)\wedge 
	\phi\right]/\chi-\left[\underline{\xi_-}, 
	\ch_g\left(\underline{\Lambda^{\mathrm{odd}}(N^*)}\right)\wedge 
	\phi\right]/\chi \right\}
	\\
	=\sum_{\alpha\in \mathfrak{B}}\left[\left(\underline{\Lambda^{
			\mathrm{even}}(N_{\alpha}^*)}\otimes 
	\underline{\mu_{\alpha}}\right)\oplus \underline{F_{\alpha}}, 
	\ch_g\left(\underline{\Lambda^{\mathrm{even}}(N_{\alpha
		}^*)}\right)
	\wedge \phi\right]/\chi
	\\
	-\sum_{\alpha\in \mathfrak{B}}\left[\left(\underline{
		\Lambda^{\mathrm{odd}}(N_{\alpha}^*)}
	\otimes \underline{\mu_{\alpha}}\right)\oplus 
	\underline{F_{\alpha}}, \ch_g\left(\underline{
		\Lambda^{\mathrm{odd}}(N_{\alpha}^*)}\right)
	\wedge \phi\right]/\chi
	\\
	=\left[\underline{\Lambda^{\mathrm{even}}(N^*)}\otimes
	\underline{\mu}, 
	\ch_g\left(\underline{\Lambda^{\mathrm{even}}(N^*)}\right)
	\wedge \phi\right]/\chi
	\\
	-\left[\underline{\Lambda^{\mathrm{odd}}(N^*)}
	\otimes \underline{\mu}, 
	\ch_g\left(\underline{\Lambda^{\mathrm{odd}}(N^*)}\right)
	\wedge \phi\right]/\chi.
	\end{multline}
	Since 
	$\underline{\lambda_{-1}(N^*)}
	=\underline{\Lambda^{\mathrm{even}}(N^*)}
	-\underline{\Lambda^{\mathrm{odd}}(N^*)}$, by \eqref{eq:2.084} 
	and \eqref{eq:3.056}, we have
	\begin{multline}\label{eq:3.057}
	\left[\underline{\lambda_{-1}(N^*)},0\right]
	\cup[\underline{\mu},\phi]/\chi
	\\
	=\hat{\iota}^*\left\{\left[\underline{\xi_+}, 
	\ch_g\left(\underline{\Lambda^{\mathrm{even}}(N^*)}\right)\wedge 
	\phi\right]/\chi-\left[\underline{\xi_-}, 
	\ch_g\left(\underline{\Lambda^{\mathrm{odd}}(N^*)}\right)
	\wedge \phi\right]/\chi \right\}.
	\end{multline}	
	Therefore, we have
	\begin{align}\label{eq:3.058}
	\hat{\iota}^*\circ \hat{\iota}_!
	=\left[\underline{\lambda_{-1}(N^*)},0\right]\cup.
	\end{align}
	
By Theorem \ref{thm:3.10}, 
$\hat{\iota}^*:\widehat{K}_g^0(Y)_{I(g)}\overset{{\sim}}
{\longrightarrow}\widehat{K}_g^0(Y^{S^1})_{I(g)}$
is an $R(S^1)_{I(g)}$-module isomorphism. 
From Theorem \ref{thm:2.14}, 
	\begin{align}\label{eq:3.059}
	(\hat{\iota}^*)^{-1}\circ \left[\underline{\lambda_{-1}(N^*)},
	0\right]\cup:
	\widehat{K}_g^0(Y^{S^1})_{I(g)}	\overset{{\sim}}{\longrightarrow}
	\widehat{K}_g^0(Y)_{I(g)}
	\end{align}
	is a well-defined isomorphism. 
	Equations \eqref{eq:3.058} and \eqref{eq:3.059} imply that 
	$\hat{\iota}_!$ in \eqref{eq:3.054}
	is a well-defined isomorphism and \eqref{eq:3.055} holds.
\end{proof}	

\subsection{Direct image in $g$-equivariant differential $K$-theory}
\label{s0401}
In the remainder of this section,  $Y$ is an odd dimensional
compact oriented manifold
and  has an $S^1$-equivariant spin$^c$ structure.

 Note that for $g\in S^1$, $\Q_g\subset \C$ was defined in \eqref{i05}
and $\ch_g(R(S^1)_{I(g)})=\Q_g$.

\begin{defthm}\label{defn:3.01}
	Let $g\in S^1$ be fixed.
	For an equivariant geometric triple $\underline{E}$, 
$\phi\in \Omega^{\mathrm{odd}}(Y^g,\C)/\Im \, d$, $\chi\in R(S^1)$
	such that $\chi(g)\neq 0$, the map
	\begin{align}\label{eq:3.001}
	\widehat{f_Y}_!\big((\underline{E}, 
	\phi)/\chi\big)
	:=\chi
	(g)^{-1}\left(-\int_{Y^g}\td_g(\nabla^{TY},\nabla^L)\wedge\phi
	+\bar{\eta}_g(\underline{TY}, \underline{L}, \underline{E})\right)
	\end{align}
defines a direct image map 
$\widehat{f_Y}_!: \widehat{K}_g^0(Y)_{I(g)}
\rightarrow \C/\Q_g$.
\end{defthm}

Note that  
for $g=1$, 
the family version of (\ref{eq:3.001}) is \cite[Definition 3.12]{FreedLott10}. 
In \cite[Proposition 4.3]{KRo01} 
K\"ohler-Roessler defined an arithmetic $K$-theory version of (\ref{eq:3.001}).

\begin{proof}	
	For an $S^1$-equivariant vector bundle isomorphism 
	$\Phi:E\rightarrow E$ over $Y$, 
	we have by Definition \ref{local128},
	\begin{align}\label{eq:3.002} 
	\bar{\eta}_g(\underline{TY}, \underline{L}, \Phi^{*}\underline{E})
	=\bar{\eta}_g(\underline{TY}, \underline{L}, 
	\underline{E}).
	\end{align}
For any finite dimensional $S^1$-representation $M$ 
	and triples $\underline{E}$, $\underline{E_1}$, $\underline{E_2}$, 
	we have from Definition \ref{local128},
	\begin{align}\label{eq:3.003} \begin{split}
&	\bar{\eta}_g(\underline{TY}, \underline{L}, \underline{M}\otimes 
	\underline{E})=\chi_{M}(g)\cdot\bar{\eta}_g(\underline{TY}, 
	\underline{L}, \underline{E}),\\
&	\bar{\eta}_g(\underline{TY}, \underline{L}, \underline{E_1}
	\oplus\underline{E_2})
	=\bar{\eta}_g(\underline{TY}, \underline{L}, 
	\underline{E_1})+\bar{\eta}_g(\underline{TY}, \underline{L}, 
	\underline{E_2}).
	\end{split}\end{align}

For cycles $(\underline{E_1}, 
\phi_1)/\chi_1$ and $(\underline{E_2}, 
\phi_2)/\chi_2$ of  $ \widehat{K}_g^0(Y)_{I(g)}$ we have
from \eqref{eq:3.001} and \eqref{eq:3.003}, 
\begin{align}\label{eq:3.005} 
\widehat{f_Y}_!\big((\underline{E_1}, 
\phi_1)/\chi_1+(\underline{E_2}, 
\phi_2)/\chi_2\big)
=\widehat{f_Y}_!\big((\underline{E_1}, 
\phi_1)/\chi_1\big)+\widehat{f_{Y}}_!\big((\underline{E_2}, 
\phi_2)/\chi_2\big).
\end{align}

	If $[\underline{E_2}-\underline{E_1}, 
	\phi]/\chi=0\in \widehat{K}_g^0(Y)_{I(g)}$, there exists a finite 
	dimensional $S^1$-representation $M$ such that 
	$[\underline{M}\otimes(\underline{E_2}-\underline{E_1}), 
	\chi_M(g)\phi]=0\in \widehat{K}_g^0(Y)$ and $\chi_M(g)\neq 0$. 
	Thus from Definition \ref{defn:2.12}, there exist $\underline{E_3}$
	and an equivariant vector bundle isomorphism 
	$\Phi:(M\otimes E_1)\oplus 
	E_3\rightarrow (M\otimes E_2)\oplus E_3$ such that
	\begin{align}\label{eq:3.006} 
	\phi=\chi_M(g)^{-1}\widetilde{\ch}_g((\underline{M}
	\otimes\underline{E_1})\oplus\underline{E_3},
	\Phi^*((\underline{M}\otimes\underline{E_2})\oplus\underline{E_3})).
	\end{align}
	From the variation formula \eqref{local73}, \eqref{eq:3.002} 
	and \eqref{eq:3.003}, there exists 
$\alpha_g\in\Z[g, g^{-1}]:=\{f(g)\in \C: f\in \Z[x, x^{-1}] \}$ such that
	\begin{multline}\label{eq:3.004} 
	\bar{\eta}_g(\underline{TY}, \underline{L}, \underline{M}
	\otimes\underline{E_2})
	-\bar{\eta}_g(\underline{TY}, \underline{L}, 
	\underline{M}\otimes\underline{E_1})
	\\
	=\bar{\eta}_g(\underline{TY}, \underline{L},
	(\underline{M}\otimes\underline{E_2})\oplus\underline{E_3})
	-\bar{\eta}_g(\underline{TY}, \underline{L}, 
	(\underline{M}\otimes\underline{E_1})\oplus\underline{E_3})
	\\
	=\bar{\eta}_g(\underline{TY}, \underline{L}, 
	\Phi^*((\underline{M}\otimes\underline{E_2})\oplus\underline{E_3}))
	-\bar{\eta}_g	(\underline{TY}, \underline{L},
	(\underline{M}\otimes\underline{E_1})\oplus\underline{E_3})
	\\
	=\int_{Y^g}\td_g(\nabla^{TY},\nabla^L)
	\widetilde{\ch}_g\Big((\underline{M}\otimes\underline{E_1})
	\oplus\underline{E_3},
	\Phi^*((\underline{M}\otimes\underline{E_2})
	\oplus\underline{E_3})\Big)
	+\alpha_g.
	\end{multline}  
From \eqref{eq:3.001}, \eqref{eq:3.003}, 
\eqref{eq:3.006} and \eqref{eq:3.004}, we have
	\begin{multline}\label{eq:3.007} 
\widehat{f_Y}_!\big((\underline{E_2}-\underline{E_1}, 
\phi)/\chi\big)
\\
=\chi
(g)^{-1}\left[\bar{\eta}_g(\underline{TY}, 
\underline{L}, \underline{E_2})
-\bar{\eta}_g(\underline{TY}, \underline{L}, 
\underline{E_1})-\int_{Y^g}\td_g(\nabla^{TY},\nabla^L)\phi \right]
\\
=\chi
(g)^{-1}\chi_M(g)^{-1}\Big[\bar{\eta}_g(\underline{TY}, \underline{L},
\underline{M}\otimes\underline{E_2})-\bar{\eta}_g(\underline{TY},
\underline{L}, 
\underline{M}\otimes\underline{E_1})
\\
-\left.\int_{Y^g}\td_g(\nabla^{TY},\nabla^L)
\widetilde{\ch}_g\Big((\underline{M}\otimes\underline{E_1})
\oplus\underline{E_3},\Phi^*((\underline{M}\otimes
\underline{E_2})\oplus\underline{E_3})\Big) \right]
\\
=\chi(g)^{-1}\chi_M(g)^{-1}\cdot\alpha_g\in \Q_g.
	\end{multline}
	
The proof of Theorem \ref{defn:3.01} is completed.
\end{proof}

\subsection{Main result: Theorem  \ref{i14}}\label{s0402}

Recall that the orientation of $Y_{\alpha}^{S^1}$ is given in 
Section \ref{s0203}.
From \eqref{eq:2.092}, there exist equivariant geometric triples 
$\underline{\mu_{\alpha,\mN,+}}$ and 
$\underline{\mu_{\alpha,\mN,-}}$ such that 
\begin{align}\label{eq:3.008} 
\underline{\lambda_{-1}(N_{\alpha}^*)^{-1}_{\mN}}=
\prod_{v:\,  r_{\alpha,v}\neq 0}(h^v-1)^{-r_{\alpha,v}-\mN}\Big(
\underline{\mu_{\alpha,\mN,+}}-\underline{\mu_{\alpha,\mN,-}}\Big).
\end{align}
In \eqref{eq:3.008}, we identify $f(h)\cdot \underline{F}$
with $\underline{M_f}\otimes \underline{F}$  for triple $\underline{F}$, 
$f\in \Z[x]$ and virtual $S^1$-representation $M_f$ associated with $f$.
For $g\in S^1\backslash A$, we define
\begin{multline}\label{eq:3.009} 
\bar{\eta}_g\left(\underline{TY_{\alpha}^{S^1}}, \underline{L_{\alpha}},
\underline{\lambda_{-1}(N_{\alpha}^*)^{-1}_{\mN}}\otimes 
\underline{E}|_{Y^{S^1}_{\alpha}}\right)
\\
=\prod_{v:N_{\alpha,v}\neq 0} (g^v-1)^{-r_{\alpha,v}-\mN}
\left[\bar{\eta}_g\left(\underline{TY_{\alpha}^{S^1}},
\underline{L_{\alpha}},\underline{\mu_{\alpha,\mN,+}}\otimes 
\underline{E}|_{Y^{S^1}_{\alpha}}\right)\right.\\
\left. -\bar{\eta}_g\left(\underline{TY_{\alpha}^{S^1}}, 
\underline{L_{\alpha}},\underline{\mu_{\alpha,\mN,-}}\otimes 
\underline{E}|_{Y^{S^1}_{\alpha}}\right)\right].
\end{multline}

\begin{rem}\label{rem:3.02} 
Note that from \eqref{eq:2.092} and \eqref{eq:3.008}, 
\begin{align}\label{eq:3.010}
\mu_{\alpha,\mN,\pm}
=\bigoplus_{k\geq 0}\xi_{\alpha,k,\pm}\in K_{S^1}^0(Y_{\alpha}^{S^1})
\end{align}
and $S^1$ acts on $\xi_{\alpha,k}$ with weight $k$. 
If $S^1$ acts on $L$ by sending $g\in S^1$ to 
$g^{l_{\alpha}}$ ($l_{\alpha}\in \Z$) on
 $Y_{\alpha}^{S^1}$, then by \cite[p139]{LMZ03} and (\ref{eq:2.089}),
 \begin{align}\label{eq:3.011}
\sum_v v\,r_{\alpha,v}+l_{\alpha}=0 \quad\,\mathrm{mod}\,   (2).
 \end{align}
Now by \eqref{local266}, \eqref{eq:1.33},
\eqref{eq:3.010} and \eqref{eq:3.011},  for $g\in S^{1}$, we have
 \begin{multline}\label{eq:3.012}
\bar{\eta}_g\left(\underline{TY_{\alpha}^{S^1}},
\underline{L_{\alpha}},\underline{\mu_{\alpha,\mN,+}}\otimes 
\underline{E}|_{Y^{S^1}_{\alpha}}\right)
-\bar{\eta}_g\left(\underline{TY_{\alpha}^{S^1}}, 
\underline{L_{\alpha}},\underline{\mu_{\alpha,\mN,-}}\otimes 
\underline{E}|_{Y^{S^1}_{\alpha}}\right)
\\
=g^{-\frac{1}{2}\sum_v v\, r_{\alpha,v}
+\frac{1}{2}l_{\alpha}}\sum_{k\geq 0, v} 
g^{k+v}\left[\bar{\eta}\left(\underline{TY_{\alpha}^{S^1}},
\underline{L_{\alpha}},\underline{\xi_{\alpha,k,+}}
\otimes \underline{E}_v\right)\right.\\
\left. -\bar{\eta}\left(\underline{TY_{\alpha}^{S^1}},
\underline{L_{\alpha}},\underline{\xi_{\alpha,k,-}}
\otimes \underline{E}_v\right)\right].
\end{multline}
\end{rem}

From (\ref{eq:2.080}) and (\ref{eq:2.095}), set
\begin{align}\label{eq:3.12b} 
\mN_0=\sup_{\alpha,v}\mN_{r_{\alpha,v}, m_{\alpha}}.
\end{align}

Now we state our main result of this paper, 
which is a precise formulation 
of Theorem \ref{i14}.
\begin{thm}\label{thm:3.03} 
	For any $\mN, \mN'\in \N$
	and $\mN'> \mN>\mN_0$, for any equivariant geometric 
	triple $\underline{E}$ on $Y$, 
	the functions on $S^1\backslash A$,
	\begin{multline}\label{eq:3.013} 
	P_{\mN,\mN'}(g):=\bar{\eta}_g\left(\underline{TY_{\alpha}^{S^1}}, 
	\underline{L_{\alpha}},
	\underline{\lambda_{-1}(N_{\alpha}^*)^{-1}_{\mN'}}\otimes 
	\underline{E}|_{Y^{S^1}_{\alpha}}\right)
	\\
	-\bar{\eta}_g\left(\underline{TY_{\alpha}^{S^1}}, 
	\underline{L_{\alpha}},
	\underline{\lambda_{-1}(N_{\alpha}^*)^{-1}_{\mN}}\otimes 
	\underline{E}|_{Y^{S^1}_{\alpha}}\right)
	\end{multline}
	and
	\begin{align}\label{eq:3.014} 
	Q_{\mN}(g):=\bar{\eta}_g\left(\underline{TY}, \underline{L},
	\underline{E}\right)-\sum_{\alpha}\bar{\eta}_g
	\left(\underline{TY_{\alpha}^{S^1}}, \underline{L_{\alpha}},
	\underline{\lambda_{-1}(N_{\alpha}^*)^{-1}_{\mN}}\otimes 
	\underline{E}|_{Y^{S^1}_{\alpha}}\right)
	\end{align}
	 are restrictions of
	 rational functions on $S^1$ with integral coefficients 
	which do not have poles on $S^1\backslash A$. 	
\end{thm}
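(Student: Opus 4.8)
The plan is to deduce Theorem \ref{thm:3.03} by combining the commutative diagram \eqref{eq:0.14} of Theorem \ref{i08} (which encodes the localization in $g$-equivariant differential K-theory) with the comparison result of Theorems \ref{local90} and \ref{local96}, using the push-forward map $\widehat{f_Y}_*$ of Definition and Theorem \ref{defn:3.01}. The two statements have quite different characters: for $P_{\mN,\mN'}(g)$ we are comparing two truncations of the same inverse $\underline{\lambda_{-1}(N_\alpha^*)^{-1}}$, and for $Q_{\mN}(g)$ we are comparing the $\eta$-invariant upstairs with its fixed-point contribution; each will require the diagram \eqref{eq:0.14} plus an analyticity input.

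First I would handle $P_{\mN,\mN'}(g)$. By \eqref{eq:2.093} the two classes $[\underline{\lambda_{-1}(N_\alpha^*)^{-1}_{\mN}},0]$ and $[\underline{\lambda_{-1}(N_\alpha^*)^{-1}_{\mN'}},0]$ agree in $\widehat K^0_g(Y^{S^1}_\alpha)_{I(g)}$ for $\mN,\mN'>\mN_0$, so their difference is of the form $(0,\psi_\alpha)/\chi$ for some $\psi_\alpha\in\Omega^{\mathrm{odd}}(Y^{S^1}_\alpha,\C)/\Im d$ and $\chi\in R(S^1)$ with $\chi(g)\neq 0$; in fact $\chi$ can be taken to be a fixed power of $\prod_v(g^v-1)$ independent of $g\in S^1\setminus A$, and $\psi_\alpha$ is built from the characteristic forms $\widetilde{\ch}$ of the relevant virtual bundles, hence has coefficients in $\Q_g$ times universal closed forms. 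Applying $\widehat{f_{Y^{S^1}_\alpha}}_*$ and using \eqref{eq:3.001}, the $\eta$-terms cancel and $P_{\mN,\mN'}(g)$ becomes $-\chi(g)^{-1}\int_{Y^{S^1}_\alpha}\td_g(\nabla^{TY^{S^1}_\alpha},\nabla^{L_\alpha})\wedge\psi_\alpha$ modulo a term in $\Z[g,g^{-1}]$; since all the differential-form ingredients are polynomials in $g$ (the equivariant Chern and Todd forms depend polynomially on $g$ via \eqref{e01051}) and the integral over $Y^{S^1}_\alpha$ is a finite sum, the whole thing lies in $\Q_g$ for every $g\in S^1\setminus A$ and is manifestly the restriction of a single rational function with $\Z$-coefficients whose only possible poles come from $\prod_v(g^v-1)$, i.e.\ from $g\in A$. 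Thus $P_{\mN,\mN'}$ has no poles on $S^1\setminus A$.

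Next I would handle $Q_{\mN}(g)$. From \eqref{i13} in Theorem \ref{i08} we already know $Q_{\mN}(g)\in\Q_g$ for each fixed $g\in S^1\setminus A$; the content of Theorem \ref{thm:3.03} is that these pointwise rational values fit together into one rational function with $\Z$-coefficients and no poles outside $A$. To get this, I would trace through the commuting diagram \eqref{eq:0.14}: the class $[\underline{TY},\underline{L},\underline{E}]=[\underline E,0]\in\widehat K^0_g(Y)_{I(g)}$ is sent by $\widehat{f_Y}_*$ to $\bar\eta_g(\underline{TY},\underline L,\underline E)$ (the integral term vanishes since $\phi=0$), while going around via $\hat\iota^*$ and the cup with $[\underline{\lambda_{-1}(N^*)},0]^{-1}=[\underline{\lambda_{-1}(N^*)^{-1}_{\mN}},0]$ (Theorem \ref{thm:2.14}) and then $\widehat{f_{Y^{S^1}}}_*$ produces exactly $\sum_\alpha\bar\eta_g(\underline{TY^{S^1}_\alpha},\underline{L_\alpha},\underline{\lambda_{-1}(N_\alpha^*)^{-1}_{\mN}}\otimes\underline E|_{Y^{S^1}_\alpha})$ up to a term whose only $g$-dependence is through characteristic forms and the fixed denominators $\prod_v(g^v-1)^{r_{\alpha,v}+\mN}$, i.e.\ up to an element of $\Q_g$ coming from a single $\Z$-coefficient rational function. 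The key point here is that the ``error term'' in the diagram — the difference between the honest cup product and the truncated one, together with the well-definedness slack of $\widehat{f_Y}_*$ modulo $\Q_g$ — is, when chased carefully, given by $\chi_V(g)$ for a fixed $V\in R(S^1)$ (coming from the embedding formula Theorem \ref{local63}, whose $V'$ is $g$-independent, and the variation formula Theorem \ref{local85}) plus a fixed $\Z$-coefficient rational function in $g$ with poles only at $A$. This upgrades the pointwise statement to the global one.

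\textbf{The main obstacle} I expect is controlling the precise $g$-dependence of all the auxiliary representation-ring elements and Chern--Simons terms, so that one genuinely gets \emph{one} rational function with $\Z$-coefficients rather than a $g$-by-$g$ collection of rationals. In particular, the embedding formula Theorem \ref{local63} and the variation formula Theorem \ref{local85} each introduce a virtual representation ($V'$, $V$) and one must check these do not depend on $g$ as $g$ ranges over $S^1\setminus A$ (for $V'$ this is noted after Theorem \ref{local63} since $Y^g=Y^{S^1}$ is constant there; for the variation term one uses that it is constructed on the fixed geometry). A second, more subtle point — and the reason Theorems \ref{local90}, \ref{local96} are invoked in the introduction — is that to conclude there are \emph{no poles} on $S^1\setminus A$ one should know $\bar\eta_g(\underline{TY},\underline L,\underline E)$ is analytic there and that the fixed-point sums, a priori only defined via \eqref{eq:3.009} with denominators $(g^v-1)^{r_{\alpha,v}+\mN}$, actually have their apparent poles at $g\in A\setminus\{1\}$ cancel. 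The plan is: since both sides of $Q_{\mN}(g)=$ (something in $\Q_g$) are restrictions of meromorphic functions on $S^1$ with possible poles only in $A$ (the left side by Theorems \ref{local96}, the right by construction), and since $Q_\mN$ takes values in $\Q_g$ on the dense set $S^1\setminus A$, a rigidity/density argument forces $Q_\mN$ itself to be the restriction of a rational function with $\Z$-coefficients; its only poles lie in $A$, but analyticity of the individual $\eta$-invariants on $S^1\setminus A$ (the last paragraph of Section \ref{s0205}) then rules out poles there, completing the proof.
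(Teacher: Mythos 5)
Your overall strategy matches the paper's: establish the pointwise statement $Q_{\mN}(g)\in\Q_g$ via the localized push-forward (this is exactly the paper's proof of Theorem \ref{i08}, and for $P_{\mN,\mN'}$ it is Lemma \ref{lem:3.04} via \eqref{eq:2.093}), then invoke Theorems \ref{local90} and \ref{local96} for analyticity, and finally ``rigidify'' the pointwise rationality into a single rational function. However, there are two genuine gaps at the rigidification stage.

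First, the step you describe as ``a rigidity/density argument'' is the actual mathematical core of the theorem and cannot be a density argument: at each $g$ you only know $Q_{\mN}(g)=P(g)/Q(g)$ for \emph{some} pair of integer polynomials depending on $g$, so there is no single rational function yet to compare with on a dense set. The paper's Proposition \ref{prop:3.08} resolves this by a counting argument: stratify $S^1\setminus A$ by the degrees $(M,N)$ of the reduced fractions, observe that some stratum intersected with a small interval is uncountable while the set of coefficient vectors in $\Z^{M_0+N_0+2}$ is countable, extract one fixed rational function $h$ agreeing with $Q_{\mN}$ on an uncountable subset, and then use the real analyticity from Lemma \ref{lem:3.07} (itself resting on Theorems \ref{local90}, \ref{local96} and the identity \eqref{eq:3.012}) to conclude $h\equiv Q_{\mN}$ on the whole component. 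Without this pigeonhole-plus-unique-continuation argument the proof does not close; the same device is needed a second time to upgrade the coefficients of $P_{\mN,\mN'}$ from $\R$ to $\Z$, so your claim that $P_{\mN,\mN'}$ is ``manifestly'' a $\Z$-coefficient rational function is too strong --- the Chern--Simons integrals and the $\eta$-invariants appearing in \eqref{eq:3.012} are a priori only real numbers.

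Second, $S^1\setminus A$ is disconnected, so the argument above only produces one rational function per connected component; the theorem asserts a single rational function on $S^1$. The paper glues across each $g\in A$ by showing, via Theorem \ref{local96} and \eqref{eq:3.030}, that $Q_{\mN}(ge^{tK})$ is meromorphic in $t$ near $t=0$, so the rational expression valid for $t>0$ small continues to $t<0$. Your proposal does not address this matching across the finite set $A$, only the absence of poles away from it.
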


\subsection{A proof of Theorems \ref{i08} and 
	\ref{thm:3.03}}\label{s0403}

Let $\widehat{f_{Y^{S^1}}}_!$ be the direct image map
$\widehat{f_{Y^{S^1}}}_!: 
\widehat{K}_g^0(Y^{S^1})_{I(g)}\rightarrow 
\C/{\Q_g}$ defined in Definition \ref{defn:3.01}. 
Explicitly, for any $[\underline{E}, 
\phi]/\chi\in \widehat{K}_g^0(Y^{S^1})_{I(g)}$,
\begin{multline}\label{eq:3.018}
\widehat{f_{Y^{S^1}}}_!\big([\underline{E}, 
\phi]/\chi\big)
:=\chi (g)^{-1}\sum_{\alpha}\Big[-\int_{Y_{\alpha}^{S^1}}
\td_g(\nabla^{TY_{\alpha}^{S^1}},\nabla^{L_{\alpha}})
\wedge \phi\\
+\bar{\eta}_g\big(\underline{TY_{\alpha}^{S^1}}, 
\underline{L_{\alpha}}, 
\underline{E}\big)\Big]\mod \Q_g.
\end{multline} 

\begin{lemma}\label{lem:3.04} 
	For any $\mN, \mN'\in \N$ and 
	$ \mN>\mN'>\mN_0$,  $g\in S^1\backslash A$, 
	we have $P_{\mN,\mN'}(g)\in \Q_g$.
\end{lemma}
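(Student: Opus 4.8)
The plan is to reduce $P_{\mN,\mN'}(g)$ to a quantity that can be controlled by the variation formula for equivariant $\eta$-invariants, Theorem \ref{local85}, applied on each fixed-point component $Y^{S^1}_{\alpha}$. The key observation is that, by Corollary \ref{cor:2.11} (equivalently \eqref{eq:2.091}), for $k > \mN_{r_{\alpha,v},m_{\alpha}}$ the virtual bundle $P_{k,+}(\underline{'N^*_{\alpha,v}}) - P_{k,-}(\underline{'N^*_{\alpha,v}})$ is trivial in $\widehat{K}^0(Y^{S^1}_{\alpha})$; that is, there exists an isomorphism of the underlying bundles and the Chern--Simons correction is an exact form. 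Hence, passing from the $\mN$-truncation to the $\mN'$-truncation in \eqref{eq:2.092} changes $\underline{\lambda_{-1}(N^*_{\alpha})^{-1}_{\mN}}$ only by terms of the form $f(g)\cdot\big(\underline{E_1}-\underline{E_2}\big)$ where $f\in\Z[x]$ (times a fixed denominator $\prod_v(g^v-1)^{-r_{\alpha,v}-\mN}$ absorbed into $\Q_g$) and $E_1\cong E_2$ as $S^1$-equivariant bundles over $Y^{S^1}_\alpha$ with $\widetilde{\ch}_g$-difference an exact form.

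First I would make \eqref{eq:3.013} explicit: using \eqref{eq:2.092}, \eqref{eq:3.008} and \eqref{eq:3.009}, write $P_{\mN,\mN'}(g)$ as a $\Z[g,g^{-1}]$-linear combination (with denominators in $\prod_{v}(g^v-1)^{\bullet}$, which lie in $\Q_g$) of differences $\bar\eta_g(\underline{TY^{S^1}_\alpha},\underline{L_\alpha},\underline{E_1}\otimes\underline{E}|_{Y^{S^1}_\alpha}) - \bar\eta_g(\underline{TY^{S^1}_\alpha},\underline{L_\alpha},\underline{E_2}\otimes\underline{E}|_{Y^{S^1}_\alpha})$, where $E_1$ and $E_2$ are equivariantly isomorphic. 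Then I would apply Theorem \ref{local85} (via the isomorphism $\Phi$, as in the proof of Definition and Theorem \ref{defn:3.01}, see \eqref{eq:3.004}): the difference equals an integral over the fixed-point set of $Y^{S^1}_\alpha$ of a Chern--Simons form wedged with a closed form, plus a character $\chi_V(g)\in\Z[g,g^{-1}]$. Crucially, for $g\in S^1\backslash A$ we have $Y^g = Y^{S^1}$, so the fixed-point set appearing in Theorem \ref{local85} for the manifold $Y^{S^1}_\alpha$ under the $S^1$-action is all of $Y^{S^1}_\alpha$, and the integrand is (a sum of shifts of) the exact form $d$ applied to $\widetilde{\ch}(\cdots)$; hence the integral of the exact piece vanishes and only the $\chi_V(g)\in\Z[g,g^{-1}]$ survives. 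Dividing by the denominator in $\prod_v(g^v-1)^{r_{\alpha,v}+\mN}$, which does not vanish on $S^1\backslash A$, yields $P_{\mN,\mN'}(g)\in\Q_g$.

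The main obstacle I anticipate is bookkeeping rather than conceptual: one must verify that the ``defect'' between the $\mN$- and $\mN'$-truncations of $\lambda_{-1}(N^*_\alpha)^{-1}$ really is of the claimed form $f(g)\cdot(\underline{E_1}-\underline{E_2})$ with $E_1\cong E_2$ equivariantly and $\widetilde{\ch}_g$-difference exact — this requires combining \eqref{eq:2.091}, \eqref{eq:2.096}, \eqref{eq:2.097} and the multiplicativity of $\gamma_t$ carefully, keeping track of the tensor product over the weights $v$ and the induced metrics/connections. A secondary point is that $\bar\eta_g$ of a virtual bundle with coefficients in $\Z[g,g^{-1}]$ is defined via \eqref{eq:3.009} only after choosing representatives $\underline{\mu_{\alpha,\mN,\pm}}$; I would check that the statement is independent of these choices, which again follows from Theorem \ref{local85} together with \eqref{eq:2.093}. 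Once these are in place, the conclusion $P_{\mN,\mN'}(g)\in\Q_g$ is immediate, and this lemma is the first input towards the full Theorem \ref{thm:3.03} (the $Q_{\mN}(g)$ part being handled separately via the commutative diagram \eqref{eq:0.14} and Theorems \ref{local90}, \ref{local96}).
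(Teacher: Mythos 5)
Your proposal is correct and follows essentially the same route as the paper: the paper deduces the lemma from \eqref{eq:2.093} (the $\mN$- and $\mN'$-truncations define the same class in $\widehat{K}_g^0(Y_\alpha^{S^1})_{I(g)}$, which rests on Corollary \ref{cor:2.11} exactly as you argue) together with the well-definedness of the push-forward $\widehat{f_{Y^{S^1}}}_*$ from Definition and Theorem \ref{defn:3.01}. Your argument simply inlines the proof of that well-definedness (the variation formula \eqref{local73} applied to an equivariant isomorphism $\Phi$, with the exact Chern--Simons term killing the integral), which is precisely how the paper establishes \eqref{eq:3.004}--\eqref{eq:3.007}.
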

\begin{proof} From \eqref{eq:2.093} and (\ref{eq:3.12b}), for any 
	$\mN, \mN'>\mN_0$, 
	$g\in S^1\backslash A$,
	\begin{align}\label{eq:3.015}
	\left[\underline{\lambda_{-1}(N_{\alpha}^*)^{-1}_{\mN}}\otimes 
	\underline{E}|_{Y^{S^1}_{\alpha}},0\right]
	=\left[\underline{\lambda_{-1}(N_{\alpha}^*)^{-1}_{\mN'}}\otimes 
	\underline{E}|_{Y^{S^1}_{\alpha}},0\right]
	\in \widehat{K}_{g}^0(Y_{\alpha}^{S^1})_{I(g)}.
	\end{align}
	Thus Lemma \ref{lem:3.04} follows directly from Definition and 
	Theorem \ref{defn:3.01}.
\end{proof}

Observe that
\begin{align}\label{eq:3.016} 
\widehat{K}_g^0(Y^{S^1})_{I(g)}=\bigoplus_{\alpha\in \mathfrak{B}}
\widehat{K}_g^0(Y_{\alpha}^{S^1})_{I(g)}.
\end{align}
From Theorem \ref{thm:2.14}, for $g\in S^1\backslash A$,
$\left[\underline{\lambda_{-1}(N_{\alpha}^*)}, 0\right]$ is invertible in 
$\widehat{K}_{g}^0(Y_{\alpha}^{S^1})_{I(g)}$.
We denote by
\begin{align}\label{eq:3.017} 
\left[\underline{\lambda_{-1}(N^*)},0\right]^{-1}=\bigoplus_{\alpha}
\left[\underline{\lambda_{-1}(N_{\alpha}^*)},0\right]^{-1}\in 
\widehat{K}_g^0(Y^{S^1})_{I(g)}.
\end{align}

\begin{proof}[Proof of Theorem \ref{i08}]
	The first part of Theorem \ref{i08} is Theorem \ref{defn:3.01}.
From Theorem \ref{thm:3.10}, for $g\in S^1\backslash A$, the
restriction map $\hat{\iota}^*$ in (\ref{eq:3.17b})
is an $R(S^1)_{I(g)}$-module isomorphism.

For $g\in S^1\backslash A$, by Theorem \ref{local63},  \eqref{local76}, 
\eqref{eq:3.054},	\eqref{eq:3.001}  and \eqref{eq:3.018}, for any 
	$[\underline{\mu},\phi]/\chi\in\widehat{K}^0_g(Y^{S^1})_{I(g)}$, 
	we have
	\begin{multline}\label{eq:3.060}
	\widehat{f_{Y}}_!\circ \hat{\iota}_!([\underline{\mu},\phi]/\chi)
	\\
	=\widehat{f_{Y}}_!\bigg\{\left[\underline{\xi_+}, 
	\ch_g\left(\underline{\Lambda^{\mathrm{even}}(N^*)}\right)\wedge 
	\phi\right]/\chi-\left[\underline{\xi_-}, 
	\ch_g\left(\underline{\Lambda^{\mathrm{odd}}(N^*)}
	\right)\wedge \phi\right]/\chi\bigg\}
	\\
	=\chi(g)^{-1}\left\{-\int_{Y^{S^1}}\td_g(\nabla^{TY},\nabla^L)
	\ch_g\left(\underline{\lambda_{-1}(N^*)}\right)\wedge
	\phi+\bar{\eta}_g(\underline{TY}, \underline{L}, \underline{\xi_+})
	-\bar{\eta}_g(\underline{TY}, \underline{L}, 
	\underline{\xi_-})\right\}
	\\
	=\chi (g)^{-1}\sum_{\alpha}\Big(-\int_{Y_{\alpha}^{S^1}}
	\td_g(\nabla^{TY_{\alpha}^{S^1}},\nabla^{L_{\alpha}})\wedge \phi\
	+\bar{\eta}_g(\underline{TY_{\alpha}^{S^1}},
	\underline{L_{\alpha}}, \underline{\mu})\Big)
	\,\mathrm{mod}\,{\Q_g}
	\\
	=\widehat{f_{Y^{S^1}}}_!([\underline{\mu},\phi]/\chi).
	\end{multline}
It means that
	\begin{align}\label{eq:3.061}
	\widehat{f_{Y}}_!\circ \hat{\iota}_!=\widehat{f_{Y^{S^1}}}_!:
	\widehat{K}^0_g(Y^{S^1})_{I(g)}\rightarrow \C/\Q_g.
	\end{align}
From Theorems \ref{thm:2.14}, \ref{thm:3.12} 
and \eqref{eq:3.061}, we have
	\begin{align}\label{eq:3.062} 
	\widehat{f_{Y}}_!=\widehat{f_{Y^{S^1}}}_!\circ 
	[\underline{\lambda_{-1}(N^*)},0]^{-1}\cup\ \hat{\iota}^*:
	\widehat{K}^0_g(Y)_{I(g)}\rightarrow \C/\Q_g.
	\end{align}
	Thus the diagram in \eqref{eq:0.14} commutes.

By Definition \ref{defn:3.01}, we have
\begin{align}\label{eq:3.020}
\widehat{f_{Y}}_!([\underline{E},0])=\bar{\eta}_g(\underline{TY}, 
\underline{L}, 
\underline{E}) \quad \mathrm{mod}\, \Q_g.
\end{align}
By Theorems \ref{thm:2.14}, \ref{defn:3.01}, (\ref{eq:3.12b}),
 \eqref{eq:3.018} 
and \eqref{eq:3.017}, for any  $\mN>\mN_0$, 
$g\in S^1\backslash A$, we get
\begin{multline}\label{eq:3.021}
\widehat{f_{Y^{S^1}}}_!\left([\underline{\lambda_{-1}(N^*)},0]^{-1}
\cup\,\hat{\iota}^*([\underline{E},0])\right)
=\widehat{f_{Y^{S^1}}}_!\left(\bigoplus_{\alpha}
[\underline{\lambda_{-1}(N_{\alpha}^*)_{\mN}^{-1}},0]
\cup\,\hat{\iota}^*([\underline{E},0])\right)
\\
=\sum_{\alpha}\bar{\eta}_g\left(\underline{TY_{\alpha}^{S^1}}, 
\underline{L_{\alpha}},
\underline{\lambda_{-1}(N_{\alpha}^*)^{-1}_{\mN}}\otimes 
\underline{E}|_{Y^{S^1}_{\alpha}}\right) \,\mathrm{mod}\, \Q_g.
\end{multline}
Thus by (\ref{eq:3.014}), (\ref{eq:3.062})-\eqref{eq:3.021}, 
for  $\mN>\mN_0$ and $g\in S^1\backslash A$, 
we have $Q_{\mN}(g)\in \Q_g$, i.e., (\ref{i13}) holds.
The proof of Theorem \ref{i08} is completed.
\end{proof}

Let $K\in \mathrm{Lie}(S^1)$ be fixed.

\begin{lemma}\label{lem:3.07} 
 For $g\in S^1\backslash A$, there exists 
$\beta>0$ such that for any $t\in \R$, $|t|\leq \beta$, 
$\mN'>\mN>\mN_0$,
$P_{\mN,\mN'}(ge^{tK})$ and $Q_{\mN}(ge^{tK})$ are 
real analytic in $t$.
\end{lemma}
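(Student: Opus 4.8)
The plan is to prove Lemma \ref{lem:3.07} by reducing the real analyticity of $P_{\mN,\mN'}(ge^{tK})$ and $Q_{\mN}(ge^{tK})$ in $t$ to the analyticity statement of Theorem \ref{local90}. First I would observe that since $g\in S^1\backslash A$, we have $Y^g=Y^{S^1}$, and since $A$ is finite (Proposition \ref{local32}), for $|t|\leq\beta$ with $\beta$ small enough we still have $ge^{tK}\in S^1\backslash A$, so that $Y^{ge^{tK}}=Y^{S^1}$ as well; in particular the fixed-point sets appearing in all the $\eta$-invariants do not jump, and the normal bundles $N_{\alpha,v}$, the weights $v$, the ranks $r_{\alpha,v}$, and the triples $\underline{\lambda_{-1}(N_\alpha^*)^{-1}_{\mN}}$ are independent of $t$. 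Moreover $ge^{tK}$ acts on $N_{\alpha,v}$ by $(ge^{tK})^v = g^v e^{tvK}$, which is real analytic and nonvanishing minus $1$ on the (shrunk) interval, so each prefactor $\prod_v((ge^{tK})^v-1)^{-r_{\alpha,v}-\mN}$ in \eqref{eq:3.008}--\eqref{eq:3.009} is real analytic in $t$. Likewise each character $\chi(ge^{tK})$ and each $f(ge^{tK})$ with $f\in\Z[x,x^{-1}]$ is real analytic and, shrinking $\beta$ if necessary, nonzero where needed.

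Next I would handle the core analytic input. Each term in $P_{\mN,\mN'}$ and $Q_{\mN}$ is, up to the real-analytic scalar factors just discussed, a finite $\Z$-linear combination of equivariant $\eta$-invariants of the form $\bar\eta_{ge^{tK}}(\underline{TY},\underline{L},\underline{E'})$ or $\bar\eta_{ge^{tK}}(\underline{TY^{S^1}_\alpha},\underline{L_\alpha},\underline{E'})$ for fixed triples $\underline{E'}$. Apply Theorem \ref{local96} with $K_0=K$: for $|t|<\beta$, $t\neq 0$, we have $\bar\eta_{ge^{tK}}(\cdot)=\bar\eta_{g,tK}(\cdot)-\mM_{g,tK}(\cdot)$. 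But $g\in S^1\backslash A$, so by \eqref{eq:1.52b} we have $\vartheta_K=0$ on $Y^g=Y^{S^1}$ and hence $\mM_{g,tK}(\cdot)=0$ identically; therefore $\bar\eta_{ge^{tK}}(\cdot)=\bar\eta_{g,tK}(\cdot)$ for all small $t\neq 0$, and by continuity (the left side is continuous at $t=0$ by the convergence noted after Definition \ref{local128}, and $\bar\eta_{g,tK}$ is continuous by Theorem \ref{local90}) the identity extends to $t=0$. Then Theorem \ref{local90} says precisely that $\bar\eta_{g,tK}(\underline{TY},\underline{L},\underline{E'})$ depends analytically on $t$ for $|t|<\beta$. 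The same reasoning applies verbatim on each $Y^{S^1}_\alpha$, where the relevant group element's fixed-point set is also $Y^{S^1}_\alpha$, so $\mM$ again vanishes and $\bar\eta_{ge^{tK}}$ is analytic in $t$.

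Finally I would assemble: shrink $\beta$ so that all finitely many scalar factors are analytic and nonvanishing and all finitely many $\eta$-invariant terms are analytic on $|t|\leq\beta$ (taking the minimum of the finitely many $\beta$'s from Theorem \ref{local90}, one for each triple occurring, and the $\beta$'s controlling distance to $A$ and nonvanishing of denominators). Since $P_{\mN,\mN'}(ge^{tK})$ and $Q_{\mN}(ge^{tK})$ are finite sums and products of such analytic functions of $t$, they are real analytic on $|t|\leq\beta$. The only genuinely delicate point is the vanishing of $\mM_{g,tK}$: one must be sure that the hypothesis $g\in S^1\backslash A$ really forces $\vartheta_K\equiv 0$ on the relevant fixed-point set for every $\eta$-invariant appearing — this holds because all the manifolds involved ($Y$ and each $Y^{S^1}_\alpha$) carry the $S^1$-action whose fixed set for $ge^{tK}$ coincides with the fixed set for the whole circle, so $K^Y$ and $K^{Y^{S^1}_\alpha}$ vanish there — and this is exactly what removes the potential poles of the equivariant $\eta$-invariant in $g\in A$ and makes the argument go through. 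Everything else is bookkeeping over the finite index set $\mathfrak{B}$ and the finitely many weights.
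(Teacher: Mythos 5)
Your proposal is correct and follows essentially the same route as the paper: clear (or control) the analytic, nonvanishing denominators $\prod_v((ge^{tK})^v-1)^{-r_{\alpha,v}-\mN}$, then use Theorem \ref{local96} together with \eqref{eq:1.52b} (so that $\mM_{g,tK}=0$ for $g\in S^1\backslash A$) to identify $\bar\eta_{ge^{tK}}$ with $\bar\eta_{g,tK}$ and invoke the analyticity of Theorem \ref{local90}. The only (harmless) variation is that for the terms on the components $Y^{S^1}_\alpha$ the paper instead uses the explicit weight decomposition \eqref{eq:3.012}, which exhibits those $\eta$-invariants as Laurent polynomials in $ge^{tK}$ with constant coefficients, whereas you apply Theorems \ref{local90} and \ref{local96} uniformly there as well — both arguments are valid.
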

\begin{proof}
Recall that $r_{\alpha,v}=\rank N_{\alpha,v}$. 
By \eqref{eq:3.008}, we have
\begin{align}\label{eq:3.022} 
F_{\alpha,\mN}(h)\cdot \underline{\lambda_{-1}
(N_{\alpha}^*)^{-1}_{\mN}}
=\underline{\mu_{\alpha,\mN,+}}-\underline{\mu_{\alpha,\mN,-}}
\, \, \,   \text{ with }  F_{\alpha,\mN}(h)=
\prod_{v: \, r_{\alpha,v}\neq 0 }	(h^v-1)^{r_{\alpha,v}+\mN}.
\end{align}
Set
\begin{align}\label{eq:3.024} 
F_{\mN}(x)= \prod_{v:  \max_{\alpha} r_{\alpha,v}\neq 0 } 
(x^v-1)^{\max_{\alpha}r_{\alpha,v}+\mN}\in \Z[x].
\end{align}
By Theorem \ref{local96} and \eqref{eq:1.52b}, for 
$g\in S^1\backslash A$, there exists $\beta>0$ such that for
$|t|<\beta$, we have
\begin{align}\label{eq:3.55a}
\bar{\eta}_{ge^{tK}}(\underline{TY},\underline{L},\underline{E})
=\bar{\eta}_{g,tK}(\underline{TY},\underline{L},\underline{E}).
\end{align}
From 
\eqref{eq:3.008}, \eqref{eq:3.013},
\eqref{eq:3.014} and \eqref{eq:3.55a}, 
for $g\in S^1\backslash A$, there exists $\beta>0$
such that for $|t|<\beta$, $\mN'>\mN>\mN_0$, we have
\begin{multline}\label{eq:3.025} 
F_{\mN'}(ge^{tK})\cdot P_{\mN,\mN'}(ge^{tK})
\\
=\sum_{\alpha}\bar{\eta}_{ge^{tK}}\left(\underline{TY_{\alpha}^{S^1}}, 
\underline{L_{\alpha}}, (\underline{\mu_{\alpha,\mN',+}}
-\underline{\mu_{\alpha,\mN',-}})\otimes 
\underline{E}|_{Y^{S^1}_{\alpha}}\right)\cdot\frac{F_{\mN'}(ge^{tK})}
{F_{\alpha,\mN'}(ge^{tK})}
\\
-\sum_{\alpha}\bar{\eta}_{ge^{tK}}\left(\underline{TY_{\alpha}^{S^1}}, 
\underline{L_{\alpha}},
(\underline{\mu_{\alpha,\mN,+}}-\underline{\mu_{\alpha,\mN,-}})
\otimes \underline{E}|_{Y^{S^1}_{\alpha}}\right)
\cdot\frac{F_{\mN'}(ge^{tK})}{F_{\alpha,\mN}(ge^{tK})},
\end{multline}
and
\begin{multline}\label{eq:3.026} 
F_{\mN}(ge^{tK})\cdot Q_{\mN}(ge^{tK})=F_{\mN}(ge^{tK})
\bar{\eta}_{g,tK}(\underline{TY}, \underline{L},
\underline{E})
\\
-\sum_{\alpha}\bar{\eta}_{ge^{tK}}\left(\underline{TY_{\alpha}^{S^1}}, 
\underline{L_{\alpha}},
(\underline{\mu_{\alpha,\mN,+}}-\underline{\mu_{\alpha,\mN,-}})
\otimes \underline{E}|_{Y^{S^1}_{\alpha}}\right)\cdot
\frac{F_{\mN}(ge^{tK})}{F_{\alpha,\mN}(ge^{tK})}.
\end{multline}

Recall that for $g\in S^1\backslash A$, $g^v-1\neq 0$ 
if $r_{\alpha,v}\neq 0$.  
So there exists $\beta>0$ such that for $|t|\leq \beta$, 
$F_{\mN}(ge^{tK})^{-1}$ is real analytic in $t$ for any $\mN$.
By (\ref{eq:3.012}), 
$\bar{\eta}_{ge^{tK}}(\underline{TY_{\alpha}^{S^1}},\cdots)$
in (\ref{eq:3.025}) and (\ref{eq:3.026}) are polynomials on 
$ge^{tK}$ and $(ge^{tK})^{-1}$.
Thus by Theorem \ref{local90} and (\ref{eq:3.012}), 
for $g\in S^1\backslash A$,
there exists $\beta>0$ such that for $|t|\leq \beta$, 
$\mN'>\mN>\mN_0$,
$P_{\mN,\mN'}(ge^{tK})$ and $Q_{\mN}(ge^{tK})$ 
are real analytic in $t$. 
\end{proof}

\begin{prop}\label{prop:3.08} 
For any $\mN'> \mN>\mN_0$,
the functions $P_{\mN,\mN'}$ and $Q_{\mN}$ on 
$S^1\backslash A$ are restrictions on 
$S^1\backslash A$ of
rational functions on $S^1$ with integral coefficients.
\end{prop}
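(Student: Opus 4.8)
The plan is to combine the pointwise statement already proved in Theorem~\ref{i08} (namely $Q_{\mN}(g)\in\Q_g$ for every $g\in S^1\backslash A$) and in Lemma~\ref{lem:3.04} ($P_{\mN,\mN'}(g)\in\Q_g$) with the local analyticity established in Lemma~\ref{lem:3.07}, and upgrade these ``pointwise rational'' statements to a single global rational function. The obstacle is that $\Q_g$ genuinely depends on $g$: knowing that $Q_{\mN}(g)$ is a ratio $P(g)/Q(g)$ of integer polynomials \emph{for each fixed $g$} does not immediately produce one universal pair $(P,Q)$ working for all $g$. The mechanism to overcome this is to clear denominators uniformly. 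Multiplying by the fixed integer-coefficient polynomial $F_{\mN}(g)=\prod_{v}(g^v-1)^{\max_\alpha r_{\alpha,v}+\mN}$ (see~\eqref{eq:3.024}) kills all the denominators coming from the localization at $I(g)$, because by Theorem~\ref{i08} the only denominators that appear are powers of $\chi(g)$ with $\chi$ built from these factors $g^v-1$.

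First I would fix $K_0\in\mathrm{Lie}(S^1)$, $K_0\neq0$, and consider the function $g\mapsto F_{\mN}(g)Q_{\mN}(g)$ on $S^1\backslash A$. By Lemma~\ref{lem:3.07}, for each $g_0\in S^1\backslash A$ this function is real analytic in $t$ along the curve $g_0e^{tK_0}$ near $t=0$; since $S^1\backslash A$ is a $1$-manifold and these curves sweep it out, $F_{\mN}Q_{\mN}$ is real analytic on $S^1\backslash A$. On the other hand, from Theorem~\ref{i08} we have $Q_{\mN}(g)\in\Q_g$, and chasing through the explicit form \eqref{eq:3.018}, \eqref{eq:3.021} of the push-forward one sees that $Q_{\mN}(g)=R(g)/\prod_\alpha F_{\alpha,\mN}(g)$ for a function $R$ that takes values in $\Z[g,g^{-1}]$ pointwise; multiplying by $F_{\mN}(g)$, which is an integer-polynomial multiple of each $F_{\alpha,\mN}(g)$, yields that $F_{\mN}(g)Q_{\mN}(g)\in\Z[g,g^{-1}]$ for every $g\in S^1\backslash A$, i.e.\ it lies in the \emph{countable} set of values on $S^1$ of integer Laurent polynomials. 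An analytic function on a connected piece of $S^1$ taking values in a countable set must be constant on that piece and equal to one such Laurent polynomial; handling the finitely many components of $S^1\backslash A$ (cf.\ Proposition~\ref{local32}), a partition-of-unity/continuity argument or directly the fact that an integer Laurent polynomial is determined by finitely many values forces a single $h_{\mN}\in\Z[x,x^{-1}]$ with $F_{\mN}(g)Q_{\mN}(g)=h_{\mN}(g)$ on all of $S^1\backslash A$.

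Then I would define the rational function
\begin{align}\label{eq:plan-final}
Q_{\mN}(g)=\frac{h_{\mN}(g)}{F_{\mN}(g)},
\end{align}
which has integer coefficients in numerator and denominator, hence (after clearing $g^{-1}$) is a genuine rational function in $g$ with coefficients in $\Z$. Its only possible poles on $S^1$ are at zeros of $F_{\mN}$, i.e.\ at $g$ with $g^v=1$ for some $v$ with $N_{\alpha,v}\neq0$; but such $g$ satisfy $\mU_{\var}^g\supsetneq Y^{S^1}$ by \eqref{eq:1.9b}--\eqref{eq:1.10b}, hence $g\in A$. Therefore $Q_{\mN}$ has no poles on $S^1\backslash A$. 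The identical argument applied to $F_{\mN'}(g)P_{\mN,\mN'}(g)$, using \eqref{eq:3.025} and the second half of Lemma~\ref{lem:3.07} together with Lemma~\ref{lem:3.04}, produces $P_{\mN,\mN'}(g)=\tilde h(g)/F_{\mN'}(g)$ with $\tilde h\in\Z[x,x^{-1}]$ and no poles on $S^1\backslash A$.

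The hard part is the ``countable values $\Rightarrow$ rigid'' step: one must be careful that the denominators appearing in $Q_{\mN}(g)\in\Q_g$ are controlled by the \emph{single} polynomial $F_{\mN}$ independent of $g$, which is exactly why the explicit description of $\widehat{f_Y}_*$ in Theorem~\ref{i08}, the explicit $F_{\alpha,\mN}$ in \eqref{eq:3.022}, and the identification $\ch_g(R(S^1)_{I(g)})=\Q_g$ are needed; without that uniformity the analytic function could a priori fail to be a fixed Laurent polynomial. Once $F_{\mN}Q_{\mN}$ is pinned to lie in $\Z[g,g^{-1}]$ pointwise and is real analytic, the conclusion that it equals a fixed element of $\Z[x,x^{-1}]$ on each component of $S^1\backslash A$ — and then globally, since $F_{\mN}Q_{\mN}$ extends continuously across $A$ minus the zeros of $F_{\mN}$ by Lemma~\ref{lem:3.07} and the meromorphicity remarks after Theorem~\ref{local96} — is routine, completing the proof of Proposition~\ref{prop:3.08} and hence of Theorems~\ref{thm:3.03} and \ref{i14}.
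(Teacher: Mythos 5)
Your overall skeleton is the same as the paper's: combine the pointwise membership $Q_{\mN}(g)\in\Q_g$ and $P_{\mN,\mN'}(g)\in\Q_g$ with the real analyticity from Lemma \ref{lem:3.07}, rigidify to a single rational function on each component of $S^1\backslash A$, and glue across the finitely many points of $A$ using the meromorphicity coming from Theorems \ref{local90} and \ref{local96}. However, the central rigidification step, which you yourself flag as the hard part, has a genuine gap.

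First, your claim that $F_{\mN}(g)Q_{\mN}(g)\in\Z[g,g^{-1}]$ for every $g\in S^1\backslash A$ is not justified. The $\Q_g$-ambiguities in the push-forward are not controlled by the fixed polynomial $F_{\mN}$: in the well-definedness argument \eqref{eq:3.007} the error term is $\chi(g)^{-1}\chi_M(g)^{-1}\alpha_g$, where $M$ is an auxiliary finite dimensional virtual $S^1$-representation arising from the equivalence relation in $\widehat{K}^0_g(Y)_{I(g)}$ (and similarly from Theorem \ref{local63} and the localization), so $\chi_M$ can be an arbitrary character nonvanishing at $g$, not a product of factors $(g^v-1)$. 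All Theorem \ref{i08} gives you is that $Q_{\mN}(g)=P(g)/Q(g)$ with $P,Q\in\Z[x]$ \emph{depending on $g$ with no a priori degree bound}. Second, even granting uniform denominators, the principle you invoke --- ``an analytic function taking values in a countable set must be constant and equal to one such Laurent polynomial'' --- does not apply: the countable set $\{h(g):h\in\Z[x,x^{-1}]\}$ varies with $g$, so you do not have a map into a fixed countable set, and the function is in any case not constant. What is actually needed, and what the paper does, is a cardinality/pigeonhole argument: stratify $S^1\backslash A$ by the degrees $(M(g),N(g))$ of numerator and denominator as in \eqref{eq:3.027}--\eqref{eq:3.029}, use the uncountability of a small real-analytic neighborhood $U$ to extract a single pair of integer polynomials agreeing with $Q_{\mN}$ on an uncountable subset of $U$, and then conclude $h\equiv Q_{\mN}$ on $U$ by the identity theorem for real-analytic (indeed meromorphic) functions. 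Your parenthetical ``an integer Laurent polynomial is determined by finitely many values'' cannot substitute for this, because without the pigeonhole you have no degree bound. With that counting argument inserted, the rest of your outline (component-by-component identification and gluing across $A$ via \eqref{eq:3.030}--\eqref{eq:3.031}) matches the paper.
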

\begin{proof}
We prove first this property for $Q_{\mN}$. 

For $g=e^{2i\pi t}\in S^1\backslash A$
and $\mN>\mN_0$, we have $Q_{\mN}(g)\in \Q_g$
by (\ref{i13}) and (\ref{eq:3.014}).
By (\ref{i05}), we 
can write
\begin{align}\label{eq:3.027}
Q_{\mN}(g)=\frac{\sum_{k=0}^{N(g)}a_k(g)g^k}{\sum_{k=0}^{M(g)}
b_k(g)g^k},
\end{align}
where $a_k(g), b_k(g)\in \Z$, $N(g), M(g)\in \N$, 
the polynomials $\sum_{k=0}^{N(g)}a_k(g)x^k$ and 
$\sum_{k=0}^{M(g)}b_k(g)x^k$ are relatively prime,
and $\sum_{k=0}^{M(g)}b_k(g)g^k\neq 0$.
	
Let $T_{M,N}=\{g\in S^1\backslash A: M(g)\leq M, N(g)\leq N \}$. Then 
\begin{align}\label{eq:3.028}
\cup_{M,N=1}^{\infty}T_{M,N}=S^1\backslash A. 
\end{align}	
Fix $g_0\in S^1\backslash A$. Let $U$ be a connected open 
neighbourhood of $g_0$ in $S^1\backslash A$ such 
that $Q_{\mN}$ is real analytic on $U$ by Lemma \ref{lem:3.07}.
We have $\cup_{M,N=1}^{\infty}(T_{M,N}\cap U)=U$.
Since $U$ is an 
uncountable set, there exist $M_0, N_0\in \N$ such that 
$T_{M_0,N_0}\cap 	U$ is an uncountable set. We define the map 
$\Psi: T_{M_0,N_0}\cap U\rightarrow \Z^{M_0+N_0+2}$ such that
\begin{align}\label{eq:3.029}
\Psi(g)=(a_{0}(g),\cdots,  a_{N_0}(g), b_0(g),\cdots, b_{M_0}(g)).
\end{align} 
Since $\Z^{M_0+N_0+2}$ is a countable set, there exists 
$I=(a_{0},\cdots,  a_{N_0}, b_0,\cdots, b_{M_0})
\in \mathrm{Im}(\Psi)$ such 
that $\Psi^{-1}(I)$ is a 	uncountable set. Set 
$h(x)=\frac{\sum_{k=0}^{N_0}a_kx^k}{\sum_{k=0}^{M_0}b_kx^k}$.
Then there is an open subset $U'\subset U$ such that $h$
is real analytic on $U'$ and 	$Q_{\mN}=h$ 
on a uncountable subset of $U'$. 
Moreover, since $h$ is a meromorphic function on $\C$, $Q_{\mN}$ 
can be extended as a holomorphic function 
on an open connected neighborhood $U_0\subset\C$ of $U$,
we have $h\equiv Q_{\mN}$ on $U_0$,
in particular, $h=Q_{\mN}$ on $U$.
So for any $g_0\in S^1\backslash A$, there is an open neighborhood 
$U$ of 	$g_0$ in 
$S^1\backslash A$ such that $Q_{\mN}$ is a rational function on $U$
with 	integral coefficients.
It means that $Q_{\mN}$ is a rational function on each connected 
component of $S^1\backslash A$ 	with integral coefficients. 

For $g\in A$ and for small $t\neq 0$ it follows from Theorem \ref{local96},
similarly to \eqref{eq:3.026}, that
\begin{multline}\label{eq:3.030}
Q_{\mN}(ge^{tK})=\bar{\eta}_{g,tK}(\underline{TY}, \underline{L}, 
\underline{E})-\mM_{g,tK}\left(\underline{TY}, \underline{L}, 
\underline{E}\right)
\\
-F_{\mN}(ge^{tK})^{-1}\cdot\sum_{\alpha}\bar{\eta}_{ge^{tK}}
\left(\underline{TY_{\alpha}^{S^1}}, \underline{L_{\alpha}},
(\underline{\mu_{\alpha,\mN,+}}-\underline{\mu_{\alpha,\mN,-}})
\otimes \underline{E}|_{Y^{S^1}_{\alpha}}\right)\cdot
\frac{F_{\mN}(ge^{tK})}{F_{\alpha,\mN}(ge^{tK})}.
\end{multline}
From Theorem \ref{local90}, \eqref{eq:3.012}, \eqref{eq:3.024} 
and \eqref{eq:3.030}, $Q_{\mN}(ge^{tK})$ is a meromorphic 
function in $t$ near $0$. But from the argument before \eqref{eq:3.030}, 
we know that for $t>0$ small
\begin{align}\label{eq:3.031} 
	Q_{\mN}(ge^{tK})=\frac{P_{+}(ge^{tK})}{Q_{+}(ge^{tK})}
\end{align}
is a rational function 
in $ge^{tK}$. As 
$\frac{P_{+}(ge^{tK})}{Q_{+}(ge^{tK})}$ is a meromorphic function
in $t$ near $0$, this implies 
\eqref{eq:3.031} holds for $t$ near $0$.
In particular, \eqref{eq:3.031} holds for $t<0$ small.
So $Q_{\mN}$ as a function on 
$S^1\backslash A$ is the restriction on $S^1\backslash A$
of a rational function on $S^1$ with integral coefficients.

By the argument after (\ref{eq:3.026}),
in particular 
by (\ref{eq:3.009}) and (\ref{eq:3.012}), we get that for
$\mN'>\mN>\mN_0$, $P_{\mN,\mN'}$ is the restriction on 
$S^1\backslash A$ of a rational function on $S^1$
with coefficients in $\R$. 
To show that that the coefficients are actually in $\Z$
we only need to apply the above argument again. 
	
The proof of Proposition \ref{prop:3.08}  is completed.
\end{proof}

By Lemma \ref{lem:3.07} and Proposition \ref{prop:3.08}, 
the proof of Theorem \ref{thm:3.03} is completed.

\subsection{The case when $Y^{S^1}=\emptyset$}\label{s0405}

In the remainder of this paper, we discuss the case when 
$Y^{S^1}=\emptyset$.

As $Y^{S^1}=\emptyset$, from Proposition \ref{local32},
$A=\{g\in S^1 : Y^g\neq\emptyset \}$ is a finite set. 
 From the variation formula 
\eqref{local73}, for 
$g\in S^1\backslash A$, up to 
$\Q_g$, $\bar{\eta}_g(\underline{TY}, \underline{L},
\underline{E})$ does not depend on the 
geometric data $g^{TY}$, $(h^{L}, \nabla^{L})$, $(h^E$, $\nabla^E$). 
Thus similarly as in Definition \ref{defn:3.01},  the 
map $f_!: K_{S^1}^0(Y)_{I(g)}\rightarrow 
\C/\Q_g$  for $g\in S^1\backslash A$, defined by 
\begin{align}\label{eq:3.063}
f_!([E]/\chi)=\chi
(g)^{-1}\bar{\eta}_g(\underline{TY}, \underline{L},
\underline{E})
\, \, \,\mathrm{mod}\,{\Q_g}
\end{align}
is well-defined. By \cite[Proposition 1.5]{ASegal68}, 
$K_{S^1}^0(Y)_{I(g)}=0$ for $g\in S^1\backslash A$. 
So $\bar{\eta}_g(\underline{TY}, \underline{L},
\underline{E})\in \Q_g$. Since 
Theorems \ref{local85} and \ref{local96} still hold for 
$Y^{S^1}=\emptyset$, following the same process as in  Lemma 
\ref{lem:3.07} and Proposition \ref{prop:3.08} (note that
for the last part of the proof of Proposition \ref{prop:3.08},
we only use Theorem \ref{local96}), we obtain: 
\begin{thm}\label{thm:3.13}
If $Y^{S^1}=\emptyset$, $A=\{g\in S^1 : Y^g\neq\emptyset \}$,
then $\bar{\eta}_g(\underline{TY}, \underline{L},
\underline{E})$ as a function on $S^1\backslash A$ 
is the restriction of a rational function on $S^1$ with
integral coefficients
 that does not have poles
on $S^1\backslash A$.
\end{thm}

\begin{eg} For $k\in \N^{*}$, we consider the circle action on 
	$Y=\widehat{S^1}$ with 
	\begin{align}\label{eq:3.064} 
	g.e^{i \theta}=e^{2\pi ikt+i\theta},\quad 
\text{ for }	g=e^{2\pi i t}\in S^1.
	\end{align}
	Here $\widehat{S^1}$ is a copy of $S^1$.
	For $x=e^{i \theta}\in \widehat{S^1}$, if $g.x=x$, we have 
	$kt\in \Z$, which means that $g^k=1$. So $Y^{S^1}=\emptyset$ 
	and $A=\{g\in S^1 : g^k=1\}$.

We identify $[0,2\pi)$ with $\widehat{S^1}$ by sending 
$\theta$ to $e^{i\theta}$. Then the canonical metric on $\widehat{S^1}$ 
is defined by $|\frac{\partial}{\partial \theta}|=1$, the spinor of
 $\widehat{S^1}$ is $\mS(\widehat{S^1})=\C$, and the Clifford action is 
 defined by $c\left(\frac{\partial}{\partial \theta}\right) =-i\in
 \End(\mS(\widehat{S^1}))$. Thus the untwisted Dirac operator 
 on $\widehat{S^1}$ is 
	\begin{align}\label{eq:3.065} 
	D=-i \frac{\partial}{\partial \theta}.
	\end{align}	
From \eqref{eq:3.064} and \eqref{eq:3.065}, we 
see that the circle action commutes with $D$.
	From \eqref{eq:3.065}, the eigenvalues of $D$ 
	are $\lambda_n=n$, $n\in \Z$, with eigenspaces
	$E_{\lambda_n}=\C\{e^{in \theta}\}$. 
	For $g=e^{2\pi i t}\in S^1$, $s\in \C$ 
	and $\Re (s)>1$, we see that
	\begin{align}\label{eq:3.066} 
	\eta_g(s):=\sum_{n=1}^{+\infty}
	\frac{\tr|_{E_{\lambda_n}}[g]}{n^s}-\sum_{n=1}^{+\infty}
	\frac{\tr|_{E_{\lambda_{-n}}}[g]}{n^s}
	=\sum_{n=1}^{+\infty}
	\frac{e^{2\pi i n kt}}{n^s}-\sum_{n=1}^{+\infty}
	\frac{e^{-2\pi i n kt}}{n^s}
	\end{align}
	is well-defined.

For $x,y\in \R$, $s\in \C$, let $S_1(x,y,s)$ be the 
Kronecker zeta function \cite[p53]{We76},
\begin{align}\label{eq:3.067} 
S_1(x,y,s)=\sum_{n\in\Z}{'}(x+n)|x+n|^{-2s}e^{-2\pi iny},
\end{align}
where $\sum_{n\in \Z}'$ is a sum over $n\in \Z$, $n\neq -x$. 
The series in \eqref{eq:3.067} converges absolutely 
for $\Re(s)>1$, and defines a holomorphic function of $s$. 
Moreover, $s\mapsto S_1(x,y,s)$ has a 
holomorphic continuation to $\C$ \cite[p57]{We76}. 
By \eqref{eq:3.066} and \eqref{eq:3.067}, we have
\begin{align}\label{eq:3.068} 
	\eta_g(s)=-S_1\left(0,kt,\frac{s+1}{2}\right).
\end{align}
Thus $\eta_g(s)$ has a holomorphic continuation to $\C$.
Also by \cite[p57]{We76},
we have the functional equation for $S_1(x,y,s)$,
\begin{align}\label{eq:3.069} 
\Gamma(s)S_1(x,y,s)=-i\pi^{2s-3/2}e^{2\pi i xy}
\Gamma\left(\frac{3}{2}-s \right)S_1\left(y,-x, \frac{3}{2}-s\right).
\end{align}
From \eqref{eq:3.068} and \eqref{eq:3.069}, we have
\begin{align}\label{eq:3.070} 
\eta_g(0)=\frac{i}{\pi}S_1(kt,0,1).
\end{align}
By \cite[p57]{We76}, for $x\notin \Z$, we have
\begin{align}\label{eq:3.071} 
S_1(x,0,1)=\pi \cot(\pi x).
\end{align}
If $g\in S^1\backslash A$, then $kt\notin\Z$.
Thus by \eqref{eq:3.064}, \eqref{eq:3.070} and \eqref{eq:3.071}, 
for $g\in S^1\backslash A$,  the equivairant eta invariant 
$\eta_g(\underline{\widehat{S^1}})$
of $S^1$ with the canonical metric, is given by 
	\begin{align}\label{eq:3.072} 
	\eta_g(\underline{\widehat{S^1}})=\eta_g(0)
	=i\cot(\pi kt)=-\frac{g^{k/2}+g^{-k/2}}{g^{k/2}-g^{-k/2}}
	=\frac{2}{1-g^k}-1.
	\end{align}
Since $\Ker (D)$ is the space of the complex valued constant functions 
on $\widehat{S^1}$, we have $\tr|_{\Ker (D)}[g]=1$. 
Thus from \eqref{eq:3.072}, for $g\in S^1\backslash A$,
the equivairant reduced eta invariant 
	\begin{align}\label{eq:3.073} 
	\bar{\eta}_g(\underline{\widehat{S^1}})
	=\frac{\eta_g(\underline{\widehat{S^1}})
		+\tr|_{\Ker (D)}[g]}{2}=\frac{1}{1-g^k}.
	\end{align}
It is a rational function on $S^1$ with integral 
coefficients and poles in $A$.

For $g\in A$, then from \eqref{eq:3.066}, we get $\eta_g(s)=0$, for any 
$s\in \C$. Since $\tr|_{\Ker (D)}[g]=1$,
\begin{align}\label{eq:3.074} 
\bar{\eta}_g(\underline{\widehat{S^1}})
=\bar{\eta}_1(\underline{\widehat{S^1}})
=\bar{\eta}(\underline{\widehat{S^1}})
=\frac{1}{2},\quad \text{for}\ g\in A.
\end{align} 
	
Remark that this reduced eta invariant could also be computed from the 
equivariant APS index theorem \eqref{eq:0.22}. Let $B$ be the unit disc 
in $\R^2$ and $\partial B=\widehat{S^1}$. 
In polar coordinates, the circle action on $B$ is defined by 
\begin{align}\label{eq:3.075} 
g.re^{i \theta}=re^{2\pi i kt+i\theta}, \quad
\text{for}\ g=e^{2\pi i t},
\,  k\in \N^{*}.
\end{align}
It induces the circle action on $\partial B=\widehat{S^1}$
in \eqref{eq:3.064}. If $g\in S^1\backslash A$, then $g^k\neq 1$
and the only fixed point set of  $g$ on $B$ is $B^g=\{0\}$. 
Let $N$ be the normal bundle of $\{0\}$ in $B$. 
Then the $g$-action on $N$ is as multiplication by $g^k$.
	
We take the metric on $B$ such that it 
has product structure
near the boundary, induces the canonical metric on $\widehat{S^1}$
and commutes with the circle action. 
We denote by $D^B_{+,\geq 0}$ the Fredholm 
operator with respect to the Dirac operator $D_+^B$ on $B$ and the 
APS boundary condition.
The equivariant APS index is defined by
	\begin{align}\label{eq:3.076} 
	\ind_{\mathrm{APS},g}(D_+^B)
	=\tr|_{\Ker \left(D^B_{+,\geq 0}\right)}[g]
	-\tr|_{\mathrm{Coker}\left(D^B_{+,\geq 0}\right)}[g].
	\end{align}
From the equivariant APS index theorem \cite{Donnelly78}, we have for 
$g\in S^1\backslash A$,
\begin{align}\label{eq:3.077} 
\bar{\eta}_g(\underline{\widehat{S^1}})
=\int_{B^g}\frac{1}{\det(1-g|_N)}-\ind_{\mathrm{APS},g}(D_+^B)
=\frac{1}{1-g^k}-\ind_{\mathrm{APS},g}(D_+^B).
\end{align}
	
Note that the equivariant APS index is invariant when the metric
varies near the boundary, without changing the metrics on the boundary.
We only need to compute $\ind_{\mathrm{APS},g}(D_+^B)$ 
using the canonical metric on $B$, which 
is not of product structure near the boundary.	In this case,
with  the coordinate $z=x+i y$ in $\R^2\simeq \C$,
the spinor of $B$ 
is $\mS(B)=\C \oplus \C (d\overline{z}/\sqrt{2})   
 \simeq \C\oplus \C$ and the Clifford 
action is given by
\begin{align}\label{eq:3.078}
c\left(\frac{\partial}{\partial x}\right):=\left(
\begin{array}{cc}
0 & -1 \\
1 & 0 \\
\end{array}
\right),\quad 
c\left(\frac{\partial}{\partial y} \right):=\left(
\begin{array}{cc}
0 & i \\
i & 0 \\
\end{array}
\right).
\end{align} 
Thus $D^B=c(\frac{\partial}{\partial x}) \frac{\partial}{\partial x}
+c(\frac{\partial}{\partial y})\frac{\partial}{\partial y}$ 
has the form
\begin{align}\label{eq:3.079}
D^B=\left(
\begin{array}{cc}
0 & D_-^B \\
D_+^B & 0 \\
\end{array}
\right)=\left(
\begin{array}{cc}
0 & -\frac{\partial}{\partial x}+i\frac{\partial}{\partial y} \\
\frac{\partial}{\partial x}+i\frac{\partial}{\partial y} & 0 \\
\end{array}
\right)=
\left(
\begin{array}{cc}
0 & -2\frac{\partial}{\partial z} \\
2\frac{\partial}{\partial \bar{z}} & 0 \\
\end{array}
\right),
\end{align} 
In polar coordinates,
	\begin{align}\label{eq:3.080} 
	D_+^B=-e^{i \theta}\left(-\frac{\partial}{\partial r}
	-\frac{i}{ r}\frac{\partial}{\partial \theta}\right).
	\end{align}
Note that $-\frac{\partial}{\partial r}$ here is the inward normal vector.
Let $P_{\geq0}$ be the orthogonal projection onto the 
direct sum of the eigenspaces associated with the nonnegative
eigenvalues of 
$\mathcal{A}:=-\frac{i}{ r}\frac{\partial}{\partial \theta}
|_{\partial B}$.
Recall that the eigenvalues of $\mA$ are $\lambda_n=n$, 
$n\in \Z$, with eigenspaces $\C\{e^{in \theta} \}$. 
Thus the APS boundary condition reads in complex coordinates
for $f\in \cC^{\infty}(B,\C)$,
\begin{align}\label{eq:3.081} 
P_{\geq 0}f|_{\partial B}=0 
\Leftrightarrow f|_{\partial B}=\sum_{n<0, n\in\Z}a_n z^{n}.
\end{align}
If $D_+^Bf=0$, $f$ is holomorphic on $B$. Thus 
	\begin{align}\label{eq:3.082} 
	\Ker (D^B_{+,\geq 0})=\{f\in \cC^{\infty}(B,\C) : D^B_+f=0, 
	P_{\geq 0}(f|_{\partial B})=0 \}=0. 
	\end{align}

By \eqref{eq:3.080}, the adjoint of $D^B_{+,\geq 0}$ is 
$D^B_{-,< 0}$, i.e., $D_-^B$ with boundary condition
\begin{align}\label{eq:3.083} 
P_{<0}(e^{-i\theta}f)|_{\partial B}=0, \,
\text{  with } \, P_{\geq 0}+P_{<0}=\Id
\, \text{  on }\, \cC^{\infty}(\partial B,\C).
\end{align}
 By \eqref{eq:3.079} and \eqref{eq:3.083}, if $D^B_{-,< 0}f=0$, 
 then $\bar{f}$ is holomorphic on $B$ and 
$f|_{\partial B}=\sum_{ n>0,n\in\Z}a_nz^n$. Thus 
$\mathrm{Coker}(D^B_{+,\geq 0})=\Ker (D^B_{-,< 0})=0$.

From \eqref{eq:3.076},  \eqref{eq:3.077} and \eqref{eq:3.082}, we have 
$\bar{\eta}_g(\underline{\widehat{S^1}})=(1-g^k)^{-1}$ for 
$g\in S^1\backslash A$, 
which is the same as \eqref{eq:3.073}.
\end{eg}

\end{document}